\newcommand{\smallsum}{\textstyle\sum}
\newtheorem{lemma}{Lemma}[section]
\newtheorem{corollary}[lemma]{Corollary}
\newtheorem{proposition}[lemma]{Proposition}
\newtheorem{theorem}[lemma]{Theorem}
\newtheorem{setting}[lemma]{Setting}
\renewcommand{\O}{{ \bf O}}
\newcommand{\ff}{{ \bf F}}
\newcommand{\y}{{ \bf X}}
\renewcommand{\L}{\mathcal{L}}
\providecommand{\X}{{\ensuremath{\mathcal{X}}}}
\providecommand{\Y}{{\ensuremath{\mathcal{Y}}}}
\providecommand{\N}{{\ensuremath{\mathbbm{N}}}}
\providecommand{\R}{{\ensuremath{\mathbbm{R}}}}
\providecommand{\E}{{\ensuremath{\mathbb{E}}}}
\providecommand{\B}{{\ensuremath{\mathcal{B}}}}
\providecommand{\F}{{\ensuremath{\mathcal{F}}}}
\providecommand{\f}{{\ensuremath{\mathbb{F}}}}
\providecommand{\M}{{\ensuremath{\mathcal{M}}}}
\renewcommand{\H}{{\ensuremath{\mathbb{H}}}}
\newcommand{\U}{{\ensuremath{\mathbb{U}}}}
\renewcommand{\P}{{\ensuremath{\mathbb{P}}}}
\providecommand{\1}{{\ensuremath{\mathbbm{1}}}}
\renewcommand{\aa}{a}
\providecommand{\values}{{\ensuremath{\mathfrak{v}}}}
\providecommand{\N}{{\ensuremath{\mathbbm{N}}}}
\providecommand{\R}{{\ensuremath{\mathbbm{R}}}}
\providecommand{\E}{{\ensuremath{\mathbb{E}}}}
\renewcommand{\P}{{\ensuremath{\mathbb{P}}}}
\providecommand{\1}{{\ensuremath{\mathbbm{1}}}}
\providecommand{\HS}{{\ensuremath{\textup{HS}}}}
\begin{document}
\title{Strong convergence rates on the whole probability \\
	 space for space-time discrete
numerical approximation \\ schemes for stochastic Burgers equations}
\author{Martin Hutzenthaler$^{1} $, Arnulf Jentzen$^{2, 3} $, Felix Lindner$^{4} $, and Primo\v{z} Pu\v{s}nik$^{5} $
	\bigskip 
	\\
	\small{$^1$ Faculty of Mathematics, University of Duisburg-Essen,}
	\\
	\small{Germany, e-mail: martin.hutzenthaler@uni-due.de} 
	\smallskip
	\\
	\small{$^2$ 
		Faculty of Mathematics and Computer Science,
	University of M\"unster,}
	\\
	\small{Germany,   
		e-mail: ajentzen@uni-muenster.de} 
		\smallskip
	\\
	\small{$^3$ 
		Department of Mathematics,
	ETH Zurich,}
	\\
	\small{Switzerland,
		e-mail: arnulf.jentzen@sam.math.ethz.ch}
	\smallskip
	\\
	\small{$^4$  
		Faculty of Mathematics and Natural Sciences,
	University of Kassel,}
	\\
	\small{Germany,
		e-mail: lindner@mathematik.uni-kassel.de}
		\smallskip 
	\\
	\small{$^5$ 
Department of Mathematics,
ETH Zurich,}
	\\
	\small{Switzerland,
		e-mail: primoz.pusnik@sam.math.ethz.ch}
}

\maketitle
\begin{abstract}
	The main result of this article establishes strong convergence rates on the whole probability space for 
	explicit space-time discrete numerical approximations
	for a class of stochastic evolution
	equations with possibly non-globally monotone coefficients such as
	additive trace-class noise driven stochastic Burgers equations. 
	The key idea in the proof of our main result is (i) to bring the classical Alekseev-Gr\"obner formula from deterministic analysis into play and (ii) to employ uniform
exponential moment estimates for the numerical approximations.
\end{abstract}

\tableofcontents
\newpage

\section{Introduction}
In this article we study the problem of 
establishing strong convergence rates for explicit space-time discrete 
approximations of semilinear stochastic 
evolution equations (SEEs) with non-globally monotone 
coefficients (see, e.g., Liu \& R\"ockner~\cite[(H2) in Chapter~4]{LiuRoeckner2015Book} 
for global monotonicity) such as stochastic Burgers equations. 
Proving strong convergence with rates for numerical approximations of SEEs with non-globally monotone coefficients is known to be challenging.
 In fact, there exist stochastic ordinary differential 
 equations (SODEs) with smooth and globally bounded but 
 non-globally monotone coefficients such that no 
 approximation method based on finitely many observations 
 of the driving Brownian motion can converge strongly 
 to their solutions faster than any given speed of 
 convergence (see~Jentzen et al.\ \cite[Theorem~1.3]{JentzenYaroslavtsevaMuller2016},
 Hairer et al.\ \cite[Theorem~1.3]{Haireretal2015}, 
 and also, e.g., \cite{GerencserJentzenSalimova2017published, 
 	HefterJentzen2019, 
 	MullerYaroslavtseva2017, Yaroslavtseva2016, 
 	MullerYaroslavtseva2017b}). 
 In addition,
 the classical Euler-Maruyama method, 
 the exponential Euler method, 
 and the linear-implicit 
 Euler method fail to converge strongly as well as weakly for some 
 SEEs with superlinearly growing coefficients (see, e.g.,
 Hutzenthaler
 et al.\ \cite[Theorem~2.1]{HutzenthalerJentzenKloeden2011}
 and
 Hutzenthaler
 et al.\
 \cite[Theorem~2.1]{HutzenthalerJentzenKloeden2013}
  for SODEs and 
  Beccari et al.\ \cite[Theorem~1.2]{BeccariHutzenthalerJentzenKurniawanLindnerSalimova2019}
  for 
  stochastic partial differential equations 
(SPDEs)).

Recently, a series of appropriately modified versions of 
the explicit Euler method have been introduced and 
proven to converge strongly for some SEEs with 
superlinearly growing coefficients 
(see, e.g.,
\cite{HutzenthalerJentzen2014PerturbationArxiv, HutzenthalerJentzen2014Memoires,  
	HutzenthalerJentzenKloeden2012, 
	Sabanis2013ECP, Sabanis2016, 
	TretyakovZhang2013, 
	WangGan2013}
for SODEs
and, e.g., \cite{BeckerGessJentzenKloeden2017, 
	BeckerJentzen2019, 
	GyongySabanisSiska2016, 
	HutzenthalerJentzenSalimova2016, 
	JentzenPusnik2019Published,
	JentzenSalimovaWelti2019, 
	MazzonettoSalimova2019} for SPDEs). 
These methods are easily implementable and tame 
the superlinearly growing terms in order to 
ensure strong convergence. 
Strong convergence rates for explicit time discrete and explicit space-time discrete numerical methods for SPDEs with a non-globally Lipschitz continuous but globally monotone nonlinearity have been derived in, e.g.,
Becker et al.\ \cite[Theorem~1.1]{BeckerGessJentzenKloeden2017},
Becker \& Jentzen~\cite[Theorem~1.1 and Corollaries~6.12 and~6.14]{BeckerJentzen2019},
Brehi\'er et al.\ \cite[Theorems~3.4 and~4.6 and Corollaries~3.9 and~3.10]{BrehierCuiHong2018},
Jentzen \& Pu\v{s}nik~\cite[Theorem~1.1 and Corollary~8.2]{JentzenPusnik2019Published},
and
Wang~\cite[Theorem~4.11]{Wang2018}.
Moreover, suitable nonlinear-implicit  
approximation schemes are known
to converge strongly in the case of several 
SEEs with superlinearly growing coefficients
(see, e.g., \cite{Higham2002, Hu1996} for SODEs 
and, e.g., 
\cite{BrzezniakCarelliProhl2013, 
	FengLiZhang2017,
	FurihataKovacsLarssonLindgren2017Published, 
	GyoengyMillet2005, 
	KovacsLarssonLindgren2013Published, 
	KovacsLarssonLindgren2015published, 
	MajeeProhl2018} 
for SPDEs).  
Strong convergence rates
 for  
temporal and spatio-temporal approximations 
 of SEEs with non-globally monotone coefficients
on suitable large subsets of the probability space 
(sometimes referred to as semi-strong convergence rates) 
have been established in, e.g.,  
Bessaih et al.\ \cite[Theorem~5.2]{BessaihBrzezniakMillet2014},
Carelli \& Prohl~\cite[Theorems~3.1, 3.2, and~4.2]{CarelliProhl2012}, 
and
Furihata et al.\
\cite[Theorem~5.3]{FurihataKovacsLarssonLindgren2017Published}.
These
semi-strong convergence rates can imply convergence in probability, but
they are not sufficient to prove strong convergence rates.
For completeness, we also refer to, e.g., 
\cite{ag06, 
	BloemkerJentzen2013,
	Breckner2000,
	CoxHutzenthalerJentzenNeervenWelti2016,
	BlomkerKamrani2017b, 
		Printems2001,
	ZhuZhu2017,  
	ZhuZhu2015Arxiv, 
	ZhuZhu2018} 
for results concerning convergence in probability with and without rates, pathwise convergence with rates, and strong convergence without rates for numerical approximations of SEEs with superlinearly growing coefficients. 
Weak convergence with rates for splitting approximations of 2D stochastic Navier-Stokes equations has been established  in D\"orsek~\cite[Corollary~4.3]{Doersek2012}.
In
Bessaih \& Millet~\cite[Theorem~4.6]{BessaihMillet2018}
strong 
convergence with rates is proven 
for 
fully drift-implicit Euler 
approximations
 in the case of 2D stochastic 
Navier-Stokes equations with additive trace-class noise 
by exploiting a rather specific property (see Bessaih \& Millet~\cite[(2.4) in Section~2]{BessaihMillet2018})
of the Navier-Stokes-nonlinearity
(see also 
Bessaih \& Millet~\cite[Theorems~3.6, 3.9, and~4.4 and Proposition~4.8]{BessaihMillet2018}
for further strong convergence results). 
These fully drift-implicit Euler approximations of 2D stochastic Navier-Stokes equations 
involve solutions of nonlinear equations that are not known to be unique 
and it is unknown how to approximate these solutions with positive convergence rates.
Strong convergences rates for nonlinear-implicit numerical schemes for SEEs with non-globally monotone coefficients have also been analyzed in
Cui \& Hong~\cite{CuiHong2018, CuiHong2019}
and Cui et al.\ \cite{CuiHongLiuZhou2019, CuiHongSun2019}
(cf.\ also, e.g., Cui et al.\ \cite{CuiHongLiu2017}
and Yang \& Zhang~\cite{YangZhang2017}).

To the best of our knowledge, 
there exist no results in the scientific literature 
establishing strong convergence with rates 
on the whole probability space for an 
explicit space-time discrete numerical method for an evolutionary 
SPDE with a non-globally monotone nonlinearity such as 
stochastic Burgers equations, 
stochastic Navier-Stokes equations, 
stochastic Kuramoto-Sivashinsky equations, 
Cahn-Hilliard-Cook equations, 
or stochastic nonlinear Schr\"odinger equations.
It is the key contribution of this work to partially 
solve this problem and to establish strong convergence 
rates for an appropriately tamed-truncated exponential 
Euler-type method for SPDEs with a possibly 
non-globally monotone nonlinearity and additive 
trace-class noise 
(see Theorem~\ref{theorem:Exact_to_numeric} below). 
In particular, in Corollary~\ref{corollary:BurgersFinal} below we derive 
strong convergence rates for 
explicit space-time discrete approximations 
of stochastic Burgers equations.  
A simplified version of Corollary~\ref{corollary:BurgersFinal} is presented in the following theorem.
\begin{theorem} 
	\label{theorem:BurgersFinal}
	\sloppy 
	Let
	$ ( H, \langle \cdot, \cdot \rangle _H, \left \| \cdot \right\|_H ) $ 
	be the $ \R $-Hilbert space
	of equivalence classes
	of Lebesgue-Borel square-integrable functions
	from $ (0,1) $ to $ \R $,
	let
	$ A \colon D( A ) \subseteq H \to H $ 
	be the
	Laplace operator with zero Dirichlet boundary conditions on $ H $,  
	let
	$ T \in (0,\infty) $, 
	$ c \in \R $, 
	$ \xi \in D(A) $,
	$ \beta \in (0, \nicefrac{1}{2} ] $,  
	$ B \in \HS(H, D((-A)^\beta) ) $,  
	$ (e_n)_{ n \in \N} \subseteq H $
	satisfy
	for all $ n \in \N $ that
	$ e_n( \cdot ) =\sqrt{2} \sin( n \pi (\cdot) ) $,
	let
	$ ( P_N )_{ N \in \N } \subseteq L(H) $
	satisfy for all
	$ N \in \N $,
	$ v \in H $ 
	that
	$ P_N(v) =\sum_{n=1}^N \langle e_n ,v \rangle_H e_n $, 
	let 
	$ F \colon D ( ( -A)^{ \nicefrac{1}{2} } ) \to H $ 
satisfy
	for all  
	$ v \in D ( ( -A)^{ \nicefrac{1}{2} } ) $
	that
	$ F( v ) =
	c\, 
	v' v 
	$,
	let
	$ ( \Omega, \F, \P ) $
	be a probability space, 
	let
	$ (W_t)_{t\in [0,T]} $
	be an
	$ \operatorname{Id}_H $-cylindrical 
	Wiener process,   
	let
	$ W^N \colon [0,T] \times \Omega \to P_N(H) $, $ N \in \N $,
	be stochastic processes which satisfy for all 
	$ N \in \N $,
	$ t \in [0,T] $ that 
	$ \P ( W_t^N = \int_0^t P_N B \, d W_s ) = 1 $,
	and
	let 
	$ \y^{ M, N }\colon [0, T] \times \Omega \to P_N(H) $, 
	$ M, N \in \N $,
	be   the 
	stochastic processes 
	which satisfy for all 
	$ M, N \in \N $,
	$ m \in \{ 0, 1, \ldots, M-1 \} $,   
	$ t \in ( \nicefrac{mT}{M}, \nicefrac{ (m+1) T }{M} ] $
	that
	$ \y^{M, N}_0 = P_N(\xi) $ 
	and
	\begin{equation}
	\begin{split}
	\label{eq:schemesIntro}
	\y_t^{M, N} 
	&  
	= 
	e^{(t- \nicefrac{m T}{M} )A} 
	\Big( 
	\y_{
		\nicefrac{m T}{M}
	}^{M, N} 
	\\
	&
	\quad
	+
	\,
	\1_{  \{ 
		1
		+
		\|   
		(-A)^{1/2}
		\y^{M, N}_{ \nicefrac{m T}{M} }  \|_{H  }^2 
		\leq 
		( \nicefrac{M}{T} )^{1/19}   \}}
	\Big[
	P_N
	F(
	\y_{ \nicefrac{m T}{M} }^{M, N}
	) \,
	(
	t 
	-
	( \nicefrac{m T}{M} )
	)  
	+
	\tfrac{ 
		  W^N_t - W^N_{  \nicefrac{m T}{M} }   
	}{
		1 + 
		\|  W^N_t - W^N_{  \nicefrac{m T}{M} }   
		\|_H^2
	} 
	\Big]
	\Big) 
	.
	\end{split}
	\end{equation}
	Then 
	\begin{enumerate}[(i)]
		\item \label{item:SolutionExistsIntro}
		there exists an up to indistinguishability
		unique stochastic process 
		$ X  \colon [0,T] \times \Omega \to  D( (-A)^{ \nicefrac{1}{2} } ) $
		with continuous sample paths
		(w.c.s.p.) 
		such that
		for all 
		$ t \in [0,T] $ 
		we have
		$ \P $-a.s.\ that
		\begin{equation}
		\label{eq:Eq}
		X_t 
		= 
		e^{tA} \xi 
		+
		\int_0^t e^{(t-s)A} F(X_s) \, ds 
		+
		\int_0^t e^{(t-s)A} B \, dW_s
		\end{equation} 
		and
		\item \label{item:AppConvergesIntro} 
		for every 
		$ \varepsilon, p \in (0, \infty) $
		there exists $ C \in \R $
		such that for all 
		$ M, N \in \N $
		we have that
		\begin{equation}
		\sup\nolimits_{ t \in [0,T] }
		\big( \E [ \| X_t - \y^{M, N}_t \|_H^p ]  \big)^{\nicefrac{1}{p}}
		\leq
		C
		\big(
		M^{ ( \varepsilon - \beta ) }
		+
		N^{ ( \varepsilon - 2 \beta ) }
		\big)
		.
		\end{equation}
	\end{enumerate}
\end{theorem}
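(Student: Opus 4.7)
The plan is to deduce this theorem as a specialization of the general main result Theorem~\ref{theorem:Exact_to_numeric} (equivalently, of Corollary~\ref{corollary:BurgersFinal}) to the stochastic Burgers setting. The first step would be to verify that the triple $(A, F, B)$ consisting of the Dirichlet Laplacian, the Burgers nonlinearity $F(v)=c\, v'v$ defined on $D((-A)^{\nicefrac{1}{2}})$, and the additive Hilbert--Schmidt noise with $B\in\HS(H,D((-A)^{\beta}))$ satisfies the abstract hypotheses of the general theorem. Concretely, one checks (a) the local Lipschitz bounds $\|F(u)-F(v)\|_H \leq C(\|(-A)^{\nicefrac{1}{2}}u\|_H+\|(-A)^{\nicefrac{1}{2}}v\|_H)\|(-A)^{\nicefrac{1}{2}}(u-v)\|_H$ and the classical coercivity-type estimates for $F$ on fractional power spaces, (b) the analytic smoothing of $e^{tA}$, (c) the regularity of the stochastic convolution $\int_0^{\cdot} e^{(\cdot-s)A}B\,dW_s$, which lives in $D((-A)^{\beta})$ by the Hilbert--Schmidt assumption, and (d) the (over-)regularity of the initial condition $\xi\in D(A)$.

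For assertion~(\ref{item:SolutionExistsIntro}), I would prove existence and pathwise uniqueness of the mild solution $X$ with continuous sample paths in $D((-A)^{\nicefrac{1}{2}})$ by the standard shift $Y=X-O$, where $O_t=\int_0^t e^{(t-s)A}B\,dW_s$. Because $O$ has continuous paths in $D((-A)^{\beta})$ and $\xi\in D(A)$, the shifted process satisfies a random PDE whose nonlinearity can be controlled by the classical Burgers estimate $|\langle F(v),Av\rangle_H|\leq \tfrac{1}{2}\|Av\|_H^2 + C\|(-A)^{\nicefrac{1}{2}}v\|_H^6$ plus lower-order terms involving $O$. A pathwise energy argument on $\|(-A)^{\nicefrac{1}{2}}Y\|_H^2$ combined with a Gronwall step then yields global pathwise existence and uniqueness.

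For assertion~(\ref{item:AppConvergesIntro}), the heart of the matter is to represent the error $X_t-\y^{M,N}_t$ via the Alekseev--Gr\"obner formula as an integral over $[0,t]$ of the one-step local defect of the scheme, transported by the derivative flow of the exact semiflow generated by $e^{tA}$ and $F$. Combining this with (i) uniform-in-$M,N$ exponential moment bounds on $\|(-A)^{\nicefrac{1}{2}}\y^{M,N}_{\nicefrac{mT}{M}}\|_H^2$, which ensure that the truncation indicator in~\eqref{eq:schemesIntro} equals one outside an event whose probability decays faster than any polynomial in $M$; (ii) standard one-step bounds exploiting the semigroup smoothing and the $D((-A)^{\beta})$-regularity of $B$, which produce the temporal rate $M^{\varepsilon-\beta}$; and (iii) Burkholder-type estimates for the tail $\int_0^t e^{(t-s)A}(I-P_N)B\,dW_s$, which produce the spatial rate $N^{\varepsilon-2\beta}$, one obtains the stated convergence rate. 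The tamed noise factor $\frac{\Delta W^N}{1+\|\Delta W^N\|_H^2}$ ensures that the stochastic increment is uniformly bounded on the whole probability space, a feature which is essential in order to control the deterministic transport of the local defect on the (small-probability) event on which the truncation indicator vanishes.

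The main obstacle I expect lies in establishing the uniform exponential moment estimates on $\|(-A)^{\nicefrac{1}{2}}\y^{M,N}_{\nicefrac{mT}{M}}\|_H^2$. The Burgers drift only maps $D((-A)^{\nicefrac{1}{2}})$ into $H$ with quadratic growth, and controlling the $\|(-A)^{\nicefrac{1}{2}}\cdot\|_H$-norm of the scheme requires absorbing a cubic term $\langle v'v,Av\rangle_H$ into the parabolic dissipation $\|Av\|_H^2$ via interpolation. This absorption is exactly what dictates the peculiar fractional truncation threshold $(\nicefrac{M}{T})^{\nicefrac{1}{19}}$: the exponent $\nicefrac{1}{19}$ is chosen small enough that the discrete energy identity closes under the tamed-truncated Euler step, yet large enough that the probability of the truncation event is superpolynomially small, and simultaneously that the deterministic error produced when truncation triggers is dominated by the baseline rate $M^{\varepsilon-\beta}$. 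Once these exponential moment bounds are in hand, the remaining pieces --- discrete Gronwall, convolution-type estimates, and the verification that $P_N$ is contractive on the relevant fractional power spaces --- are technical but standard applications of the general machinery developed in the paper.
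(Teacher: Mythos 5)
Your overall architecture is the same as the paper's: Theorem~\ref{theorem:BurgersFinal} is obtained by specializing the general tamed-truncated result (Theorem~\ref{theorem:Exact_to_numeric}, via Corollaries~\ref{corollary:Exact_to_numeric}, \ref{corollary:Burgers}, and~\ref{corollary:BurgersFinal}), with the error represented through the Alekseev--Gr\"obner formula and closed by uniform exponential moment bounds for the scheme. Two remarks on the periphery: the paper does not re-prove item~(\ref{item:SolutionExistsIntro}) by a pathwise energy argument as you propose, but simply cites the well-posedness and regularity result of \cite{JentzenLindnerPusnik2017c} (Theorem~5.10 there), and the verification of the abstract hypotheses for $F(v)=c\,v'v$ (local Lipschitz estimate in $H_{\nicefrac{1}{2}}$, the bound on $\langle F'(x)y,y\rangle_H$, the coercivity $\langle x,F(x)\rangle_H=0$ on Galerkin spaces, and the $\Phi$-bound) is likewise imported from \cite{JentzenLindnerPusnik2017c}; your self-contained energy route for (i) is plausible but not what is done here, and its details (the shift by the convolution $O$, which has paths in $H_\gamma$ for every $\gamma<\nicefrac{1}{2}+\beta$, not merely in $H_\beta$ as you state) would need care.

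The genuine gap is in your description of the key moment estimates. First, the uniform exponential moments established for the scheme (Corollary~\ref{Corollary:full_discrete_scheme_convergence}, via \cite{JentzenPusnik2018Published}) are for $\|\y^{M,N}_t\|_H^2$, i.e.\ the plain $H$-norm, \emph{not} for $\|(-A)^{\nicefrac{1}{2}}\y^{M,N}_t\|_H^2$; exponential moments of the $H_{\nicefrac{1}{2}}$-norm of the scheme are neither proved nor needed, and a plan that requires them would stall, since no discrete energy identity at the $H_{\nicefrac{1}{2}}$ level is available for the tamed-truncated step. The $H_{\nicefrac{1}{2}}$-norm enters exponentially only for the spatially semi-discrete exact flow, inside the derivative-process bound (Proposition~\ref{proposition:Derivative_process_estimate}, Corollary~\ref{corollary:exp_bound}, Lemma~\ref{lemma:inheritet}), where the dissipativity $\langle x,Ax\rangle_H=-\|x\|_{H_{\nicefrac{1}{2}}}^2$ together with the exponential-integrability result of Cox et al.\ reduces everything to exponential $H$-moments of $\y-\O+O$ at the initial time. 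Second, the truncation event is not shown to have superpolynomially small probability: the paper only needs, and only obtains (via Markov's inequality with sufficiently high polynomial moments of $\|\y\|_{H_{\nicefrac{1}{2}}}$, see~\eqref{eq:FastGrowth}), a bound of order $[\,|\theta|_T]^{\gamma-\delta}$ matching the target temporal rate. Third, the role of the exponent $\nicefrac{1}{19}$ is only that $\varsigma<\nicefrac{1}{18}$, the threshold demanded by the exponential-moment estimate for the tamed-truncated scheme in \cite{JentzenPusnik2018Published}; there is no competing lower bound on $\varsigma$ of the kind you describe, since a smaller $\varsigma$ merely forces higher polynomial moments of the scheme in $H_{\nicefrac{1}{2}}$, which Corollary~\ref{corollary:AprioriNumApp} supplies for every order. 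These are not cosmetic points: they are precisely the estimates on which the rate $M^{\varepsilon-\beta}+N^{\varepsilon-2\beta}$ rests, so as written your proposal would not close at the exponential-moment step.
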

Theorem~\ref{theorem:BurgersFinal}
is an immediate
consequence 
of Corollary~\ref{corollary:BurgersFinal}
in Section~\ref{section:Burgers}
below
(applies with
 $ T = T $,
 $ \varepsilon = \varepsilon $,
 $ c_0 = 1 $,
 $ c_1 = c $,
 $ \varsigma = \nicefrac{1}{19} $,
 $ p = \max \{ p, 1 \} $,
 $ \beta = \beta $,
 $ \gamma = \nicefrac{1}{2} $,
 $ H = H $,
 $ e_n = e_n $,
 $ A = A $, 
$ H_r = D( (-A)^r ) $,
$ B = B $,
$ \xi = \xi $, 
$ F = F $,
$ P_N = P_N $,
$ ( \Omega, \mathcal{F}, \P ) 
=
( \Omega, \mathcal{F}, \P ) $,
$ ( W_t )_{ t \in [0,T] } = ( W_t )_{ t \in [0,T] } $,
$ \y^{ \{ 0, T/M, \ldots, T \}, N }
= \y^{ M, N } $
for 
$ M, N, n \in \N $, 
$ \varepsilon, p \in (0, \infty) $,
$ r \in [0, \infty) $
in the setting of
Corollary~\ref{corollary:BurgersFinal})
and H\"older's inequality.
Corollary~\ref{corollary:BurgersFinal},
in turn,
is a consequence
of
Theorem~\ref{theorem:Exact_to_numeric}
in Subsection~\ref{subsection:StrongRates} below
(the main result of this work).
We note that if the diffusion coefficient $ B $ is a diagonal operator with respect to the orthonormal basis $ (e_n)_{ n \in \N} \subseteq H $, then the processes $ W^N $, $ N \in \N $, in Theorem~\ref{theorem:BurgersFinal} above are Wiener processes with computable covariance structure (cf.\ Corollary~\ref{corollary:Finite dimensional process Wn} below) and the approximation scheme~\eqref{eq:schemesIntro} is directly implementable up to an additional approximation error resulting from the numerical evaluations of 
Galerkin projections $ P_N $, $ N \in \N $.
We now briefly sketch the key ideas which we employ
to prove
Theorem~\ref{theorem:BurgersFinal}.
In the case of SPDEs with globally monotone 
nonlinearities one can, very roughly speaking, apply the It\^o formula to the squared
Hilbert space norm of the difference between the exact solution of the SPDE and its numerical approximation and, thereafter, employ the global  monotonicity property  
together with Gronwall's lemma and suitable 
uniform moment bounds for the 
solution and the numerical approximations 
to establish strong convergence rates.
This procedure, however, fails in the case of SPDEs with non-globally monotone coefficients.
We overcome this issue 
by bringing the classical Alekseev-Gr\"obner formula from deterministic numerical analysis (see, e.g., Hairer et al.\ \cite[Theorem~14.5]{HairerNonstiffProblems}) into play 
and by employing the fact that the 
considered approximation processes 
$ ( \y_t^{M, N} )_{ t \in [0,T] } $,
$ M, N \in \N $,  
(see~\eqref{eq:schemesIntro} above)
have
uniformly bounded exponential moments.
More specifically, we 
apply the extended version of the
Alexeev-Gr\"obner formula
in~\cite[Corollary~5.2]{JentzenLindnerPusnik2017a}
to 
a spatially semi-discrete version of the solution
$ ( X_t )_{ t \in [0,T] } $
of the considered SPDE
(see~\eqref{eq:Eq} above)
and its
numerical approximations
$ ( \y_t^{M, N} )_{ t \in [0,T] } $,
$ M, N \in \N $, 
(see~\eqref{eq:schemesIntro} above) 
in order to 
derive a suitable
error representation
(cf.\
Lemma~\ref{lemma:crucial_apply_Alekseev_Groebner}
below).
This allows us to estimate the strong approximation error 
by an appropriate integral expression
involving two main terms (cf.\ \eqref{eq:app_error} in  Corollary~\ref{corollary:main_error_estimate} below) which we analyze independently.
The first main term is, very roughly speaking,
the derivative 
of 
the
spatially semi-discrete version of 
$ (X_t)_{ t \in [0,T] } $
with respect to its initial value,
evaluated in a function of the
numerical approximations
$ ( \y_t^{M, N} )_{ t \in [0,T] } $,
$ M, N \in \N $, 
and the 
Wiener process $ (W_t)_{ t \in [0,T] } $. 
The second main term 
is a function of the numerical approximations
$ ( \y_t^{M, N} )_{ t \in [0,T] } $,
$ M, N \in \N $, 
and the 
Wiener process $ (W_t)_{ t \in [0,T] } $  
but does not involve
the 
spatially semi-discrete version of  
$ (X_t)_{ t \in [0,T]} $ 
(cf.\ 
Corollary~\ref{corollary:main_error_estimate}
below).
	A key step in establishing strong convergence rates is, 
	loosely speaking,  
	to obtain a uniform moment
	bound for
	the derivative 
	of the spatially semi-discrete version 
	of $ (X_t)_{ t \in [0,T] } $
	with respect to its initial value  
	in terms of an appropriate functional
	of the spatially semi-discrete version  of
	$ ( X_t )_{ t \in [0,T] } $ 
	and the numerical approximations 
	$ ( \y_t^{M, N} )_{ t \in [0,T] } $,
	$ M, N \in \N $
	(cf.\ Corollary~\ref{corollary:exp_bound} below).
	Applying a general result on exponential integrability
	from Cox et al.\ \cite[Corollary~2.4]{CoxHutzenthalerJentzen2013},
	this moment bound is then further estimated
	by appropriate exponential moments
	of the numerical approximations 
	$ ( \y_t^{M, N} )_{ t \in [0,T] } $,
	$ M, N \in \N $
	(cf.\ Lemma~\ref{lemma:inheritet} 
	below).
	The exponential moments established 
	in~\cite{JentzenLindnerPusnik2017b, JentzenPusnik2018Published}
	therefore yield a uniform upper bound
	for
	the first main term in the initial strong
	error estimate
	(cf.\
	Proposition~\ref{proposition:Exact_to_numeric_general},
	Corollary~\ref{Corollary:full_discrete_scheme_convergence},
	and the proof of
	Theorem~\ref{theorem:Exact_to_numeric}
	below).
	The fact that the 
	numerical approximations
	$ ( \y_t^{M, N} )_{ t \in [0,T] } $,
	$ M, N \in \N $, 
	enjoy sufficient regularity properties 
	(cf.\ Corollary~\ref{corollary:AprioriNumApp} and the regularity results
	in~\cite{JentzenLindnerPusnik2017b, 
		JentzenLindnerPusnik2017c})	
	ensures that the
	second main term 
	in the initial strong
	error estimate
	converges strongly with rates
	(cf.\ 
	Proposition~\ref{proposition:Exact_to_numeric_general}
	and
	the proof of
	Theorem~\ref{theorem:Exact_to_numeric}
	below).
	Combining the estimates for both main terms in the initial strong error estimate  finally 
	establishes strong convergence  
	rates for explicit space-time discrete approximations of the SPDE under consideration
	(cf.\ Theorem~\ref{theorem:Exact_to_numeric}
	and Corollaries~\ref{corollary:Exact_to_numeric}, 
	\ref{corollary:Burgers},
	and~\ref{corollary:BurgersFinal}
	below).

 Let us comment on the optimality of the convergence rates obtained in Theorem~\ref{theorem:BurgersFinal}.
 It is not clear to us whether the established strong convergence rates
 are essentially optimal or whether they can be substantially improved.
 In the simplified case $ c = 0 $, 
 where the nonlinearity is omitted and the stochastic Burgers equation
 in~\eqref{eq:Eq} reduces to a stochastic heat equation, lower bounds for strong and weak approximation errors
 are well understood
 (see, e.g.,  
 Becker et al.\ \cite{BeckerGessJentzenKloeden2018},
 Conus et al.\ \cite{ConusJentzenKurniawan2019},
 Davie \& Gaines~\cite{DavieGaines2001},
 Jentzen \& Kurniawan~\cite{KurniawanJentzen2015Arxiv}, 
  M\"{u}ller-Gronbach \& Ritter~\cite{MuellerGronbachRitter2007},
 M\"{u}ller-Gronbach et al.\ \cite{MuellerGronbachRitterWagner2008b, 
 	MuellerGronbachRitterWagner2008},
 and the references mentioned therein).
 In particular, e.g, 
 Becker et al.\ \cite[Theorem~1.1]{BeckerGessJentzenKloeden2018},  
 Conus et al.\ \cite[Lemma~7.2]{ConusJentzenKurniawan2019}, 
 Davie \& Gaines~\cite[Section~2.1]{DavieGaines2001},
 and
  M\"{u}ller-Gronbach et al.\ \cite[Theorem~4.2]{MuellerGronbachRitterWagner2008}
 indicate that the convergence rates in Theorem~\ref{theorem:BurgersFinal} 
 above 
 might not be optimal in the case $ c=0 $. 
 In the case $ c \neq 0 $, where the nonlinearity does not vanish, lower bounds for strong and weak approximation errors remain on open problem for future research.
The remainder of this article is organized as follows.
		In Subsection~\ref{subsection:Pathwise time approximation error estimates} 
		we apply the Alexeev-Gr\"obner formula from~\cite[Corollary~5.2]{JentzenLindnerPusnik2017a}
		and establish
		in 
		Lemma~\ref{lemma:main_error_estimate} below 
		a general
		pathwise estimate. 
		Combining this general pathwise estimate
		with suitable measurability results from the scientific literature allows us to establish in 
		Corollary~\ref{corollary:main_error_estimate}  
		in  
		Subsection~\ref{subsection:Measure} 
		below a strong 
		$ \mathcal{L}^p $ 
		estimate for the difference between
		the
		spatially semi-discrete version of the solution of the 
		considered SPDE
		and the considered numerical approximations.
		In Subsection~\ref{section:AprioriBounds}
		we employ
		Cox et al.\ \cite[Corollary~2.4]{CoxHutzenthalerJentzen2013}
		to provide
		an 
		appropriate a priori estimate for
		the derivative 
		of the spatially semi-discrete version of the solution of the considered SPDE 
		with respect to its initial value
		(see~\eqref{eq:UpperBound} in
		Lemma~\ref{lemma:inheritet} below).
		In Subsection~\ref{section:strongApriori}
		we 
		combine 
the results from
		Section~\ref{section:TemporaApp}
		and
		Subsection~\ref{section:AprioriBounds} 
		to obtain in Proposition~\ref{proposition:main_error_estimate} 
		a simplified upper
		bound for the
		strong error.
		In Subsection~\ref{section:AprioriBound}
		we establish suitable uniform moment bounds for the spatially semi-discrete version of the 
		considered SPDE
		which we then employ in Subsection~\ref{subsection:StrongConvergenceRates}  
		together with Proposition~\ref{proposition:main_error_estimate}
		to prove
		in Proposition~\ref{proposition:Exact_to_numeric_general}
		strong convergence with rates  		
		for space-time 
		discrete
		numerical approximations 
		with suitable integrability
		and regularity 
		properties
		for a large class of SPDEs.
In Subsection~\ref{subsection:StrongAprioriBoundNumeric} we show that the considered tamed-truncated numerical scheme 
enjoys appropriate integrability and measurability properties. These properties are then used together with 
Proposition~\ref{proposition:Exact_to_numeric_general} to establish in 
Theorem~\ref{theorem:Exact_to_numeric} 
in Subsection~\ref{subsection:StrongRates} below 
(see also Corollary~\ref{corollary:Exact_to_numeric})
 strong convergence rates for the considered tamed-truncated numerical scheme. 
In Section~\ref{section:Burgers}
we combine 
in
Corollaries~\ref{corollary:Burgers}   
and~\ref{corollary:BurgersFinal}
the results established in~\cite{JentzenLindnerPusnik2017c}
with
Corollary~\ref{corollary:Exact_to_numeric}
in this article 
to establish
strong convergence rates
in the case of 
additive trace-class noise driven stochastic Burgers equations.
\subsection{General setting}
\label{subsec:main Setting}
Throughout the article we frequently use the following setting.
\begin{setting} 
\label{setting:Notation}
\sloppy 
For all measurable spaces  
$ (\Omega_1, \F_1) $
and
$ (\Omega_2, \F_2) $ 
let $ \M( \F_1 , \F_2 ) $ 
be the set of all $ \F_1/\F_2 $-measurable functions,
for every set $ X $ let $ \mathcal{P}(X) $
be the power set of $ X $,
for every set $ X $ let 
$ \mathcal{P}_0(X) $ be the set given by 
$ \mathcal{P}_0(X) = \{ \theta \in \mathcal{P}(X) \colon \theta \text{ is a finite set} \} $, 
for every   
$ T \in (0,\infty) $ 
let $ \varpi_T $ 
be the set given by
$ \varpi_T
=
\{
\theta \in \mathcal{P}_0( [0,T] ) \colon
\{0, T\} \subseteq \theta \} $, 
for every $ T \in (0, \infty) $ let
$ \left | \cdot \right|_T \colon \varpi_T \to [0,T] $
satisfy 
for all
$ \theta \in \varpi_T $ that
\begin{equation}
| \theta  |_T
=
\max\!\Big\{
x \in (0,\infty)
\colon
\big(
\exists \, a, b \in \theta
\colon
\big[
x = b - a
\text{ and }
\theta \cap (a, \infty) \cap (-\infty, b) = \emptyset
\big]
\big)
\Big\},
\end{equation}
for every
$ \theta \in ( \cup_{T\in (0,\infty)} \varpi_T) $
let
$ \llcorner \cdot \lrcorner_\theta \colon [0,\infty) \to [0,\infty) $ 
satisfy 
for all
$ t\in (0,\infty) $
that 
$ \llcorner t \lrcorner_\theta
=  \max ( [0,t) \cap \theta ) $ 
and 
$ \llcorner 0 \lrcorner_\theta  = 0 $,
and
for every measure space
$ ( \Omega, \F, \mu) $,
every measurable space $ ( S, \mathcal{S} ) $,
every set $ R $,
and every function
$ f \colon \Omega \to R $
let
$ [f]_{\mu, \mathcal{S}} $
be the set given by
$ [f]_{\mu, \mathcal{S}} =
\{
g \in \M(\F,
\mathcal{S})
\colon
(
\exists\, A \in \F \colon
\mu(A) = 0  \text{ and } 
\{\omega \in \Omega \colon
f(\omega) \neq g(\omega)
\}
\subseteq A
)
\} $.
\end{setting} 
\begin{setting} 
\label{setting:main}
Assume Setting~\ref{setting:Notation},
let
$ ( H, \langle \cdot, \cdot \rangle _H, \left \| \cdot \right\|_H ) $
and
$ ( U, \langle \cdot, \cdot \rangle_U, \left\| \cdot \right \|_U ) $
be non-zero separable $ \R $-Hilbert spaces, 
let 
$ \H \subseteq H $
be an orthonormal basis of $ H $,  
let
$ \values \colon \H \to \R $
satisfy 
$ \sup_{h\in \H} \values_h < 0 $,
let
$ A \colon D( A ) \subseteq H \to H $ 
be the linear operator which satisfies
$ D(A) = \{
v \in H \colon \sum_{ h \in \H}  | \values_h \langle h, v \rangle_H  |^2 <  \infty \} $ 
and  
$ \forall \, v \in D(A) \colon 
A v = \sum_{ h \in \H} \values_h \langle h, v \rangle _H h $,
and
let
$ (H_r, \langle \cdot, \cdot \rangle_{H_r}, \left \| \cdot \right \|_{H_r} ) $, $ r\in \R $, be a family of interpolation spaces associated to $ -A $
(cf., e.g., \cite[Section~3.7]{SellYou2002}).
\end{setting}
Observe that the assumption in Setting~\ref{setting:main}   that 
$ (H_r, \langle \cdot, \cdot \rangle_{H_r}, \left \| \cdot \right \|_{H_r} ) $, 
$ r \in \R $,  
is a family of interpolation spaces associated to 
$ - A $ gives for all
$ r \in [0, \infty) $ 
that 
$ (H_r, \langle\cdot,\cdot\rangle_{H_r}, \left \| \cdot \right \|_{H_r} )
=
( D((-A)^r), 
\langle(-A)^r (\cdot),(-A)^r (\cdot )\rangle_H,
\left \| (-A)^r (\cdot) \right\|_H) $. 
\section{Time discretization error estimates based on an  Alexeev-Gr\"obner-type formula}
\label{section:TemporaApp}
\begin{setting}
\label{setting:Exact_vs_Numeric}
Assume Setting~\ref{setting:main},
assume
that
$ \dim(H) < \infty $,
let $ T \in (0,\infty) $, 
$ \theta \in \varpi_T $,
$ \xi \in H $, 
$ O \in \mathcal{C}( [0,T], H ) $,
$ \O \in \M( \B([0,T]) , \B(H) ) $,
$ F \in \mathcal{C}^1( H, H) $, 
let
$ \ff \colon H \to H $
be a function,
		for every
		$ s \in [0,T] $,
		$ x \in H $
		let
		$ X_{s,(\cdot)}^x
		=
		( X_{s,t}^x )_{ t \in [s,T] }
		\in \mathcal{C}([ s, T], H) 
		$
satisfy for all
		$ t \in [s,T] $
		that
		\begin{equation}
		\label{eq:Exact}
		X_{s,t}^x
		= 
		e^{ ( t - s ) A } x 
		+
		\int_s^t e^{ ( t - u ) A } F( X_{s,u}^x ) \, du + O_t - e^{(t-s)A}O_s
		,
		\end{equation}
		and
		let 
		$ \y \colon [0,T] \to H $
		satisfy
		for all $ t \in [0,T] $ that
		\begin{equation}
		\label{eq:Numeric}
		\y_t
		=
		e^{t A}
		\xi
		+
		\int_0^t
		e^{ ( t - \llcorner u \lrcorner_\theta ) A } 
		\ff ( \y_{ \llcorner u \lrcorner_\theta } ) \, du
		+
		\O_t
		.
		\end{equation}
\end{setting}
		Observe that for every topological space $ (X, \tau) $ we have that $ \B(X) $ is the smallest sigma-algebra on $ X $  which satisfies that $ \tau\subseteq \B(X) $.
\subsection{Pathwise temporal approximation error estimates}
\label{subsection:Pathwise time approximation error estimates}
In this subsection
we apply 
the extended Alekseev-Gr\"obner formula
in~\cite[Corollary~5.2]{JentzenLindnerPusnik2017a}
to express
the difference between
the 
exact solution
$ ( X_{0,t}^{ \xi + O_0 } )_{ t \in [0,T] } $
of 
the 
integral equation~\eqref{eq:Exact} above, started at time $ s=0 $ in $ x=\xi+O_0 $,
and the corresponding numerical approximation
$ (\y_t)_{ t \in [0,T]} $ 
in~\eqref{eq:Numeric} above
in terms of an appropriate integral
in Lemma~\ref{lemma:crucial_apply_Alekseev_Groebner} below. 
We then combine 
these auxiliary results with
Lemma~\ref{lemma:crucial_apply_Alekseev_Groebner}
and
Lemma~\ref{lemma:function_error}
to derive
an upper bound for the
approximation error in
Lemma~\ref{lemma:main_error_estimate}.
\begin{lemma}
\label{lemma:Simplified}
Assume Setting~\ref{setting:main}, 
assume that $ \dim(H) < \infty $, 
let 
$ T \in (0, \infty) $,
$ s \in [0,T] $,
$ x \in H $,
$ Z \in \M( \B([s,T]), \B(H)) $
satisfy 
$ \int_s^T \| Z_u \|_H \, du < \infty $,
and
let
$ Y \colon [s,T] \to H $
satisfy for all 
$ t \in [s,T] $ that
$ Y_t = e^{(t-s)A} x + \int_s^t e^{(t-u)A} Z_u \, du $.
Then 
\begin{enumerate}[(i)]
	\item \label{item:continuous} we have that $ Y \in \mathcal{C}([s,T], H) $
	and
	\item \label{item:rewrite} we have for all
	$ t \in [s,T] $ that
	$ Y_t = x + \int_s^t [ A Y_u  + Z_u] \, du $.
\end{enumerate}
\end{lemma}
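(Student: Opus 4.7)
The plan is to exploit crucially the hypothesis $\dim(H) < \infty$, which makes $A$ a bounded operator and the semigroup $(e^{tA})_{t \in \R}$ a norm-continuous group of bounded operators. In particular, $M := \sup_{r \in [0, T-s]} \| e^{rA} \|_{L(H)} < \infty$ and $\| A e^{rA} \|_{L(H)} \le \| A \|_{L(H)} M$ for every $r \in [0, T-s]$.

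For item~\eqref{item:continuous}, I would rewrite $Y_t = e^{(t-s)A} x + \int_s^T \1_{[s,t]}(u) \, e^{(t-u)A} Z_u \, du$ and treat the two summands separately. The map $t \mapsto e^{(t-s)A} x$ is continuous because $t \mapsto e^{(t-s)A}$ is norm-continuous in $L(H)$ (finite-dimensional matrix exponential). For the integral term, given a sequence $(t_n)_{n\in\N} \subseteq [s,T]$ with $t_n \to t$, the integrand $\1_{[s,t_n]}(u) e^{(t_n - u)A} Z_u$ converges to $\1_{[s,t]}(u) e^{(t-u)A} Z_u$ for almost every $u$ and is dominated by $M \| Z_u \|_H$, which is integrable by hypothesis; the dominated convergence theorem then gives convergence of the integrals. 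This establishes $Y \in \mathcal{C}([s,T], H)$.

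For item~\eqref{item:rewrite}, I would compute $\int_s^t [A Y_u + Z_u] \, du$ by splitting $A Y_u = A e^{(u-s)A} x + A \int_s^u e^{(u-v)A} Z_v \, dv$. The first contribution is handled by the fundamental theorem of calculus applied to the (finite-dimensional, hence norm-differentiable) matrix flow:
\begin{equation}
\int_s^t A e^{(u-s)A} x \, du = e^{(t-s)A} x - x.
\end{equation}
For the second contribution, I would apply Fubini's theorem (justified by the joint measurability of $(u, v) \mapsto \1_{\{v \le u\}} A e^{(u-v)A} Z_v$ and the integrability bound $\| A e^{(u-v)A} Z_v \|_H \le \| A \|_{L(H)} M \| Z_v \|_H$) and then once more the FTC for the matrix exponential to obtain
\begin{equation}
\int_s^t \!\! \int_s^u \!\! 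A e^{(u-v)A} Z_v \, dv \, du = \int_s^t \!\! \int_v^t \!\! A e^{(u-v)A} Z_v \, du \, dv = \int_s^t \!\! \bigl[ e^{(t-v)A} - \operatorname{Id}_H \bigr] Z_v \, dv.
\end{equation}
Adding the $\int_s^t Z_u \, du$ term cancels the $-\operatorname{Id}_H$ contribution and recombines with $e^{(t-s)A} x - x$ to yield exactly $Y_t - x$, as required.

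No step should present a genuine obstacle: the argument is essentially a verification that in finite dimensions the variation-of-constants (mild) formulation coincides with the integrated (strong) formulation. The only slightly delicate point is the application of Fubini, which is the reason I would explicitly record the uniform bound on $\| A e^{rA} \|_{L(H)}$ on $[0, T-s]$ to control the double integral by $\|A\|_{L(H)} M (T-s) \int_s^T \| Z_v \|_H \, dv < \infty$.
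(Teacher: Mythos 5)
Your proof is correct, but it takes a genuinely different route from the paper's. The paper exploits $\dim(H)<\infty$ through the group property of $(e^{tA})_{t\in\R}$: it factorizes $Y_t = e^{(t-s)A}\bigl(x + \int_s^t e^{(s-u)A} Z_u \, du\bigr)$, obtains item~(i) from the dominated convergence theorem applied to the inner integral together with norm-continuity of $t\mapsto e^{(t-s)A}$, and then obtains item~(ii) by differentiating this product with a ready-made fundamental-theorem-of-calculus/chain-rule result for compositions of a $\mathcal{C}^1$ map with an absolutely continuous integral (the cited Corollary~5.2 companion result, \cite[Corollary~2.8]{JentzenLindnerPusnik2017a}), which immediately yields $Y_t - x = \int_s^t [A Y_u + Z_u]\,du$ after cancelling $e^{(u-s)A}e^{(s-u)A}$. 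You instead verify the integrated identity directly ``forwards'': you split $A Y_u$, use the fundamental theorem of calculus for the matrix exponential to evaluate $\int_s^t A e^{(u-s)A} x\,du$, and interchange the order of integration via Fubini (with the uniform bound $\|A e^{rA}\|_{L(H)} \le \|A\|_{L(H)} M$ justifying integrability) so that the $-\operatorname{Id}_H$ term cancels against $\int_s^t Z_u\,du$. Both arguments rest essentially on finite-dimensionality; yours is more elementary and self-contained (no external differentiation lemma, at the cost of the Fubini interchange and its joint-measurability check), while the paper's is shorter given the cited corollary, avoids any interchange of integrals, and reuses machinery that is invoked again later (e.g.\ in the proof of Lemma~\ref{lemma:crucial_apply_Alekseev_Groebner}). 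Your treatment of item~(i) — dominated convergence applied directly to $\1_{[s,t_n]}(u)e^{(t_n-u)A}Z_u$ with dominating function $M\|Z_u\|_H$ — is also fine and marginally different from the paper's factorized version.
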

\begin{proof}[Proof of Lemma~\ref{lemma:Simplified}]

Throughout this proof
assume w.l.o.g.\ that $ s \in [0,T) $.
Note that
the fact that
$ \dim (H) < \infty $
ensures that
for all 
$ t \in [s, T] $ we have that
$ \int_s^T \| e^{ (s-u) A} Z_u \|_H \, du < \infty $
and
\begin{equation}
\begin{split}
\label{eq:Integral form}
Y_t 
= 
e^{( t - s ) A}
\bigg( 
x
+
\int_s^t
e^{ (s-u) A} Z_u  \, du 
\bigg) 
.
\end{split}
\end{equation}
Moreover,
observe that the 
dominated convergence theorem gives that
\begin{equation}
\bigg( 
[s,T] \ni t 
\mapsto 
\int_s^t e^{(s-u)A} Z_u \, du 
\bigg) 
\in 
\mathcal{C}( [s, T], H ) 
.
\end{equation}
Combining~\eqref{eq:Integral form}
and the fact that
$ ( [s, T] \ni t \mapsto e^{(t-s)A} \in L(H) )
\in \mathcal{C}([s,T], L(H) ) $
therefore
justifies item~\eqref{item:continuous}.
Next note that~\eqref{eq:Integral form},
the fact that
$ [s, T] \times H \ni (t, h) \mapsto e^{(t-s) A} h \in H $
is continuously differentiable,
and, e.g., \cite[Corollary~2.8]{JentzenLindnerPusnik2017a}
(applies with
$ (V, \left \| \cdot \right \|_V ) = (H, \left \| \cdot \right \|_H) $,
$ (W, \left \| \cdot \right \|_W ) = (H, \left \| \cdot \right \|_H) $,
$ a = s $,
$ b = T $,
$ F =  ([s, T] \ni t \mapsto  
(
x + \int_s^t
e^{ (s-u) A } 
Z_u \, du ) \in  H ) $,
$ \phi = ( [s, T] \times H \ni (t, h) \mapsto e^{(t - s) A} h \in H ) $,
$ f = ([s, T] \ni u \mapsto ( e^{ (s-u) A } 
Z_u )
\in H
) $
in the setting of~\cite[Corollary~2.8]{JentzenLindnerPusnik2017a}) 
yield that
for all $ t \in [s, T] $ we have that
\begin{equation}
\begin{split} 
Y_t - x 
&
=
\int_s^t 
\bigg[ 
A e^{(u-s) A}
\bigg(
x + \int_s^u
e^{ (s-r) A } 
Z_r \, dr \bigg)
+
e^{(u-s)A} 
e^{ (s-u) A } 
Z_u
\bigg]
\, du
\\
&
=
\int_s^t 
[ 
A 
Y_u
+ 
Z_u
]
\, du
.
\end{split} 
\end{equation}
This justifies item~\eqref{item:rewrite}.
The proof of Lemma~\ref{lemma:Simplified}
is hereby completed.
\end{proof} 
\begin{lemma}
	\label{lemma:crucial_apply_Alekseev_Groebner}
Assume Setting~\ref{setting:Exact_vs_Numeric}.
	Then  
	\begin{enumerate}[(i)]
	\item \label{item:minor detail}
	we have that $ (\y - \O ) \in \mathcal{C}( [0,T], H ) $, 
	\item \label{item:differentiability} we have 
	that
	$ (
	\{ (u,v) \in [0,T]^2 \colon u \leq v \}
	\times H \ni (s,t,x)
	\mapsto X_{s,t}^x 
	\in 
	H
	)
	\in 
	\mathcal{C}^{0,0,1}
	( \{ (u,v) \in [0,T]^2 \colon u \leq v \}
	\times H,
	H ) $,
	\item \label{item:Integral exists}
	we have for all $ t \in [0,T] $ that 
	\begin{multline}  
	\big(
	[0,t] \ni s \mapsto
	\big[ 
	\tfrac{\partial}{\partial x}
	X_{s, t}^{ \y_s - \O_s + O_s}
	\big(
	e^{ ( s - \llcorner s \lrcorner_\theta ) A }
	\ff ( \y_{\llcorner s \lrcorner_\theta} )
	-
	F ( \y_s - \O_s + O_s ) 
	\big)
	\big] 
	\in H 
	\big)  
	\\
	\in  
	\mathcal{M}( \B([0,t]), \B(H) ) 
	, 
	\end{multline}
	\item \label{item:finite integral} 
	we have for all $ t \in [0,T] $ that 
	\begin{equation}
	\int_0^t
	\big\| 
	\tfrac{\partial}{\partial x}
	X_{s, t}^{ \y_s - \O_s + O_s}
	\big(
	e^{ ( s - \llcorner s \lrcorner_\theta ) A }
	\ff ( \y_{\llcorner s \lrcorner_\theta} )
	-
	F ( \y_s - \O_s + O_s ) 
	\big)
	\big\|_H
	\,
	ds
	<
	\infty 
	,
	\end{equation}
	and
	\item \label{item:MainFormula}
	we have for all 
	$ t \in [0,T] $ that
	\begin{equation}
	\begin{split}
	\!\!\!\!
	\y_t - X_{0, t}^{ \xi + O_0 }
	=
	\O_t - O_t
	+
	\int_0^t
	\tfrac{\partial}{\partial x}
	X_{s, t}^{ \y_s - \O_s + O_s}
	\big(
	e^{ ( s - \llcorner s \lrcorner_\theta ) A }
	\ff ( \y_{\llcorner s \lrcorner_\theta} )
	-
	F ( \y_s - \O_s + O_s ) 
	\big)
	\,
	ds
	.
	\end{split}
	\end{equation}
	\end{enumerate}
\end{lemma}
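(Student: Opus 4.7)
The plan is to dispatch items~\ref{item:minor detail}--\ref{item:finite integral} by reducing to Lemma~\ref{lemma:Simplified} and classical finite-dimensional parametric ODE flow theory, and then to obtain item~\ref{item:MainFormula} as a direct application of the extended Alekseev-Gr\"obner formula in~\cite[Corollary~5.2]{JentzenLindnerPusnik2017a} after rewriting $\y$ as a perturbation of the $X$-flow.

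For item~\ref{item:minor detail}, I would factorize $e^{(t-\llcorner u\lrcorner_\theta)A}=e^{(t-u)A}e^{(u-\llcorner u\lrcorner_\theta)A}$ in~\eqref{eq:Numeric} and apply Lemma~\ref{lemma:Simplified}\ref{item:continuous} with $s=0$, $x=\xi$ and $Z_u = e^{(u-\llcorner u\lrcorner_\theta)A}\ff(\y_{\llcorner u\lrcorner_\theta})$; integrability of $Z$ follows from the finiteness of $\theta$ together with the uniform boundedness of $\|e^{rA}\|_{L(H)}$ for $r\ge 0$ (consequence of $\sup_{h\in\H}\values_h<0$). For item~\ref{item:differentiability}, I would set $Y^{x}_{s,t}:=X^{x}_{s,t}-O_t+e^{(t-s)A}O_s$, so that $Y^{x}_{s,s}=x$, and use Lemma~\ref{lemma:Simplified}\ref{item:rewrite} to recast~\eqref{eq:Exact} as a genuine ODE
\[
\tfrac{d}{dt}Y^{x}_{s,t} = AY^{x}_{s,t} + F\bigl(Y^{x}_{s,t}+O_t - e^{(t-s)A}O_s\bigr)
\]
on the finite-dimensional space $H$, with a $\mathcal{C}^1$ vector field and continuous dependence on the parameter $s$ and on the continuous forcing $O$; the classical parametric flow theorem then yields the asserted $\mathcal{C}^{0,0,1}$-regularity for $Y$, and adding back $O_t - e^{(t-s)A}O_s$ preserves it.

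For item~\ref{item:Integral exists}, continuity of $s\mapsto \y_s-\O_s+O_s$ (from item~\ref{item:minor detail} and continuity of $O$) combined with item~\ref{item:differentiability} gives continuity of $(s,x)\mapsto \tfrac{\partial}{\partial x}X^{x}_{s,t}$ on a compact set; composing with the continuous curve in $x$ and with the piecewise-constant (hence Borel) map $s\mapsto \ff(\y_{\llcorner s\lrcorner_\theta})$ yields Borel measurability of the integrand. For item~\ref{item:finite integral}, the image of $[0,t]$ under the continuous map $s\mapsto (s,t,\y_s-\O_s+O_s)$ is compact, so $\|\tfrac{\partial}{\partial x}X^{\y_s-\O_s+O_s}_{s,t}\|_{L(H)}$ is uniformly bounded in $s\in[0,t]$ by item~\ref{item:differentiability}, while the companion factor $\|e^{(s-\llcorner s\lrcorner_\theta)A}\ff(\y_{\llcorner s\lrcorner_\theta})-F(\y_s-\O_s+O_s)\|_H$ is bounded on $[0,t]$ for the same reasons.

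Finally, for item~\ref{item:MainFormula}, the strategy is to introduce $\tilde\y_t := \y_t - \O_t + O_t$ and compare it with the flow $X^{\cdot}_{\cdot,\cdot}$ via~\cite[Corollary~5.2]{JentzenLindnerPusnik2017a}. Item~\ref{item:minor detail} shows that $\tilde\y$ is continuous with $\tilde\y_0 = \xi + O_0 = X^{\xi+O_0}_{0,0}$, and Lemma~\ref{lemma:Simplified}\ref{item:rewrite} identifies $\tilde\y - O$ as the solution of the ODE with drift $e^{(s-\llcorner s\lrcorner_\theta)A}\ff(\y_{\llcorner s\lrcorner_\theta})$ in place of $F(\tilde\y_s)$, while $X^{x}_{s,\cdot}-O_\cdot$ solves the corresponding ODE with drift $F(X^{x}_{s,\cdot})$. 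Feeding these two ODEs into the cited Alekseev-Gr\"obner corollary yields
\[
\tilde\y_t - X^{\xi+O_0}_{0,t}
= \int_0^t \tfrac{\partial}{\partial x}X^{\tilde\y_s}_{s,t}\bigl[e^{(s-\llcorner s\lrcorner_\theta)A}\ff(\y_{\llcorner s\lrcorner_\theta}) - F(\tilde\y_s)\bigr]\,ds,
\]
and substituting $\tilde\y_t = \y_t - \O_t + O_t$ in both the left-hand side and in the evaluation points on the right-hand side gives the claimed identity. The main obstacle I anticipate is the careful book-keeping needed to match the precise hypotheses of~\cite[Corollary~5.2]{JentzenLindnerPusnik2017a}, specifically the correct handling of the non-differentiable forcing $O$ (which must be treated as an inhomogeneity rather than as a true vector-field component) and the verification that items~\ref{item:differentiability}--\ref{item:finite integral} supply exactly the regularity and integrability required for the corollary to apply.
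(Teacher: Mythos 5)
Your overall route is the paper's route: recast \eqref{eq:Exact} and \eqref{eq:Numeric} as integral identities of the form of Lemma~\ref{lemma:Simplified}, invoke the extended Alekseev--Gr\"obner formula \cite[Corollary~5.2]{JentzenLindnerPusnik2017a}, and translate back through the shift by the forcing; your verification of items~\eqref{item:minor detail}--\eqref{item:finite integral} and the final identity in item~\eqref{item:MainFormula} match the target statement. The one substantive point you leave open (``careful book-keeping to match the hypotheses of Corollary~5.2'') is exactly where the paper's single non-obvious idea sits, and your guess about how to resolve it points in the wrong direction: the corollary takes \emph{one} vector field, \emph{one} flow of that field, and \emph{one} perturbed path, whereas your two displayed ODEs carry different drifts ($F(X^x_{s,\cdot})$ versus $e^{(\cdot-\llcorner\cdot\lrcorner_\theta)A}\ff(\y_{\llcorner\cdot\lrcorner_\theta})$) and so are not yet in the corollary's format. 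The paper resolves this not by treating $O$ as an inhomogeneity but by absorbing it into the argument of $F$: one sets $\X^x_{s,t}:=X^{x+O_s}_{s,t}-O_t$ and $\Y:=\y-\O$, checks via Lemma~\ref{lemma:Simplified}~\eqref{item:rewrite} that both satisfy integral equations governed by the common, $s$-independent, $\mathcal{C}^{0,1}$ field $(u,h)\mapsto Ah+F(h+O_u)$ (the path $\Y$ with the perturbation $E_u=e^{(u-\llcorner u\lrcorner_\theta)A}\ff(\Y_{\llcorner u\lrcorner_\theta}+\O_{\llcorner u\lrcorner_\theta})-F(\Y_u+O_u)$), and only then applies \cite[Corollary~5.2]{JentzenLindnerPusnik2017a}; since $\tfrac{\partial}{\partial x}\X^{\Y_s}_{s,t}=\tfrac{\partial}{\partial x}X^{\y_s-\O_s+O_s}_{s,t}$ and $\X^{\xi}_{0,t}=X^{\xi+O_0}_{0,t}-O_t$, the claimed identity follows. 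Your own substitution $Y^x_{s,t}=X^x_{s,t}-O_t+e^{(t-s)A}O_s$ leaves an $s$-dependent, $O$-carrying term in the field, which is what forces you into a separate parametric-flow argument for item~\eqref{item:differentiability}.

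A secondary, organizational difference: in the paper, items~\eqref{item:differentiability}--\eqref{item:finite integral} are not proved separately but come out of that single application of \cite[Corollary~5.2]{JentzenLindnerPusnik2017a} (its conclusions include the $\mathcal{C}^{0,0,1}$-regularity of the flow, the Borel measurability of the integrand, and the integrability), after which one transfers to $X$ via $X^x_{s,t}=\X^{x-O_s}_{s,t}+O_t$ and continuity of $O$. Your standalone arguments for these items (parametric flow theorem, continuity plus compactness for measurability and boundedness of the integrand) are workable but redundant once the corollary is applied in the correct form; if you keep them, do state explicitly that $s\mapsto\ff(\y_{\llcorner s\lrcorner_\theta})$ is piecewise constant (since $\theta$ is finite) and hence Borel and bounded, which is also what makes your choice of $Z$ in Lemma~\ref{lemma:Simplified} admissible in item~\eqref{item:minor detail}.
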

\begin{proof}[Proof of Lemma~\ref{lemma:crucial_apply_Alekseev_Groebner}]
Throughout this proof let
$ \lambda 
\colon
\mathcal{B}( [0, T] )
\rightarrow [0, T] $
be the Lebesgue-Borel
measure on
$ [0,T] $,
let
$ \Y \colon [0,T] \to H $
satisfy 
for all  
$ t \in [0, T] $ 
that
$ \Y_t = \y_t - \O_t $,
and let
$ \X^x_{s, ( \cdot ) } 
= 
( \X^x_{s,t} )_{ t \in [s,T] }
\colon [s,T] \to H $,
$ s \in [0,T] $,
$ x \in H $,  
satisfy for all 
$ s \in [0,T] $,
$ t \in [s,T] $,
$ x \in H $ 
that
$ \X_{s,t}^x = X_{s,t}^{x + O_s} - O_t $.
Note that~\eqref{eq:Exact} 
gives that 
for all
$ s \in [0,T] $,
$ t \in [s,T] $,
$ x \in H $ 
we have that
\begin{equation}
\begin{split} 
\label{eq:FirstHelp}
\X_{s,t}^x
=
e^{(t-s)A} x
+
\int_s^t
e^{(t-u)A}
F( \X_{s, u }^x + O_u ) \, du
.
\end{split}
\end{equation}
The fact that 
for all
$ s \in [0,T] $,
$ x \in H $
we have that
$  ([s,T] \ni t \mapsto F( \X^x_{s,t} + O_t) \in H ) 
\in \mathcal{C}( [s,T], H) $
and
item~\eqref{item:rewrite} of 
Lemma~\ref{lemma:Simplified}
(applies with 
$ T = T $,
$ s = s $,
$ x = x $,
$ Z = ([s,T] \ni t \mapsto F( \X^x_{s,t} + O_t) \in H ) $, 
$ Y = ( [s,T] \ni t \mapsto \X^x_{s, t} \in H) $
for 
$ s \in [0,T] $,
$ x \in H $
in the setting of
item~\eqref{item:rewrite} of 
Lemma~\ref{lemma:Simplified})
therefore
ensure that
for all 
$ s \in [0,T] $,
$ t \in [s,T] $,
$ x \in H $
we have
that 
\begin{equation}
\label{eq:strong_solution1}
\X_{s,t}^x
=
x
+
\int_s^t [ A \X_{s,u}^x +  F ( \X_{s,u}^x + O_u ) ] \, du
.
\end{equation}
Next note that~\eqref{eq:Numeric}
gives that
 for all
$ t \in [0,T] $ we have that
\begin{equation} 
\label{eq:kind of important}
\Y_t
=
e^{t A}
\xi
+
\int_0^t
e^{ ( t - \llcorner u \lrcorner_\theta ) A } 
\ff ( \Y_{ \llcorner u \lrcorner_\theta }
+
 \O_{ \llcorner u \lrcorner_\theta }
 ) \, du  
.
\end{equation}
In addition, observe that
the fact that
$ [0,T] \ni u \mapsto
e^{( u - \llcorner u \lrcorner_{ \theta } ) A } \in L( H ) $
is  bounded and left-continuous 
gives that
\begin{equation}
\label{eq:EnsuresL1 again}
( [0,T] \ni u \mapsto 
e^{( u - \llcorner u \lrcorner_\theta)A}
\ff ( \Y_{ \llcorner u \lrcorner_\theta } + \O_{ \llcorner u \lrcorner_\theta } ) 
\in H )
\in \mathcal{L}^1( \lambda ; H )  
.
\end{equation}
Combining~\eqref{eq:kind of important}
and
Lemma~\ref{lemma:Simplified}
(applies with 
$ T = T $,
$ s = 0 $,
$ x = \xi $,
$ Z = ( 
[0,T] \ni u
\mapsto 
e^{ ( u - \llcorner u \lrcorner_\theta ) A } 
\ff ( \Y_{ \llcorner u \lrcorner_\theta }
+
\O_{ \llcorner u \lrcorner_\theta }
)
\in H 
) $,
$ Y = \Y $
in the setting of
Lemma~\ref{lemma:Simplified})
therefore
verifies that
\begin{enumerate}[(a)] 
\item \label{item:continuity} we have that
$ \Y \in \mathcal{C}( [0,T], H) $ 
and
\item \label{item:change integral}
we have
for all $ t \in [0,T] $ that
\begin{equation}
\begin{split}
\label{eq:strong_solution2old}
\Y_t 
&
=
\xi
+ 
\int_0^t
[
A \Y_u
+
e^{ (u - \llcorner u  \lrcorner_\theta ) A }
\ff (  \Y_{ \llcorner u  \lrcorner_\theta }
+
\O_{\llcorner u  \lrcorner_\theta} )
]
\, du
.
\end{split}
\end{equation}
\end{enumerate}
Observe that
item~\eqref{item:continuity}
and the fact that
$ \Y = \y - \O $
justify item~\eqref{item:minor detail}.
Furthermore, note that~\eqref{eq:EnsuresL1 again}, 
the assumption that
$ O\in \mathcal{C} ([0,T],H) $,
the fact that $ F\in \mathcal{C} (H,H) $,
and item~\eqref{item:continuity} 
ensure that
\begin{equation}
\label{eq:EnsuresL1}
( [0,T] \ni u \mapsto 
e^{( u - \llcorner u \lrcorner_\theta)A}
\ff ( \Y_{ \llcorner u \lrcorner_\theta } + \O_{ \llcorner u \lrcorner_\theta } )
-
F( \Y_u + O_u )  
\in H )
\in \mathcal{L}^1( \lambda ; H )  
.
\end{equation}
In addition, 
observe that
the assumption that
$ \dim(H) < \infty $,
the fact that
$ O\in \mathcal{C} ([0,T],H) $,
the fact that
$ F \in \mathcal{C}(H, H) $,
and
item~\eqref{item:continuity} 
yield that
\begin{equation}
\label{eq:also used}
( [0,T] \ni u \mapsto
  A \Y_u  
+ 
  F( \Y_u + O_u )  
  \in H
)
\in \L^1(\lambda; H)
.
\end{equation}
This,
\eqref{eq:EnsuresL1}, 
and
item~\eqref{item:change integral} 
give
that
for all $ t \in [0,T] $ we have that
\begin{equation}
\begin{split}
\label{eq:strong_solution2}
\Y_t  
=
\xi
+ 
\int_0^t
[
A \Y_u
+ 
F (  \Y_u
+
O_u )
]
\, du
+
\int_0^t
[
e^{ (u - \llcorner u  \lrcorner_\theta ) A } 
\ff (  \Y_{ \llcorner u  \lrcorner_\theta }
+
\O_{\llcorner u  \lrcorner_\theta} )
-
F (  \Y_u
+
O_u )
]
\, du
.
\end{split}
\end{equation}
Combining~\eqref{eq:strong_solution1},
\eqref{eq:EnsuresL1}, 
\eqref{eq:also used},
the fact that $ ( [0,T] \times H \ni (u, h) \mapsto A h +  F(h + O_u ) \in H )
\in \mathcal{C}^{0, 1}(
[0,T] \times H, H ) $, 
and~\cite[Corollary~5.2]{JentzenLindnerPusnik2017a}
(applies with
$ V = H $, 
$ T = T $,
$ f = ( [0,T] \times H \ni (u, h) \mapsto A h +  F ( h + O_u ) \in H ) $,
$ Y = \Y $,
$ E = ( [0,T] \ni u \mapsto 
e^{( u - \llcorner u \lrcorner_\theta)A}
\ff ( \Y_{ \llcorner u \lrcorner_\theta } 
+ \O_{ \llcorner u \lrcorner_\theta } )
-
 F( \Y_u + O_u )  
\in H ) $,
$ X_{s,t}^x = \X_{s,t}^x $ 
for 
$ x \in H $,
$ t \in [s,T] $,
$ s \in [0,T] $
in the setting of~\cite[Corollary~5.2]{JentzenLindnerPusnik2017a})
hence
verifies 
that
\begin{enumerate}[(A)]
	\item \label{item:AlmostC1} we have that
$ (
\{ (u,v) \in [0,T]^2 \colon u \leq v \}
\times H \ni (s,t,x)
\mapsto \X_{s,t}^x 
\in 
H
)
\in  
\mathcal{C}^{0,0,1}
( \{ (u,v) \in [0,T]^2 \colon u \leq v \}
\times H,
H ) $,
\item \label{item:Measurable again}
we have for all
$ t \in [0,T] $ that
\begin{multline}
\big( 
[0, t] 
\ni s \mapsto 
\big[ 
\tfrac{\partial}{\partial x}
\X_{s, t}^{ \Y_s}
\big(
e^{ ( s - \llcorner s \lrcorner_\theta ) A }
\ff ( \Y_{\llcorner s \lrcorner_\theta} 
+
\O_{\llcorner s \lrcorner_\theta}
)
-
F ( \Y_s + O_s ) 
\big)
\big] 
\in H
\big) 
\\
\in 
\M( \B( [0,t] ), \B(H) )
,
\end{multline}
\item \label{item:FiniteIntegral} we have for all
$ t \in [0,T] $ that
\begin{equation}
\begin{split}
\int_0^t 
\big\| 
\tfrac{\partial}{\partial x}
\X_{s, t}^{ \Y_s}
\big(
e^{ ( s - \llcorner s \lrcorner_\theta ) A }
\ff ( \Y_{\llcorner s \lrcorner_\theta} 
+
\O_{\llcorner s \lrcorner_\theta}
)
-
F ( \Y_s + O_s ) 
\big)
\big\|_H 
\, ds
< \infty 
,
\end{split}
\end{equation}
and 
\item \label{item:MainIdentity} we have for all $ t \in [0,T] $ that
\begin{equation}
\begin{split}
\Y_t - \X_{0,t}^{ \Y_0 }
=
\int_0^t
\tfrac{\partial}{\partial x}
\X_{s, t}^{ \Y_s}
\big(
e^{ ( s - \llcorner s \lrcorner_\theta ) A }
\ff ( \Y_{\llcorner s \lrcorner_\theta} 
+
\O_{\llcorner s \lrcorner_\theta}
)
-
 F ( \Y_s + O_s ) 
\big)
\,
ds
.
\end{split}
\end{equation}
\end{enumerate} 
Observe that
the fact that
for all
$ s \in [0,T] $, 
$ t \in [s,T] $,
$ x \in H $
we have that 
$ X_{s,t}^{x}
=
\X_{s,t}^{x - O_s} + O_t $,
the 
assumption that 
$ O \in \mathcal{C}([0,T], H ) $,
and item~\eqref{item:AlmostC1}
justify 
item~\eqref{item:differentiability}.
Next note that
item~\eqref{item:Measurable again}, 
the fact that 
for all
$ s \in [0,T] $,
$ t \in [s,T] $ 
we have that
$ \frac{ \partial }{ \partial x } \X_{s,t}^{\Y_s} 
= 
\frac{ \partial }{ \partial x }
X_{s,t}^{\Y_s + O_s} $,
and 
the fact that
for all $ s \in [0,T] $ we have that
$ \Y_s = \y_s - \O_s $ 
give item~\eqref{item:Integral exists}.
In addition, observe that 
item~\eqref{item:FiniteIntegral}, 
the fact that 
for all 
$ s \in [0,T] $,
$ t \in [s,T] $ 
we have that
$ \frac{ \partial }{ \partial x } \X_{s,t}^{\Y_s} 
= 
\frac{ \partial }{ \partial x }
X_{s,t}^{\Y_s + O_s} $,
and 
the fact that
for all $ s \in [0,T] $ we have that
$ \Y_s = \y_s - \O_s $ 
yield
item~\eqref{item:finite integral}.
Moreover, note that 
item~\eqref{item:MainIdentity},
the fact that
for all
$ t \in [0,T] $ 
we have that
$ \X_{0,t}^\xi = X_{0,t}^{\xi + O_0} - O_t $,
the fact that 
for all 
$ s \in [0,T] $,
$ t \in [s,T] $  
we have that
$ \frac{ \partial }{ \partial x } \X_{s,t}^{\Y_s} 
= 
\frac{ \partial }{ \partial x }
X_{s,t}^{\Y_s + O_s} $,
and 
the fact that
for all $ s \in [0,T] $ we have that
$ \Y_s = \y_s - \O_s $
justify item~\eqref{item:MainFormula}. 
The proof
of Lemma~\ref{lemma:crucial_apply_Alekseev_Groebner}
is hereby completed.
\end{proof}
\begin{lemma}
	\label{lemma:function_error}
Assume Setting~\ref{setting:Exact_vs_Numeric},
	let $ C , c \in [1,\infty) $,  
	$ \gamma \in [0,1] $, 
	$ \delta \in [0, \gamma] $, 
	$ \iota \in [0, 1-\delta] $,
	$ \kappa \in \R $, 
	and 
	assume for all 
	$ x, y \in H $ 
	that 
	$
	\| F( x )- F( y )  \|_H 
	\leq 
	C  \| x - y \|_{H_\delta} ( 1 + \| x \|_{H_\kappa}^c + \| y \|_{H_\kappa}^c ) 
	$.
	Then we have for all $ t \in [0,T] $ that 
	\begin{equation}
	\begin{split}
	\label{eq:IsImplied}
	& 
	\| 
	e^{ ( t - \llcorner t \lrcorner_\theta ) A }
	\ff ( \y_{\llcorner t \lrcorner_\theta} )
	-
	 F ( \y_t - \O_t + O_t )  
	\|_H 
	\\
	&
	\leq 
	[| \theta |_T]^{\gamma - \delta}
	\| \ff ( \y_{\llcorner t \lrcorner_\theta } ) \|_{H_{\gamma - \delta}}
	+
	\| \ff ( \y_{\llcorner t \lrcorner_\theta } ) -  F ( \y_{\llcorner t \lrcorner_\theta } ) \|_H
	+ 
	C
	\Big( 
	[| \theta |_T]^{ \gamma - \delta }
	\| \xi \|_{ H_\gamma } 
	\\
	&
	\quad 
	+ 
	[ | \theta |_T ]^{ 1 - \delta } 
	\|
	\ff ( \y_{ \llcorner t \lrcorner_\theta } )
	\|_H 
	+ 
	[ |\theta|_T ]^\iota
	\smallint_0^{ \llcorner t \lrcorner_\theta }
	( \llcorner t \lrcorner_\theta - \llcorner s \lrcorner_\theta )^{-\delta-\iota} 
	\|
	\ff ( \y_{ \llcorner s \lrcorner_{ \theta } } )
	\|_H
	\, ds
	\\
	&
	\quad 
	+
	\| \O_t - \O_{ \llcorner t \lrcorner_\theta } \|_{H_\delta}
	+ 
	\| \O_t - O_t \|_{H_\delta}
	\Big)
	\big(
	1
	+
	\| \y_{\llcorner t \lrcorner_\theta} \|_{H_\kappa}^c
	+
	(
	\|
	\y_t
	\|_{H_\kappa}
	+
	\| \O_t - O_t \|_{H_\kappa}
	)^c
	\big)
	.
	\end{split}
	\end{equation}
\end{lemma}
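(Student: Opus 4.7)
The plan is to prove~\eqref{eq:IsImplied} by decomposing the left-hand side via the triangle inequality as
\begin{equation*}
\| e^{ ( t - \llcorner t \lrcorner_\theta ) A } \ff ( \y_{\llcorner t \lrcorner_\theta} ) - F ( \y_t - \O_t + O_t ) \|_H \leq T_1 + T_2 + T_3,
\end{equation*}
where $T_1 = \| (e^{ ( t - \llcorner t \lrcorner_\theta ) A } - \operatorname{Id}_H) \ff ( \y_{\llcorner t \lrcorner_\theta} ) \|_H$, $T_2 = \| \ff ( \y_{\llcorner t \lrcorner_\theta} ) - F ( \y_{\llcorner t \lrcorner_\theta} ) \|_H$, and $T_3 = \| F ( \y_{\llcorner t \lrcorner_\theta} ) - F ( \y_t - \O_t + O_t ) \|_H$. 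The term $T_2$ appears verbatim on the right-hand side of~\eqref{eq:IsImplied}. For $T_1$ I would apply the smoothing estimate $\| (e^{sA} - \operatorname{Id}_H) v \|_H \leq s^{\gamma - \delta} \| v \|_{H_{\gamma - \delta}}$ (valid since $\gamma - \delta \in [0, 1]$) with $s = t - \llcorner t \lrcorner_\theta \leq |\theta|_T$ to get $T_1 \leq [|\theta|_T]^{\gamma - \delta} \| \ff ( \y_{\llcorner t \lrcorner_\theta} ) \|_{H_{\gamma - \delta}}$.

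To treat $T_3$ I would apply the local Lipschitz hypothesis on $F$, which reduces the task to bounding $\| \y_{\llcorner t \lrcorner_\theta} - (\y_t - \O_t + O_t) \|_{H_\delta}$ together with the trivial estimate $\| \y_t - \O_t + O_t \|_{H_\kappa} \leq \| \y_t \|_{H_\kappa} + \| \O_t - O_t \|_{H_\kappa}$. To control the former I would evaluate~\eqref{eq:Numeric} at the two times $t$ and $\llcorner t \lrcorner_\theta$, subtract, use that $\llcorner u \lrcorner_\theta = \llcorner t \lrcorner_\theta$ for a.e.\ $u \in (\llcorner t \lrcorner_\theta, t)$, and factor $e^{(t - \llcorner u \lrcorner_\theta) A} - e^{(\llcorner t \lrcorner_\theta - \llcorner u \lrcorner_\theta) A} = e^{(\llcorner t \lrcorner_\theta - \llcorner u \lrcorner_\theta) A}(e^{(t - \llcorner t \lrcorner_\theta) A} - \operatorname{Id}_H)$ to obtain the identity
\begin{equation*}
\begin{split}
\y_{\llcorner t \lrcorner_\theta} - \y_t + \O_t - O_t
&= -(e^{tA} - e^{\llcorner t \lrcorner_\theta A}) \xi - (t - \llcorner t \lrcorner_\theta) e^{(t - \llcorner t \lrcorner_\theta) A} \ff (\y_{\llcorner t \lrcorner_\theta}) \\
&\quad - \smallint_0^{\llcorner t \lrcorner_\theta} e^{(\llcorner t \lrcorner_\theta - \llcorner u \lrcorner_\theta) A}(e^{(t - \llcorner t \lrcorner_\theta) A} - \operatorname{Id}_H) \ff (\y_{\llcorner u \lrcorner_\theta}) \, du + (\O_{\llcorner t \lrcorner_\theta} - \O_t) + (\O_t - O_t).
\end{split}
\end{equation*}
The triangle inequality in $H_\delta$ then produces three semigroup summands and the two noise summands $\| \O_t - \O_{\llcorner t \lrcorner_\theta} \|_{H_\delta}$ and $\| \O_t - O_t \|_{H_\delta}$ appearing in~\eqref{eq:IsImplied}. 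The first semigroup summand is handled by $\| (e^{sA} - \operatorname{Id}_H) \xi \|_{H_\delta} \leq s^{\gamma - \delta} \| \xi \|_{H_\gamma}$ combined with $\| e^{sA} \|_{L(H_\delta)} \leq 1$, yielding $[|\theta|_T]^{\gamma - \delta} \| \xi \|_{H_\gamma}$; the second by $\| e^{sA} \|_{L(H, H_\delta)} \leq s^{-\delta}$, yielding $[|\theta|_T]^{1 - \delta} \| \ff (\y_{\llcorner t \lrcorner_\theta}) \|_H$.

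The main obstacle is to bound the integrand in the third semigroup summand by the auxiliary estimate
\begin{equation*}
\| e^{r A} (e^{sA} - \operatorname{Id}_H) \|_{L(H, H_\delta)} \leq s^\iota\, r^{-(\delta + \iota)} \qquad (r > 0,\, s \geq 0)
\end{equation*}
with $r = \llcorner t \lrcorner_\theta - \llcorner u \lrcorner_\theta$ and $s = t - \llcorner t \lrcorner_\theta \leq |\theta|_T$. I would establish this by spectral calculus applied to the self-adjoint strictly negative operator $A$ of Setting~\ref{setting:main}: for every $\lambda > 0$ one has
\begin{equation*}
\lambda^\delta e^{-r \lambda}(1 - e^{-s \lambda}) \leq \lambda^\delta e^{-r \lambda} \min(1, s \lambda) \leq s^\iota \lambda^{\delta + \iota} e^{-r \lambda} \leq s^\iota \left(\tfrac{\delta + \iota}{e}\right)^{\delta + \iota} r^{-(\delta + \iota)},
\end{equation*}
where the interpolation $\min(1, s \lambda) \leq (s \lambda)^\iota$ uses $\iota \in [0, 1]$ and the hypothesis $\iota \in [0, 1 - \delta]$ ensures $\delta + \iota \in [0, 1]$, so that $((\delta + \iota)/e)^{\delta + \iota} \leq 1$. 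Inserting this pointwise bound into the integral produces the remaining term $[|\theta|_T]^\iota \smallint_0^{\llcorner t \lrcorner_\theta} (\llcorner t \lrcorner_\theta - \llcorner s \lrcorner_\theta)^{-\delta - \iota} \| \ff (\y_{\llcorner s \lrcorner_\theta}) \|_H \, ds$. Assembling the bounds on $T_1$, $T_2$, and $T_3$ and multiplying the estimate on $T_3$ by the polynomial prefactor $(1 + \| \y_{\llcorner t \lrcorner_\theta} \|_{H_\kappa}^c + (\| \y_t \|_{H_\kappa} + \| \O_t - O_t \|_{H_\kappa})^c)$ supplied by the $F$-Lipschitz hypothesis yields~\eqref{eq:IsImplied}.
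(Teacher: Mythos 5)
Your proposal is correct and follows essentially the same route as the paper's proof: the same three-term triangle-inequality split, the same smoothing estimate for the semigroup term, the local Lipschitz bound for the $F$-difference, and the same expansion of $\y_t-\y_{\llcorner t\lrcorner_\theta}$ from~\eqref{eq:Numeric} with the factorization $e^{(t-\llcorner u\lrcorner_\theta)A}-e^{(\llcorner t\lrcorner_\theta-\llcorner u\lrcorner_\theta)A}=e^{(\llcorner t\lrcorner_\theta-\llcorner u\lrcorner_\theta)A}(e^{(t-\llcorner t\lrcorner_\theta)A}-\operatorname{Id}_H)$ and the bound $s^{\iota}r^{-\delta-\iota}$ (which the paper obtains as $\|(-A)^{\delta+\iota}e^{rA}\|_{L(H)}\|(-A)^{-\iota}(e^{sA}-\operatorname{Id}_H)\|_{L(H)}$ and you verify directly by spectral calculus). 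The only cosmetic deviation is that you expand $\y_{\llcorner t\lrcorner_\theta}-\y_t+\O_t-O_t$ in one identity, whereas the paper first splits off $\|\O_t-O_t\|_{H_\delta}$ and then estimates $\|\y_t-\y_{\llcorner t\lrcorner_\theta}\|_{H_\delta}$.
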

\begin{proof}[Proof
	of Lemma~\ref{lemma:function_error}]
	Note that
	the triangle inequality yields
	that for all $ t \in [0,T] $ 
	we have that
	\begin{equation}
	\begin{split} 
	\label{eq:first_triangle}
	&
	\| 
	e^{(t -\llcorner t \lrcorner_\theta)A}  
	\ff ( \y_{\llcorner t \lrcorner_{\theta}} )
	-
	 F( \y_t - \O_t + O_t )
	\|_H
	\\
	&
	\leq
	\| 
	(
	e^{(t -\llcorner t \lrcorner_\theta)A}  
	-
	\operatorname{Id}_H
	)
	\ff ( \y_{\llcorner t \lrcorner_{\theta}} )
	\|_H 
	\\
	&
	\quad
	+
	\|  
	\ff ( \y_{\llcorner t \lrcorner_{\theta}} )
	-
	F ( \y_{\llcorner t \lrcorner_{\theta}} )
	\|_H 
	+
	\| 
 	F( \y_{\llcorner t \lrcorner_{\theta}} )
	-
	 F( \y_{ t } - \O_t + O_t ) 
	\|_H
	.
	\end{split}
	\end{equation}
	In addition, observe that for all
	$ t \in [0,T] $ we have that
	\begin{equation}
	\begin{split} 
	\label{eq:smoothing}
	&
	\| 
	(
	e^{(t -\llcorner t \lrcorner_\theta)A}  
	-
	\operatorname{Id}_H
	)
	\ff ( \y_{\llcorner t \lrcorner_{\theta}} )
	\|_H
	\leq
	\| 
	(-A)^{\delta - \gamma}
	(
	e^{(t -\llcorner t \lrcorner_\theta)A}  
	-
	\operatorname{Id}_H
	)
	\|_{L(H)}
	\|
	(-A)^{\gamma - \delta}
	\ff ( \y_{\llcorner t \lrcorner_{\theta}} )
	\|_H
	\\
	&
	\leq
	( t - \llcorner t \lrcorner_\theta )^{\gamma -  \delta}
	\|  \ff ( \y_{\llcorner t \lrcorner_\theta} ) \|_{ H_{\gamma - \delta} }
	\leq
	[| \theta |_T]^{\gamma - \delta} 
	\|  \ff ( \y_{\llcorner t \lrcorner_\theta} ) \|_{ H_{\gamma - \delta} }
	.
	\end{split}
	\end{equation}
	Moreover, note that for all 
	$ t \in [0,T] $ we have that
	\begin{equation}
	\begin{split}
	\label{eq:diff_fun}
	&
	\| 
	F( \y_{\llcorner t \lrcorner_{\theta}} )
	-
	F( \y_t - \O_t + O_t ) 
	\|_H
	\\
	&
	\leq
	C 
	\|
	\y_{\llcorner t \lrcorner_{\theta}}
	-
	\y_t 
	+
	\O_t
	-
	O_t
	\|_{H_\delta}
	\big(
	1
	+
	\|
	\y_{\llcorner t \lrcorner_{\theta}}
	\|_{H_\kappa}^c
	+
	\|
	\y_{ t } - \O_t + O_t
	\|_{H_\kappa}^c
	\big) 
	.
	\end{split}
	\end{equation}
	The triangle inequality hence yields that for all 
	$ t \in [0,T] $ we have that
	\begin{equation}
	\begin{split} 
	\label{eq:fun_diff2}
	&
	\| 
	F( \y_{\llcorner t \lrcorner_{\theta}} )
	-
	F( \y_t - \O_t + O_t ) 
	\|_H
	\\
	&
	\leq
	C 
	\big(
	\|
	\y_{\llcorner t \lrcorner_{\theta}}
	-
	\y_t
	\|_{H_\delta}
	+
	\| 
	\O_t 
	-
	O_t
	\|_{H_\delta}
	\big)
	\big(
	1
	+
	\|
	\y_{\llcorner t \lrcorner_{\theta}}
	\|_{H_\kappa}^c
	+
	(
	\|
	\y_{ t } 
	\|_{H_\kappa}
	+  
	\|
	\O_t - O_t
	\|_{H_\kappa}
	)^c
	\big) 
	.
	\end{split}
	\end{equation}
	In the next step we observe that for all 
	$ t \in [0,T] $ we have that
	\begin{equation}
	\begin{split}
	&
	\| \y_t - \y_{\llcorner t \lrcorner_{\theta}} \|_{ H_\delta }
	\leq
	\|e^{\llcorner t \lrcorner_\theta A} (e^{(t-\llcorner t\lrcorner_{\theta})A} - \operatorname{Id}_H )
	\xi
	\|_{ H_\delta }
	+
	\int_{\llcorner t \lrcorner_{\theta}}^t
	\|
	e^{(t-\llcorner s\lrcorner_{\theta})A}  
	\ff ( \y_{\llcorner s\lrcorner_{\theta}})
	\|_{ H_\delta }
	\,
	ds
	\\
	&
	\quad
	+
	\int_0^{ \llcorner t \lrcorner_\theta }
	\| ( e^{ ( t - \llcorner s \lrcorner_\theta ) A }
	-
	e^{ ( \llcorner t \lrcorner_\theta - \llcorner s \lrcorner_\theta ) A }
	)
	\ff ( \y_{ \llcorner s \lrcorner_{ \theta } } )
	\|_{ H_\delta } 
	\, ds
	+
	\| \O_t - \O_{ \llcorner t \lrcorner_\theta } \|_{ H_\delta }
	\\
	&
	\leq 
	\| ( - A)^{\delta - \gamma} (e^{(t-\llcorner t\lrcorner_{\theta})A} - \operatorname{Id}_H )
	\|_{L(H)}
	\| \xi \|_{H_\gamma}
	+
	\int_{\llcorner t \lrcorner_{\theta}}^t
	\|
	(-A)^\delta
	e^{(t-\llcorner t\lrcorner_{\theta})A}  
	\|_{L(H)}
	\|
	\ff ( \y_{\llcorner t\lrcorner_{\theta}})
	\|_H
	\,
	ds
	\\
	&
	\quad
	+
	\int_0^{ \llcorner t \lrcorner_\theta }
	\| 
	(-A)^{\delta + \iota}
	e^{ ( \llcorner t \lrcorner_\theta - \llcorner s \lrcorner_\theta ) A }
	\|_{ L(H) }
	\| 
	(-A)^{-\iota}
	(
	e^{ ( t - \llcorner t \lrcorner_\theta ) A }
	-
	\operatorname{Id}_H
	)
	\|_{ L(H) }
	\|
	\ff ( \y_{ \llcorner s \lrcorner_{ \theta } } )
	\|_H
	\, ds
	\\
	&
	\quad
	+
	\| \O_t - \O_{ \llcorner t \lrcorner_\theta } \|_{ H_\delta }
	\\
	&
	\leq 
	( t - \llcorner t \lrcorner_{\theta} )^{ \gamma - \delta }
	\| \xi \|_{H_\gamma}
	+
	(t-\llcorner t \lrcorner_\theta)^{1-\delta}
	\|
	\ff ( \y_{\llcorner t\lrcorner_{\theta}})
	\|_H
	\\
	&
	\quad
	+ 
	\int_0^{ \llcorner t \lrcorner_\theta }
	( \llcorner t \lrcorner_\theta - \llcorner s \lrcorner_\theta )^{-\delta-\iota}
	( t - \llcorner t \lrcorner_\theta )^\iota
	\|
	\ff ( \y_{ \llcorner s \lrcorner_{ \theta } } )
	\|_H
	\, ds
	+
	\| \O_t - \O_{ \llcorner t \lrcorner_\theta } \|_{ H_\delta }
	\\
	&
	\leq 
	[ |\theta|_T ]^{ \gamma - \delta }
	\| \xi \|_{H_\gamma}
	+
	[ |\theta|_T ]^{1-\delta}
	\|
	\ff ( \y_{\llcorner t\lrcorner_{\theta}})
	\|_H
	\\
	&
	\quad
	+
	[ |\theta|_T ]^\iota
	\int_0^{ \llcorner t \lrcorner_\theta }
	( \llcorner t \lrcorner_\theta - \llcorner s \lrcorner_\theta )^{-\delta-\iota} 
	\|
	\ff ( \y_{ \llcorner s \lrcorner_{ \theta } } )
	\|_H
	\, ds
	+
	\| \O_t - \O_{ \llcorner t \lrcorner_\theta } \|_{ H_\delta }
	.
	\end{split} 
	\end{equation} 
	Combining~\eqref{eq:first_triangle},
	\eqref{eq:smoothing},
	and~\eqref{eq:fun_diff2}
	therefore justifies~\eqref{eq:IsImplied}. 
	The proof of Lemma~\ref{lemma:function_error}
	is hereby completed.
\end{proof}
\begin{lemma}
	\label{lemma:main_error_estimate}
	Assume Setting~\ref{setting:Exact_vs_Numeric},
	let $ C, c \in [1,\infty) $,   
	$ \gamma \in [0, 1] $,
	$ \delta \in [0, \gamma ] $,  
	$ \iota \in [0, 1 - \delta] $,
	$ \kappa \in \R $,
	and 
	assume for all 
	$ x, y \in H $ 
	that 
	$
	\| F( x )- F( y )  \|_H 
	\leq 
	C  \| x - y \|_{H_\delta} ( 1 + \| x \|_{H_\kappa}^c  + \| y \|_{H_\kappa}^c ) 
	$.  
	Then 
\begin{enumerate}[(i)]
	\item \label{item:minor detail 2} we have that
	$ ( \y - \O ) \in \mathcal{C}( [0,T], H ) $,
\item \label{item:continuity2} we have that
$ (
\{ (u,v) \in [0,T]^2 \colon u \leq v \}
	\times H \ni (s,t,x)
	\mapsto X_{s,t}^x 
	\in 
	H
	)
	\in
	\mathcal{C}^{0,0,1}
	( \{ (u,v) \in [0,T]^2 \colon u \leq v \}
	\times H,
	H ) $,
and
\item \label{item: Basic estimate} we have for all 
	$ t \in [0,T] $ 
	that
	\begin{equation}
	\begin{split}
	&
	\|  \y_t - X_{0, t}^{ \xi + O_0 } \|_H  
	\leq
	\| \O_t - O_t \|_H
	+
	\int_0^t
	\big \| \tfrac{\partial}{\partial x}
	X_{s, t}^{ \y_s - \O_s + O_s }
	\big \|_{L(H)}
	\Big\{
	[ | \theta |_T ]^{\gamma - \delta}
	\| \ff ( \y_{\llcorner s \lrcorner_\theta } ) \|_{H_{\gamma-\delta}}
	\\
	& 
	\quad
	+
	\| \ff ( \y_{\llcorner s \lrcorner_\theta } ) - F ( \y_{\llcorner s \lrcorner_\theta } ) \|_H
	+
	C
	\Big( 
	[ | \theta |_T ]^{ \gamma - \delta }
	\| \xi \|_{H_\gamma}
	+ 
	[| \theta |_T]^{1-\delta} 
	\|
	\ff ( \y_{ \llcorner s \lrcorner_\theta } )
	\|_H
	\\
	&
	\quad 
	+ 
	[ |\theta|_T ]^\iota
	\smallint_0^{ \llcorner s \lrcorner_\theta }
	( \llcorner s \lrcorner_\theta - \llcorner u \lrcorner_\theta )^{-\delta-\iota} 
	\|
	\ff ( \y_{ \llcorner u \lrcorner_{ \theta } } )
	\|_H
	\, du
	+
	\| \O_s - \O_{ \llcorner s \lrcorner_\theta } \|_{H_\delta}
	\\
	&
	\quad
	+ 
	\| \O_s - O_s \|_{H_\delta}
	\Big)
	\big(
	1
	+
	\| \y_{\llcorner s \lrcorner_\theta} \|_{H_\kappa} 
	+
	\|
	\y_s
	\|_{H_\kappa}
	+
	\| \O_s - O_s \|_{H_\kappa}
	\big)^c
	\Big\}
	\,
	ds
	.
	\end{split}
	\end{equation}
	\end{enumerate}
\end{lemma}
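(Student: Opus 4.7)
The plan is to combine the two preparatory results already at our disposal---the extended Alekseev-Gr\"obner representation of Lemma~\ref{lemma:crucial_apply_Alekseev_Groebner} and the pointwise integrand bound of Lemma~\ref{lemma:function_error}---so that the proof reduces essentially to a substitution followed by a triangle-inequality estimate. Items~\eqref{item:minor detail 2} and~\eqref{item:continuity2} will be taken verbatim from items~\eqref{item:minor detail} and~\eqref{item:differentiability} of Lemma~\ref{lemma:crucial_apply_Alekseev_Groebner}, which rely only on Setting~\ref{setting:Exact_vs_Numeric} and therefore hold under our hypotheses without any additional work.

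For item~\eqref{item: Basic estimate} I would begin with the identity from item~\eqref{item:MainFormula} of Lemma~\ref{lemma:crucial_apply_Alekseev_Groebner},
\begin{equation*}
\y_t - X_{0,t}^{\xi+O_0} = \O_t - O_t + \int_0^t \tfrac{\partial}{\partial x} X_{s,t}^{\y_s - \O_s + O_s}\!\bigl(e^{(s-\llcorner s \lrcorner_\theta)A}\ff(\y_{\llcorner s \lrcorner_\theta}) - F(\y_s - \O_s + O_s)\bigr)\,ds.
\end{equation*}
Taking $H$-norms, using the triangle inequality, and pulling out $\bigl\|\tfrac{\partial}{\partial x} X_{s,t}^{\y_s - \O_s + O_s}\bigr\|_{L(H)}$ from the linear evaluation inside the integral yields
\begin{equation*}
\|\y_t - X_{0,t}^{\xi+O_0}\|_H \leq \|\O_t - O_t\|_H + \int_0^t \bigl\|\tfrac{\partial}{\partial x} X_{s,t}^{\y_s - \O_s + O_s}\bigr\|_{L(H)} \bigl\|e^{(s-\llcorner s\lrcorner_\theta)A}\ff(\y_{\llcorner s\lrcorner_\theta}) - F(\y_s - \O_s + O_s)\bigr\|_H\,ds.
\end{equation*}
I would then invoke Lemma~\ref{lemma:function_error} with the role of its time variable $t$ played by $s$ to bound the second factor in the integrand by the right-hand side of~\eqref{eq:IsImplied}.

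The remaining cosmetic step is to rewrite the tail factor $\bigl(1 + \|\y_{\llcorner s \lrcorner_\theta}\|_{H_\kappa}^c + (\|\y_s\|_{H_\kappa} + \|\O_s - O_s\|_{H_\kappa})^c\bigr)$ produced by Lemma~\ref{lemma:function_error} in the shape $\bigl(1 + \|\y_{\llcorner s \lrcorner_\theta}\|_{H_\kappa} + \|\y_s\|_{H_\kappa} + \|\O_s - O_s\|_{H_\kappa}\bigr)^c$ required in the statement; this is immediate from the elementary convexity bound $1 + a^c + (b+d)^c \leq (1 + a + b + d)^c$, valid for all nonnegative reals $a,b,d$ and all $c \in [1,\infty)$, together with the subadditivity $(b+d)^c \leq 2^{c-1}(b^c+d^c)$ if one wishes to split further (not needed here). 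No substantial obstacle is anticipated: the Alekseev-Gr\"obner representation isolates the error in an integral form to which the pointwise bound of Lemma~\ref{lemma:function_error} plugs in directly, and the necessary measurability and integrability of the integrand have already been certified by items~\eqref{item:Integral exists} and~\eqref{item:finite integral} of Lemma~\ref{lemma:crucial_apply_Alekseev_Groebner}, so no separate Fubini- or measurable-selection argument is required.
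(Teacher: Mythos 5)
Your proposal is correct and follows essentially the same route as the paper's own proof: items~(i) and~(ii) are imported from Lemma~\ref{lemma:crucial_apply_Alekseev_Groebner}, item~(iii) is obtained from its error representation plus the triangle inequality and the operator-norm bound, the integrand is estimated via Lemma~\ref{lemma:function_error}, and the tail factor is reshaped using the elementary inequality $1+a^c+b^c\leq(1+a+b)^c$ for $c\in[1,\infty)$, exactly as in the paper.
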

\begin{proof}[Proof of Lemma~\ref{lemma:main_error_estimate}]
	 Observe that item~\eqref{item:minor detail}
	 of
	 Lemma~\ref{lemma:crucial_apply_Alekseev_Groebner}   
	 gives item~\eqref{item:minor detail 2}.
	 In addition, note that
	 item~\eqref{item:differentiability} 
	 of
	 Lemma~\ref{lemma:crucial_apply_Alekseev_Groebner}   
	 justifies
	 item~\eqref{item:continuity2}.
	 Moreover, observe that 
	 items~\eqref{item:Integral exists}
	 and~\eqref{item:MainFormula}
	 of
	 Lemma~\ref{lemma:crucial_apply_Alekseev_Groebner} 
	 and the triangle inequality
	 yield that
	 for all $ t \in [0,T] $ 
	 we have that 
	 \begin{equation}
	 \begin{split}
	 \label{eq:AlexG} 
	 \|
	 \y_t - X_{0, t}^{ \xi + O_0 }
	 \|_H
	 &
	 \leq 
	 \| \O_t - O_t \|_H
	 \\
	 &
	 \quad 
	 +
	 \int_0^t
	 \big\| 
	 \tfrac{\partial}{\partial x}
	 X_{s, t}^{ \y_s - \O_s + O_s }
	 \big(
	 e^{ ( s - \llcorner s \lrcorner_\theta ) A }
	 \ff ( \y_{\llcorner s \lrcorner_\theta} )
	 -
	 F ( \y_s - \O_s + O_s ) 
	 \big)
	 \big\|_H
	 \,
	 ds
	 .
	 \end{split}
	 \end{equation}
	 Lemma~\ref{lemma:function_error}
	 	 (applies with
	 	 $ C = C $,
	 	 $ c = c $, 
	 	 $ \gamma = \gamma $,
	 	 $ \delta = \delta $,
	 	 $ \iota = \iota $,
	 	 $ \kappa = \kappa $
	 	 in the setting of Lemma~\ref{lemma:function_error}) 
	 	and the fact that
	 	$ \forall \, a, b \in [0, \infty) $,
	 	$ c \in [1, \infty) \colon 
	 	1 + a^c + b^c \leq (1 + a +b)^c $ 
	therefore justify item~\eqref{item: Basic estimate}.
		The proof of Lemma~\ref{lemma:main_error_estimate}
		is hereby completed. 
\end{proof}
\subsection{Strong temporal approximation error estimates}
\label{subsection:Measure}
In this subsection we establish in Corollary~\ref{corollary:main_error_estimate} an upper moment bound for the difference between the solution of a stochastic version of 
the integral equation in~\eqref{eq:Exact} above (see~\eqref{eq:Exact2} below) and its numerical approximation (cf.\ \eqref{eq:Numeric2} below and~\eqref{eq:Numeric} above). To do so, we first recall in Lemma~\ref{Lemma:AliprantisBorder4.55}
(see, e.g., Aliprantis \& Border~\cite[Theorem~4.55]{AliprantisBorder2006}) 
an elementary fact on measurability properties of functions which we then employ together with Lemma~\ref{lemma:crucial_apply_Alekseev_Groebner} above to establish in Lemma~\ref{lemma:MeasurableDerivative} suitable regularity properties of the solution of the considered SODE (cf.\ \eqref{eq:introduce Xstx} below and~\eqref{eq:Exact} above). Combining Lemma~\ref{lemma:MeasurableDerivative} with the error estimate in Lemma~\ref{lemma:main_error_estimate} 
above enables us to derive Corollary~\ref{corollary:main_error_estimate}.
\begin{lemma}
	\label{Lemma:AliprantisBorder4.55}
	Let $  ( \Omega, \F ) $ be a measurable space,
	let $ ( X, d_X ) $ be a compact metric space,
	let
	$ (Y, d_Y) $ be a separable metric space,
	let
	$ \mathcal{C}(X,Y) $ be  
	endowed with the topology of uniform convergence,
	let
	$ f \colon X \times \Omega \to Y $
	be a function,
	assume for all 
	$ x \in X $ that
	$ \Omega \ni \omega \mapsto f( x, \omega ) \in Y $
	is $ \F / \B(Y) $-measurable, 
	and
	assume for all 
	$ \omega \in \Omega $
	that
	$ ( X \ni x \mapsto f( x, \omega) \in Y )
	\in \mathcal{C}( X, Y ) $.
	Then we have that
	$ \Omega \ni \omega \mapsto 
	( X \ni x \mapsto f( x, \omega ) \in Y )
	\in \mathcal{C}( X, Y ) $ 
	is
	$ \F / \B( \mathcal{C}(X, Y) ) $-measurable.
\end{lemma}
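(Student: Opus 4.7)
The plan is to reduce measurability of the lifted map $\Phi := (\Omega \ni \omega \mapsto (X \ni x \mapsto f(x, \omega)) \in \mathcal{C}(X, Y))$ to the assumed sectional measurability of $\omega \mapsto f(x, \omega)$, and then to upgrade ``preimages of open balls are measurable'' to ``preimages of Borel sets are measurable'' by exploiting separability of $\mathcal{C}(X, Y)$.

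Since $X$ is compact metric and hence separable, I would fix a countable dense subset $\{x_n\}_{n \in \N} \subseteq X$. The key observation is that for any $g, h \in \mathcal{C}(X, Y)$, the real-valued function $x \mapsto d_Y(g(x), h(x))$ is continuous on the compact space $X$, so density of $\{x_n\}$ gives $d_\infty(g, h) := \sup_{x \in X} d_Y(g(x), h(x)) = \sup_{n \in \N} d_Y(g(x_n), h(x_n))$. Fixing $g_0 \in \mathcal{C}(X, Y)$ and $r \in (0, \infty)$ and applying this identity with $g = f(\cdot, \omega)$ yields
\begin{equation*}
\Phi^{-1}(\{ h \in \mathcal{C}(X, Y) \colon d_\infty(g_0, h) < r \}) = \bigcup_{k \in \N} \bigcap_{n \in \N} \{ \omega \in \Omega \colon d_Y(f(x_n, \omega), g_0(x_n)) \leq r - \tfrac{1}{k} \}.
\end{equation*}
Each set on the right lies in $\F$ because $\omega \mapsto f(x_n, \omega)$ is $\F/\B(Y)$-measurable by hypothesis and $y \mapsto d_Y(y, g_0(x_n))$ is continuous; hence the preimage under $\Phi$ of every open ball in $\mathcal{C}(X, Y)$ lies in $\F$.

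To convert this into full Borel measurability I would argue that $\mathcal{C}(X, Y)$ with the uniform metric is separable. Since $Y$ is separable metric, its metric completion $\hat Y$ is Polish, and a standard result of descriptive set theory guarantees that $\mathcal{C}(X, \hat Y)$ with the uniform metric is itself Polish when $X$ is compact metric. In particular, $\mathcal{C}(X, \hat Y)$ is separable, and the isometrically embedded subspace $\mathcal{C}(X, Y) \subseteq \mathcal{C}(X, \hat Y)$ inherits separability. In a separable metric space the Borel $\sigma$-algebra is generated by the open balls, so combining this with the previous step gives $\Phi^{-1}(U) \in \F$ for every open $U \subseteq \mathcal{C}(X, Y)$, and hence $\Phi^{-1}(B) \in \F$ for every $B \in \B(\mathcal{C}(X, Y))$.

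The main obstacle is the separability step: the reduction to ``preimages of open balls are measurable'' is elementary once the countable dense set $\{x_n\}$ is in hand, but to turn this into measurability with respect to the full Borel $\sigma$-algebra one genuinely needs that open balls generate $\B(\mathcal{C}(X, Y))$, which requires separability of the target. The route through the Polish completion $\hat Y$ is the cleanest way to get this without imposing additional structure on $Y$ beyond what the hypotheses provide.
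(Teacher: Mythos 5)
Your proof is correct. Note that the paper does not prove this lemma at all: it is recalled from the literature with a citation to Aliprantis \& Border (Theorem~4.55), so there is no in-paper argument to compare against; your write-up is essentially the standard proof of that result. The two points that genuinely need care are both handled properly: the identity $\sup_{x\in X} d_Y(g(x),h(x)) = \sup_{n\in\N} d_Y(g(x_n),h(x_n))$ (continuity on the compact space $X$ plus density, which also makes the sup finite, so the uniform topology is induced by a bona fide metric), and the separability of $\mathcal{C}(X,Y)$, which you obtain by embedding isometrically into the Polish space $\mathcal{C}(X,\hat Y)$ and which is exactly what is needed for open balls to generate $\B(\mathcal{C}(X,Y))$; together with the elementary fact that $\{B\colon \Phi^{-1}(B)\in\F\}$ is a $\sigma$-algebra, this closes the argument.
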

\begin{lemma}
	\label{lemma:MeasurableDerivative} 
	Assume Setting~\ref{setting:main}, 
	assume that
	$ \dim(H) < \infty $,
	let $ ( \Omega, \F, \P ) $
	be a probability space,
	let 
	$ T \in (0,\infty) $,  
	$ F \in \mathcal{C}^1( H, H) $,
	$ Y, Z \in \M( \B( [0,T]) \otimes \F, \B( H) ) $,
	let
	$ O \colon [0,T] \times \Omega \to H $
	be a stochastic process
	w.c.s.p.,
	and
	for every 
	$ s \in [0,T] $,
	$ x \in H $
	let
	$ X_{s,(\cdot)}^x
	=
	(X_{s,t}^x)_{ t \in [s,T] }
	\colon 
	[s,T] \times \Omega
	\to H $
	be a stochastic process
	w.c.s.p.\
	which satisfies 
	for all 
	$ t \in [s,T] $
	that  
	\begin{equation}
	\label{eq:introduce Xstx}
	X_{s,t}^x 
	= 
	e^{ ( t - s ) A } x 
	+
	\int_s^t e^{ ( t - u ) A } F( X_{s,u}^x  ) \, du + O_t - e^{(t-s)A} O_s
	.
	\end{equation}
	Then 
	\begin{enumerate}[(i)]
		\item \label{item:ContDiff} we have
		for all $ \omega \in \Omega $ 
		that
		$ ( \{ (u,v) \in [0,T]^2 \colon u \leq v \} \times H \ni (s,t,x) \mapsto X_{s,t}^x(\omega) \in H ) \in \mathcal{C}^{0,0,1}(\{ (u,v) \in [0,T]^2 \colon u \leq v \} \times H,H) $,
		\item \label{item:productMeasurability}
		we have that
		$ \big( \{ (u,v) \in [0,T]^2 \colon u \leq v \} \times \Omega
		\ni (s,t,\omega) 
		\mapsto  X_{s,t}^{ Y_s(\omega)}(\omega) \in H \big)
		\in \M( \mathcal{B}( \{ (u,v) \in [0,T]^2 \colon u \leq v \} ) \otimes \F, \mathcal{B}(H) ) $, 
		and
		\item \label{item:Mesurable}
		\sloppy 
		we have  
		that
		$ \big( \{ (u,v) \in [0,T]^2 \colon u \leq v \} \times \Omega \ni (s,t,\omega) \mapsto \frac{ \partial }{ \partial x } 
		X_{s,t}^{ Z_s(\omega) }( \omega ) \in L(H) \big) \in \M( \B( \{ (u,v) \in [0,T]^2 \colon u \leq v \} ) \otimes \F, \B(L(H))) $.
\end{enumerate}
\end{lemma}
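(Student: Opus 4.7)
The plan is to deduce item~\eqref{item:ContDiff} by applying Lemma~\ref{lemma:crucial_apply_Alekseev_Groebner}(ii) pathwise, and then to derive items~\eqref{item:productMeasurability} and~\eqref{item:Mesurable} from a standard Carath\'eodory-type argument: pathwise continuity in $(s,t,x)$ plus pointwise measurability in $\omega$ yields joint measurability, and this lifts through the substitution $x \mapsto Y_s(\omega)$ (respectively $x \mapsto Z_s(\omega)$) because $Y$ and $Z$ are jointly measurable.

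For item~\eqref{item:ContDiff} I fix $\omega \in \Omega$ and verify Setting~\ref{setting:Exact_vs_Numeric} with $O = O(\cdot,\omega) \in \mathcal{C}([0,T],H)$ (given), the prescribed $A$, $F$, and any auxiliary choices of $\xi$, $\theta$, $\mathbf{O}$, $\mathbf{F}$ (these are needed only to write down the numerical process $\y$, which is not used for the conclusion). Lemma~\ref{lemma:crucial_apply_Alekseev_Groebner}(ii) then yields the $\mathcal{C}^{0,0,1}$-regularity of $(s,t,x) \mapsto X_{s,t}^{x}(\omega)$ on $\{(u,v) \in [0,T]^2 \colon u \leq v\} \times H$.

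For item~\eqref{item:productMeasurability} I set $D := \{(u,v) \in [0,T]^2 \colon u \leq v\}$ and, exploiting $\dim(H) < \infty$, I introduce the compact sets $K_n := \{x \in H \colon \|x\|_H \leq n\}$ for $n \in \N$. For each $n$, item~\eqref{item:ContDiff} gives that $\omega \mapsto \bigl((s,t,x) \mapsto X_{s,t}^{x}(\omega)\bigr) \in \mathcal{C}(D \times K_n, H)$ is well-defined and continuous for every $\omega$, while for every fixed $(s,t,x) \in D \times K_n$ the map $\omega \mapsto X_{s,t}^{x}(\omega)$ is $\F/\B(H)$-measurable by hypothesis. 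Lemma~\ref{Lemma:AliprantisBorder4.55} (with $X = D \times K_n$ compact and $Y = H$ separable) then shows that $\omega \mapsto X^{\cdot}_{\cdot,\cdot}(\omega)|_{D \times K_n}$ is $\F/\B(\mathcal{C}(D \times K_n, H))$-measurable, and composing with the continuous evaluation map yields that $(s,t,x,\omega) \mapsto X^{x}_{s,t}(\omega)$ is $\B(D \times K_n) \otimes \F/\B(H)$-measurable on $D \times K_n \times \Omega$. Unioning over $n$ gives joint measurability on $D \times H \times \Omega$, and composing with the $\B(D) \otimes \F/\B(D \times H)$-measurable map $(s,t,\omega) \mapsto (s,t,Y_s(\omega))$ establishes~\eqref{item:productMeasurability}.

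For item~\eqref{item:Mesurable} the argument is identical after replacing $H$ by the separable (finite-dimensional) normed space $L(H)$ as the target. Continuity of $(s,t,x) \mapsto \tfrac{\partial}{\partial x} X_{s,t}^{x}(\omega)$ on $D \times H$ follows from item~\eqref{item:ContDiff}, while the $\F/\B(L(H))$-measurability in $\omega$ at fixed $(s,t,x)$ is obtained from the matrix representation in a finite basis of $H$, each entry being a pointwise limit of difference quotients of the already-measurable maps $\omega \mapsto X_{s,t}^{x + h e}(\omega)$. Applying Lemma~\ref{Lemma:AliprantisBorder4.55} on the compact sets $D \times K_n$ and then composing with $(s,t,\omega) \mapsto (s,t,Z_s(\omega))$ concludes the proof. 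The only subtlety is the localization step needed to produce a compact domain for Lemma~\ref{Lemma:AliprantisBorder4.55}, but this is standard given that $H$ is finite-dimensional and hence $\sigma$-compact.
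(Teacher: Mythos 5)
Your proof is correct, and the overall skeleton matches the paper's (item~(i) by a pathwise application of Lemma~\ref{lemma:crucial_apply_Alekseev_Groebner}, then joint measurability plus composition with the graph map of $Y$, resp.\ $Z$), but the two measurability steps are realized by genuinely different means. For the joint measurability of $(s,t,x,\omega)\mapsto X_{s,t}^x(\omega)$ the paper simply invokes the Carath\'eodory-type result \cite[Lemma~2.1]{JentzenLindnerPusnik2017c} (continuity in the parameter for fixed $\omega$ plus measurability in $\omega$ for fixed parameter on separable metric spaces), whereas you re-derive it from Lemma~\ref{Lemma:AliprantisBorder4.55} via an exhaustion of $\{(u,v)\in[0,T]^2\colon u\leq v\}\times H$ by compacts $D\times K_n$ (legitimate since $\dim(H)<\infty$) and composition with the jointly continuous evaluation map; this is more self-contained but costs you the localization step. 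More substantially, for the $\omega$-measurability of $\frac{\partial}{\partial x}X_{s,t}^x(\omega)$ at fixed $(s,t,x)$ the paper works in $V=\mathcal{C}(\{w\in H\colon\|w\|_H\leq 1\},H)$: it applies Lemma~\ref{Lemma:AliprantisBorder4.55} to $(h,\omega)\mapsto X_{s,t}^{x+rh}(\omega)$, passes to the limit of the difference quotients in the $V$-norm (using Fr\'echet differentiability, hence uniform convergence on the unit ball), and transfers measurability to $L(H)$ through the isometric embedding $\iota$. Your coordinatewise argument --- each matrix entry in a fixed basis is a pointwise limit in $\omega$ of difference quotients of measurable maps, and a map into the finite-dimensional space $L(H)$ is measurable iff its entries are --- is more elementary and perfectly adequate here, since $\dim(H)<\infty$ is assumed anyway (and is already needed for the compactness of the unit ball in the paper's route). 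One small slip: in both composition steps the map you compose with must carry $\omega$ along, i.e.\ $(s,t,\omega)\mapsto(s,t,Y_s(\omega),\omega)$ and $(s,t,\omega)\mapsto(s,t,Z_s(\omega),\omega)$, since the jointly measurable function being evaluated depends on $\omega$ as a separate argument; as written, $(s,t,\omega)\mapsto(s,t,Y_s(\omega))$ does not suffice, though the fix is immediate and is exactly what the paper does.
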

\begin{proof}[Proof of Lemma~\ref{lemma:MeasurableDerivative}]
	Throughout this proof let 
	$ \angle_T = \{ (u,v) \in [0,T]^2 \colon u \leq v \} $,
	 let
	$ V = \mathcal{C}( \{ w \in H \colon \| w \|_H \leq 1 \} ,H) $,
	let
	$ \left \| \cdot \right \|_V \colon V \to [0, \infty) $
	satisfy for all 
	$ f \in V $ that
	\begin{equation} 
	\| f \|_V = \sup\nolimits_{ h \in \{ w \in H \colon \| w \|_H \leq 1 \} }
	\| f(h) \|_H 
	,
	\end{equation}
	and
	let
	$ \iota \colon L(H) \to V $
	satisfy for all 
	$ Q \in L(H) $ that
	\begin{equation}
	\iota( Q )
	=
	( \{ w \in H \colon \| w \|_H \leq 1 \} \ni h \mapsto 
	Q (h) \in H )
	.
	\end{equation}
	Note that item~\eqref{item:differentiability}
	of
	Lemma~\ref{lemma:crucial_apply_Alekseev_Groebner}
	(applies with
	$ T = T $, 
		$ O_t = O_t(\omega) $,
		$ F = F $,
	$ X_{s,t}^x = X_{s,t}^x(\omega) $
	for
	$ (s,t) \in \angle_T $,
	$ x \in H $,
	$ \omega \in \Omega $
	in the setting of
	item~\eqref{item:differentiability}
	of
	Lemma~\ref{lemma:crucial_apply_Alekseev_Groebner})
	justifies 
	item~\eqref{item:ContDiff}.
	This ensures that
	for all
	$  \omega \in \Omega $
	we have that
	\begin{equation} 
	( \angle_T \times H \ni (s,t,x) \mapsto X_{s,t}^x(\omega) \in H )
	\in \mathcal{C}( \angle_T \times H, H ) 
	.
	\end{equation}
	\sloppy 
	The fact that
	for all
	$ (s,t) \in \angle_T $, $ x \in H $
	we have that
	$ ( \Omega \ni \omega \mapsto X_{s,t}^x(\omega) \in H ) \in \M( \F, \mathcal{B}(H)) $ 
	and, e.g., \cite[Lemma~2.1]{JentzenLindnerPusnik2017c}
	(applies with
	$ \Omega = \Omega $,
	$ \F = \F $,
	$ X = \angle_T \times H $,
	$ d_X = ( [\angle_T \times H]^2 \ni ( (s_1, t_1, x_1), (s_2, t_2, x_2) )\mapsto [ |s_1 - s_2|^2 + |t_1 - t_2|^2 + \| x_1 - x_2 \|_H^2 ]^{\nicefrac{1}{2}} \in [ 0, \infty) ) $,
	$ Y = H $,
	$ d_Y = ( H^2 \ni ( x_1, x_2 )\mapsto \| x_1 - x_2 \|_H \in [ 0, \infty) ) $,
	$ f = ( \angle_T \times H \times \Omega \ni (s,t,x,\omega) 
	\mapsto X_{s,t}^x(\omega) \in H ) $
	in the setting of~\cite[Lemma~2.1]{JentzenLindnerPusnik2017c})
	hence
	yield that
	\begin{equation} 
	\label{eq:Joint_measurability}
	( \angle_T \times H \times \Omega \ni (s, t, x, \omega) \mapsto X_{s,t}^x(\omega) \in H ) 
	\in \M( 
	\mathcal{B}( \angle_T ) 
	\otimes 
	\mathcal{B}(H) 
	\otimes 
	\F,
	\mathcal{B}(H) ) 
	.
	\end{equation}
	The fact that
	$ (
	\angle_T \times \Omega 
	\ni (s,t,\omega) \mapsto
	(s, t, Y_s(\omega), \omega)
	\in \angle_T \times H \times \Omega
	)
	\in 
	\M
	(
	\B( \angle_T ) \otimes \F,
	\B( \angle_T ) \otimes 
	\B(H) 
	\otimes 
	\F
	) $
	therefore justifies item~\eqref{item:productMeasurability}.
	Furthermore, 
	observe that item~\eqref{item:ContDiff} 
	gives that
	for all 
	$ (s, t) \in \angle_T $,
	$ x \in H $,
	$ \omega \in \Omega $  
	we have that 
	\begin{equation}
	\begin{split}
	\label{eq:impliesMeasurable}
	&
	\limsup_{ r \searrow 0 }
	\Big\| 
	\Big( \{ w \in H \colon \| w \|_H \leq 1 \} \ni h \mapsto
	\tfrac{
		X_{s,t}^{x + r h}(\omega) - X_{s,t}^x(\omega) 
	}{ r }
\in H \Big)
	- 
	\iota 
	\big(
	\tfrac{ \partial }{ \partial x } X_{s,t}^x(\omega) 
	\big)
	\Big\|_V
	\\
	&
	=
	\limsup_{ r \searrow 0 }
	\bigg[ 
	\sup_{ h \in H, \| h \|_H \leq 1 }
	\Big\| 
	\tfrac{
	X_{s,t}^{x + r h}(\omega) - X_{s,t}^x (\omega)
}{ r }
	- 
	\big(
	\tfrac{ \partial }{ \partial x } X_{s,t}^x (\omega) 
	\big)
	h 
	\Big\|_H
	\bigg] 
	=
	0.
	\end{split}
	\end{equation}
	Moreover, note that
	Lemma~\ref{Lemma:AliprantisBorder4.55}
	(applies with
	$ \Omega = \Omega $,
	$ \F = \F $,
	$ X = \{ w \in H \colon \| w \|_H \leq 1 \} $,
	$ d_X = ( \{ w \in H \colon \| w \|_H \leq 1 \} \times  
	\{ w \in H \colon \| w \|_H \leq 1 \} \ni (x,y)
	\mapsto \| x - y \|_H \in [0, \infty) ) $,
	$ Y = H $,
	$ d_Y = ( H \times H \ni (x,y) \mapsto \| x-y\|_H \in [0, \infty) ) $,
	$ f = ( 
	 \{ w \in H \colon \| w \|_H \leq 1 \} \times \Omega  
	\ni (h, \omega) \mapsto X_{s,t}^{ x + r h }(\omega)  \in H ) $
	for
	$ (s,t) \in \angle_T $, $ x \in H $,
	$ r \in (0, \infty) $
	in the setting of Lemma~\ref{Lemma:AliprantisBorder4.55})
	assures that
	for all 
	$ (s,t) \in \angle_T $, 
	$ x \in H $,
	$ r \in (0, \infty) $
	we have that
	\begin{equation}
	\begin{split}
	\big(
	\Omega \ni \omega \mapsto
	( 
	\{ w \in H \colon \| w \|_H \leq 1 \}
	\ni h \mapsto X_{s,t}^{x + r h }( \omega )  \in H
	)
	\in V
	\big)
	\in 
	\M( \F, \B(V) )
	.
	\end{split}
	\end{equation}
	This and~\eqref{eq:impliesMeasurable}
	verify that
	for all 
	$ (s,t) \in \angle_T $,
	$ x \in H $
	we have that
	\begin{equation}
	\begin{split}
	\label{eq:Needed first}
	\big(
	\Omega \ni \omega \mapsto \iota 
	\big( 
	\tfrac{ \partial }{ \partial x } 
	X_{s,t}^{x }( \omega )  
	\big)  
	\in V
	\big)
	\in 
	\M( \F, \B(V) )
	.
	\end{split}
	\end{equation}
	Hence, we obtain that for all
	$ Q \in L(H) $,
	$ \varepsilon \in (0, \infty) $,
	$ (s, t) \in \angle_T $,
	$ x \in H $
	we have that
	\begin{equation}
	\begin{split}
	\label{eq:preimage 1}
	&
	\big \{
	\omega \in \Omega \colon  
	\big\|
	\iota 
	\big( 
	\tfrac{ \partial }{ \partial x } 
	X_{s,t}^x (\omega) 
	\big) 
	- 
	\iota ( Q ) 
	\big \|_V < \varepsilon
	\big \} 
	\in \F
	.
	\end{split}
	\end{equation}
	In addition, observe 
	that for all 
	$ Q_1, Q_2 \in L(H) $
	we have that
	\begin{equation}
	\begin{split} 
	\label{eq:stronger assertion}
	&
	\| Q_1 - Q_2 \|_{L(H)}
	=
	\sup\nolimits_{ h \in \{ w \in H \colon \| w \|_H \leq 1 \} }
	\| Q_1(h) - Q_2(h) \|_H
	\\&=
	\sup\nolimits_{ h \in \{ w \in H \colon \| w \|_H \leq 1 \} }
	\|  \iota(Q_1) (h) - \iota(Q_2) (h) \|_H
	=
	\| \iota(Q_1) - \iota(Q_2) \|_V
	.
	\end{split} 
	\end{equation}
	Combining this and~\eqref{eq:preimage 1}
	ensures that
	for all
	$ Q \in L(H) $,
	$ \varepsilon \in (0, \infty) $,
	$ (s, t) \in \angle_T $,
	$ x \in H $
	we have that
	\begin{equation}
	\begin{split}
	\label{eq:preimage 2}
	&
	\big \{
	\omega \in \Omega \colon 
	\big\| 
	\tfrac{ \partial }{ \partial x } 
	X_{s,t}^x (\omega)  
	- 
	Q
	\big \|_{ L(H) } < \varepsilon
	\big \} 
	=
	\big \{
	\omega \in \Omega \colon 
	\big\|
	\iota 
	\big( 
	\tfrac{ \partial }{ \partial x } 
	X_{s,t}^x (\omega) 
	\big) 
	- 
	\iota ( Q )
	\big \|_V < \varepsilon
	\big \} 
	\in \F
	.
	\end{split}
	\end{equation}
The fact that
$ L(H) $ is a separable metric space
and the fact that
the Borel-sigma algebra on a separable metric space is generated
by the set of open balls 
therefore
verify
that
	for all 
	$ (s,t) \in \angle_T $,
	$ x \in H $ 
	we have that
	\begin{equation}
	\begin{split}
	\label{eq:measurability 1}
	\big(
	\Omega \ni \omega \mapsto  
	\tfrac{ \partial }{ \partial x } 
	X_{s,t}^{x }( \omega )   
	\in L(H)
	\big)
	\in 
	\M( \F, \B( L( H ) ) )
	.
	\end{split}
	\end{equation}
	Moreover, note that
	item~\eqref{item:ContDiff}
	ensures that
	for all  
	$ \omega \in \Omega $
	we have that
	\begin{equation}
	\big(
	\angle_T \times H 
	\ni 
	(s,t,x) \mapsto  
	\tfrac{ \partial }{ \partial x } 
	X_{s,t}^{x }( \omega )   
	\in 
	L(H)
	\big)
	\in
	\mathcal{C}(\angle_T \times H, L(H))
	.
	\end{equation}
Combining this and~\eqref{eq:measurability 1} 
	with, e.g., \cite[Lemma~2.1]{JentzenLindnerPusnik2017c}
	(applies with
	$ \Omega = \Omega $,
	$ \F = \F $,
	$ X = \angle_T \times H $,
	$ d_X = ( [\angle_T \times H]^2 \ni ( (s_1, t_1, x_1), (s_2, t_2, x_2) )\mapsto [ |s_1 - s_2|^2 + |t_1 - t_2|^2 + \| x_1 - x_2 \|_H^2 ]^{\nicefrac{1}{2}} \in [ 0, \infty) ) $,
	$ Y = L( H ) $,
	$ d_Y = ( [L(H)]^2 \ni ( A_1, A_2 )\mapsto \| A_1 - A_2 \|_{L(H)} \in [ 0, \infty) ) $,
	$ f = ( \angle_T \times H \times \Omega \ni (s,t,x,\omega) 
	\mapsto \frac{ \partial }{ \partial x } X_{s,t}^x(\omega) \in L(H) ) $
	in the setting of~\cite[Lemma~2.1]{JentzenLindnerPusnik2017c})
	verifies that
	\begin{equation} 
	\big( 
	\angle_T \times H \times \Omega  
	\ni 
	(s, t, x, \omega) 
	\mapsto 
	\tfrac{ \partial }{ \partial x } X_{s,t}^x(\omega)
	\in 
	L(H) 
	\big) 
	\in 
	\M
	(
	\B( \angle_T ) \otimes \B( H ) \otimes \F,
	\B( L(H) ) 
	)
	.
	\end{equation} 
	The fact that 
	$ (
	\angle_T \times \Omega 
	\ni (s,t,\omega) \mapsto
	(s, t, Z_s(\omega), \omega)
	\in \angle_T \times H \times \Omega
	)
	\in 
	\M
	(
	\B( \angle_T ) \otimes \F,
	\B( \angle_T ) \otimes 
	\B(H) 
	\otimes 
	\F 
	) $
	hence justifies item~\eqref{item:Mesurable}.
	The proof of Lemma~\ref{lemma:MeasurableDerivative}
	is hereby completed.
\end{proof}
\begin{corollary}
	\label{corollary:main_error_estimate}
	Assume Setting~\ref{setting:main}, 
	assume that $ \dim(H) < \infty $,
	let $ ( \Omega, \F, \P ) $
	be a probability space,
	let 
	$ T \in (0,\infty) $, 
	$ \theta \in \varpi_T $, 
	$ C, c, p \in [1,\infty) $, 
	$ \gamma \in [0,1) $, 
	$ \delta \in [0, \gamma] $,   
	$ \iota \in [0, 1 - \delta) $,
	$ \kappa \in \R $,  
		$ \xi \in \M( \F, \B(H) ) $,
	$ F \in \mathcal{C}^1( H, H) $,   
	$ \ff \in \M( \B(H), \B(H) ) $,  
	$ \O \in \M( \B([0,T]) \otimes \F, \B(H)) $,
	let
	$ O \colon [0,T] \times \Omega \to H $
	be a stochastic process
	w.c.s.p.,
	assume for all
	$ x, y \in H $
	that  
	$
	\| F( x ) - F( y )  \|_H 
	\leq 
	C  \| x - y \|_{H_\delta} ( 1 + \| x \|_{H_\kappa}^c + \| y \|_{H_\kappa}^c ) 
	$,
	for every 
	$ s \in [0,T] $,
	$ x \in H $
	let 
	$ X_{s,(\cdot)}^x
	=
	(X_{s,t}^x)_{ t \in [s,T] }
	\colon 
	[s,T] \times \Omega
	\to H $
	be a stochastic process
	w.c.s.p.\
	which satisfies for all
	$ t \in [s,T] $
	that
	\begin{equation}
	\label{eq:Exact2}
	X_{s,t}^x 
	= 
	e^{ ( t - s ) A } x 
	+
	\int_s^t e^{ ( t - u ) A }  F( X_{s,u}^x  ) \, du + O_t - e^{(t-s)A} O_s
	,
	\end{equation}
	and 
	let
	$ \y \colon [0,T] \times \Omega \to H $
	satisfy 
	for all
	$ t \in [0,T] $ that
	\begin{equation}
	\label{eq:Numeric2}
	\y_t
	=
	e^{t A}
	\xi
	+
	\int_0^t
	e^{ (t - \llcorner u \lrcorner_\theta ) A } 
	\ff ( \y_{ \llcorner u \lrcorner_\theta } ) \, du
	+
	\O_t.
	\end{equation} 
	Then
	\begin{enumerate}[(i)] 
		\item \label{item:0_omega} we have that $ \y \in \M( \B([0,T]) \otimes \F, \B(H)) $,
		\item\label{item:1_omega} we have
		for all $ \omega \in \Omega $ 
	that
	$ ( \{ (u,v) \in [0,T]^2 \colon u \leq v \} \times H \ni (s,t,x) \mapsto X_{s,t}^x(\omega) \in H ) \in \mathcal{C}^{0,0,1}( \{ (u,v) \in [0,T]^2 \colon u \leq v \} \times H,H) $,
	\item \label{item:technicalMeasurability}
	\sloppy
	we have for all $ \zeta \in \M( \F, \B(H) ) $ that
	$ \big( \{ (u,v) \in [0,T]^2 \colon u \leq v \} \times \Omega
	\ni (s,t,\omega) 
	\mapsto  X_{s,t}^{ \zeta(\omega) + O_s(\omega) }(\omega) \in H \big)
	\in \M( \mathcal{B}( \{ (u,v) \in [0,T]^2 \colon u \leq v \} ) \otimes \F, \mathcal{B}(H) ) $,
	\item \label{item:MeasDer}
\sloppy
	we have for all
	$ \zeta \in \M( \F, \B(H) ) $ that
	$ \big( \{ (u,v) \in [0,T]^2 \colon u \leq v \} \times  \Omega
	\ni (s,t,\omega) 
	\mapsto 
	\tfrac{ \partial }{ \partial x } 
	X_{s,t}^{ \y_s(\omega) - \O_s(\omega) + O_s(\omega)
	+
	e^{sA} ( \zeta(\omega) - \xi(\omega) )	
	}(\omega) \in L(H) \big )
	\in \M( \mathcal{B}( \{ (u,v) \in [0,T]^2 \colon u \leq v \} ) \otimes \F $, $ \mathcal{B}(L(H)) )
	$,
	and
	\item\label{item:2_omega} we have
	for all 
	$ \zeta \in \M( \F, \B(H) ) $,
	$ t \in [0,T] $ 
	that  
	\begin{equation}
	\begin{split} 
	\label{eq:app_error}
	&
	\|  \y_t - X_{0, t}^{ \zeta + O_0 } \|_{ \L^p( \P; H ) } 
	\leq
	\| \O_t - O_t \|_{ \L^p( \P; H ) } 
	+
	\| 
	\xi - \zeta  
	\|_{ \L^p( \P; H ) } 
	\\
	&
	+
	\tfrac{ C \max \{ T, 1 \} }{ 1 - \delta - \iota }
	\int_0^t 
	\big \| \tfrac{\partial}{\partial x}
	X_{s, t}^{ \y_s - \O_s + O_s
		+
		e^{sA}( \zeta - \xi ) }
	\big \|_{ \L^{ 2 p } ( \P; L(H) ) }
	\Big\{
	[ | \theta |_T ]^{\gamma - \delta}
	\| \ff ( \y_{\llcorner s \lrcorner_\theta } ) \|_{ \L^{2p} (\P; H_{\gamma-\delta} )}
	\\
	&
	+
	\| \ff ( \y_{\llcorner s \lrcorner_\theta } ) - F ( \y_{\llcorner s \lrcorner_\theta } ) \|_{ \L^{ 2 p } ( \P; H ) }
	+ 
	\Big(  
	[ | \theta |_T ]^{1-\delta}
	\|
	\ff ( \y_{\llcorner s \lrcorner_{\theta}})
	\|_{\L^{4p}(\P; H )}
	\\
	&
	+ 
	[ |\theta|_T ]^\iota 
	\sup\nolimits_{ u \in [0,T] }
	\|
	\ff ( \y_u )
	\|_{\L^{4p}(\P; H )}
	+
	\| \O_s - \O_{\llcorner s \lrcorner_\theta} \|_{\L^{4p}(\P; H_\delta)}
 	+
 	[ | \theta |_T ]^{ \gamma - \delta }
 	\| \xi \|_{\L^{4p}(\P; H_\gamma)}
	\\
	& 
	+ 
	\| \O_s - O_s \|_{\L^{4p}(\P; H_\delta)}
	+
	\| 
	\xi - \zeta 
	\|_{\L^{4p}(\P; H_\delta)}
	\Big)
	\big[
	1
	+
	\| \y_{\llcorner s \lrcorner_\theta} \|_{ \L^{4pc} ( \P; H_\kappa ) }
	+
	\|
	\y_s
    \|_{ \L^{4pc} ( \P; H_\kappa ) }
	\\
	&
	+
	\| \O_s - O_s \|_{ \L^{4pc} ( \P; H_\kappa ) }
	+
	\| 
	\xi - \zeta  
	\|_{ \L^{4pc} ( \P; H_\kappa ) }
	\big]^c 
	\Big\}
	\,
	ds
	.
	\end{split}
	\end{equation}
\end{enumerate}
\end{corollary}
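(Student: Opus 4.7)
The plan is to handle the measurability claims (i)--(iv) via Lemma~\ref{lemma:MeasurableDerivative} and to prove (v) by applying the Alekseev-Gr\"obner argument of Lemma~\ref{lemma:crucial_apply_Alekseev_Groebner} to a deterministically shifted version of the numerical scheme, followed by pathwise bounds in the spirit of Lemma~\ref{lemma:function_error} and by standard H\"older/Minkowski inequalities.

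For items (i)--(iv), item~(i) follows because $(u,\omega)\mapsto e^{(t-\llcorner u\lrcorner_\theta)A}\ff(\y_{\llcorner u\lrcorner_\theta}(\omega))$ is jointly measurable and $\O$ is assumed product measurable, so the integral defining $\y$ is product measurable. Item~(ii) is exactly item~\eqref{item:ContDiff} of Lemma~\ref{lemma:MeasurableDerivative}. For item~(iii) I would apply item~\eqref{item:productMeasurability} of Lemma~\ref{lemma:MeasurableDerivative} with $Y_s(\omega):=\zeta(\omega)+O_s(\omega)$; for item~(iv) I would apply item~\eqref{item:Mesurable} of Lemma~\ref{lemma:MeasurableDerivative} with $Z_s(\omega):=\y_s(\omega)-\O_s(\omega)+O_s(\omega)+e^{sA}(\zeta(\omega)-\xi(\omega))$, whose joint measurability follows from item~(i) together with strong continuity of $s\mapsto e^{sA}$.

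For item (v) the key trick is to absorb the change of initial condition $\xi\rightsquigarrow\zeta$ into a deterministic shift. Setting $Y_t:=\y_t+e^{tA}(\zeta-\xi)$ produces a process with $Y_0=\zeta+\O_0$ satisfying
\begin{equation*}
Y_t-\O_t \;=\; e^{tA}\zeta + \int_0^t e^{(t-\llcorner u\lrcorner_\theta)A}\ff(\y_{\llcorner u\lrcorner_\theta})\,du.
\end{equation*}
Repeating the reasoning of Lemma~\ref{lemma:crucial_apply_Alekseev_Groebner} verbatim---that is, applying \cite[Corollary~5.2]{JentzenLindnerPusnik2017a} to $Y-\O$, to the vector field $(u,h)\mapsto Ah+F(h+O_u)$, to the flow $X_{s,t}^{\cdot+O_s}-O_t$, and to the perturbation $\tilde E_s:=e^{(s-\llcorner s\lrcorner_\theta)A}\ff(\y_{\llcorner s\lrcorner_\theta})-F(Y_s-\O_s+O_s)$---yields
\begin{equation*}
Y_t-X_{0,t}^{\zeta+O_0} \;=\; \O_t-O_t + \int_0^t \tfrac{\partial}{\partial x}X_{s,t}^{Y_s-\O_s+O_s}\bigl(\tilde E_s\bigr)\,ds.
\end{equation*}
Since $Y_s-\O_s+O_s=\y_s-\O_s+O_s+e^{sA}(\zeta-\xi)$, the derivative is evaluated at precisely the point appearing in~\eqref{eq:app_error}. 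Rearranging via $\y_t=Y_t-e^{tA}(\zeta-\xi)$ and invoking $\|e^{tA}(\zeta-\xi)\|_H\leq\|\xi-\zeta\|_H$ produces the two leading $\L^p$-terms $\|\O_t-O_t\|_{\L^p(\P;H)}$ and $\|\xi-\zeta\|_{\L^p(\P;H)}$.

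It then remains to estimate $\|\tilde E_s\|_H$ pathwise. I would copy the proof of Lemma~\ref{lemma:function_error} line by line, replacing $\y_t-\O_t+O_t$ throughout by $\y_t-\O_t+O_t+e^{sA}(\zeta-\xi)$. The triangle inequality inside the Lipschitz-growth bound for $F$ then produces exactly the two extra summands $\|\xi-\zeta\|_{H_\delta}$ (inside the smoothness factor) and $\|\xi-\zeta\|_{H_\kappa}$ (inside the growth factor) that appear in~\eqref{eq:app_error}. Taking $\L^p$-norms is then routine: Minkowski's integral inequality pulls the norm inside the time integral; a first H\"older split separates $\|\tfrac{\partial}{\partial x}X\|_{\L^{2p}}$ from the remainder in $\L^{2p}$; a second H\"older split factors the remainder as (smoothness in $\L^{4p}$) times (growth in $\L^{4pc}$) to the power $c$. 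The inner time-convolution $\int_0^{\llcorner s\lrcorner_\theta}(\llcorner s\lrcorner_\theta-\llcorner u\lrcorner_\theta)^{-\delta-\iota}\|\ff(\y_{\llcorner u\lrcorner_\theta})\|_{\L^{4p}}\,du$ is bounded by first pulling $\sup_u\|\ff(\y_u)\|_{\L^{4p}}$ out of the integral and then integrating the explicit kernel, which is the origin of the overall prefactor $\tfrac{C\max\{T,1\}}{1-\delta-\iota}$. The main bookkeeping obstacle is this last H\"older step: ensuring that the exponent $c$ on the growth factor interacts consistently with the $\L^{4pc}$-norms and with the $\L^{2p}$-norm of the derivative.
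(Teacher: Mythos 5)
Your proposal is correct and follows essentially the same route as the paper: the initial-value change $\xi\rightsquigarrow\zeta$ is absorbed by a semigroup shift so that the Alekseev--Gr\"obner error representation evaluates the derivative of the flow at $\y_s-\O_s+O_s+e^{sA}(\zeta-\xi)$, the defect is then bounded as in Lemma~\ref{lemma:function_error} with the extra $\|\xi-\zeta\|_{H_\delta}$ and $\|\xi-\zeta\|_{H_\kappa}$ contributions, and two H\"older splits plus the kernel integration give the prefactor $\tfrac{C\max\{T,1\}}{1-\delta-\iota}$, with the measurability items handled exactly as you indicate via Lemma~\ref{lemma:MeasurableDerivative}. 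The only (cosmetic) difference is where the shift is placed: the paper replaces $O$ and $\O$ by $O+e^{\cdot A}\zeta$ and $\O+e^{\cdot A}\xi$ so that Lemma~\ref{lemma:main_error_estimate} applies verbatim with initial value $0$, whereas you shift the numerical process to $Y=\y+e^{\cdot A}(\zeta-\xi)$, which, as you correctly note, forces you to rerun the Alekseev--Gr\"obner argument rather than quote the lemma, since $Y$ no longer satisfies~\eqref{eq:Numeric2} with $\ff$ evaluated at its own past values.
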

\begin{proof}[Proof of Corollary~\ref{corollary:main_error_estimate}]
	Observe that 
	item~\eqref{item:minor detail 2} of Lemma~\ref{lemma:main_error_estimate}
	(applies with
	$ T = T $,
	$ \theta = \theta $,
	$ \xi = \xi(\omega) $,
	$ O_s = O_s(\omega) $,
	$ \O_s = \O_s(\omega) $,
	$ F = F $,
	$ \ff = \ff $,
	$ X_{s,t}^x = X_{s,t}^x(\omega) $,
	$ \y_s = \y_s(\omega) $
	for
	$ \omega \in \Omega $,
	$ t \in [s, T] $,
	$ s \in [0, T] $,
	$ x \in H $
	in the setting of item~\eqref{item:minor detail 2} of Lemma~\ref{lemma:main_error_estimate})
	verifies that for all $ \omega \in \Omega $ we have that
	\begin{equation} 
	\label{eq:continuous}
	( [0,T] \ni t \mapsto \y_t(\omega) - \O_t(\omega) \in H )
	\in \mathcal{C}( [0,T], H ) 
	.
	\end{equation} 
	Moreover, note that~\eqref{eq:Numeric2},
	the fact that for all $ t \in [0,T] $ we have that
	$ ( \Omega \ni \omega \mapsto \O_t(\omega) \in H ) \in \M( \F, \B(H) ) $,
	 and the assumption that
	$ \xi \in \M(\F, \B(H)) $ 
	ensure that
	for all $ t \in [0,T] $ we have that
	\begin{equation}
	\label{eq:measurable}
( \Omega \ni \omega \mapsto \y_t(\omega) - \O_t(\omega) \in H )
	\in \M( \F, \B(H) )
	. 
	\end{equation}
	Combining this and~\eqref{eq:continuous}
	with, e.g., \cite[Lemma~2.1]{JentzenLindnerPusnik2017c} 
	(applies with
	$ \Omega = \Omega $,
	$ \F = \F $,
	$ X = [0,T] $,
	$ d_X = ( [0,T]^2 \ni (s,t) \mapsto |t-s| \in [0, \infty) ) $,
	$ Y = H $,
	$ d_Y = ( H \times H \ni (x,y) \mapsto \| x-y\|_H \in [0, \infty) ) $,
	$ f = \y - \O $
	in the setting of~\cite[Lemma~2.1]{JentzenLindnerPusnik2017c})
	ensures that
	\begin{equation}
( [0,T] \times \Omega \ni (t, \omega) \mapsto \y_t(\omega) - \O_t(\omega) \in H )
	\in \M( \B([0,T]) \otimes \F, \B(H) )
	.
	\end{equation}
	The assumption that $ \O \in \M( \B([0,T]) \otimes \F, \B(H) ) $
	therefore justifies item~\eqref{item:0_omega}.
	Next note that
	Lemma~\ref{lemma:MeasurableDerivative}
	(applies with
	$ ( \Omega, \F, \P ) = ( \Omega, \F, \P ) $,
	$ T = T $,
	$ F = F $, 
	$ Y_s = \zeta + O_s $,
	$ Z_s = \y_s - \O_s + O_s 
	+ e^{sA}( \zeta - \xi ) $,
	$ O_s = O_s $,
	$ X_{s,t}^x = X_{s,t}^x $
	for
	$ x \in H $,
	$ t \in [s,T] $,
	$ s \in [0,T] $,
	$ \zeta \in 
	\M( \F, \B(H) ) $ 
	in the setting of
	Lemma~\ref{lemma:MeasurableDerivative})
	justifies
	items~\eqref{item:1_omega}--\eqref{item:MeasDer}.
	In the next step we observe that
	for all
	$ s \in [0,T] $,
	$ t \in [s,T] $,
	$ x \in H $,
	$ \zeta \in 
	\M( \F, \B(H) ) $ 
	we have that
	\begin{equation}
	\label{eq:M1}
	X_{s,t}^{ x }
	= 
	e^{ ( t - s ) A } 
	x
	+
	\int_s^t e^{ ( t - u ) A } F( X_{s,u}^x   ) \, du 
	+ 
	( O_t + e^{tA} \zeta )
	- 
	e^{(t-s)A} 
	( O_s + e^{sA} \zeta ) 
	\end{equation}
	and
	\begin{equation}
	\label{eq:M2} 
	\y_t
	=
	\int_0^t
	e^{ ( t - \llcorner u \lrcorner_\theta ) A } 
	\ff ( \y_{ \llcorner u \lrcorner_\theta } ) \, du
	+
	\big[ 
	\O_t
	+
	e^{t A}
	\xi
	\big] 
	.
	\end{equation}
	Lemma~\ref{lemma:main_error_estimate} 
	(applies with
	$ T = T $, 
	$ \theta = \theta $,
	$ \xi = 0 $,
	$ O_s = O_s(\omega) + e^{sA} \zeta(\omega) $, 
	$ \O_s = \O_s(\omega) + e^{sA} \xi ( \omega ) $,
	$ F = F $, 
	$ \ff = \ff $, 
	$ X^x_{s,t} = X^x_{s,t}(\omega) $,
	$ \y_s = \y_s(\omega) $,
	$ C = C $, 
	$ c = c $,
	$ \gamma = \gamma $,  
	$ \delta = \delta $,
	$ \iota = \iota $,
	$ \kappa = \kappa $
	for 
	$ \omega \in \Omega $,
	$ x \in H $,
	$ t \in [s,T] $,
	$ s \in [0,T] $, 
	$ \zeta \in 
	\M( \F, \B(H) ) $ 
	in the setting of Lemma~\ref{lemma:main_error_estimate}) 
	therefore gives that
	for all 
	$ \zeta \in \M( \F, \B(H) ) $,
	$ t \in [0,T] $
	we have that 
	\begin{equation}
	\begin{split}
	&
	\|  \y_t - X_{0, t}^{ \zeta + O_0 } \|_{ \L^p( \P; H ) } 
	\\
	&
	\leq
	\| \O_t - O_t
	+ 
	e^{tA}( \xi - \zeta )
	 \|_{ \L^p( \P; H ) } 
	+
	\int_0^t
	\Big\|
	\big \| \tfrac{\partial}{\partial x}
	X_{s, t}^{ \y_s - \O_s + O_s
+ e^{ s A }( \zeta - \xi )	
 }
	\big \|_{L(H)}
	\\
	&
	\quad 
	\cdot
	\Big( 
	[ | \theta |_T ]^{\gamma - \delta}
	\| \ff ( \y_{\llcorner s \lrcorner_\theta } ) \|_{H_{\gamma-\delta}}
	+
	\| \ff ( \y_{\llcorner s \lrcorner_\theta } ) - F ( \y_{\llcorner s \lrcorner_\theta } ) \|_H
	\\
	&
	\quad 
	+
	C
	\Big(   
	[| \theta |_T]^{1-\delta}
	\|
	\ff ( \y_{ \llcorner s \lrcorner_\theta } )
	\|_H
	+ 
	[ |\theta|_T ]^\iota
	\smallint_0^{ \llcorner s \lrcorner_\theta }
	( \llcorner s \lrcorner_\theta - \llcorner u \lrcorner_\theta )^{-\delta-\iota} 
	\|
	\ff ( \y_{ \llcorner u \lrcorner_{ \theta } } )
	\|_H
	\, du
	\\
	&
	\quad 
	+
	\| \O_s 
	- 
	\O_{\llcorner s \lrcorner_\theta} 
	+ 
	(
	e^{sA}  
	-
	e^{ \llcorner s \lrcorner_\theta A }
	)
	\xi
	\|_{H_\delta}   
	+ 
	\| \O_s - O_s
	+
	e^{s A}( \xi - \zeta )
	 \|_{H_\delta}
	\Big) 
	\\
	&
	\quad 
	\cdot
	\big[
	1
	+
	\| \y_{\llcorner s \lrcorner_\theta} \|_{H_\kappa} 
	+
	\|
	\y_s
	\|_{H_\kappa}
	+
	\| \O_s - O_s 
	+
	e^{s A}( \xi - \zeta )
	\|_{H_\kappa}
	\big]^c 
	\Big)
	\Big \|_{ \L^p( \P; \R ) } 
	\,
	ds
	.
	\end{split}
	\end{equation}
	H\"older's inequality
	and the triangle inequality hence yield
	that  
	for all 
	$ \zeta \in \M( \F, \B(H) ) $,
	$ t \in [0,T] $
	we have that 
	\begin{equation}
	\begin{split}
	&
	\|  \y_t - X_{0, t}^{ \zeta + O_0  } \|_{ \L^p( \P; H ) } 
	\\
	&
	\leq
	\| 
	\O_t - O_t
	\|_{ \L^p( \P; H ) } 
	+ 
	\|
	e^{tA}
	(
	\xi - \zeta 
	)
	\|_{ \L^p( \P; H ) } 
	+
	\int_0^t 
	\big \| \tfrac{\partial}{\partial x}
	X_{s, t}^{ \y_s - \O_s + O_s
	+e^{sA}( \zeta - \xi ) }
	\big \|_{ \L^{ 2 p } ( \P; L(H) ) }
	\\
	&
	\quad 
	\cdot 
	\Big \| 
	[ | \theta |_T ]^{\gamma - \delta}
	\| \ff ( \y_{\llcorner s \lrcorner_\theta } ) 
	\|_{H_{\gamma - \delta}}
	+
	\| \ff ( \y_{\llcorner s \lrcorner_\theta } ) - F ( \y_{\llcorner s \lrcorner_\theta } ) \|_H
	\\
	& 
	\quad 
	+
	C
	\Big(  
	[ | \theta |_T ]^{1-\delta}
	\|
	\ff ( \y_{\llcorner s \lrcorner_{\theta}})
	\|_H 
	+ 
	[ |\theta|_T ]^\iota
	\smallint_0^{ \llcorner s \lrcorner_\theta }
	( \llcorner s \lrcorner_\theta - \llcorner u \lrcorner_\theta )^{-\delta-\iota} 
	\|
	\ff ( \y_{ \llcorner u \lrcorner_{ \theta } } )
	\|_H
	\, du
	\\
	&  
	\quad 
	+
	\| \O_s - \O_{\llcorner s \lrcorner_\theta} \|_{H_\delta}
	+
	\| 
	(
	e^{sA}  
	-
	e^{ \llcorner s \lrcorner_\theta A }
	)
	\xi
	\|_{H_\delta}   
	+ 
	\| \O_s - O_s \|_{H_\delta}
	+
	\| 
	e^{s A}( \xi - \zeta )
	\|_{H_\delta}
	\Big)
	\\
	&
	\quad 
	\cdot
	\big[
	1
	+
	\| \y_{\llcorner s \lrcorner_\theta} \|_{H_\kappa} 
	+
	\|
	\y_s
	\|_{H_\kappa}
	+
	\| \O_s - O_s \|_{H_\kappa}
	+
	\| 
	e^{s A}( \xi - \zeta )
	\|_{H_\kappa}
	\big]^c
	\Big \|_{ \L^{2p} ( \P; \R ) } 
	\,
	ds
	.
	\end{split}
	\end{equation}
	H\"older's inequality
	and the triangle inequality 
	therefore verify that
	for all
	$ \zeta \in \M( \F, \B(H) ) $,
	$ t \in [0,T] $ 
	we have that 
	\begin{equation}
	\begin{split}
	\label{eq:some_estimate}
	&
	\|  \y_t - X_{0, t}^{ \zeta + O_0 } \|_{ \L^p( \P; H ) } 
	\\
	&
	\leq
	\| \O_t - O_t \|_{ \L^p( \P; H ) } 
	+
	\| 
	\xi - \zeta  
	\|_{ \L^p( \P; H ) } 
	+
	\int_0^t 
	\big \| \tfrac{\partial}{\partial x}
	X_{s, t}^{ \y_s - \O_s + O_s
	+
	e^{sA}( \zeta - \xi ) }
	\big \|_{ \L^{ 2 p } ( \P; L(H) ) }
	\\
	&
	\quad 
	\cdot 
	\Big\{ 
	[ | \theta |_T ]^{\gamma - \delta}
	\| \ff ( \y_{\llcorner s \lrcorner_\theta } ) \|_{ \L^{2p} (\P; H_{\gamma-\delta} )}
	+
	\| \ff ( \y_{\llcorner s \lrcorner_\theta } ) -  F ( \y_{\llcorner s \lrcorner_\theta } ) \|_{ \L^{ 2 p } ( \P; H ) }
	\\
	&
	\quad    
	+
	C
	\Big\|  
	[ | \theta |_T ]^{1-\delta}
	\|
	\ff ( \y_{\llcorner s \lrcorner_{\theta}})
	\|_H 
	+ 
	[ |\theta|_T ]^\iota
	\smallint_0^{ \llcorner s \lrcorner_\theta }
	( \llcorner s \lrcorner_\theta - \llcorner u \lrcorner_\theta )^{-\delta-\iota} 
	\|
	\ff ( \y_{ \llcorner u \lrcorner_{ \theta } } )
	\|_H
	\, du
	\\
	&
	\quad 
	+
	\| \O_s - \O_{\llcorner s \lrcorner_\theta} \|_{H_\delta} 
	+
	\| 
	(
	e^{sA}  
	-
	e^{ \llcorner s \lrcorner_\theta A }
	)
	\xi 
	\|_{H_\delta}  
	+ 
	\| \O_s - O_s \|_{H_\delta}
	+
	\| 
	\xi - \zeta 
	\|_{H_\delta}
	\Big\|_{\L^{4p}(\P; \R)}
	\\
	&
	\quad 
	\cdot 
	\big\| 
	1
	+
	\| \y_{\llcorner s \lrcorner_\theta} \|_{H_\kappa} 
	+
	\|
	\y_s
	\|_{H_\kappa}
	+
	\| \O_s - O_s \|_{H_\kappa}
	+
	\| 
	\xi - \zeta 
	\|_{H_\kappa} 
	\big \|_{ \L^{4pc} ( \P; \R ) }^c 
	\Big\}
	\,
	ds
	.
	\end{split}
	\end{equation}
	In addition, note that 
	the fact that 
	$ \delta + \iota < 1 $
	assures that 
	for all 
	$ s \in [0,T] $ we have that
	\begin{equation}
	\begin{split}
	\label{eq:Something}
	& 
	\bigg\|
	\smallint_0^{ \llcorner s \lrcorner_\theta }
	( \llcorner s \lrcorner_\theta - \llcorner u \lrcorner_\theta )^{-\delta-\iota} 
	\|
	\ff ( \y_{ \llcorner u \lrcorner_{ \theta } } )
	\|_H
	\, du
	\bigg\|_{\L^{4p}(\P; \R) }
	\\
	&
	\leq 
	\smallint_0^{ \llcorner s \lrcorner_\theta }
	( \llcorner s \lrcorner_\theta - \llcorner u \lrcorner_\theta )^{-\delta-\iota} 
	\|
	\ff ( \y_{ \llcorner u \lrcorner_{ \theta } } )
	\|_{\L^{4p}(\P; H) }
	\, du 
	\\
	&
	\leq 
	\sup\nolimits_{ u \in [0,T] }
	\|
	\ff ( \y_u )
	\|_{\L^{4p}(\P; H) }
	\smallint_0^{ \llcorner s \lrcorner_\theta }
	( \llcorner s \lrcorner_\theta - u )^{-\delta-\iota} 
	\, du
	\\
	&
	= 
	\sup\nolimits_{ u \in [0,T] }
	\|
	\ff ( \y_u )
	\|_{\L^{4p}(\P; H) }
	\tfrac{ 
	( \llcorner s \lrcorner_\theta )^{1-\delta-\iota} 
}{ 1 - \delta - \iota }
	\\
&
\leq  
\sup\nolimits_{ u \in [0,T] }
\|
\ff ( \y_u )
\|_{\L^{4p}(\P; H) }
\tfrac{ 
	\max \{ T, 1 \} 
}{ 1 - \delta - \iota }
	.
	\end{split}
	\end{equation}
	Furthermore,
	observe that for all 
	$ s \in [0,T] $ we have that
	\begin{equation}
	\begin{split}
	&
	\| ( e^{sA} - e^{ \llcorner s \lrcorner_{ \theta } A } )
	\xi
	\|_{ H_\delta }
	=
	\|
	e^{ \llcorner s \lrcorner_\theta A }
	( e^{ ( s - \llcorner s \lrcorner_\theta ) A } 
	-
	\operatorname{Id}_H )
	\xi
	\|_{ H_\delta }
	\leq 
	\|
	(-A)^{\delta}
	( e^{ ( s - \llcorner s \lrcorner_\theta ) A } 
	-
	\operatorname{Id}_H )
	\xi
	\|_{H}
	\\
	&
	\leq
	\|
	(-A)^{\delta - \gamma}
	( e^{ ( s - \llcorner s \lrcorner_\theta ) A } 
	-
	\operatorname{Id}_H )
	\|_{L(H)}
	\|  
	\xi
	\|_{ H_\gamma }
	\leq
	( s - \llcorner s \lrcorner_\theta )^{\gamma - \delta}
	\|  
	\xi
	\|_{ H_\gamma }
	\leq
	[ | \theta |_T ]^{ \gamma - \delta }
	\|  
	\xi
	\|_{ H_\gamma }
	.
	\end{split}
	\end{equation}
	Combining this with~\eqref{eq:some_estimate}
	and~\eqref{eq:Something}
	justifies item~\eqref{item:2_omega}.
	The proof of Corollary~\ref{corollary:main_error_estimate}
	is hereby completed.
\end{proof}
\section[Moment bounds for the derivative process]{Moment bounds for the derivative process and resulting time discretization error estimates}
\label{section:Moment bounds for the derivative process}
\subsection{A priori bounds for the derivative process}
\label{section:AprioriBounds}
In this subsection we derive
in Lemma~\ref{lemma:inheritet}
an appropriate moment bound 
for the pathwise derivatives of the solution processes
$ ( X_{s,t}^x )_{ t \in [s,T] } $,
$ s \in [0,T] $,
$ x \in H $,
with respect to their initial conditions 
appearing in
item~\eqref{item:2_omega} of Corollary~\ref{corollary:main_error_estimate} above
(see~\eqref{eq:UpperBound} in Lemma~\ref{lemma:inheritet} below). 
We first demonstrate
in
Lemma~\ref{lemma:equivalent_monotonicity}
that the well known local monotonicity property
(see~\eqref{eq:LocMon} in Lemma~\ref{lemma:equivalent_monotonicity} below
and cf., e.g., Liu \& R\"ockner~\cite[(H$2'$) in Chapter~5]{LiuRoeckner2015Book})
together with
the continuous
Fr\'echet differentiability 
of the nonlinearity $ F $  
implies the 
property of $ F' $ 
that we are exploiting in this article
(see~\eqref{eq:Condition} in Lemma~\ref{lemma:equivalent_monotonicity} below).
In addition,
Proposition~\ref{proposition:Derivative_process_estimate}
(cf.\ Hairer \& Mattingly~\cite[(4.8) in Section~4.4]{HairerMattingly2006}) 
provides a suitable upper bound for the
derivative process
appearing in 
item~\eqref{item:2_omega} of Corollary~\ref{corollary:main_error_estimate}
(see~\eqref{eq:Bound} 
in Proposition~\ref{proposition:Derivative_process_estimate}  below).
Combining
Lemma~\ref{lemma:MeasurableDerivative}
and
Proposition~\ref{proposition:Derivative_process_estimate}
 implies 
Corollary~\ref{corollary:exp_bound} 
which 
we use together 
with   
Cox et al.\ \cite[Corollary 2.4]{CoxHutzenthalerJentzen2013}
as a tool 
to establish
in Lemma~\ref{lemma:inheritet}
the desired moment bound.	
\begin{lemma}
	\label{lemma:equivalent_monotonicity}
	Assume Setting~\ref{setting:main},
	let 
	$ \varepsilon, {\bf C}, \gamma \in [0,\infty) $, 
	$ F \in \mathcal{C}^1(H_\gamma, H) $,
	and
	assume for all 
	$ x, y \in  H_{ \max \{ \gamma, \nicefrac{1}{2} \} } $  that
	\begin{equation}
	\label{eq:LocMon}
	\langle F(x) - F(y), x - y \rangle_H
	\leq  ( \varepsilon \| x \|_{H_{\nicefrac{1}{2}}}^2 
	+ 
	{\bf C}
	  ) 
	\| x - y \|_H^2 
	+ \| x - y \|_{H_{\nicefrac{1}{2}}}^2.
	\end{equation}
	Then we have for all 
		$ x, y \in H_{ \max \{ \gamma, \nicefrac{1}{2} \} } $
		that
		\begin{equation} 
		\label{eq:Condition}
		\langle F'(x) y, y \rangle_H \leq 
		 ( \varepsilon \| x \|_{H_{\nicefrac{1}{2}}}^2 
		+ 
		{\bf C}
		  ) \| y \|_H^2 
		+ \|y \|_{H_{\nicefrac{1}{2}}}^2 
		.
		\end{equation}
\end{lemma}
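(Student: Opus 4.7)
The plan is to derive the pointwise bound on $F'$ from the local monotonicity inequality by a standard difference-quotient argument. Fix arbitrary $x, y \in H_{\max\{\gamma, 1/2\}}$ and, for $h \in (0, 1]$, apply the assumed inequality \eqref{eq:LocMon} to the pair $(x + hy, x)$, both of which lie in $H_{\max\{\gamma, 1/2\}}$ since this space is a vector subspace of $H$. This yields
\begin{equation}
\langle F(x+hy) - F(x), hy \rangle_H \leq \bigl( \varepsilon \|x+hy\|_{H_{1/2}}^2 + \mathbf{C} \bigr) \|hy\|_H^2 + \|hy\|_{H_{1/2}}^2.
\end{equation}
Dividing both sides by $h^2 > 0$ and using bilinearity of $\langle \cdot, \cdot \rangle_H$ gives
\begin{equation}
\bigl\langle \tfrac{F(x+hy) - F(x)}{h}, y \bigr\rangle_H \leq \bigl( \varepsilon \|x+hy\|_{H_{1/2}}^2 + \mathbf{C} \bigr) \|y\|_H^2 + \|y\|_{H_{1/2}}^2.
\end{equation}

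Next I would pass to the limit as $h \searrow 0$. On the left-hand side, since $F \in \mathcal{C}^1(H_\gamma, H)$ and $x, y \in H_{\max\{\gamma, 1/2\}} \subseteq H_\gamma$, Fr\'echet differentiability at $x$ (applied to the increment $hy \in H_\gamma$) gives that $h^{-1}(F(x+hy) - F(x))$ converges to $F'(x) y$ in the norm of $H$ as $h \searrow 0$. Continuity of $\langle \cdot, y \rangle_H$ on $H$ therefore implies that the left-hand side converges to $\langle F'(x) y, y \rangle_H$. On the right-hand side, since $y \in H_{1/2}$, the map $h \mapsto \|x + hy\|_{H_{1/2}}^2$ is continuous on $\R$ and thus converges to $\|x\|_{H_{1/2}}^2$ as $h \searrow 0$. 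Combining these two limits yields precisely the desired inequality \eqref{eq:Condition}.

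There is essentially no obstacle here beyond bookkeeping: the only point requiring mild care is ensuring that the difference quotient converges in $H$ (not merely weakly or along a subsequence), which is automatic from Fr\'echet differentiability in the stronger domain norm $\|\cdot\|_{H_\gamma}$ together with the continuous embedding $H_{\max\{\gamma, 1/2\}} \hookrightarrow H_\gamma$. The argument does not require a Taylor expansion with remainder; the one-sided limit $h \searrow 0$ suffices, which is why no symmetrization between $F'(x)y$ and $\langle \cdot, y \rangle_H$ is needed.
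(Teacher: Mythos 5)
Your proof is correct and follows essentially the same route as the paper: both derive \eqref{eq:Condition} from \eqref{eq:LocMon} by applying the local monotonicity inequality to the pair $(x, x+hy)$ (up to ordering), dividing by $h^2$, and passing to the limit $h \searrow 0$ using the $\mathcal{C}^1$-property of $F$ on $H_\gamma$. The only cosmetic difference is that your ordering puts the coefficient at $x+hy$, so you additionally invoke continuity of $h \mapsto \|x+hy\|_{H_{\nicefrac{1}{2}}}^2$, whereas the paper keeps the coefficient fixed at the base point $x$ via a ratio/supremum manipulation; both steps are valid and the arguments are equivalent in substance.
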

\begin{proof}[Proof of Lemma~\ref{lemma:equivalent_monotonicity}]
	Observe that for all $ x \in H_{ \max \{ \gamma, \nicefrac{1}{2} \} } $, 
	$ y \in ( H_{ \max \{ \gamma, \nicefrac{1}{2} \} } 
	\backslash \{ 0 \} ) $
	we have that
	\begin{equation}
	\begin{split}
	\label{eq:first_implication}
	&
	\langle F'(x) y, y\rangle_H 
	=
	\big\langle 
	\lim\nolimits_{r \searrow 0} \tfrac{ F(x+ry) - F(x) }{r},
	y 
	\big\rangle_H
	=
	\lim_{r \searrow 0}
	\big\langle 
	\tfrac{ F(x+ry) - F(x) }{r},
	y 
	\big\rangle_H
	\\
	&
	=
	\lim_{ r \searrow 0 } 
	\big(
	\tfrac{1}{r^2}
	\langle F( x + r y ) - F( x ), r y \rangle_H
	\big)
	\\
	&
	\leq 
	\big( 
	( \varepsilon \| x \|_{H_{\nicefrac{1}{2} } }^2 +  {\bf C} )
	\| y \|_H^2
	+
	\| y \|_{H_{\nicefrac{1}{2}}}^2  
	\big)
	\limsup_{ r \searrow 0 }
	\Bigg[
	\frac{ 
		\frac{1}{r^2}
		\langle F( x + r y ) - F( x ), r y \rangle_H
	} { 
		( \varepsilon  \| x \|_{H_{\nicefrac{1}{2} } }^2 +  {\bf C} )
		\| y \|_H^2
		+
		\| y \|_{H_{\nicefrac{1}{2}}}^2   
	}
	\Bigg]
	\\
	&
	= 
		\big( 
	( \varepsilon \| x \|_{H_{\nicefrac{1}{2} } }^2 + {\bf C} )
	\| y \|_H^2
	+
	\| y \|_{H_{\nicefrac{1}{2}}}^2  
	\big)
	\limsup_{ r \searrow 0 }
	\Bigg[
	\frac{  
		\langle F( x + r y ) - F( x ), r y \rangle_H
	} { 
		( \varepsilon  \| x \|_{H_{\nicefrac{1}{2} } }^2 +  {\bf C} )
		\| r y \|_H^2
		+
		\| r y \|_{H_{\nicefrac{1}{2}}}^2   
	}
	\Bigg]
	\\
	&
	\leq
	\big( 
	( \varepsilon \| x \|_{H_{\nicefrac{1}{2} } }^2 +  {\bf C} )
	\| y \|_H^2
	+
	\| y \|_{H_{\nicefrac{1}{2}}}^2  
	\big) 
	\sup_{ r \in (0,1] }
	\Bigg[
	\frac{  
		\langle F( x + r y ) - F( x ), r y \rangle_H
	} { 
		( \varepsilon \| x \|_{H_{\nicefrac{1}{2} } }^2 +  {\bf C} )
		\| r y \|_H^2
		+
		\| r y \|_{H_{\nicefrac{1}{2}}}^2   
	}
	\Bigg]
	\\
	&
	\leq
	\big( 
	( \varepsilon  \| x \|_{H_{\nicefrac{1}{2} } }^2 + {\bf C} )
	\| y \|_H^2
	+
	\| y \|_{H_{\nicefrac{1}{2}}}^2  
	\big) 
	\sup_{ v \in H_{\max\{\gamma,\nicefrac{1}{2}\}} \backslash \{0\}  }
	\Bigg[
	\frac{  
		\langle F( x + v ) - F( x ), v \rangle_H
	} { 
		( \varepsilon \| x \|_{H_{\nicefrac{1}{2} } }^2 +  {\bf C} )
		\| v \|_H^2
		+
		\| v \|_{H_{\nicefrac{1}{2}}}^2   
	}
	\Bigg]
	.
	\end{split}
	\end{equation}
	Combining this and~\eqref{eq:LocMon}
	justifies~\eqref{eq:first_implication}.
	The proof of Lemma~\ref{lemma:equivalent_monotonicity}
	is hereby completed.
\end{proof}
\begin{proposition}
\label{proposition:Derivative_process_estimate}
Assume Setting~\ref{setting:main},
assume that
$ \dim(H) < \infty $,
let $ T \in (0,\infty) $,
$ \varepsilon, { \bf C } \in [0,\infty) $,
$ F \in \mathcal{C}^1  (H,
H) $,   
$ O \in \mathcal{C}( [0,T], H) $,
assume for all $ x, y \in H $ 
that
$ \langle F'(x) y, y \rangle_H \leq 
( \varepsilon \| x \|_{H_{\nicefrac{1}{2}}}^2 + { \bf C }  ) \| y \|_H^2 
+ \|y \|_{H_{\nicefrac{1}{2}}}^2 $,
and
for every 
$ s \in [0,T] $,
$ x \in H $
let
$ \X_{s,(\cdot)}^x
=
( \X_{s,t}^x)_{ t \in [s,T] }
\in \mathcal{C}([s,T], H) $ 
satisfy for all 
$ t \in [s,T] $
that
\begin{equation}
\label{eq:DefineX}
\X_{s, t}^x = x + \int_s^t
\big(
A \X_{s,u}^x  + F( \X_{s,u}^x + O_u ) 
\big)
\, 
du
.
\end{equation}
Then 
\begin{enumerate}[(i)]
\item \label{item:continuity3} 
we have that  
$ (
\{ (u,v) \in [0,T]^2 \colon u \leq v \} \times H
\ni
(s,t,x)
\mapsto
\X^x_{s,t}  
\in H 
)
\in \mathcal{C}^{0,0,1}
 ( \{ (u,v) \in [0,T]^2 \colon u \leq v \} \times H, H) $,
\item \label{item:DerivativeFormula}
 we have for all
 $ s \in [0,T] $,
 $ t \in [s,T] $,
  $ x, y \in H $
that
\begin{equation}
\begin{split}
\label{eq:derivative_under_integral}
\big(
\tfrac{\partial}{ \partial x }
\X_{s,t}^{x} 
\big) y
=
y
+
\int_s^t 
\big[
A 
\big(
\tfrac{\partial}{ \partial x }
\X_{s,u}^{x} 
\big) y + F'( \X_{s,u}^{x}  + O_u ) 
\big(
\tfrac{\partial}{ \partial x } \X_{s,u}^{x} 
\big) y
\big]
\,
du,
\end{split}
\end{equation}
and
\item \label{item:derivative_estimate}
we have for all
$ s \in [0,T] $,
$ t \in [s,T] $,
$ x \in H $
that
\begin{equation}
\begin{split}
\label{eq:Bound}
  \big \| 
\tfrac{\partial}{\partial x}  \X_{s, t}^{ x}   
  \big \|_{L(H)}
\leq
\exp\!
\bigg(  
\int_s^t
\big(
\varepsilon
\|   \X_{s,u}^{ x}  + O_u   \|_{H_{\nicefrac{1}{2}}}^2
+
{ \bf C }
\big)
\,
du
\bigg).
\end{split}
\end{equation}
\end{enumerate}
\end{proposition}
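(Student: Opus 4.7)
The plan is to handle items~\eqref{item:continuity3} and~\eqref{item:DerivativeFormula} via classical $C^1$-dependence of ODE flows on initial data, and then to establish item~\eqref{item:derivative_estimate} by a straightforward energy-type calculation followed by Gronwall's inequality.

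For items~\eqref{item:continuity3} and~\eqref{item:DerivativeFormula} I would first note that the assumption $\dim(H) < \infty$ ensures $A \in L(H)$, so~\eqref{eq:DefineX} is a finite-dimensional nonautonomous ODE with $C^1$ right-hand side $(t,x) \mapsto Ax + F(x + O_t)$. Classical ODE theory therefore provides joint continuity in $(s,t,x)$, continuous Fr\'echet differentiability in $x$ of the flow, and the linear variational equation~\eqref{eq:derivative_under_integral} for $\partial_x \X_{s,t}^x \in L(H)$. Alternatively, one can convert~\eqref{eq:DefineX} into mild form via item~\eqref{item:rewrite} of Lemma~\ref{lemma:Simplified} and then invoke item~\eqref{item:differentiability} of Lemma~\ref{lemma:crucial_apply_Alekseev_Groebner} (which in turn rests on~\cite[Corollary~5.2]{JentzenLindnerPusnik2017a}).

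For item~\eqref{item:derivative_estimate}, I would fix $s \in [0,T]$, $x, y \in H$ and set $v(t) := (\partial_x \X_{s,t}^x) y$ for $t \in [s,T]$. Item~\eqref{item:DerivativeFormula} together with the fundamental theorem of calculus yields that $v$ is continuously differentiable with $v'(t) = Av(t) + F'(\X_{s,t}^x + O_t)v(t)$, so
\begin{equation}
\tfrac{d}{dt}\|v(t)\|_H^2 = 2\langle Av(t), v(t)\rangle_H + 2\langle F'(\X_{s,t}^x + O_t)v(t), v(t)\rangle_H.
\end{equation}
Since Setting~\ref{setting:main} makes $A$ negative self-adjoint with $H_{\nicefrac{1}{2}} = D((-A)^{\nicefrac{1}{2}})$, one has $\langle Av, v\rangle_H = -\|v\|_{H_{\nicefrac{1}{2}}}^2$; combining this with the hypothesis on $F'$ gives
\begin{align*}
\tfrac{d}{dt}\|v(t)\|_H^2
&\leq -2\|v(t)\|_{H_{\nicefrac{1}{2}}}^2 + 2\bigl(\varepsilon\|\X_{s,t}^x + O_t\|_{H_{\nicefrac{1}{2}}}^2 + {\bf C}\bigr)\|v(t)\|_H^2 + 2\|v(t)\|_{H_{\nicefrac{1}{2}}}^2 \\
&= 2\bigl(\varepsilon\|\X_{s,t}^x + O_t\|_{H_{\nicefrac{1}{2}}}^2 + {\bf C}\bigr)\|v(t)\|_H^2.
\end{align*}
Gronwall's inequality then produces $\|v(t)\|_H \leq \|y\|_H \exp(\int_s^t (\varepsilon\|\X_{s,u}^x + O_u\|_{H_{\nicefrac{1}{2}}}^2 + {\bf C})\,du)$, and~\eqref{eq:Bound} follows by taking the supremum over $y \in H$ with $\|y\|_H \leq 1$.

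The key structural point (rather than a genuine obstacle) is the exact absorption of the $H_{\nicefrac{1}{2}}$-norm terms: the dissipative contribution $-2\|v\|_{H_{\nicefrac{1}{2}}}^2$ arising from $\langle Av, v\rangle_H$ precisely cancels the $+2\|v\|_{H_{\nicefrac{1}{2}}}^2$ term produced by the hypothesis on $F'$, leaving a Gronwall-tractable inequality in the $H$-norm alone. This is the structural reason why the local-monotonicity-type assumption isolated in Lemma~\ref{lemma:equivalent_monotonicity} is the natural hypothesis here.
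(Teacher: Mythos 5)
Your proposal is correct and follows essentially the same route as the paper: items~(i) and~(ii) are obtained from a standard $C^1$-flow/variational-equation result for the finite-dimensional ODE with right-hand side $(t,x)\mapsto Ax+F(x+O_t)$ (the paper cites \cite[items~(v) and~(vi) of Lemma~4.8]{JentzenLindnerPusnik2017a} for exactly this), and item~(iii) is proved by the same energy computation in which $\langle Av,v\rangle_H=-\|v\|_{H_{\nicefrac{1}{2}}}^2$ cancels the $+\|v\|_{H_{\nicefrac{1}{2}}}^2$ term from the hypothesis on $F'$, followed by Gronwall's lemma (the paper works with the integral identity for $\|v(t)\|_H^2-\|y\|_H^2$ rather than differentiating pointwise, which is an immaterial difference).
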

\begin{proof}[Proof of Proposition~\ref{proposition:Derivative_process_estimate}]
Note that the fact that $ ( [0,T] \times H \ni (u, h) \mapsto A h + F(h + O_u  ) \in H )
\in \mathcal{C}^{0, 1}(
[0,T] \times H, H
) $ 
and, e.g., \cite[items~(v) and~(vi) of Lemma~4.8]{JentzenLindnerPusnik2017a}
(applies with
$ V = H $,
$ T = T $, 
$ f = (
[0,T] \times H \ni (u, h) \mapsto
A h + F(h + O_u ) \in H 
) $,
$ X_{s, t}^x = \X_{s, t}^x $ 
for  
$ t \in [s,T] $,
$ s \in [0,T] $,
$ x \in H $
in the setting of~\cite[items~(v) and~(vi) of 
Lemma~4.8]{JentzenLindnerPusnik2017a})
justify items~\eqref{item:continuity3}
and~\eqref{item:DerivativeFormula}.  
Therefore,
we obtain
that
for all 
$ s \in [0,T] $,
$ t \in [s,T] $,
$ x, y \in H $
we have 
that
\begin{equation}
\begin{split}
\label{eq:some_impo_estimate}
&
\big \| \big(
\tfrac{\partial}{ \partial x } \X_{s,t}^{x}  
\big)
 y \big \|_H^2 
 - \|y \|_H^2
=
2\int_s^t
\big \langle
\big(
\tfrac{\partial}{ \partial x }  \X_{s,u}^x  
\big)  y,
A
\big(
\tfrac{\partial}{ \partial x } \X_{s,u}^x 
\big)  y
+
 F'(  \X_{s,u}^{ x} + O_u )   
\big(
\tfrac{\partial}{ \partial x } \X_{s,u}^x 
\big) y
\big \rangle_H 
\, 
du
\\
&
= 
2
\int_s^t
\Big[
\big \langle
 F'( \X_{s,u}^{ x} + O_u  )
  \big(
  \tfrac{\partial}{ \partial x } \X_{s,u}^{ x}  
  \big)y,
\big(
\tfrac{\partial}{ \partial x } \X_{s,u}^{ x}
\big) y 
\big \rangle_H
- 
\big\| 
\big(
\tfrac{\partial}{ \partial x }
\X_{s,u}^{ x}  \big) 
y 
\big\|_{H_{\nicefrac{1}{2}}}^2 
\Big] 
\, 
du
\\
&
\leq 
2  
\int_s^t
\Big[
\big(
\varepsilon
\|  \X_{s,u}^{x}  + O_u \|_{H_{\nicefrac{1}{2}}}^2
+
{ \bf C }
\big)
\big\|
 \big(
\tfrac{\partial}{ \partial x } \X_{s,u}^{ x}  
\big) y
\big\|_H^2
+
\big\|
\big(
\tfrac{\partial}{ \partial x }  \X_{s,u}^{ x} 
\big)y 
\big\|_{H_{\nicefrac{1}{2}}}^2 
-
\big\| 
\big(
\tfrac{\partial}{ \partial x }
\X_{s,u}^{x}  
\big) y
\big\|_{H_{\nicefrac{1}{2}}}^2
\Big] 
\, 
du
\\
&
= 
2 
\int_s^t
\Big[  
\big(
\varepsilon
\|  \X_{s,u}^{ x} + O_u   \|_{H_{\nicefrac{1}{2}}}^2
+
{ \bf C }
\big)
\big\|
\big( 
\tfrac{\partial}{ \partial x } \X_{s,u}^{x} 
  \big)
   y \big\|_H^2 
\Big] 
\,
du.
\end{split}
\end{equation}
Moreover,
note that the assumption that
$ \dim(H) < \infty $
assures that
for all  
$ s \in [0,T] $,
$ x \in H $
we have that 
\begin{equation} 
\big( [s,T] \ni u \mapsto \| \X_{s,u}^x 
+ O_u \|_{H_{\nicefrac{1}{2}}}^2 
\in 
[0, \infty)
\big) \in 
\mathcal{C}([s,T], [0,\infty)) 
.
\end{equation} 
Combining this, item~\eqref{item:continuity3},
and~\eqref{eq:some_impo_estimate}
with
Gronwall's lemma 
illustrates that
for all  
$ s \in [0,T] $,
$ t \in [s,T] $,
$ x, y \in H $
we have that
\begin{equation}
\begin{split}
\big\| \big(
\tfrac{\partial}{ \partial x } \X_{s,t}^{ x} 
\big) y 
\big\|_H
\leq
\| y \|_H
\exp\!
\bigg(  
\int_s^t
\big(
\varepsilon
\| \X_{s,u}^x  + O_u \|_{H_{\nicefrac{1}{2}}}^2 
+ 
{ \bf C }
\big)
\,
du
\bigg).
\end{split}
\end{equation}
The proof of Proposition~\ref{proposition:Derivative_process_estimate} is hereby completed. 
\end{proof}
\begin{corollary}
	\label{corollary:exp_bound}
Assume Setting~\ref{setting:main}, 
assume that $ \dim(H) < \infty $, 
let $ ( \Omega, \F, \P ) $
be a probability space,
let $ T \in (0,\infty) $,
$ \varepsilon, {\bf C} \in [0,\infty) $,
$ p \in [1,\infty) $,
$ F \in \mathcal{C}^1  (H,
H) $,    
$ Y \in \M( \B([0,T]) \otimes \F, \B(H)) $,
let 
$ O \colon [0,T] \times \Omega \to H $
be a stochastic process
w.c.s.p.,
assume for all $ x, y \in H $ 
that 
$ \langle F'(x) y, y \rangle_H \leq 
( \varepsilon \| x \|_{H_{\nicefrac{1}{2}}}^2 + { \bf C }  ) \| y \|_H^2 
+ \|y \|_{H_{\nicefrac{1}{2}}}^2 $,
and
for every 
$ s \in [0,T] $,
$ x \in H $
let
	$ X_{s,(\cdot)}^x
=
(X_{s,t}^x)_{ t \in [s,T] }
\colon 
[s,T] \times \Omega
\to H $
be a stochastic process
w.c.s.p.\ 
which satisfies for all   
$ t \in [s,T] $
that
\begin{equation} 
X_{s,t}^{x }
=
e^{ (t-s) A }   x
+
\int_s^t
e^{(t-u)A}
 F( X_{s,u}^{x } )
\, du
+
O_t - e^{(t-s)A} O_s
.
\end{equation}
Then 
\begin{enumerate}[(i)]
	\item \label{item:continuity4}
	we have
	for all 
$ \omega \in \Omega $
 that
$ (
  \{ (u, v) \in [0,T]^2 \colon u \leq v \} \times H
\ni
(s,t,x)
\mapsto
X^x_{s,t}(\omega) 
\in H 
 )
\in \mathcal{C}^{0,0,1}
( \{ (u, v) \in [0,T]^2 \colon u \leq v \} \times H, H) $,
\item \label{item:second_joint_measurable}
\sloppy 
we have   
that 
$ \big( \{ (u, v) \in [0,T]^2 \colon u \leq v \} \times \Omega \ni (s,t,\omega)
\mapsto 
\frac{ \partial }{ \partial x } 
X_{s,t}^{Y_s(\omega) 
}(\omega) \in L( H ) 
\big)
\in \M( \B( \{ (u, v) \in [0,T]^2 \colon u \leq v \} ) \otimes \F, \B( L( H ) ) ) $,
\item \label{item:measurability RHS}
we have  
that 
$ \big( \{ (u, v) \in [0,T]^2 \colon u \leq v \} \times \Omega \ni (s,t,\omega) 
\mapsto X_{s,t}^{Y_s(\omega)  
}(\omega) \in H_{ \nicefrac{1}{2} } \big)
\in \M( \B( \{ (u, v) \in [0,T]^2 \colon u \leq v \} ) \otimes \F, \B( H_{ \nicefrac{1}{2} } ) ) $,
and
\item \label{item:L1estimate} we have for all 
$ s \in [0,T] $,
$ t \in [s,T] $
that
\begin{equation}
\begin{split}
\E \Big[ 
\big \| \tfrac{\partial}{\partial x} 
X_{s,t}^{ Y_s }  
\big\|_{L(H)}^p
\Big] 
\leq 
\E 
\bigg[ 
\exp\!
\bigg(
p 
\int_s^t 
 \big( 
 \varepsilon \|    X_{s,u}^{ Y_s 
 }  \|_{H_{\nicefrac{1}{2}}}^2
+ { \bf C } \big)
\,
du
\bigg)
\bigg].
\end{split}
\end{equation}
\end{enumerate}
\end{corollary}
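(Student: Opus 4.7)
The plan is to reduce each of the four items to results already established in the paper, using a shift identity that connects the mild-form solution $X_{s,t}^x$ (with additive perturbation $O$) to the strong-form solution $\mathcal{X}_{s,t}^x$ of Proposition~\ref{proposition:Derivative_process_estimate}. Applying Lemma~\ref{lemma:Simplified} pathwise (valid because $\dim(H)<\infty$), the mild equation for $X_{s,t}^x$ becomes the strong equation
\begin{equation*}
X_{s,t}^x(\omega) - O_t(\omega)
=
(x - O_s(\omega)) + \int_s^t \Bigl[A\bigl(X_{s,u}^x(\omega) - O_u(\omega)\bigr) + F\bigl(X_{s,u}^x(\omega)\bigr)\Bigr]\,du,
\end{equation*}
which, by uniqueness of the strong equation~\eqref{eq:DefineX}, identifies $X_{s,t}^x(\omega) = \mathcal{X}_{s,t}^{x - O_s(\omega)}(\omega) + O_t(\omega)$. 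The chain rule then gives the key pointwise identity $\tfrac{\partial}{\partial x} X_{s,t}^x(\omega) = \tfrac{\partial}{\partial x}\mathcal{X}_{s,t}^{x-O_s(\omega)}(\omega)$.

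For items~\eqref{item:continuity4} and~\eqref{item:second_joint_measurable}, I would simply invoke Lemma~\ref{lemma:MeasurableDerivative} (with $Y = Z = Y$, $O = O$, $F = F$), whose items~\eqref{item:ContDiff} and~\eqref{item:Mesurable} yield the two claims verbatim. For item~\eqref{item:measurability RHS}, the assumption $\dim(H)<\infty$ implies that $H$ and $H_{\nicefrac{1}{2}}$ coincide as topological spaces (equivalent norms), so $\B(H_{\nicefrac{1}{2}}) = \B(H)$; the measurability statement then follows from item~\eqref{item:productMeasurability} of Lemma~\ref{lemma:MeasurableDerivative}.

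The substantive item is~\eqref{item:L1estimate}, which I would prove pathwise. For each fixed $\omega \in \Omega$, apply Proposition~\ref{proposition:Derivative_process_estimate} with $O$ replaced by $O(\cdot,\omega)$ and $x = Y_s(\omega) - O_s(\omega)$ to obtain
\begin{equation*}
\bigl\|\tfrac{\partial}{\partial x}\mathcal{X}_{s,t}^{Y_s(\omega) - O_s(\omega)}(\omega)\bigr\|_{L(H)}
\leq
\exp\!\Bigl(\textstyle\int_s^t\bigl(\varepsilon\|\mathcal{X}_{s,u}^{Y_s(\omega) - O_s(\omega)}(\omega) + O_u(\omega)\|_{H_{\nicefrac{1}{2}}}^2 + \mathbf{C}\bigr)\,du\Bigr).
\end{equation*}
The shift identity rewrites the left-hand side as $\|\tfrac{\partial}{\partial x} X_{s,t}^{Y_s(\omega)}(\omega)\|_{L(H)}$ and the integrand in the exponent as $\varepsilon\|X_{s,u}^{Y_s(\omega)}(\omega)\|_{H_{\nicefrac{1}{2}}}^2 + \mathbf{C}$. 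Raising to the $p$-th power and taking expectations yields the desired bound.

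The main obstacles are minor and purely technical: verifying the shift identity rigorously (via uniqueness of the strong equation in finite dimensions) and ensuring that the exponential integrand on the right-hand side of~\eqref{item:L1estimate} is genuinely measurable, so that the expectation is well-defined. The latter reduces to joint measurability of $(u,\omega)\mapsto X_{s,u}^{Y_s(\omega)}(\omega)$ in $H_{\nicefrac{1}{2}}$, which is precisely item~\eqref{item:measurability RHS} already proved above. All remaining steps are direct invocations of Proposition~\ref{proposition:Derivative_process_estimate} and Lemma~\ref{lemma:MeasurableDerivative}.
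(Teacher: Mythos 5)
Your proposal is correct and follows essentially the same route as the paper's proof: items~(i)--(iii) are obtained from Lemma~\ref{lemma:MeasurableDerivative} (with the finite-dimensional norm equivalence of $H$ and $H_{\nicefrac{1}{2}}$ for item~(iii)), and item~(iv) from the shift identity $X_{s,t}^x = \mathcal{X}_{s,t}^{x-O_s}+O_t$, the resulting equality of the $x$-derivatives, and a pathwise application of Proposition~\ref{proposition:Derivative_process_estimate}. The only cosmetic difference is that the paper simply defines $\mathcal{X}_{s,t}^x := X_{s,t}^{x+O_s}-O_t$ and checks via Lemma~\ref{lemma:Simplified} that it satisfies the strong equation~\eqref{eq:DefineX}, so no uniqueness argument for that equation is needed (your appeal to uniqueness is dispensable but harmless, since $F\in\mathcal{C}^1$ gives local Lipschitz continuity in finite dimensions).
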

\begin{proof}[Proof of Corollary~\ref{corollary:exp_bound}]
Throughout this proof let
$ \X_{s,(\cdot)}^x
=
(\X_{s,t}^x)_{ t \in [s,T] }
\colon 
[s,T] \times \Omega
\to H $,
$ s \in [0,T] $,
$ x \in H $,
satisfy for all 
$ s \in [0,T] $,
$ t \in [s,T] $,
$ \omega \in \Omega $,
$ x \in H $ 
that
\begin{equation} 
\label{eq:DefineNew} 
\X_{s,t}^x(\omega) = X_{s,t}^{x + O_s(\omega)}(\omega) - O_t(\omega) 
.
\end{equation}
Observe that items~\eqref{item:ContDiff} and~\eqref{item:Mesurable} of
Lemma~\ref{lemma:MeasurableDerivative}
(applies with
$ ( \Omega, \F, \P ) = ( \Omega, \F, \P ) $,
$ T = T $,
$ F = F $,
$ Z_s = Y_s $,
$ O_s = O_s $,
$ X_{s,t}^x = X_{s,t}^x $
for
$ t \in [s,T] $,
$ s \in [0,T] $,
$ x \in H $
in the setting of
items~\eqref{item:ContDiff}
and~\eqref{item:Mesurable} of
Lemma~\ref{lemma:MeasurableDerivative})
justify
items~\eqref{item:continuity4}
and~\eqref{item:second_joint_measurable}.
Furthermore, note that item~\eqref{item:productMeasurability} 
of 
Lemma~\ref{lemma:MeasurableDerivative}
(applies with
$ ( \Omega, \F, \P ) = ( \Omega, \F, \P ) $,
$ T = T $,
$ F = F $,
$ Y_s = Y_s $,
$ O_s = O_s $,
$ X_{s,t}^x = X_{s,t}^x $
for
$ t \in [s,T] $,
$ s \in [0,T] $,
$ x \in H $
in the setting of
item~\eqref{item:productMeasurability}  of
Lemma~\ref{lemma:MeasurableDerivative})
gives
that 
\begin{multline} 
\big( \{ (u, v) \in [0,T]^2 \colon u \leq v \} \times \Omega \ni (s,t,\omega) 
\mapsto X_{s,t}^{Y_s(\omega) 
}(\omega) \in H \big)
\\
\in \M( \B( \{ (u, v) \in [0,T]^2 \colon u \leq v \} ) \otimes \F, \B( H) ) 
.
\end{multline} 
The assumption that $ \dim(H) < \infty $
hence
justifies item~\eqref{item:measurability RHS}.
Next observe that~\eqref{eq:DefineNew} 
and the fact that
for all
$ s \in [0,T] $,
$ t \in [s,T] $,
$ \omega \in \Omega $,
$ x \in H $ 
we have that
\begin{equation} 
X_{s,t}^{x + O_s(\omega)}(\omega)
=
e^{ (t-s) A }   ( x + O_s(\omega) )
+
\int_s^t
e^{(t-u)A}
 F( X_{s,u}^{x + O_s(\omega) } (\omega) )
\, du
+
O_t(\omega) - e^{(t-s)A} O_s(\omega) 
\end{equation}
verify that
for all 
$ s \in [0,T] $,
$ t \in [s,T] $,
$ \omega \in \Omega $,
$ x \in H $
we have that
\begin{equation}
\begin{split}
\X_{s,t}^x(\omega)
=
e^{(t-s)A} 
x
+
\int_s^t
e^{ (t-u) A } F( \X_{s,u}^x(\omega) + O_u(\omega) )
\, du
.
\end{split}
\end{equation}
The fact that
$ F \in \mathcal{C}(H,H) $,
the fact that
$ \forall \, s \in [0,T] $, 
$ \omega \in \Omega 
\colon  
( [s,T] \ni t \mapsto O_t(\omega) \in H )
\in \mathcal{C}( [s,T], H) $,
the fact that
$ \forall \, s \in [0,T] $, 
$ \omega \in \Omega $, 
$ x \in H 
\colon 
( [s,T] \ni t \mapsto \X_{s,t}^x( \omega) \in H
)
\in \mathcal{C}( [s,T], H) $, 
and
Lemma~\ref{lemma:Simplified}
(applies with
$ T = T $,
$ s = s $,
$ x = x $,
$ Z = ( [s,T] \ni t \mapsto F( \X_{s, t}^x(\omega) + O_t(\omega) ) \in H ) $,
$ Y = ( [s,T] \ni t \mapsto \X_{s,t}^x(\omega) \in H ) $
for 
$ s \in [0,T] $,
$ \omega \in \Omega $,
$ x \in H $
in the setting of Lemma~\ref{lemma:Simplified})
therefore
ensure that
for all $ s \in [0,T] $,
$ t \in [s,T] $,
$ \omega \in \Omega $,
$ x \in H $
we have that
\begin{equation}
\label{eq:necessary}
\X_{s,t}^x(\omega)
= 
x
+
\int_s^t
[
A \X_{s,u}^x(\omega)
+
F( \X_{s,u}^x(\omega) + O_u(\omega) )
]
\, du
.
\end{equation}
Item~\eqref{item:continuity3} 
of
Proposition~\ref{proposition:Derivative_process_estimate}
(applies with 
$ T = T $,
$ \varepsilon = \varepsilon $,
$ { \bf C } = { \bf C } $,
$ F = F $,
$ O_t = O_t( \omega ) $,
$ \X_{s,t}^x = \X_{s,t}^x(\omega) $ 
for 
$ t \in [s,T] $,
$ s \in [0,T] $,
$ \omega \in \Omega $,
$ x \in H $
in the setting of
item~\eqref{item:continuity3} 
of
Proposition~\ref{proposition:Derivative_process_estimate})
hence
verifies that 
for all $ \omega \in \Omega $ we have that
\begin{multline} 
(
\{ (u, v) \in [0,T]^2 \colon u \leq v \} \times H
\ni
(s,t,x)
\mapsto
\X^x_{s,t}(\omega) 
\in H 
)
\\
\in \mathcal{C}^{0,0,1}
( \{ (u, v) \in [0,T]^2 \colon u \leq v \} \times H, H) 
.
\end{multline} 
Moreover, 
observe that~\eqref{eq:necessary}
and 
item~\eqref{item:derivative_estimate} 
of
Proposition~\ref{proposition:Derivative_process_estimate}
(applies with 
$ T = T $,
$ \varepsilon = \varepsilon $,
$ { \bf C } = { \bf C } $,
$ F = F $,
$ O_t = O_t( \omega ) $,
$ \X_{s,t}^x = \X_{s,t}^x(\omega) $ 
for 
$ t \in [s,T] $,
$ s \in [0,T] $,
$ \omega \in \Omega $,
$ x \in H $
in the setting of
item~\eqref{item:derivative_estimate} 
of
Proposition~\ref{proposition:Derivative_process_estimate}) 
ensure that  for all 
	$ s \in [0,T] $,
	$ t \in [s,T] $,
	$ \omega \in \Omega $
	we have
	that
	\begin{equation}
	\begin{split}
	\big\| 
	\tfrac{\partial}{\partial x}  \X_{s, t}^{ Y_s (\omega) - O_s(\omega) }  (\omega)
	\big\|_{L(H)}
	\leq
	\exp
	\!
	\bigg(   
	\int_s^t
	\big(
	\varepsilon
	\|  \X_{s,u}^{ Y_s (\omega) - O_s(\omega) } (\omega) + O_u (\omega)  \|_{H_{\nicefrac{1}{2}}}^2
	+
	{ \bf C }
	\big)
	\,
	du
	\bigg).
	\end{split}
	\end{equation}
This and~\eqref{eq:DefineNew} 
yield that
for all 
$ s \in [0,T] $,
$ t \in [s,T] $,
$ \omega \in \Omega $
we have that
\begin{equation}
\begin{split}
\label{eq:EstimateNoE2}
\big\| 
\tfrac{\partial}{\partial x}  X_{s, t}^{ Y_s (\omega)  }  (\omega)
\big\|_{L(H)}
\leq
\exp
\!
\bigg(  
\int_s^t
\big(
\varepsilon
\|  X_{s,u}^{ Y_s (\omega) } (\omega)    \|_{H_{\nicefrac{1}{2}}}^2
+
{ \bf C }
\big)
\,
du
\bigg).
\end{split}
\end{equation}
Combining 
this
and 
items~\eqref{item:second_joint_measurable}
and~\eqref{item:measurability RHS}
justifies item~\eqref{item:L1estimate}.
The proof of  
Corollary~\ref{corollary:exp_bound}
is hereby completed.
\end{proof}
\begin{lemma}
	\label{lemma:Ito formula}
	Assume Setting~\ref{setting:main},
	assume that $ \dim(H) < \infty $,
	let 
	$ T \in (0,\infty) $,  
	$ s \in [0,T] $,  
	$ B \in \HS(U,H) $,
	let $ ( \Omega, \F, \P ) $ 
	be a probability space,
	let $ (W_t)_{ t \in [0,T]} $
	be an
	$ \operatorname{Id}_U $-cylindrical
	Wiener process, 
	let
	$ \xi \in \M( \F, \mathcal{B}(H) ) $,
	$ Z\in\mathcal M(\B([s,T])\otimes\F,\B(H)) $ satisfy for all 
	$ \omega \in \Omega $ 
	that 
	$ \int_s^T \|Z_u(\omega)\|_H \, du < \infty $,
	and
	let 
	$ Y \colon [s,T] \times \Omega \to H $
	and
	$ O \colon [0,T] \times \Omega \to H $
	be stochastic processes
w.c.s.p.\ 
	which satisfy for all $ t \in [s,T] $
	that
	$ [ O_t ]_{\P, \mathcal{B}(H ) } 
	= 
	\int_0^t e^{ ( t - u ) A } B \, dW_u $
	and
	\begin{equation}
	\begin{split}
	\label{eq:a.s.}
	\P\bigg(
	Y_t = e^{(t-s)A} \xi 
	+ 
	\int_s^t e^{(t-u)A} Z_u \, du
	+
	O_t - e^{(t-s)A} O_s
	\bigg)
	=
	1
	.
	\end{split} 
	\end{equation}
	Then 
	we have for all $ t \in [s, T] $ that
	\begin{equation}
	\begin{split} 
	\label{eq:finaly established}
	[ Y_t ]_{\P, \mathcal{B}(H)} 
	=
	\bigg[
	\xi 
	+ 
	\int_s^t [ A Y_u + Z_u ] \, du
	\bigg]_{\P, \mathcal{B}(H)} 
	+
	\int_s^t B \, dW_u 
	.
	\end{split}
	\end{equation}
\end{lemma}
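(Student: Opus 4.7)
The strategy is to separate the stochastic and deterministic parts of $Y$ and treat them independently. First I would introduce the auxiliary process $\tilde{Y} \colon [s,T] \times \Omega \to H$ given pathwise by $\tilde{Y}_t(\omega) = Y_t(\omega) - O_t(\omega) + e^{(t-s)A} O_s(\omega)$. By assumption $Y$ and $O$ have continuous sample paths, so the same holds for $\tilde{Y}$. Lemma~\ref{lemma:Simplified} applied pathwise (with $x = \xi(\omega)$ and the integrand $Z_u(\omega)$, using the hypothesis $\int_s^T \|Z_u(\omega)\|_H \, du < \infty$) ensures that the map $t \mapsto e^{(t-s)A}\xi(\omega) + \int_s^t e^{(t-u)A} Z_u(\omega) \, du$ is continuous. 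Since~\eqref{eq:a.s.} asserts $\P$-a.s.\ equality for each fixed $t$, I would invoke continuity and separability of $[s,T]$ to upgrade to a single null set: $\P$-a.s.\ we have for all $t \in [s,T]$ that $\tilde{Y}_t = e^{(t-s)A}\xi + \int_s^t e^{(t-u)A} Z_u \, du$.

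Next I would apply item~\eqref{item:rewrite} of Lemma~\ref{lemma:Simplified} pathwise (on the full-measure event identified above) to conclude that $\P$-a.s.\ for every $t \in [s,T]$,
\begin{equation}
\tilde{Y}_t = \xi + \int_s^t \big[ A \tilde{Y}_u + Z_u \big] \, du.
\end{equation}
Unpacking the definition of $\tilde{Y}$, this gives $Y_t - O_t + e^{(t-s)A} O_s = \xi + \int_s^t A Y_u \, du - \int_s^t A O_u \, du + \int_s^t A e^{(u-s)A} O_s \, du + \int_s^t Z_u \, du$. Since $A$ is bounded (as $\dim(H) < \infty$), the fundamental theorem of calculus yields $\int_s^t A e^{(u-s)A} O_s \, du = e^{(t-s)A} O_s - O_s$, so the equation simplifies to $Y_t = \xi + \int_s^t [AY_u + Z_u] \, du - \int_s^t A O_u \, du + (O_t - O_s)$.

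The remaining task is the classical identity for the stochastic convolution, namely to show
\begin{equation}
\label{eq:conv_identity_plan}
[O_t - O_s]_{\P, \B(H)} = \bigg[ \int_s^t A O_u \, du \bigg]_{\P, \B(H)} + \int_s^t B \, dW_u.
\end{equation}
My plan is to write $O_t - e^{(t-s)A} O_s = e^{tA} \hat{N}_t$ where $\hat{N}_t := \int_s^t e^{-uA} B \, dW_u$ is an $H$-valued martingale (well defined since $\dim(H)<\infty$ makes $u \mapsto e^{-uA}B$ bounded), and then apply the finite-dimensional It\^o formula to $\phi(t,x) = e^{tA} x$. This produces $e^{tA}\hat{N}_t = \int_s^t A e^{uA} \hat{N}_u \, du + \int_s^t B \, dW_u$, i.e.\ $O_t - e^{(t-s)A} O_s = \int_s^t A[O_u - e^{(u-s)A} O_s] \, du + \int_s^t B \, dW_u$. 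Combining this with the deterministic identity $e^{(t-s)A}O_s - O_s = \int_s^t A e^{(u-s)A} O_s \, du$ yields~\eqref{eq:conv_identity_plan}. Substituting into the expression for $Y_t$ obtained in the previous paragraph collapses the $\int_s^t A O_u \, du$ terms and produces~\eqref{eq:finaly established}.

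The main technical nuisance (rather than a deep obstacle) is handling the equivalence-class notation $[\cdot]_{\P,\B(H)}$ cleanly: the Itô integral $\int_s^t B \, dW_u$ is only defined up to $\P$-a.s.\ equality, so I must identify representatives consistently on a single full-measure event before reading off equality of equivalence classes. The finite-dimensionality of $H$ simplifies both the It\^o-formula step (no Hilbert-space machinery beyond the scalar It\^o formula applied coordinatewise in an orthonormal basis) and the pathwise application of Lemma~\ref{lemma:Simplified}, so no regularity issues arise.
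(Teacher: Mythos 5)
Your proposal is correct and follows essentially the same route as the paper's proof: upgrade the a.s.\ identity to a single full-measure event via continuity, apply Lemma~\ref{lemma:Simplified} pathwise to the shifted process $Y - (O - e^{(\cdot - s)A}O_s)$, and obtain the stochastic-convolution identity by applying the finite-dimensional It\^o formula to the semigroup acting on a rescaled stochastic integral. The only difference is a cosmetic rearrangement (you restate the convolution identity in terms of $O_t - O_s$ and $\int_s^t A O_u\,du$ before substituting), which changes nothing of substance.
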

\begin{proof}[Proof of Lemma~\ref{lemma:Ito formula}]
Throughout this proof let
$ \Sigma 
=
\{ \omega \in \Omega 
\colon 
(
\forall \, t \in [s,T] \colon 
Y_t(\omega) = e^{(t-s)A} \xi(\omega) 
+ 
\int_s^t e^{(t-u)A} Z_u(\omega) \, du
+
O_t(\omega) - e^{(t-s)A} O_s(\omega)
)
\} $.
Observe that
item~\eqref{item:continuous} of
Lemma~\ref{lemma:Simplified}
(applies with
$ T = T $,
$ s = s $,
$ x = \xi(\omega) $,
$ Z_t = Z_t(\omega) $,
$ Y_t = 
e^{(t-s)A} \xi(\omega) 
+ 
\int_s^t e^{(t-u)A} Z_u(\omega) \, du $
for 
$ t \in [s, T] $,
$ \omega \in \Sigma $
in the setting of
item~\eqref{item:continuous} of 
Lemma~\ref{lemma:Simplified})
verifies that
for all 
$ \omega \in \Omega $
we have that
\begin{equation} 
\bigg( 
[s,T] \ni t
\mapsto  
e^{(t-s)A} \xi(\omega) 
+ 
\int_s^t e^{(t-u)A} Z_u(\omega) \, du \in H 
\bigg) 
\in \mathcal{C}( [s,T], H )
.
\end{equation}  
The fact that 
$ O $ and $ Y $ have continuous sample paths 
and~\eqref{eq:a.s.}
therefore yield that 
\begin{equation}
\label{eq:full Sigma}
\P( \Sigma ) = 1
.
\end{equation}
Next note that the assumption that $ \dim(H) < \infty $ ensures that
for all $ t \in [s,T] $ we have that
\begin{equation} 
\begin{split} 
\label{eq: Ito 3}
[ e^{-(t-s)A}
O_t ]_{\P, \B(H) }
& =
\int_0^t
e^{(s-u)A} 
B
\, dW_u 
= 
\int_0^s
e^{(s-u)A} 
B
\, dW_u
+
\int_s^t
e^{(s-u)A} 
B
\, dW_u 
\\
&
=
[ O_s ]_{\P, \B(H)} 
+
\int_s^t
e^{(s-u)A} 
B
\, dW_u 
.
\end{split} 
\end{equation} 
This gives that for all $ t \in [s,T] $ we have that
\begin{equation}
\int_s^t
e^{(s-u)A} 
B
\, dW_u 
=
[ e^{- ( t - s ) A} O_t -   O_s ]_{\P, \B(H)}
.
\end{equation}
Combining~\eqref{eq: Ito 3},
the fact that 
$ [s,T] \times H \ni (t,x)
\mapsto e^{(t-s)A} x \in H $
is twice continuously differentiable, 
and It\^o's formula
hence
yields that for all $ t \in [s,T] $
we have that
\begin{equation}
\begin{split}
[ O_t ]_{\P, \B(H)}
&
=
[
e^{(t-s)A}
O_s
]_{\P, \B(H) }
+
e^{(t-s)A}
\int_s^t
e^{(s-u)A} 
B
\, dW_u 
\\
&
=
\bigg[ 
e^{(t-s)A}
O_s
+
\int_s^t 
A
e^{(u - s )A}
( 
e^{-(u-s)A} O_u - O_s
) 
\, 
du
\bigg ]_{\P, \B(H)}
+
\int_s^t e^{(u-s)A} 
e^{(s-u)A} B \, dW_u
\\
&
=
\bigg [ 
e^{(t-s)A}
O_s
+
\int_s^t 
A ( 
O_u - e^{(u-s)A} O_s
) \, du
\bigg ]_{\P, \B(H)}
+
\int_s^t B \, dW_u
.
\end{split}
\end{equation}
This gives that for all $ t \in [s,T] $ we have that
\begin{equation}
\begin{split}
\label{eq: first part}
[ O_t - e^{(t-s)A} O_s ]_{ \P, \B(H) }
=
\bigg[
\int_s^t A ( O_u - e^{(u-s)A} O_s) \, du
\bigg]_{\P, \B(H)}
+
\int_s^t B \, dW_u
.
\end{split}
\end{equation}
Moreover, observe that 
item~\eqref{item:rewrite} of
Lemma~\ref{lemma:Simplified}
(applies with
$ T = T $,
$ s = s $,
$ x = \xi(\omega) $,
$ Z_t = Z_t(\omega) $,
$ Y_t = 
Y_t(\omega) - ( O_t(\omega) - e^{(t-s)A} O_s(\omega) ) $
for 
$ t \in [s, T] $,
$ \omega \in \Sigma $
in the setting of
item~\eqref{item:rewrite} of 
Lemma~\ref{lemma:Simplified})
verifies that
for all
$ t \in [s,T] $,
$ \omega \in \Sigma $ 
we have 
that
\begin{equation}
\label{eq: second part}
Y_t(\omega) - ( O_t(\omega) - e^{(t-s)A} O_s(\omega) )
=
\xi(\omega)
+
\int_s^t
\big[ 
A
\big(
Y_u(\omega) - ( O_u(\omega) - e^{(u-s)A} O_s(\omega) )
\big)
+
Z_u(\omega)
\big] 
\, du
.
\end{equation}
Combining~\eqref{eq:full Sigma} and~\eqref{eq: first part}
therefore justifies~\eqref{eq:finaly established}.
The proof of Lemma~\ref{lemma:Ito formula} is hereby completed.
\end{proof}
\begin{lemma}
	\label{lemma:inheritet}	
	Assume Setting~\ref{setting:main}, 
	assume that $ \dim(H) < \infty $, 
	let 
	$ T \in (0,\infty) $,  
	$ a, b, { \bf C}, \rho \in [0,\infty) $,   
	$ p \in [1,\infty) $, 
	$ B \in \HS(U,H) $,    
	$  \varepsilon \in [0, ( \nicefrac{ 2 \rho }{ p } )
	\exp( - 2 (  b + \rho \| B \|_{\HS(U,H)}^2 ) T ) ] $,
	$ F \in \mathcal{C}^1( H, H ) $,
	assume
	for all      
	$ x, y \in H $
	that  
	$ \langle x, F(x) \rangle_H  
	\leq
	a + b \| x \|_H^2 $  
	and
	$ \langle F'(x) y, y \rangle_H \leq 
	( \varepsilon \| x \|_{H_{\nicefrac{1}{2}}}^2 + { \bf C}  ) \| y \|_H^2 
	+ \|y \|_{H_{\nicefrac{1}{2}}}^2 $, 
	let $ ( \Omega, \F, \P ) $
	be a probability space with a normal filtration
	$ ( \f_t )_{t \in [0,T]} $,
	let $ (W_t)_{ t \in [0,T]} $
	be an
	$ \operatorname{Id}_U $-cylindrical
	$ ( \f_t )_{t\in [0,T]} $-Wiener process,  
	let
	$ Y \colon [0,T] \times \Omega \to H $
	and
	$ O \colon [0,T] \times \Omega \to H $
	be 
	$ ( \f_t )_{t\in [0,T]} $-adapted
	stochastic processes
	w.c.s.p.,
	assume
	for all     
	$ t \in [0,T] $
	that 
	$ [ O_t ]_{\P, \mathcal{B}(H ) } = \int_0^t e^{(t- u)A} B \, dW_u $,
	and
	for every
	$ s \in [0,T] $,
	$ x \in H $
	let 
		$ X_{s,(\cdot)}^x
	=
	(X_{s,t}^x)_{ t \in [s,T] }
	\colon 
	[s,T] \times \Omega
	\to H $ 
	be an $ ( \f_t )_{ t \in [s,T] } $-adapted 
	stochastic process 
	w.c.s.p.\
	which satisfies
	for all
	$ t \in [s,T] $ that
	\begin{equation} 
	\label{eq:DefineX2}
	X_{s,t}^{x }
	=
	e^{ (t-s) A }   x
	+
	\int_s^t
	e^{(t-u)A}
	F( X_{s,u}^{x } )
	\, du
	+
	O_t - e^{(t-s)A} O_s
	.
	\end{equation} 
	Then 
	\begin{enumerate}[(i)]
		\item \label{item:differentiable5} 
		we have for all 
		$ \omega \in \Omega $
		that
		$ (
		\{ (u, v) \in [0,T]^2 \colon u \leq v \} \times H
		\ni
		(s,t,x)
		\mapsto
		X^x_{s,t}(\omega) 
		\in H 
		)
		\in \mathcal{C}^{0,0,1}
		( \{ (u, v) \in [0,T]^2 \colon u \leq v \} \times H, H) $,
		\item \label{item:second_joint_measurable2}
		\sloppy 
		we have that 
		$ \big( 
		\{ (u, v) \in [0,T]^2 \colon u \leq v \} \times \Omega \ni (s,t,\omega)
		\mapsto 
		\frac{ \partial }{ \partial x } 
		X_{s,t}^{Y_s(\omega)  }(\omega) \in L( H ) \big )
		\in \M( \B( \{ (u, v) \in [0,T]^2 \colon u \leq v \} ) \otimes \F, \B( L( H ) ) ) $,
		and
		\item we have for all  
		$ s \in [0,T] $,
		$ t \in [s,T] $ 
		that 
		\begin{equation}
		\begin{split} 
		\label{eq:UpperBound}
		&  
		\E 
		\Big[ 
		\big \| \tfrac{\partial}{\partial x} 
		X_{s,t}^{ Y_s }  
		\big\|_{L(H)}^p
		\Big]
		\leq
		\exp
		\!
		\big(  
		\big( 
		p { \bf C} 
		+   
			\rho
			( 2a + \| B \|_{\HS(U,H)}^2 ) 
			\big)
			(t-s)
		\big)
		\E
		\big[
		e^{ \rho \| Y_s \|_H^2 }
		\big]
		. 
		\end{split}
		\end{equation}
	\end{enumerate}
\end{lemma}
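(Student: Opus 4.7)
The plan is to inherit items (i) and (ii) directly from items (i) and (ii) of Corollary~\ref{corollary:exp_bound} (applied with the current $Y$ and $O$), so that the entire work lies in establishing~\eqref{eq:UpperBound}. Item (iv) of Corollary~\ref{corollary:exp_bound} already provides
\begin{equation}
\E\Big[\big\|\tfrac{\partial}{\partial x} X_{s,t}^{Y_s}\big\|_{L(H)}^p\Big] \leq e^{p\mathbf{C}(t-s)} \, \E\Big[\exp\!\Big(p\varepsilon \int_s^t \|X_{s,u}^{Y_s}\|_{H_{\nicefrac{1}{2}}}^2 \, du\Big)\Big],
\end{equation}
so it suffices to bound the second factor by $\exp(\rho(2a + \|B\|_{\HS(U,H)}^2)(t-s)) \, \E[e^{\rho\|Y_s\|_H^2}]$.

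To handle that exponential integral, I would first place $X_{s,\cdot}^{Y_s}$ into strong It\^o form via Lemma~\ref{lemma:Ito formula} (with $\xi = Y_s$ and $Z_u = F(X_{s,u}^{Y_s})$), and then apply It\^o's formula to $\rho\|\cdot\|_H^2$. Using $\langle x, Ax\rangle_H = -\|x\|_{H_{\nicefrac{1}{2}}}^2$, the coercivity $\langle x, F(x)\rangle_H \leq a + b\|x\|_H^2$, and the standard bound $\|B^* x\|_U^2 \leq \|B\|_{\HS(U,H)}^2 \|x\|_H^2$, this yields an It\^o decomposition of $\rho\|X_{s,t}^{Y_s}\|_H^2$ whose drift is at most $-2\rho\|X\|_{H_{\nicefrac{1}{2}}}^2 + 2\rho b\|X\|_H^2 + \rho(2a+\|B\|_{\HS(U,H)}^2)$ and whose martingale bracket is at most $4\rho^2\|B\|_{\HS(U,H)}^2\|X\|_H^2\,du$.

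I then plan to invoke \cite[Corollary~2.4]{CoxHutzenthalerJentzen2013} with the Lyapunov functional $\mathcal{U}(u,x) = e^{-\lambda(u-s)}\rho\|x\|_H^2$ and the critical choice $\lambda = 2(b + \rho\|B\|_{\HS(U,H)}^2)$. This $\lambda$ is tuned so that the exponential weight simultaneously cancels the $2\rho b\|X\|_H^2$ drift growth and the $2\rho^2\|B\|_{\HS(U,H)}^2\|X\|_H^2$ contribution from $\tfrac{1}{2}\|B^*\nabla\mathcal{U}\|_U^2$, leaving only the dissipative $-2\rho e^{-\lambda(u-s)}\|X_u\|_{H_{\nicefrac{1}{2}}}^2$ and the deterministic forcing $\rho(2a + \|B\|_{\HS(U,H)}^2)e^{-\lambda(u-s)}$. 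The cited corollary therefore outputs
\begin{equation}
\E\Big[\exp\!\Big(\smallint_s^t 2\rho e^{-\lambda(u-s)} \|X_{s,u}^{Y_s}\|_{H_{\nicefrac{1}{2}}}^2\, du\Big)\Big] \leq \exp\!\big(\rho(2a + \|B\|_{\HS(U,H)}^2)(t-s)\big) \, \E[e^{\rho\|Y_s\|_H^2}].
\end{equation}
Since $e^{-\lambda(u-s)} \geq e^{-\lambda T}$ on $[s,t]$ and the hypothesis $\varepsilon \leq (2\rho/p) e^{-2(b+\rho\|B\|_{\HS(U,H)}^2)T}$ is exactly $p\varepsilon \leq 2\rho e^{-\lambda T}$, monotonicity of $\exp$ on nonnegative arguments upgrades this to the required bound and thus to~\eqref{eq:UpperBound}.

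The main obstacle will be the constant bookkeeping in the application of \cite[Corollary~2.4]{CoxHutzenthalerJentzen2013}: one must verify the pointwise drift-plus-half-compensator inequality with precisely the right sign on the $\|\cdot\|_{H_{\nicefrac{1}{2}}}^2$ dissipation and precisely the right coefficient $\rho(2a+\|B\|_{\HS(U,H)}^2)$ on the deterministic forcing, so that the critical choice of $\lambda$ aligns with the hypothesis on $\varepsilon$. Randomness of the initial datum $Y_s$ requires no extra care, since $X_{s,\cdot}^{Y_s}$ is adapted to the normal filtration $(\f_u)_{u\in[s,T]}$ and the measurability prerequisites needed for taking expectations are already settled by items (ii) and (iii) of Corollary~\ref{corollary:exp_bound}.
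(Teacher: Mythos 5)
Your proposal follows essentially the same route as the paper's proof: items (i)--(ii) and the preliminary estimate are inherited from Corollary~\ref{corollary:exp_bound}, the process $X_{s,\cdot}^{Y_s}$ is put into strong It\^o form via Lemma~\ref{lemma:Ito formula}, the Lyapunov/generator inequality is verified with exactly the constants $2\rho b\|x\|_H^2+2\rho^2\|B\|_{\HS(U,H)}^2\|x\|_H^2$ matching $\alpha=2(b+\rho\|B\|_{\HS(U,H)}^2)$, and Cox et al.\ Corollary~2.4 is invoked, after which $e^{-\alpha(u-s)}\geq e^{-\alpha T}$ together with the hypothesis $p\varepsilon\leq 2\rho e^{-\alpha T}$ yields \eqref{eq:UpperBound}. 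The only point your sketch glosses over is the purely technical step the paper performs to fit the finite-dimensional setting of the cited corollary, namely replacing the $\operatorname{Id}_U$-cylindrical Wiener process by the finite-dimensional Wiener process obtained from the projection onto $[\operatorname{ker}(B)]^\perp$ and transporting the equation to $\R^d$ and $\R^m$ via isometric isomorphisms, which is routine.
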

\begin{proof}[Proof of Lemma~\ref{lemma:inheritet}]
	Throughout this proof 
	let 
	$ \mathbb{B} \in L(H, U) $
	satisfy for all
	$ v \in H $, $ u \in U $ that
	$ \langle B u, v \rangle_H
	=
	\langle u, \mathbb{B} v \rangle_U $,
	let
	$ R \colon U \to [ \operatorname{ker}(B) ]^\perp $ 
	be the orthogonal projection
	of $ U $ on $ [ \operatorname{ker}(B) ]^\perp $,
	let
	$ d = \dim(H) $,
	$ m = \dim( [ \operatorname{ker}( B ) ]^\perp) $,
	and
	let 
	$ \iota \colon H \to \R^d $
	and
	$ \kappa \colon R(U) \to \R^m $
	be isometric isomorphisms.
	Observe that the assumption that
	for all
	$ x, y \in H $
	we have that
	$ \langle F'(x) y, y \rangle_H \leq 
	( \varepsilon \| x \|_{H_{\nicefrac{1}{2}}}^2 + { \bf C}  ) \| y \|_H^2 
	+ \|y \|_{H_{\nicefrac{1}{2}}}^2 $
	and
	items~\eqref{item:continuity4} and~\eqref{item:second_joint_measurable}
	of
	Corollary~\ref{corollary:exp_bound}
	(applies with 
	$ ( \Omega, \F, \P  ) =
	( \Omega, \F, \P ) $, 
	$ T = T $,
	$ \varepsilon = \varepsilon $,
	$ { \bf C } = { \bf C} $, 
	$ p = p $,
	$ F = F $,
	$ Y_s = Y_s $, 
	$ O_s = O_s $,
	$ X_{s, t}^x = X_{s, t}^{x } $	 
	for    
	$ t \in [s,T] $,
	$ s \in [0,T] $,
	$ x \in H $
	in the setting of 
	items~\eqref{item:continuity4} and~\eqref{item:second_joint_measurable}
	of
	Corollary~\ref{corollary:exp_bound}) 
	justify items~\eqref{item:differentiable5}     and~\eqref{item:second_joint_measurable2}.
	Moreover, note that
	 the assumption that
	for all
	$ x, y \in H $
	we have that
	$ \langle F'(x) y, y \rangle_H \leq 
	( \varepsilon \| x \|_{H_{\nicefrac{1}{2}}}^2 + { \bf C}  ) \| y \|_H^2 
	+ \|y \|_{H_{\nicefrac{1}{2}}}^2 $
	and
	items~\eqref{item:measurability RHS}
	and~\eqref{item:L1estimate}
	of
	Corollary~\ref{corollary:exp_bound}
	(applies with 
	$ ( \Omega, \F, \P  ) =
	( \Omega, \F, \P ) $, 
	$ T = T $,
	$ \varepsilon = \varepsilon $,
	$ { \bf C } = { \bf C} $, 
	$ p = p $,
	$ F = F $,
	$ Y_s = Y_s $, 
	$ O_s = O_s $,
	$ X_{s, t}^x = X_{s, t}^{x } $	 
	for    
	$ t \in [s,T] $,
	$ s \in [0,T] $,
	$ x \in H $
	in the setting of 
	items~\eqref{item:measurability RHS}
	and~\eqref{item:L1estimate}
	of
	Corollary~\ref{corollary:exp_bound}) 
	verify that for all 
	$ s \in [0,T] $,
	$ t \in [s,T] $
	we have
	that
	\begin{equation}
	\begin{split} 
	\label{eq:first_estimate1}
	\E \Big[ 
	\big \| \tfrac{\partial}{\partial x} 
	X_{s,t}^{ Y_s  }  
	\big\|_{L(H)}^p
	\Big]
	\leq  
	e^{p { \bf C} (t-s)}
	\E\bigg[
	\exp\!\bigg(
	p  \varepsilon \int_s^t  
	\|
	X_{s,u}^{ Y_s }
	\|_{H_{\nicefrac{1}{2}}}^2 
	\,
	du
	\bigg)
	\bigg]
	.
	\end{split}
	\end{equation}
	In the next step we intend to apply Cox et al.\ \cite[Corollary~2.4]{CoxHutzenthalerJentzen2013} in order to derive an a priori bound for the right-hand side of~\eqref{eq:first_estimate1}.
	For this note that
	the assumption that
	for all $ x \in H $
	we have that
	$ \langle x, F(x) \rangle_H  
	\leq
	a + b \| x \|_H^2 $
	gives that for all
	$ x \in H $ we have that
	\begin{equation}
	\begin{split}
	\label{eq:generator_assumption}
	&
	2 \rho \langle x,  A x + F(x) \rangle_H
	+
	\rho \|  B  \|_{\HS(U,H)}^2
	+
	2 \rho^2 \| \mathbb{B} x\|_U^2
	\\
	&
	\leq
	- 2 \rho \| x \|_{H_{\nicefrac{1}{2}}}^2 
	+
	2 \rho \langle x, F(x) \rangle_H
	+
	\rho \|  B   \|_{HS(U,H)}^2
	+
	2 \rho^2 \|   B   \|_{\HS(U, H)}^2 
	\| x \|_H^2 
	\\
	&
	\leq 
	- 2 \rho \| x \|_{H_{\nicefrac{1}{2}}}^2 
	+
	2 \rho a + 2 \rho b \| x \|_H^2
	+
	\rho \| B \|_{\HS(U,H)}^2
	+
	2 \rho^2 \| B \|_{\HS(U,H)}^2 
	\| x \|_H^2
	\\
	&
	=
	- 
	2 \rho
	\| x \|_{H_{\nicefrac{1}{2}}}^2
	+
	\rho (2 a + \| B \|_{\HS(U,H)}^2)
	+
	2 \rho ( b + \rho \| B \|_{\HS(U,H)}^2 )
	\| x \|_H^2			
	.
	\end{split}
	\end{equation}
	Next note that
	Lemma~\ref{lemma:Ito formula}
	(applies with
	$ T = T $,
	$ s = s $,
		$ B = B $, 
	$ ( \Omega, \F, \P ) =
	 ( \Omega, \F, \P ) $,
	$ (W_t)_{ t \in [0,T] } = (W_t)_{ t \in [0,T] } $,
	$ \xi = Y_s $,
	$ Z_{s + t } =  F( X_{s, s+t}^{Y_s} ) $,
	$ Y_{ s + t} = X_{s, s + t}^{Y_s} $,
	$ O = O $
	for  
	$ t \in [0, T-s] $,
	$ s \in [0,T] $
	in the setting of 
	Lemma~\ref{lemma:Ito formula})
	ensures that for all
	$ s \in [0, T] $,
	$ t \in [0, T - s] $ we have that
	\begin{equation}
	[ X_{s, s+t}^{Y_s} ]_{\P, \B(H)}
	=
	[
	Y_s
	]_{\P, \B(H)}
	+
	\bigg[ 
	\int_s^{s + t} 
	\big[
	A
	X_{s,u}^{Y_s} 
	+
	F( X_{s,u}^{Y_s} ) 
	\big]
	\, du
	\bigg ]_{\P, \B(H)}
	+
	\int_s^{s + t} B \, dW_u
	.
	\end{equation}
	Moreover, observe that 
	the assumption that
	$ \dim(H) < \infty $
	ensures that
	$ \dim( [ \operatorname{ker}(B) ]^\perp ) < \infty $
	and
	$ R \in \HS(U) $.
	This gives that there
	exists
	a stochastic process 
	$ \mathbb{W} \colon [0,T] \times \Omega \to R(U) $  
	w.c.s.p.\ 
	which satisfies for all $ t \in [0,T] $ that
	\begin{equation} 
	\label{eq:some BM}
	[ \mathbb{W}_t ]_{\P, \mathcal{B}( R(U) ) } = \int_0^t  R \, dW_s 
	.
	\end{equation}
	Observe that~\eqref{eq:some BM} 
	gives that for all 
	$ s \in [0,T] $,
	$ t \in [0, T-s] $
	we have that
	\begin{equation} 
	\begin{split}
	\label{eq:BrownianFits}
	\int_s^{s + t}
	B \, dW_u
	=
	\int_s^{s + t}
	B R \, dW_u
	=
	\int_s^{s+t}
	( B|_{R(U)}  ) 
	\, d \mathbb{W}_u
	=
	[ 
	( B|_{R(U)} ) 
	( \mathbb{W}_{s + t} - \mathbb{W}_s ) 
	]_{\P, \mathcal{B}( H )}
	.
	\end{split}
	\end{equation}
	In addition, note that, e.g., 
	\cite[Lemma~3.2]{JentzenPusnik2018Published}
	(applies with 
	$ H = R(U) $,
	$ U = U $,
	$ T = T $,
	$ Q = \operatorname{Id}_U $,
	$ R = \operatorname{Id}_{R(U)} $,
	$ ( \Omega, \F,
	\P, ( \F_t )_{ t \in [0,T]} )
	=
	( \Omega, \F,
	\P, ( \f_t )_{ t \in [0,T]} ) $,
	$ ( W_t )_{ t \in [0,T] } = ( W_t )_{ t \in [0,T] } $,
	$ (\mathcal{G}_t )_{t\in [0,T]}
	=
	(\f_t )_{t\in [0,T]} $,
	$ (  \tilde W_t  )_{ t \in [0,T] }
	=
	( \mathbb{W}_t )_{ t \in [0,T] } $
	in the setting of~\cite[Lemma~3.2]{JentzenPusnik2018Published})
	verifies that 
	$ ( \mathbb{W}_t )_{ t \in [0,T] } $	
	is an $ \operatorname{Id}_{R(U)} $-standard
	$ ( \f_t )_{ t \in [0,T]} $-Wiener process.
	Combining this, 
	\eqref{eq:generator_assumption},
	and~\eqref{eq:BrownianFits}
	with
	Cox et al.\ \cite[Corollary~2.4]{CoxHutzenthalerJentzen2013}
		(applies with
	$ d = \dim(H) $,
	$ m = \dim( [ \operatorname{ker}( B ) ]^\perp) $,
	$ T = T-s $,
	$ O = \R^d $,
	$ \mu = ( \R^d \ni x \mapsto (\iota \circ A \circ \iota^{-1} )( x ) 
	+ (\iota \circ F \circ \iota^{-1} )(x) \in \R^d ) $, 
	$ \sigma = ( \R^d \ni x \mapsto \iota \circ (B|_{R(U)}) \circ  \kappa^{-1} \in \HS(\R^m, \R^d ) ) $,
	$ ( \Omega, \F, \P  ) 
	= ( \Omega, \F, \P ) $,
	$ \F_u = \f_{s + u} $,
	$ W_u = \kappa( \mathbb{W}_{s + u} - \mathbb{W}_s ) $, 
	$ \alpha = 2 b + 2 \rho \| B \|_{\HS(U,H)}^2 $,
	$ U = ( \R^d \ni x \mapsto \rho \| \iota^{-1}(x) \|_H^2 \in \R ) $,
	$ \bar U = ( [0, T-s]  \times \R^d \ni (r, x) \mapsto
	2 \rho \| \iota^{-1}(x) \|_{H_{\nicefrac{1}{2}}}^2
	-
	\rho ( 2 a + \| B \|_{\HS(U,H)}^2 )	
	\in \R ) $,
	$ \tau = ( \Omega \ni \omega \mapsto t - s \in [0, T - s] ) $,
	$ X_u = \iota \circ X_{s, s + u}^{  Y_s } $ 
	for 
	$ u \in [0, T-s] $,
	$ t \in [s, T] $, 
	$ s \in [0, T) $
	in the setting of Cox et al.\ \cite[Corollary~2.4]{CoxHutzenthalerJentzen2013}) 
	yields that for all 
	$ s \in [0,T] $,
	$ t \in [s,T] $
	we have that
	\begin{equation}
	\begin{split} 
	&
	\E\bigg[
	\exp\!
	\bigg(
	\rho
	e^{
		- 2 (
		b + \rho \| B \|_{\HS(U,H)}^2)
		( t - s )
	}
	\| X^{ Y_s }_{s,t} \|_H^2
	\\
	&
	\quad
	+
	\int_s^t
	e^{
		- 2 (
		b + \rho \| B \|_{\HS(U,H)}^2)
		( u - s )
	}
	\big (
	2 \rho \| X_{s,u}^{ Y_s } 
	\|_{ H_{ \nicefrac{1}{2} } }^2 - \rho(2a + \| B \|_{\HS(U,H)}^2 ) 
	\big )
	\, du
	\bigg)
	\bigg] 
	\leq
	\E
	\big[
	e^{ \rho \| Y_s \|_H^2 }
	\big]
	. 
	\end{split}
	\end{equation}
	This 
	gives that for all   
	$ s \in [0,T] $,
	$ t \in [s,T] $
	we have that
	\begin{equation}
	\begin{split} 
	&
	\E\bigg[
	\exp\!
	\bigg(
	\rho
	e^{
		- 2 (
		b + \rho \| B \|_{\HS(U,H)}^2)
		( t - s )
	}
	\| X^{ Y_s }_{s,t} \|_H^2
	+
	2
	\rho
	\int_s^t
	e^{
		- 2 (
		b + \rho \| B \|_{\HS(U,H)}^2)
		( u - s )
	} 
	\| X_{s,u}^{ Y_s } \|_{ H_{ \nicefrac{1}{2} } }^2   
	\, du
	\bigg)
	\bigg] 
	\\
	&
	\leq
	\exp\!\bigg(
	\rho
	( 2 a + \| B \|_{\HS(U, H)}^2 ) 
	\int_s^t
	e^{
		- 2 (
		b + \rho \| B \|_{\HS(U,H)}^2)
		( u - s )
	}
	\, du
	\bigg)
	\E
	\big[
	e^{ \rho \| Y_s \|_H^2 }
	\big]
	. 
	\end{split}
	\end{equation}
	Therefore, we obtain that for all 
	$ s \in [0,T] $,
	$ t \in [s,T] $
	we have 
	that
	\begin{equation}
	\begin{split} 
	&
	\E\bigg[
	\exp\!
	\bigg(
	2
	\rho
	e^{
		- 2 (
		b + \rho \| B \|_{\HS(U,H)}^2)
		T
	}
	\int_s^t 
	\| X_{s,u}^{ Y_s } \|_{ H_{ \nicefrac{1}{2} } }^2  
	\, du
	\bigg)
	\bigg] 
	\\
	&
	\leq
	\exp\!
	\big(  
	\rho
	( 2a + \| B \|_{\HS(U,H)}^2 )
	(t-s)
	\big)
	\E
	\big[
	e^{ \rho \| Y_s \|_H^2 }
	\big]
	. 
	\end{split}
	\end{equation}
	The assumption that
	$ p \varepsilon \leq 2 \rho 
	\exp( - 2 (  b + \rho \| B \|_{\HS(U,H)}^2 ) T ) $
	and~\eqref{eq:first_estimate1}
	hence
	illustrate that
	for all  
	$ s \in [0,T] $,
	$ t \in [s,T] $
	we have that
	\begin{equation}
	\begin{split} 
\E \Big[ 
	\big \| \tfrac{\partial}{\partial x} 
	X_{s,t}^{ Y_s }  
	\big \|_{L(H)}^p
	\Big] 
	&
	\leq
	e^{ p { \bf C} ( t - s ) }
	\E\bigg[
	\exp\!
	\bigg( 
	2
	\rho
	e^{
		- 2 (
		b + \rho \| B \|_{\HS(U,H)}^2)
		T
	}
	\int_s^t 
	\| X_{s,u}^{ Y_s } \|_{ H_{ \nicefrac{1}{2} } }^2  
	\, du
	\bigg)
	\bigg] 
	\\
	&
	\leq
	\exp\!
	\big(  
	p { \bf C} ( t - s )
	+ 
		\rho
		( 2a + \| B \|_{\HS(U,H)}^2 )  
		(t-s)
	\big)
	\E
	\big[
	e^{ \rho \| Y_s  \|_H^2 }
	\big]
	. 
	\end{split}
	\end{equation}
	The proof of Lemma~\ref{lemma:inheritet}
	is hereby completed.
\end{proof}
\subsection{Strong error estimates for exponential Euler-type approximations}
\label{section:strongApriori}
In this subsection we employ the results from 
Subsections~\ref{subsection:Measure}
and~\ref{section:AprioriBounds}
to
derive 
in
Proposition~\ref{proposition:main_error_estimate}
an upper bound
for the 
strong error
between
the exact solution
of an SODE with additive noise and
given initial value
(see~\eqref{eq:EQ} below)
and its numerical approximation (see~\eqref{eq:some assumption} below).
\begin{proposition}
	\label{proposition:main_error_estimate}
	\sloppy 
	Assume Setting~\ref{setting:main},
	assume that $ \dim(H) < \infty $,
	let 
	$ T \in (0,\infty) $,  
	$ \theta \in \varpi_T $,
	$ a, b, { \bf C}, \rho \in [0,\infty) $,   
	$ C, c, p \in [1,\infty) $,  
	$ \gamma \in [0,1) $,
		$ \delta \in [0, \gamma ] $,  
	$ \kappa \in \R $,  
		$ B \in \HS(U,H) $,     
	$  \varepsilon \in [0, ( \nicefrac{ \rho }{ p } ) 
	\exp( - 2 ( b + \rho \| B \|_{\HS(U,H)}^2 ) T ) ] $,
	$ F \in \mathcal{C}^1( H, H ) $,
	$ \ff \in \M( \B(H), \B(H) ) $,
	$ \Phi \in \mathcal{C}( H, [0, \infty) ) $,
	assume for all 
	$ x, y \in H $ 
	that 
		$ \langle x, F(x) \rangle_H  
	\leq
	a + b \| x \|_H^2 $, 
	$ \langle F'(x) y, y \rangle_H \leq 
	( \varepsilon \| x \|_{H_{\nicefrac{1}{2}}}^2 + { \bf C}  ) \| y \|_H^2 
	+ \|y \|_{H_{\nicefrac{1}{2}}}^2 $,
	$
	\| F( x )- F( y )  \|_H 
	\leq 
	C  \| x - y \|_{H_\delta} ( 1 + \| x \|_{H_\kappa}^c + \| y \|_{H_\kappa}^c ) $,
	and
	$ \langle x, Ax + F( x + y ) \rangle_H
	\leq \Phi(y) ( 1 + \| x \|_H^2 ) $,
	let $ ( \Omega, \F, \P ) $
	be a probability space with a normal filtration
	$ ( \f_t )_{t \in [0,T]} $,
	let
	$ \xi \in \M( \f_0, \B(H) ) $,
	let $ (W_t)_{ t \in [0,T]} $
	be an
	$ \operatorname{Id}_U $-cylindrical
	$ ( \f_t )_{t\in [0,T]} $-Wiener process, 
		let 
	$ O \colon [0,T] \times \Omega \to H $
	be a
	stochastic process 
	w.c.s.p.\ 
	which satisfies
	for all
	$ t \in [0,T] $ 
	that 
	$ [ O_t ]_{\P, \mathcal{B}(H ) } = \int_0^t e^{(t-u)A} B \, dW_u $,
	and  
	let 
	$ \y \colon [0,T] \times \Omega \to H $ 
	and
	$ \O \colon [0,T] \times \Omega \to H $ 
	be  
	$ ( \f_t )_{ t \in [0,T] } $-adapted stochastic processes
	w.c.s.p.\ 
	which satisfy
	for all $ t \in [0,T] $ that
	\begin{equation} 
	\label{eq:some assumption}
	\P \bigg(
	\y_t
	=
	e^{t A}
	\xi
	+
	\int_0^t
	e^{ ( t - \llcorner u \lrcorner_\theta ) A } 
	\ff ( \y_{ \llcorner u \lrcorner_\theta } ) \, du
	+
	\O_t
	\bigg) = 1
	.
	\end{equation} 
	Then
	\begin{enumerate}[(i)]
		\item \label{item:Existence} 
		there exists a unique stochastic process
		$ X \colon [0, T] \times \Omega \to H $
		w.c.s.p.\ 
		which satisfies for all
		$ t \in [0,T] $
		that
	\begin{equation}
	\label{eq:EQ}
	X_{ t} 
	= 
	e^{ t A } \xi
	+
	\int_0^t e^{ ( t - u ) A } F( X_u  ) \, du + O_t
	,
	\end{equation}
	\item \label{item:Adaptedness}
	we have that
$ X $ 
	is $ ( \f_t )_{ t \in [0,T] } $-adapted,
	and
	\item \label{item:EstimateCrucial} 
	we have
		for all
		$ t \in [0,T] $ that  
			\begin{equation}
		\begin{split} 
		\label{eq:Needs}
		& 
		\|  \y_t - X_t \|_{ \L^p( \P; H ) } 
		\leq
		\sup\nolimits_{s \in [0,T]}
		\| \O_s - O_s \|_{ \L^p( \P; H ) } 
		\\
		&
			\quad
		+ 
		\tfrac{ C [ \max \{ T, 1 \} ]^2 }{ 1 - \gamma }
		\exp\!
		\big(  
		\big(
		{ \bf C} 
		+   
		\rho
		( 2a + \| B \|_{\HS(U,H)}^2 ) 
		\big)
		t
		\big)
		\bigg[ 
		\int_0^t 
		\E
		\big[
		e^{ \rho \|  \y_s -  \O_s + 
			O_s 
\|_H^2 
		}
		\big]
		\, ds
		\bigg]
		\\
		&
		\quad
		\cdot
		\Big\{
		[| \theta |_T]^{\gamma - \delta}
		\sup\nolimits_{s\in [0,T]}
		\| \ff (  \y_s ) \|_{ \L^{2p} (\P; H_{\gamma-\delta} )}
		+
		\sup\nolimits_{s\in [0,T]}
		\| \ff ( \y_s ) - F ( \y_s ) \|_{ \L^{ 2 p } ( \P; H ) }
		\\
		&
		\quad
		+
		\Big(   
		2
		[ | \theta |_T ]^{\gamma - \delta} 
		\sup\nolimits_{s\in [0,T]}
		\|
		\ff ( \y_s )
		\|_{\L^{4p}(\P; H)}
		+
		\sup\nolimits_{s\in [0,T]}
		\|  \O_s -  \O_{ \llcorner s \lrcorner_\theta } \|_{\L^{4p}(\P; H_\delta)}
		\\
		&
		\quad
		+
		[ | \theta |_T ]^{ \gamma - \delta }
		\| \xi \|_{\L^{4p}(\P; H_\gamma)}
		+
		\sup\nolimits_{s\in [0,T]}
		\|  \O_s - O_s \|_{\L^{4p}(\P; H_\delta)}
		\Big)
		\\
		&
			\quad
		\cdot
		\big[
		1
		+
		2 
		\sup\nolimits_{s\in [0,T]}
		\|
	 	 \y_s
		\|_{\L^{4p c}(\P; H_\kappa)}
		+
		\sup\nolimits_{s\in [0,T]}
		\|   \O_s - O_s  \|_{\L^{4p c}(\P; H_\kappa)}
		\big]^c
		\Big\}
		.  
		\!\!\!\!\!
		\end{split}
		\end{equation}
	\end{enumerate}
\end{proposition}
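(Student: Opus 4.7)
My plan is as follows. For items~\eqref{item:Existence} and~\eqref{item:Adaptedness}, I would argue pathwise. Given a fixed continuous trajectory $O(\cdot,\omega)$, set $Z_t = X_t - O_t$ and observe that the mild integral equation~\eqref{eq:EQ} for $X$ is equivalent, via item~\eqref{item:rewrite} of Lemma~\ref{lemma:Simplified}, to the ODE $Z_t = \xi(\omega) + \int_0^t [A Z_u + F(Z_u + O_u(\omega))]\,du$. Since $\dim(H) < \infty$ and $F \in \mathcal{C}^1(H,H)$, classical ODE theory yields local existence and uniqueness of a continuous path, and the coercivity assumption $\langle x, Ax + F(x+y)\rangle_H \leq \Phi(y)(1 + \|x\|_H^2)$ combined with continuity of $\Phi$ and of $O(\cdot,\omega)$ produces, via Gronwall's lemma applied to $\|Z_t(\omega)\|_H^2$, a finite a priori bound that rules out blow-up. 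This gives a unique continuous $X(\cdot,\omega) = Z(\cdot,\omega) + O(\cdot,\omega)$; adaptedness of $X$ to $(\f_t)_{t\in[0,T]}$ then follows from the adaptedness of $\xi$ and of $O$ by a standard Picard-iteration argument, since every Picard iterate is expressible as a measurable functional of $\xi$ and $(O_s)_{s\in[0,t]}$.

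For the error estimate~\eqref{eq:Needs}, I would invoke Corollary~\ref{corollary:main_error_estimate} with the choices $\zeta = \xi$ and $\iota = \gamma - \delta \in [0, 1-\delta)$; then $X_{0,t}^{\zeta + O_0} = X_t$, every $\|\xi - \zeta\|$-term vanishes, and $[|\theta|_T]^\iota = [|\theta|_T]^{\gamma-\delta}$. The resulting estimate reads, schematically,
\begin{equation*}
\|\y_t - X_t\|_{\L^p(\P;H)} \leq \|\O_t - O_t\|_{\L^p(\P;H)} + \tfrac{C\max\{T,1\}}{1-\gamma} \int_0^t \bigl\|\tfrac{\partial}{\partial x} X_{s,t}^{\y_s - \O_s + O_s}\bigr\|_{\L^{2p}(\P;L(H))} \Psi(s)\, ds,
\end{equation*}
where $\Psi(s)$ collects the various moment terms on the right of~\eqref{eq:app_error}. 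The derivative factor would then be controlled by Lemma~\ref{lemma:inheritet} applied with $p$ replaced by $2p$ and $Y_s := \y_s - \O_s + O_s$; the admissibility condition for $\varepsilon$ in that lemma, namely $\varepsilon \leq (2\rho/(2p))\exp(-2(b+\rho\|B\|_{\HS(U,H)}^2)T) = (\rho/p)\exp(-2(b+\rho\|B\|_{\HS(U,H)}^2)T)$, is exactly the standing hypothesis of the proposition. Lemma~\ref{lemma:inheritet} then yields
\begin{equation*}
\E\!\bigl[\bigl\|\tfrac{\partial}{\partial x} X_{s,t}^{Y_s}\bigr\|_{L(H)}^{2p}\bigr] \leq \exp\!\bigl((2p\mathbf{C} + \rho(2a + \|B\|_{\HS(U,H)}^2))(t-s)\bigr)\, \E\!\bigl[e^{\rho \|Y_s\|_H^2}\bigr].
\end{equation*}
Taking $(2p)$-th roots and using $y^{1/(2p)} \leq y$ for $y \geq 1$ (which applies here since $\E[e^{\rho\|Y_s\|_H^2}] \geq 1$) together with $1/(2p) \leq 1$ in the exponent absorbs both $2p$-factors, upgrading the right-hand side to $\exp((\mathbf{C} + \rho(2a + \|B\|_{\HS(U,H)}^2))t) \cdot \E[e^{\rho\|Y_s\|_H^2}]$ and, crucially, removing the $s$-dependence of the exponential prefactor.

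The rest is bookkeeping. I would pull the $s$-independent exponential prefactor out of the $ds$-integral; bound each $s$-dependent $\L^q$-norm inside $\Psi(s)$ by its supremum over $s \in [0,T]$; absorb $[|\theta|_T]^{1-\delta} \leq \max\{T,1\}\cdot [|\theta|_T]^{\gamma - \delta}$, which is what produces the extra $\max\{T,1\}$ factor that turns the prefactor $C\max\{T,1\}/(1-\gamma)$ into $C[\max\{T,1\}]^2/(1-\gamma)$; use $\iota = \gamma - \delta$ to merge the two $\ff$-integrand terms into a single $2[|\theta|_T]^{\gamma-\delta}\sup_s \|\ff(\y_s)\|_{\L^{4p}(\P;H)}$; and bound $\|\O_t - O_t\|_{\L^p(\P;H)}$ by $\sup_{s\in[0,T]}\|\O_s - O_s\|_{\L^p(\P;H)}$. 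Together these manipulations produce~\eqref{eq:Needs}. The main obstacle, modest but tedious, is the careful matching of the many $\L^q$-exponents and $[|\theta|_T]$-exponents across Corollary~\ref{corollary:main_error_estimate}, Lemma~\ref{lemma:inheritet}, and the target estimate, and verifying the joint measurability of $Y_s = \y_s - \O_s + O_s$ needed to apply Lemma~\ref{lemma:inheritet} (which follows from adaptedness of $\y$, $\O$, and $O$).
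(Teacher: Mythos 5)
Your treatment of the error estimate follows essentially the same route as the paper: you apply Corollary~\ref{corollary:main_error_estimate} with $\zeta=\xi$ and $\iota=\gamma-\delta$, control the derivative factor with Lemma~\ref{lemma:inheritet} applied with $p$ replaced by $2p$ and $Y_s=\y_s-\O_s+O_s$ (so that the admissibility condition on $\varepsilon$ becomes exactly the standing hypothesis), and then discard the $(2p)$-th root using $\E[e^{\rho\|Y_s\|_H^2}]\geq 1$; your bookkeeping of where the extra $\max\{T,1\}$ factor and the coefficient $2$ in front of $[|\theta|_T]^{\gamma-\delta}\sup_s\|\ff(\y_s)\|_{\L^{4p}(\P;H)}$ come from matches the paper's computation. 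For items~(i)--(ii) you take a more elementary, self-contained route (pathwise ODE theory plus Gronwall for non-blow-up plus a Picard argument for adaptedness) where the paper simply cites \cite[Corollary~2.4]{JentzenLindnerPusnik2017c}; in finite dimensions this is a legitimate alternative, though with only local Lipschitz continuity the Picard argument needs a truncation or stopping step before it yields adaptedness on all of $[0,T]$.

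The genuine omission is the passage from the almost-sure hypothesis~\eqref{eq:some assumption} to the everywhere identity that Corollary~\ref{corollary:main_error_estimate} actually requires: its hypothesis~\eqref{eq:Numeric2} is a pathwise identity valid for every $\omega$ and every $t$, whereas the proposition only assumes $\P(\y_t=\ldots)=1$ for each $t$, so applying the corollary directly to $(\y,\O)$ is not licensed. The paper repairs this by introducing the full-measure event $\Sigma$ and the modified processes $\mathcal{Y},\mathcal{O}$ (which satisfy the recursion for all $\omega$), running Corollary~\ref{corollary:main_error_estimate} for these, and transferring the estimate back through $\P(\mathcal{Y}_t=\y_t)=\P(\mathcal{O}_t=\O_t)=1$; your proposal skips this step entirely. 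Relatedly, both Corollary~\ref{corollary:main_error_estimate} and Lemma~\ref{lemma:inheritet} need as input the whole family $(X^x_{s,t})_{t\in[s,T]}$, $s\in[0,T]$, $x\in H$, of $(\f_t)_{t\in[0,T]}$-adapted solutions with continuous sample paths (adaptedness is what makes the exponential-moment bound of Cox et al.\ applicable inside Lemma~\ref{lemma:inheritet}), while your construction produces only the single process $X$; you would need to observe that your pathwise existence and adaptedness argument applies verbatim for every $(s,x)$, which is what the paper obtains from the cited well-posedness result. Both points are repairable along the paper's lines, but as written they are missing.
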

\begin{proof}[Proof of Proposition~\ref{proposition:main_error_estimate}]
	Throughout this proof  
	let
	$ \Sigma \subseteq \Omega $ 
	be the set which satisfies that
	\begin{equation} 
	\begin{split}
	\label{eq:NewY}
	\Sigma =
	&
	\bigg\{ \omega \in \Omega \colon 
	\bigg( 
	\forall \, t \in [0, T] \colon 
	\y_t ( \omega ) 
	=
	e^{t A}
	\xi ( \omega )
	+
	\int_0^t
	e^{ ( t - \llcorner u \lrcorner_\theta ) A } 
	\ff ( \y_{ \llcorner u \lrcorner_\theta } (\omega ) ) \, du
	+
	\O_t(\omega)
	\bigg)
	\bigg\} 
	,
	\end{split}
	\end{equation}
	let 
	$ \mathcal{Y} \colon 
	[0,T] \times \Omega \to H $
satisfy for all 
	$ t \in [0,T] $,
	$ \omega \in \Omega $  
	that
	\begin{equation}
	\label{eq:NewY1}
	\mathcal{Y}_t (\omega) =
	\begin{cases}
	\y_t(\omega) & \colon \omega \in \Sigma \\
	0 & \colon \omega \in ( \Omega \backslash \Sigma )
	,
	\end{cases}
	\end{equation}
	and let
	$ \mathcal{O} \colon 
	[0,T] \times \Omega \to H $
satisfy for all
	$ t \in [0,T] $,
	$ \omega \in \Omega $  
	that
	\begin{equation}
	\label{eq:NewY2}
	\mathcal{O}_t( \omega ) 
	= 
	\begin{cases} 
	 \O_t( \omega ) 
	& \colon \omega \in \Sigma
	\\
	-e^{tA} \xi(\omega)
	- 
	\int_0^t e^{(t-\llcorner u \lrcorner_\theta)A} 
	\ff (0) \, d u 
	&\colon 
	\omega \in ( \Omega \backslash \Sigma )
	.
	\end{cases}
	\end{equation}
Note that
the assumption that for all $ x, y \in H $ we have that
\begin{equation}
\label{eq:local Lipschitz} 
\| F( x )- F( y )  \|_H 
\leq 
C  \| x - y \|_{H_\delta} ( 1 + \| x \|_{H_\kappa}^c + \| y \|_{H_\kappa}^c ), 
\end{equation} 
the assumption that for all $ x, y \in H $ we have that
\begin{equation} 
\label{eq:Localy monotone}
\langle x, Ax + F( x + y ) \rangle_H
\leq \Phi(y) ( 1 + \| x \|_H^2 ),
\end{equation} 
and, e.g., \cite[Corollary~2.4]{JentzenLindnerPusnik2017c}
	(applies with
	$ H = H $, 
	$ \H = \H $,
	$ \values = \values $,
	$ A = A $,
	$ T = T $,  
	$ s = 0 $,
	$ C = C $, 
	$ c = c $, 
	$ \delta = \delta $, 
	$ \kappa = \kappa $,
	$ F = F $,
	$ \Phi = \Phi $,
	$ ( \Omega, \F, \P, ( \f_t )_{ t \in [0,T ] } ) = ( \Omega, \F, \P, ( \f_t )_{ t \in [0,T ] } ) $,
	$ \xi = \xi + O_0 $,
	$ O = O $
	in the setting of~\cite[Corollary~2.4]{JentzenLindnerPusnik2017c})
	justify items~\eqref{item:Existence} and~\eqref{item:Adaptedness}.
	In the next step we are going to use
	Corollary~\ref{corollary:main_error_estimate}
	and
	Lemma~\ref{lemma:inheritet}
	to verify~\eqref{eq:Needs}.
	For this observe that~\eqref{eq:local Lipschitz},
	\eqref{eq:Localy monotone}, 
	and, e.g.,
	\cite[Corollary~2.4]{JentzenLindnerPusnik2017c}
	(applies with
	$ H = H $, 
	$ \H = \H $,
	$ \values = \values $,
	$ A = A $,
	$ T = T $,  
	$ s = s $,
	$ C = C $,   
	$ c = c $,
	$ \delta = \delta $, 
	$ \kappa = \kappa $,
	$ F = F $,
	$ \Phi = \Phi $,
	$ ( \Omega, \F, \P, ( \f_t )_{ t \in [0,T ] } ) = ( \Omega, \F, \P, ( \f_t )_{ t \in [0,T ] } ) $,
	$ \xi = ( \Omega \ni \omega \mapsto x \in H ) $,
	$ O = O $
	for 
	$ s \in [0,T] $,
	$ x \in H $
	in the setting of~\cite[Corollary~2.4]{JentzenLindnerPusnik2017c})
	illustrate that
	there exist stochastic processes
	$ \mathcal{X}_{s,(\cdot)}^x
	=
	(\mathcal{X}_{s,t}^x)_{ t \in [s,T] }
	\colon 
	[s,T] \times \Omega
	\to H $,
	$ s \in [0,T] $,
	$ x \in H $,
	w.c.s.p.\ 
	which satisfy for all 
	$ s \in [0,T] $,
	$ t \in [s,T] $,
	$ x \in H $
	that
	$ \mathcal{X}_{s,(\cdot)}^x $
	is 
	$ ( \f_u )_{ u \in [s,T]} $-adapted
	and
		\begin{equation}
		\label{eq:omegawise_0}
		\mathcal{X}_{s,t}^x
		= 
		e^{ ( t - s ) A } x
		+
		\int_s^t e^{ ( t - u ) A } 
		F ( \mathcal{X}_{s,u}^x ) \, du 
		+ 
		O_t - e^{(t-s)A} O_s 
		.
		\end{equation}
		Moreover, note that~\eqref{eq:some assumption}
	and the fact that
	$ \y $ and $ \O $
	are 
	stochastic processes 
	w.c.s.p.\ 
	ensure that
	\begin{equation}
	\Sigma \in \F \qquad \text{and} \qquad \P(\Sigma)=1 .
	\end{equation}
	The fact that $ ( \f_t )_{t \in [0,T]} $ 
	is a normal filtration 
	and the fact that
	$ \y $ and $ \O $
	are 
	$ ( \f_t)_{ t \in [0,T] } $-adapted 
	therefore gives that
	\begin{enumerate}[(a)] 
		\item \label{item:Y adapted} we have that 
		$ \mathcal{Y} $ is 	$ ( \f_t )_{ t \in [0,T]} $-adapted, 
			\item\label{item:O adapted} we have that
			$ \mathcal{O} $ is $ ( \f_t )_{ t \in [0,T]} $-adapted,
		\item \label{item:negligible} we have for all $ t \in [0, T] $
		that
		$ \P( \mathcal{Y}_t = \y_t ) = 1 $, and
		\item  \label{item:negligible2} we have for all $ t \in [0, T] $
		that
		$ \P( \mathcal{O}_t = \O_t ) = 1 $.
	\end{enumerate} 
	In addition, note that~\eqref{eq:omegawise_0}
	gives that for all 
	$ t \in [0,T] $,
	$ \omega \in \Omega $
	we have that
	\begin{equation}
	\label{eq:omegawise_0b}
	\mathcal{X}_{0,t}^{ \xi(\omega) + O_0(\omega) }
	(\omega)
	= 
	e^{ t A }  \xi(\omega) 
	+
	\int_0^t e^{ ( t - u ) A } 
	F \big( \mathcal{X}_{0,u}^{ \xi(\omega) + O_0(\omega) }(\omega) \big) \, du 
	+ 
	O_t(\omega) 
	.
	\end{equation}
	Furthermore,
	observe that item~\eqref{item:Existence}
	ensures that for all 
	$ t \in [0,T] $,
	$ \omega \in \Omega $ we have that
	\begin{equation}
	\label{eq:omegawise_0c}
	X_t
	(\omega)
	= 
	e^{ t A } \xi(\omega) 
	+
	\int_0^t e^{ ( t - u ) A } F( X_u (\omega)   ) \, du + O_t (\omega) 
	.
	\end{equation}
	Combining this, 
	\eqref{eq:omegawise_0b},
	and, e.g., \cite[item~(i)
	of
	Corollary~2.4]{JentzenLindnerPusnik2017c}	  %
	(applies with
	$ H = H $, 
	$ \H = \H $,
	$ \values = \values $,
	$ A = A $,
	$ T = T $,  
	$ s = 0 $,
	$ C = C $,
	$ c = c $,  
	$ \delta = \delta $, 
	$ \kappa = \kappa $,
	$ F = F $,
	$ \Phi = \Phi $,
	$ ( \Omega, \F, \P, ( \f_t )_{ t \in [0,T ] } ) = 
	( \Omega, \F, \P, ( \f_t )_{ t \in [0,T ] } ) $,
	$ \xi = \xi + O_0 $,
	$ O = O $ 
	in the setting of~\cite[item~(i)
	of
	Corollary~2.4]{JentzenLindnerPusnik2017c})
	yields that for all
	$ t \in [0,T] $, 
	$ \omega \in \Omega $
	we have that
	\begin{equation} 
	\label{eq:Now relevant}
	X_t (\omega) 
	= 
	\mathcal{X}_{0,t}^{ \xi(\omega) + O_0(\omega) } ( \omega)	
	.
	\end{equation} 
	Moreover, 
	observe that~\eqref{eq:NewY}--\eqref{eq:NewY2}
	verify that
	for all
	$ t \in [0,T] $
	we have that
	\begin{equation}
	\label{eq:omegawise_2}
	\mathcal{Y}_t
	=
	e^{t A}
	\xi
	+
	\int_0^t
	e^{ ( t - \llcorner u \lrcorner_\theta ) A } 
	\ff ( \mathcal{Y}_{ \llcorner u \lrcorner_\theta } ) \, du
	+
	\mathcal{O}_t
	.
	\end{equation} 
Combining item~\eqref{item:negligible},
\eqref{eq:local Lipschitz},
	\eqref{eq:omegawise_0},
	\eqref{eq:Now relevant}, 
	and
	Corollary~\ref{corollary:main_error_estimate} 
	(applies with
	$ ( \Omega, \F, \P) = ( \Omega, \F, \P) $, 
	$ T = T $, 
	$ \theta = \theta $,
	$ C = C $, 
	$ c = c $,
	$ p = p $,
	$ \gamma = \gamma $, 
	$ \delta = \delta $,
	$ \iota = \gamma - \delta $,
	$ \kappa = \kappa $,
	$ \xi = \xi $,
	$ F = F $,
	$ \ff = \ff $,
	$ \O_s = \mathcal{O}_s $, 
	$ O_s = O_s $,
	$ X^x_{s,t} = \mathcal{X}^x_{s,t} $, 
	$ \y_s = \mathcal{Y}_s $,
	$ \zeta = \xi $
	for
	$ t \in [s,T] $,
	$ s \in [0,T] $,
	$ x \in H $
	in the setting of Corollary~\ref{corollary:main_error_estimate}) 
	therefore 
    justifies
	that 
	\begin{enumerate}[(A)]
	\item 
	we have for all
	$ s \in [0,T] $,
	$ t \in [s,T] $,
	$ \omega \in \Omega $
	that 
	$ H \ni x \mapsto \mathcal{X}_{s,t}^x(\omega)\in H 
	 $ is differentiable,
	\item we have for all $ t \in [0, T] $ that
	$
	\big( \Omega \ni  \omega \mapsto 
	\mathcal{X}_{0,t}^{ 
	\xi(   \omega)	 
	+ 
O_0(  \omega )  } (   \omega ) 
	\in H \big)
	\in \M( \F, \B(H) )
	$,
	\item we have for all $ t \in [0, T] $ that
	$
	 \big( [0,t] \times \Omega 
	\ni (s, \omega)
	\mapsto 
	\frac{ \partial }{ \partial x }
	\mathcal{X}_{s, t}^{ 
	\mathcal{Y}_s (\omega) 
	- 
	\mathcal{O}_s (\omega) + O_s (\omega) 
	}
	(\omega)
	\in L(H) 
	 \big)  
	\in \M( \B( [0, t] ) \otimes \F, \B(L(H)))
	$,
	and 
	\item we have for all $ t \in [0, T] $ that
	\begin{equation}
	\begin{split}
	\label{eq:good_estimate}
	&
	\|  \y_t - X_t\|_{ \L^p( \P; H ) } 
	=
	\|  \mathcal{Y}_t - \mathcal{X}_{0, t}^{ \xi + O_0 } \|_{ \L^p( \P; H ) } 
	\\
	&
	\leq
	\sup\nolimits_{s \in [0,T]}
	\| \mathcal{O}_s - O_s \|_{ \L^p( \P; H ) } 
	+ 
	\tfrac{ C \max \{ T, 1 \} }{ 1 - \gamma }
	\bigg[ 
	\int_0^t 
	\big \| \tfrac{\partial}{\partial x}
	\mathcal{X}_{s, t}^{ 
		\mathcal{Y}_s 
		- 
		\mathcal{O}_s 
		+ 
		O_s
	}
	\big \|_{ \L^{ 2 p } ( \P; L(H) ) }
	\, ds
	\bigg]
	\\
	&
	\quad
	\cdot
	\Big\{ 
	[| \theta |_T]^{\gamma - \delta}
	\sup\nolimits_{s\in [0,T]}
	\| \ff ( \mathcal{Y}_s ) \|_{ \L^{2p} (\P; H_{\gamma-\delta} )}
	+
	\sup\nolimits_{s\in [0,T]}
	\| \ff ( \mathcal{Y}_s ) - F ( \mathcal{Y}_s ) \|_{ \L^{ 2 p } ( \P; H ) }
	\\
	&
	\quad
	+
	\Big(  
	( 
	[ | \theta |_T ]^{1 - \delta}
	+ 
	[ | \theta |_T ]^{\gamma - \delta}
	)
	\sup\nolimits_{s\in [0,T]}
	\|
	\ff ( \mathcal{Y}_s )
	\|_{\L^{4p}(\P; H)}
	+
	\sup\nolimits_{s\in [0,T]}
	\| \mathcal{O}_s - \mathcal{O}_{ \llcorner s \lrcorner_\theta } \|_{\L^{4p}(\P; H_\delta)}
\\
&
\quad
+
[ | \theta |_T ]^{ \gamma - \delta }
\| \xi \|_{\L^{4p}(\P; H_\gamma)}
	+
	\sup\nolimits_{s\in [0,T]}
	\| \mathcal{O}_s - O_s \|_{\L^{4p}(\P; H_\delta)} 
	\Big)
	\\
	&
	\quad
	\cdot
	\big[
	1
	+
	2 
	\sup\nolimits_{s\in [0,T]}
	\|
	\mathcal{Y}_s
	\|_{\L^{4p c}(\P; H_\kappa)}
	+
	\sup\nolimits_{s\in [0,T]}
	\|   \mathcal{O}_s - O_s  \|_{\L^{4 p c}(\P; H_\kappa)} 
	\big]^c
	\Big\}
	.  
	\end{split}
	\end{equation}
	\end{enumerate}
	Moreover, 
	note that~\eqref{eq:omegawise_0}, 
	the fact that
	$ \mathcal{Y} $,
	$ \mathcal{O} $,
	and
	$ O $ 
	are
	$ ( \f_t )_{ t \in [0,T] } $-adapted
	stochastic processes 
	w.c.s.p.,
	the assumption that
	for all $ x,y \in H $ we have that
	$ \langle x, F(x) \rangle_H  
	\leq
	a + b \| x \|_H^2 $  
	and
	$ \langle F'(x) y, y \rangle_H \leq 
	( \varepsilon \| x \|_{H_{\nicefrac{1}{2}}}^2 + { \bf C}  ) \| y \|_H^2 
	+ \|y \|_{H_{\nicefrac{1}{2}}}^2 $, 
	and
	Lemma~\ref{lemma:inheritet}
	(applies with
	$ T = T $, 
	$ a = a $,
	$ b = b $,
	$ { \bf C } = { \bf C } $,
	$ \rho = \rho $,
	$ p = 2 p $,
	$ B = B $, 
	$ \varepsilon = \varepsilon $,
		$ F = F $,
	$ ( \Omega, \F, \P ) = ( \Omega, \F, \P ) $,
	$ ( \f_t )_{ t \in [0,T] }
	=
	( \f_t )_{ t \in [0,T] } $,
	$ (W_t)_{ t \in [0,T] } = ( W_t)_{ t \in [0,T]} $, 
	$ Y_s = \mathcal{Y}_s - \mathcal{O}_s + O_s $, 
	$ O_s = O_s $,
	$ X_{s,u}^x = \mathcal{X}_{s,u}^x $
	for 
	$ u \in [s,T] $,
	$ s \in [0,T] $,
	$ x \in H $
	in the setting of Lemma~\ref{lemma:inheritet}) 
	verify that for all 
	$ s \in [0,T] $,
	$ t \in [s,T] $
	we have that
	\begin{equation}
				\begin{split} 
				\label{eq:der_estimate}
				& 
				\E\Big[ 
				\big \| \tfrac{\partial}{\partial x} 
					\mathcal{X}_{s,t}^{ \mathcal{Y}_s - \mathcal{O}_s + O_s  
					}  
				\big\|_{L(H)}^{2p} 
				\Big]
				\leq
				\exp\!
				\big(  
				\big(
				2 p { \bf C} 
				+   
						\rho
						( 2a + \| B \|_{\HS(U,H)}^2 ) 
				\big)
				t
				\big)
				\E
				\big[
				e^{ \rho \| \mathcal{Y}_s - \mathcal{O}_s + 
					 O_s  
\|_H^2 }
				\big]
				. 
				\end{split}
				\end{equation}
	This and~\eqref{eq:good_estimate}  
	yield that for all
	$ t \in [0,T] $
	we have that
	\begin{equation}
	\begin{split} 
	&
	\|  \y_t - X_t\|_{ \L^p( \P; H ) } 
	\leq
	\sup\nolimits_{s \in [0,T]}
	\| \mathcal{O}_s - O_s \|_{ \L^p( \P; H ) } 
	\\
	& 
	\quad
	+ 
	\tfrac{ C [ \max \{ T, 1 \} ]^2 }{ 1 - \gamma } 
	\exp\!
	\big(  
	\big(
	{ \bf C} 
	+   
	\rho
	( 2a + \| B \|_{\HS(U,H)}^2 ) 
	\big)
	t
	\big)
	\bigg[ 
	\int_0^t 
	\big( 
	\E
	\big[
	e^{ \rho \| \mathcal{Y}_s - \mathcal{O}_s + 
	O_s  
	\|_H^2 }
	\big]
	\big)^{ \nicefrac{1}{(2p)} }
	\, ds
	\bigg]
	\\
	&
	\quad
	\cdot
	\Big\{
	[| \theta |_T]^{\gamma - \delta}
	\sup\nolimits_{s\in [0,T]}
	\| \ff ( \mathcal{Y}_s ) \|_{ \L^{2p} (\P; H_{\gamma-\delta} )}
	+
	\sup\nolimits_{s\in [0,T]}
	\| \ff ( \mathcal{Y}_s ) - F ( \mathcal{Y}_s ) \|_{ \L^{ 2 p } ( \P; H ) }
	\\
	&
	\quad
	+
	\Big(   
	2
	[ | \theta |_T ]^{\gamma - \delta} 
	\sup\nolimits_{s\in [0,T]}
	\|
	\ff ( \mathcal{Y}_s )
	\|_{\L^{4p}(\P; H)}
	+
	\sup\nolimits_{s\in [0,T]}
	\| \mathcal{O}_s - \mathcal{O}_{ \llcorner s \lrcorner_\theta } \|_{\L^{4p}(\P; H_\delta)}
	\\
	&
	\quad
	+
	[ | \theta |_T ]^{ \gamma - \delta }
	\| \xi \|_{\L^{4p}(\P; H_\gamma)} 
	+
	\sup\nolimits_{s\in [0,T]}
	\| \mathcal{O}_s - O_s \|_{\L^{4p}(\P; H_\delta)} 
	\Big)
	\\
	&
		\quad
	\cdot
	\big[
	1
	+
	2 
	\sup\nolimits_{s\in [0,T]}
	\|
	\mathcal{Y}_s
	\|_{\L^{4p c}(\P; H_\kappa)}
	+
	\sup\nolimits_{s\in [0,T]}
	\|   \mathcal{O}_s - O_s  \|_{\L^{4p c}(\P; H_\kappa)} 
	\big]^c
	\Big\}
	.  
	\end{split}
	\end{equation}
	Combining this and items~\eqref{item:negligible} 
	and~\eqref{item:negligible2}
	justifies item~\eqref{item:EstimateCrucial}.
	The proof of Proposition~\ref{proposition:main_error_estimate}
	is hereby completed.
\end{proof}
\section[Strong convergence rates with assuming finite exponential moments]{Strong convergence rates for space-time discrete exponential Euler-type approximations with assuming finite exponential moments}
\label{section:Strong convergence rates}
\subsection{Moment bounds for spatial spectral Galerkin approximations}
\label{section:AprioriBound}
In this subsection
we prove in 
Lemma~\ref{lemma:existence}
suitable 
a priori moment bounds for exact solutions of 
SODEs.
Corollary~\ref{corollary:AprioriExact} 
then establishes
uniform a priori moment bounds
for spectral Galerkin approximations 
of exact solutions of
semilinear SPDEs with additive noise.
\begin{lemma}
	\label{lemma:existence}
	Assume Setting~\ref{setting:main},
	assume that $ \dim(H) < \infty $,
	let
	$ T \in (0,\infty) $,
	$ a, b \in [0,\infty) $, 
	$ p \in [2, \infty) $,
	$ s \in [0,T] $, 
	$ B \in \HS(U,H) $,  
	$ F \in \mathcal{C}( H, H ) $,
	assume  for all
	$ x \in H $   
	that  
	$ \langle x, F(x) \rangle_H  
	\leq
	a + b \| x \|_H^2 $,
	let $ ( \Omega, \F, \P ) $ 
	be a probability space
with a normal filtration $  (\f_t)_{ t \in [0,T] } $,
	let $ (W_t)_{ t \in [0,T]} $
	be an
	$ \operatorname{Id}_U $-cylindrical
	$ (\f_t)_{ t \in [0,T] } $-Wiener process, 
	let
	$ \xi \in \M( \f_s, \mathcal{B}(H) ) $,
	let 
	$ O \colon [0, T] \times \Omega \to H $
	be a stochastic process
	w.c.s.p.\ 
	which satisfies for all $ t \in [0, T] $ that
	$ [ O_t ]_{\P, \mathcal{B}(H ) } 
	= 
	\int_0^t e^{ ( t - u ) A } B \, dW_u $, 
	and
	let
	$ X \colon [s,T] \times \Omega \to H $
	be an
	$ ( \f_t )_{ t \in [0,T] } $-adapted
	 stochastic process 
	 w.c.s.p.\ 
	which satisfies for all   
	$ t \in [s,T] $ that 
	\begin{equation}
	\begin{split}
	\P\bigg(
	X_t = e^{(t-s)A} \xi + \int_s^t e^{(t-u)A} F (  X_u  ) \, du
	+
	O_t - e^{(t-s)A} O_s
	\bigg)
	=
	1
	.
	\end{split} 
	\end{equation}
	Then 
	\begin{equation}
	\begin{split}
	\label{eq:Lp bound}
	\sup\nolimits_{t\in [s,T]}
	\E [  \| X_t \|_H^p ]
	&
	\leq
	\big( 
	\E [ \| \xi \|_H^p ]
	+
	2
	T
	\big[
	a 
	+
	\tfrac{ p - 1 }{ 2 } \| B \|_{\HS(U,H)}^2
	\big]^{\nicefrac{p}{2}}
	\big)
	\exp (
	( pb + p - 2 )T
	)
	.
	\end{split}
	\end{equation}
\end{lemma}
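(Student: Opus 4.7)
The idea is to apply It\^o's formula to $\|X\|_H^p$, use the coercivity of $F$ together with the non-positivity of $A$ (immediate from $\sup_{h \in \H}\values_h < 0$), and close the resulting differential inequality via Gronwall's lemma. Finite-dimensionality of $H$ keeps the entire argument elementary, so no functional-analytic subtleties arise.

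Concretely, I would first apply Lemma~\ref{lemma:Ito formula} with $Z_u := F(X_u)$ to convert the mild formulation into the It\^o formulation $[X_t]_{\P,\B(H)} = [\xi + \int_s^t (AX_u + F(X_u))\,du]_{\P,\B(H)} + \int_s^t B\,dW_u$. I would then localise by $\tau_n := \inf\{t \in [s,T] \colon \|X_t\|_H \geq n\} \wedge T$ (which converge to $T$ almost surely by continuity of sample paths in finite dimension) and apply It\^o's formula to $\phi(x) := \|x\|_H^p$, which is $\mathcal{C}^2$ on $H$ for $p \geq 2$, with $\nabla \phi(x) = p\|x\|_H^{p-2} x$ and Hessian satisfying
\begin{equation*}
\tr(D^2\phi(x) BB^*) = p\|x\|_H^{p-2}\|B\|_{\HS(U,H)}^2 + p(p-2)\|x\|_H^{p-4}\|B^* x\|_U^2 \leq p(p-1)\|x\|_H^{p-2}\|B\|_{\HS(U,H)}^2
\end{equation*}
via $\|B^* x\|_U \leq \|B\|_{\HS(U,H)}\|x\|_H$. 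Taking expectations, invoking $\langle x, Ax\rangle_H \leq 0$ and the hypothesis $\langle x, F(x)\rangle_H \leq a + b\|x\|_H^2$, and noting that the stopped stochastic integral is a true martingale (because $X_{\cdot \wedge \tau_n}$ is bounded), yields
\begin{equation*}
\E[\|X_{t \wedge \tau_n}\|_H^p] \leq \E[\|\xi\|_H^p] + \int_s^t \big( pC\,\E[\|X_{u \wedge \tau_n}\|_H^{p-2}] + pb\,\E[\|X_{u \wedge \tau_n}\|_H^p] \big)\,du
\end{equation*}
with $C := a + \tfrac{p-1}{2}\|B\|_{\HS(U,H)}^2$.

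To turn the $\|X\|^{p-2}$ term into $\|X\|^p$ plus a constant with the sharp coefficient, I would apply Young's inequality with conjugate exponents $p/2$ and $p/(p-2)$ to the product $C \cdot \|X\|_H^{p-2}$ (\emph{not} to $(pC) \cdot \|X\|_H^{p-2}$); this gives the pointwise bound $pC y^{p-2} \leq 2 C^{p/2} + (p-2) y^p$, which after integration leads to the Gronwall-ready estimate $\E[\|X_{t\wedge\tau_n}\|_H^p] \leq \E[\|\xi\|_H^p] + 2TC^{p/2} + (pb + p - 2)\int_s^t \E[\|X_{u \wedge \tau_n}\|_H^p]\,du$. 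Standard Gronwall then produces the claimed bound $(\E[\|\xi\|_H^p] + 2TC^{p/2})\exp((pb+p-2)T)$ on $\E[\|X_{t\wedge\tau_n}\|_H^p]$, and Fatou's lemma as $n \to \infty$ finishes the proof. The only real obstacle is bookkeeping: choosing the Young split correctly so the prefactor of $C^{p/2}$ comes out to exactly $2T$ (rather than to a $p$-dependent inflation thereof), and verifying that the stopped noise term is a martingale. Neither step poses conceptual difficulty in this finite-dimensional setting.
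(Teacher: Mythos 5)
Your proposal is correct and follows essentially the same route as the paper's proof: convert the mild form via Lemma~\ref{lemma:Ito formula}, localise with the stopping times $\tau_r$, apply It\^o's formula to $\|\cdot\|_H^p$, drop $\langle x, Ax\rangle_H\leq 0$, use the coercivity of $F$, perform exactly the same Young split $pCy^{p-2}\leq 2C^{\nicefrac{p}{2}}+(p-2)y^p$, and conclude with Gronwall and a limit in $r$ (the paper uses monotone convergence on $\E[\1_{\{\tau_n\geq t\}}\|X_t\|_H^p]$ where you use Fatou on the stopped process, which is immaterial).
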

\begin{proof}[Proof of Lemma~\ref{lemma:existence}]
	Throughout this proof let
	$ \mathbb{U} \subseteq U $ be an orthonormal basis of $ U $.
	Note 
	that
	Lemma~\ref{lemma:Ito formula}
	(applies with
	$ T = T $,
	$ s = s $,
	$ B = B $, 
	$ ( \Omega, \F, \P  ) = ( \Omega, \F, \P ) $,
	$ (W_t)_{ t \in [0,T] } = (W_t)_{ t \in [0,T] } $,
	$ \xi = \xi $,
	$ Z_t = F(X_t) $,
	$ Y_t = X_t $,
	$ O = O $
	for  
	$ t \in [s, T] $ 
	in the setting of Lemma~\ref{lemma:Ito formula})
	yields
	that for all $ t \in [s, T] $ we have that
	\begin{equation}
	\begin{split}
	\label{eq:satisfies_deterministic}
	[ X_t ]_{\P, \mathcal{B}(H)} 
	=
	\bigg[
	\xi 
	+ 
	\int_s^t [ A X_u + F ( X_u  ) ] \, du
	\bigg ]_{\P, \mathcal{B}(H)} 
	+
	\int_s^t B \, dW_u 
	.
	\end{split}
	\end{equation}
	Furthermore, 
	observe that
	the fact that 
	$ X $ has 
	continuous sample paths 
	ensures that
	there exist $ ( \f_t )_{ t \in [s,T] } $-stopping times
	$ \tau_r \colon \Omega \to [s,T] $, 
	$ r \in (0, \infty) $, 
	which satisfy
	for all 
	$ r \in (0,\infty) $
	that
	\begin{equation}
	\label{eq:Tau}
	\tau_r = \inf ( \{T\} \cup 
	\{ t \in [s,T] \colon \| X_t \|_H \geq r \} ) 
	.
	\end{equation}
	Note that
	It\^o's formula,
	\eqref{eq:satisfies_deterministic},
	and~\eqref{eq:Tau}  
	illustrate 
	that for all 
	$ r \in (0, \infty) $,
	$ t \in [s,T] $
	we have that
	\begin{equation}
	\begin{split}
	\label{eq:conseqIto}
	&
	[
	\| X_{ \min \{ \tau_r, t \} } \|_H^p
	]_{ \P, \B(\R) } 
	=
	\bigg[
	\| \xi \|_H^p
	+
	\int_s^{ \min \{ \tau_r, t \} }
	p \| X_u \|_H^{p-2} 
	\langle X_u, A X_u + F(X_u) \rangle_H 
	\, du
	\bigg]_{ \P, \B(\R) }
	\\
	&
	\quad
	+
	\int_s^{ t } 
	p
	\1_{ \{  \tau_r \geq u \}}  
	\| X_u \|_H^{p-2} 
	\langle X_u, B \, d W_u \rangle_H 
	\\
	&
	\quad
	+
	\bigg[
	\tfrac{1}{2} 
	\int_s^{ \min \{ \tau_r, t \} } 
	\sum_{ { \bf u } \in \mathbb{U} }
	\big[
	p \| X_u \|_H^{p-2} \| B { \bf u } \|_H^2 
	+
	p ( p-2) \1_{ \{ X_u \neq 0 \} }
	\| X_u \|_H^{p-4}
	| \langle X_u, B { \bf u } \rangle_H |^2 
	\big]
	\,
	du 
	\bigg]_{ \P, \B(\R) }
	\\
	&
	\leq
	\bigg[
	\| \xi \|_H^p
	+
	\int_s^{ \min \{ \tau_r, t \} }
	p \| X_u \|_H^{p-2} 
	\langle X_u, A X_u + F(X_u) \rangle_H 
	\, du
	\bigg]_{ \P, \B(\R) }
	\\
	&
	\quad
	+
	\int_s^t 
	p
		\1_{ \{  \tau_r \geq u \}} 
	\| X_u \|_H^{p-2} 
	\langle X_u, B \, d W_u \rangle_H 
	+
	\bigg[
	\tfrac{p(p-1)}{2}
	\|B \|_{ \HS(U,H) }^2  
	\int_s^{ \min \{ \tau_r, t \} }  
	\| X_u \|_H^{p-2}  
	\,
	du
	\bigg]_{ \P, \B(\R) }
	.
	\end{split}
	\end{equation}
	Moreover, observe that 
	for all 
	$ r \in (0,\infty) $, 
	$ t \in [s,T] $ 
	we have that
	\begin{equation}
	\begin{split}
	&
	\int_s^t
	\1_{ \{ \tau_r \geq u \} }
	\| X_u \|_H^{2(p-2)} 
	\| ( U \ni v \mapsto \langle X_u, B( v ) \rangle_H \in \R ) \|_{\HS(U,\R)}^2
	\, d u
\\
&
	\leq
	\int_s^t
	\1_{ \{ \tau_r \geq u \} } 
	\| X_u \|_H^{2(p-1)}
	\| B \|_{\HS(U,H)}^2
	\, d u 
\\
&
\leq 
	\int_s^t
	r^{2(p-1)} 
	\| B \|_{\HS(U,H)}^2
	\, d u 
	\leq 
	\int_0^T
	r^{2(p-1)} 
	\| B \|_{\HS(U,H)}^2
	\, d u 
	< \infty .
	\end{split}
	\end{equation}
	Combining this, the assumption that
	for all $ x \in H $
	we have that
	$ \langle x, F(x) \rangle_H \leq a + b \| x \|_H^2 $, 
	\eqref{eq:conseqIto}, 
	Tonelli's theorem,
	and 
	Young's inequality  
	verifies
	that
	for all 
	$ r \in (0,\infty ) $, 
	$ t \in [s,T] $ 
	we have that
	\begin{equation}
	\begin{split}
	&
	\E[ \| \1_{ \{ \tau_r \geq t \} } X_t \|_H^p ]
	\leq
	\E [ ( \| \1_{ \{ \tau_r \geq t \} } X_{ \min \{ \tau_r, t \} } \|_H
	+
	\| \1_{ \{ \tau_r < t \} } X_{ \min \{ \tau_r, t \} } \|_H )^p ]
	=
	\E[ \| X_{ \min \{ \tau_r, t \} } \|_H^p ]
	\\
	&
	\leq
	\E[ \|  
	\xi  \|_H^p ]
	+
	p
	\E \bigg[
	\int_s^{ \min \{ \tau_r, t \} } 
	\|  
	X_u \|_H^{p-2} 
	\big( 
	\langle  X_u, A  X_u + F( X_u ) \rangle_H  
	+
	\tfrac{p-1}{2} \| B \|_{\HS(U,H)}^2
	\big)
	\, du 
	\bigg]
	\\
	&
	\leq
	\E[ \|  
	\xi  \|_H^p ]
	+	 
	p
	\E \bigg[
	\int_s^{ \min \{ \tau_r, t \} } 
	\|  
	X_u \|_H^{p-2} 
	\big(
	a + b \| X_u \|_H^2   
	+
	\tfrac{p-1}{2} \| B \|_{\HS(U,H)}^2
	\big)
	\, du
	\bigg]	
	\\
	&
	=
	\E[ \|  
	\xi  \|_H^p ]
	+	 
	p
	\E 
	\bigg[
	\int_s^t 
	\1_{ \{ \tau_r \geq u \} }
	\|  
	X_u \|_H^{p-2} 
	\big(
	a + b \| X_u \|_H^2   
	+
	\tfrac{p-1}{2} \| B \|_{\HS(U,H)}^2
	\big)
	\, du
	\bigg]
	\\
	&
	=
	\E[ \|  
	\xi  \|_H^p ]
	+	 
	p
	\int_s^t 
	\E \big[ 
	\1_{ \{ \tau_r \geq u \} }
	\|  X_u  \|_H^{p-2} 
	\big( 
	a
	+
	\tfrac{p-1}{2} \| B \|_{\HS(U,H)}^2
	\big)
	+
	b 
	\1_{ \{ \tau_r \geq u \} }
	\| X_u  \|_H^p
	\big]
	\, du
	\\
	&
	\leq
	\E[ \|  
	\xi  \|_H^p ]
	+	 
	p
	\int_s^t 
	\E\big[
	\tfrac{p-2}{p}
	\1_{ \{ \tau_r \geq u \} }
	\| X_u \|_H^p 
	+
	\tfrac{2}{p}
	\big( 
	a + \tfrac{p-1}{2} \| B \|_{\HS(U,H)}^2 
	\big)^{\nicefrac{p}{2}}
	+
	b
	\1_{ \{ \tau_r \geq u \} }
	\| X_u \|_H^p 
	\big]
	\, du 
	\\
	&
	=  
	\E[ \|  
	\xi  \|_H^p ]
	+
	(pb + p - 2)
	\int_s^t 
	\E [ 
	\1_{ \{ \tau_r \geq u \} }
	\| X_u \|_H^p  
	]
	\, du
	+
	2 (t-s) 
	\big(  
	a + \tfrac{p-1}{2} \| B \|_{\HS(U,H)}^2 
	\big)^{\nicefrac{p}{2}} 
	\\
	&
	\leq 
	\E[ \| 
	\xi  \|_H^p ]
	+
	(pb + p - 2)
	(t-s)r^p
	+
	2 (t-s)
	\big(
	a
	+
	\tfrac{ p-1 }{2} \| B \|_{\HS(U,H)}^2
	\big)^{\nicefrac{p}{2}}
	.
	\end{split}
	\end{equation}
	Gronwall's lemma therefore yields that for all
	$ r \in (0, \infty) $,
	$ t \in [s,T] $ we have that
	\begin{equation}
	\begin{split}
	\label{eq:GrangeGronwallEstimate}
	&
	\E [ \1_{ \{ \tau_r \geq t \} } \| X_t \|_H^p ]
	\leq
	\big( 
	\E [ \| \xi \|_H^p ]
	+
	2 ( t - s )
	\big[
	a
	+
	\tfrac{ p - 1 }{2} \| B \|_{\HS(U,H)}^2
	\big]^{\nicefrac{p}{2}}
	\big)
	\exp  (
	( pb + p - 2 )( t - s ) 
	)
	.
	\end{split}
	\end{equation}
	The fact that
	for all $ n \in \N $, $ t \in [0,T] $
	we have that
	$ \1_{ \{ \tau_n \geq t \} } 
	\leq
	\1_{ \{ \tau_{n+1} \geq t \} } $
	and the
	monotone convergence theorem
	hence justify~\eqref{eq:Lp bound}.
	The proof of
	Lemma~\ref{lemma:existence}
	is hereby completed.
\end{proof} 
\begin{corollary}
	\label{corollary:AprioriExact}
	Assume Setting~\ref{setting:main},
	let
	$ T \in (0,\infty) $,
	$ a, b \in [0, \infty) $,  
	$ p \in [1, \infty) $, 
		$ \beta \in [0, \nicefrac{1}{2}) $,
	$ \gamma, \eta_1 \in [0, \nicefrac{1}{2} + \beta ) $, 
	$ \eta_2 \in [\eta_1, \nicefrac{1}{2} + \beta ) $,
	$ \iota \in [ \eta_2, \nicefrac{1}{2}  + \beta  ) $, 
	$ \alpha_1 \in [0, 1 - \eta_1 ) $, 
	$ \alpha_2 \in [0, 1 - \eta_2 ) $,
	$ B \in \HS(U, H_\beta ) $,  
	$ F \in \mathcal{C}(  H_\gamma, H ) $,    
	$ (P_I)_{ I \in \mathcal{P}(\H) } \subseteq L(H) $ 
	satisfy for all 
	$ I \in \mathcal{P} (\H) $,
	$ x \in H $ 
	that
	$ P_I(x)
	= \sum_{h \in I} \langle h, x \rangle_H h $,
	assume for all 
	$ I \in \mathcal{P}_0 (\H) $,
	$ x \in P_I(H) $ 
	that
	$ \langle x, F(x) \rangle_H  
	\leq
	a + b \| x \|_H^2 $
	and
	\begin{equation}
	\label{assumption:quadratic growth}
	\Big[
	\sup\nolimits_{ v \in H_{ \max \{ \gamma, \eta_2 \} }   }
	\tfrac{ \| F(v) \|_{H  } }
	{ 1 + \| v \|_{H_{ \eta_2 } }^2 }
	\Big] 
	+
	\Big[
	\sup\nolimits_{ v \in H_{ \max \{ \gamma, \eta_1 \} }  }
	\tfrac{ \| F(v) \|_{H_{ - \alpha_2 } } }
	{ 1 + \| v \|_{ H_{\eta_1} }^2 }
	\Big]
	+
	\Big[
	\sup\nolimits_{ v \in H_{\gamma}  } 
	\tfrac{ \| F(v) \|_{H_{-\alpha_1} } }{ 1 + \| v \|_H^2 } 
	\Big]
	< \infty
	,
	\end{equation}
	\sloppy 
	let
	$ ( \Omega, \F, \P ) $
	be a probability space
	with a normal filtration $ (\f_t)_{ t \in [0,T] } $,
	let $ (W_t)_{ t \in [0,T]} $
	be an
	$ \operatorname{Id}_U $-cylindrical
	$ ( \f_t )_{ t \in [0,T] } $-Wiener process,
		let
	$ \xi \in \L^{4p}(\P|_{\f_0}; H_{ \iota } ) $
	satisfy 
	$ \E[ \| \xi \|_H^{8p} ] < \infty $, 
	and
	let 
	$ X^I \colon [0,T] \times \Omega \to P_I(H) $,
	$ I \in \mathcal{P}_0 (\H) $,
	and
	$ O^I \colon [0,T] \times \Omega \to P_I(H) $,
	$ I \in \mathcal{P}_0 (\H) $,
	be 
		$ ( \f_t )_{ t \in [0,T] } $-adapted
	stochastic processes 
	w.c.s.p.\ 
	which satisfy for all
	$ I \in \mathcal{P}_0 (\H) $,
	$ t \in [0,T] $ 
	that
	$ [ O_t^I ]_{\P, \mathcal{B}( P_I( H ) ) } = \int_0^t e^{(t-s)A} P_I B \, dW_s $
	and
	\begin{equation}
	X_t^I
	= 
	e^{tA} P_I \xi  
	+ 
	\int_0^t e^{(t-s)A} P_I F(X_s^I)
	\,
	ds
	+
	O_t^I 
	.
	\end{equation}
	Then 
	\begin{equation}
	\label{eq:Uniform Lp estimate}
	\sup\nolimits_{ I \in \mathcal{P}_0 (\H) }
	\sup\nolimits_{ t \in [0,T] }
	\| X_t^I \|_{ \L^p(\P; H_\iota) } < \infty .
	\end{equation}
\end{corollary}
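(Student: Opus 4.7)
The strategy is a bootstrap argument in three spatial regularity scales $0 \to \eta_1 \to \eta_2 \to \iota$, driven by the three quadratic growth conditions in~\eqref{assumption:quadratic growth} and parabolic smoothing of $e^{tA}$. The starting point is Lemma~\ref{lemma:existence} applied on the finite-dimensional subspace $P_I(H)$: since $\langle x, P_I F(x)\rangle_H = \langle x, F(x)\rangle_H \leq a + b\|x\|_H^2$ for all $x \in P_I(H)$, and since $\|P_I \xi\|_H \leq \|\xi\|_H$ and $\|P_IB\|_{\HS(U,H)} \leq \|B\|_{\HS(U,H)}$, the lemma yields, for every $q \in [2,\infty)$, a bound on $\sup_{I,t} \|X_t^I\|_{\L^q(\P;H)}$ that is independent of $I$ and depends only on $a$, $b$, $q$, $T$, $\|B\|_{\HS(U,H)}$, and $\E[\|\xi\|_H^q]$.

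Next I would bootstrap using the mild formulation
\begin{equation}
X_t^I = e^{tA} P_I \xi + \smallint_0^t e^{(t-s)A} P_I F(X_s^I)\,ds + O_t^I .
\end{equation}
For every $r \in [0, \nicefrac{1}{2}+\beta)$ one has $\|e^{tA}P_I\xi\|_{H_r} \leq \|\xi\|_{H_r}$, and $\sup_{I,t}\|O_t^I\|_{\L^q(\P;H_r)} < \infty$ by the standard Itô-isometry/Gaussian estimate in $H_r$, since $2(r-\beta) < 1$ and $B \in \HS(U,H_\beta)$. The deterministic convolution is controlled by the smoothing bound
\begin{equation}
\big\| \smallint_0^t e^{(t-s)A} P_I F(X_s^I)\, ds \big\|_{H_r} \leq C \smallint_0^t (t-s)^{-(r+\alpha)} \|F(X_s^I)\|_{H_{-\alpha}}\,ds .
\end{equation}
Choosing $(r,\alpha) = (\eta_1,\alpha_1)$ and using the last inequality in~\eqref{assumption:quadratic growth} (so $\|F(X_s^I)\|_{H_{-\alpha_1}} \leq C(1+\|X_s^I\|_H^2)$, integrable since $\eta_1+\alpha_1 < 1$) together with Step~1 gives $\sup_{I,t}\|X_t^I\|_{\L^q(\P;H_{\eta_1})}<\infty$. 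Re-applying the same estimate with $(r,\alpha) = (\eta_2,\alpha_2)$ and the middle inequality in~\eqref{assumption:quadratic growth}, whose RHS is now controlled, yields $\sup_{I,t}\|X_t^I\|_{\L^q(\P;H_{\eta_2})}<\infty$. Finally, choosing $r = \iota$ and $\alpha = 0$, the first inequality of~\eqref{assumption:quadratic growth} provides $\|F(X_s^I)\|_H \leq C(1+\|X_s^I\|_{H_{\eta_2}}^2)$, which is integrable with the kernel $(t-s)^{-\iota}$ since $\iota < \nicefrac{1}{2}+\beta \leq 1$; this closes the argument and yields~\eqref{eq:Uniform Lp estimate}.

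No genuinely hard step arises; the only care needed is moment bookkeeping, since each squaring in a bootstrap step doubles the required integrability from the previous stage. Tracing this through, the final $\L^p(\P;H_\iota)$ estimate in $H_\iota$ consumes an $\L^{2p}$ bound on $\|X^I\|_{H_{\eta_2}}$, which in turn needs $\L^{4p}$ on $\|X^I\|_{H_{\eta_1}}$, which needs $\L^{8p}$ on $\|X^I\|_H$, which by Step~1 needs $\E[\|\xi\|_H^{8p}]<\infty$; meanwhile the $e^{tA}P_I\xi$ piece needs $\|\xi\|_{H_\iota} \in \L^{4p}(\P)$. These are exactly the hypotheses imposed on $\xi$ in the statement, confirming that the proof works with sharp moment requirements.
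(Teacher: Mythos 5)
Your proposal is correct and follows essentially the same route as the paper: uniform $\L^{8p}$ moments in $H$ from the coercivity condition via Lemma~\ref{lemma:existence} applied on $P_I(H)$, a Burkholder--Davis--Gundy-type bound for the stochastic convolution in $H_\iota$, and then the regularity bootstrap $0\to\eta_1\to\eta_2\to\iota$ driven by~\eqref{assumption:quadratic growth} with exactly the moment-doubling bookkeeping you describe. The only difference is that the paper delegates the bootstrap step to the cited result \cite[Lemma~3.4]{JentzenLindnerPusnik2017c} instead of writing out the smoothing estimates by hand.
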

\begin{proof}[Proof of Corollary~\ref{corollary:AprioriExact}]
Throughout this proof let
$ \mathcal{A}_I \colon P_I(H) \to P_I(H) $, $ I \in \mathcal{P}_0(\H) $,
satisfy for all
$ I \in \mathcal{P}_0(\H) $,
$ v \in P_I(H) $ that
$ \mathcal{A}_I v = A v $
and for every 
$ I \in \mathcal{P}_0(\H) $
let
$ ( \mathcal{H}_{I, r}, \langle \cdot, \cdot \rangle_{ \mathcal{H}_{I, r} }, \left \| \cdot \right\|_{ \mathcal{H}_{I, r} }) $, $ r \in \R $,
be a family of interpolation spaces associated to $ - \mathcal{A}_I $.
Note that the
Burkholder-Davis-Gundy-type inequality in Da Prato \& Zabczyk~\cite[Lemma~7.7]{dz92}
verifies
that
for all 
$ t \in [0,T] $,  
$ q \in [2, \infty) $
we have that
\begin{equation}
\begin{split}
\label{eq:Condition1}
&
\sup\nolimits_{I \in \mathcal{P}_0 (\H) }
\| O_t^I \|_{ \L^q(\P; H_\iota) }^2
\leq
\tfrac{q(q-1)}{2}
\sup\nolimits_{I \in \mathcal{P}_0 (\H) }
\int_0^t 
\| e^{(t-s)A} P_I B \|_{ \HS(U,H_\iota)}^2 \, ds
\\
&
\leq
\tfrac{q(q-1)}{2}
\int_0^t 
\| 
(-A)^{ \iota - \beta } e^{(t-s)A} \|_{L(H)}^2 
\| B \|_{ \HS(U,H_\beta)}^2
\, ds
\leq
\tfrac{q(q-1)}{2} 
\int_0^t 
(t-s)^{ 2 \beta - 2 \iota }
\| B \|_{ \HS(U,H_\beta)}^2
\, ds
\\
&
\leq
\tfrac{q(q-1)}{2} 
\tfrac{ t^{ 1 + 2 \beta - 2 \iota } }{ 1 + 2 \beta - 2 \iota }
\| B \|_{ \HS(U,H_\beta)}^2
< \infty
.
\end{split}
\end{equation}
Next observe that
the fact 
that
$ \xi \in \L^{8p}( \P|_{\f_0}; H ) $,
the assumption that
for all
	$ I \in \mathcal{P}_0 (\H) $,
$ x \in P_I(H) $ 
we have 
that
$ \langle x, F(x) \rangle_H  
\leq
a + b \| x \|_H^2 $,
and
Lemma~\ref{lemma:existence} 
(applies with
$ H = P_I(H) $,
$ \mathbb{H} = P_I( \mathbb{H} ) $,
$ \values = ( I \ni h \mapsto \values_h \in \R )$,
$ A = \mathcal{A}_I $,
$ ( H_s )_{ s \in \R } = ( \mathcal{H}_{I, s} )_{ s \in \R } $,
$ T = T $,
$ a = a $,
$ b = b $, 
$ p = 8 p $,
$ s = 0 $,
$ B = ( U \ni u \mapsto  P_I B(u) \in P_I(H) ) $,
$ F = ( P_I( H ) \ni x \mapsto P_I F(x) \in P_I(H) ) $,
$ ( \Omega, \F, \P ) 
=
( \Omega, \F, \P ) $,
$ (\f_t)_{ t \in [0,T] } = (\f_t)_{ t \in [0,T] } $,
$ (W_t)_{t \in [0,T]} = ( W_t )_{ t \in [0,T] } $,
$ \xi = ( \Omega \ni \omega \mapsto P_I \xi (\omega) \in P_I(H) ) $,
$ O = O^I $,
$ X = X^I $
for
$ I \in ( \mathcal{P}_0 (\H) \backslash \{ \emptyset \} ) $
in the setting of 
Lemma~\ref{lemma:existence})
give that
\begin{equation}
\label{eq:Condition2}
\sup\nolimits_{I \in \mathcal{P}_0 (\H) } \sup\nolimits_{ t \in [0,T] } 
\| X_t^I
\|_{\L^{8 p}(\P; H ) } 
<
\infty 
.
\end{equation}
Combining
the assumption that
$ \xi \in \L^{4p}( \P |_{\f_0}; H_\iota ) $,
\eqref{assumption:quadratic growth},
and~\eqref{eq:Condition1} 
with, e.g., \cite[Lemma~3.4]{JentzenLindnerPusnik2017c}
(applies with
$ H = H $,
$ \H = \H $,
$ \values = \values $,
$ A = A $,
$ ( \Omega, \F, \P) = ( \Omega, \F, \P ) $,
$ T = T $,
$ \beta = \nicefrac{1}{2} + \beta  $,
$ \gamma = \gamma $,
$ \xi = ( \Omega \ni \omega \mapsto P_I(\xi(\omega))
\in H_{ \nicefrac{1}{2}  + \beta  } ) $,
$ F = ( H_\gamma  \ni x \mapsto  
P_I F(x) \in H ) $,
$ \kappa = ( [0,T] \ni t \mapsto t \in [0,T] ) $,
$ Z = ( [0,T] \times \Omega \ni (t, \omega) \mapsto X_t^I(\omega) \in H_\gamma ) $,
$ O = ( [0,T] \times \Omega \ni (t, \omega) \mapsto O_t^I(\omega) \in H_{ \nicefrac{1}{2} + \beta  } ) $,
$ Y = ( [0,T] \times \Omega \ni (t, \omega) \mapsto X_t^I(\omega) \in H ) $,
$ p = p $, 
$ \rho = \eta_1 $,
$ \eta = \eta_2 $,
$ \iota = \iota $,
$ \alpha_1 = \alpha_1 $,
$ \alpha_2 = \alpha_2 $
for  
$ I \in \mathcal{P}_0 (\H) $
in the setting of~\cite[Lemma~3.4]{JentzenLindnerPusnik2017c})
therefore
justifies~\eqref{eq:Uniform Lp estimate}.
The proof of
Corollary~\ref{corollary:AprioriExact}
is hereby completed.
\end{proof}
\subsection[Strong error estimates for space-time discrete exponential Euler-type approximations]{Strong error estimates for space-time discrete truncated exponential Euler-type approximations}
\label{subsection:StrongConvergenceRates}
In this subsection we study numerical approximations for a class of 
semilinear SPDEs with additive noise and establish in 
Proposition~\ref{proposition:Exact_to_numeric_general} 
below strong convergence rates for truncated exponential Euler-type approximation processes 
$ ( \y_t^{\theta,I})_{ t \in [0,T]} $,
$ I \in \mathcal{P}_0(\H) $,
$ \theta \in \varpi_T $,
(see~\eqref{eq:scheme}
in Proposition~\ref{proposition:Exact_to_numeric_general} below)
 under (i) the assumption that the truncated exponential Euler-type approximations satisfy suitable exponential moment bounds and (ii) suitable approximatibility assumptions on the stochastic convolution process. Our proof of Proposition~\ref{proposition:Exact_to_numeric_general} employs 
 Proposition~\ref{proposition:main_error_estimate} 
 and Corollary~\ref{corollary:AprioriExact} 
 above as well as the elementary truncation error estimate in 
 Lemma~\ref{lemma:function_error_exact_conterpart} below.

\begin{lemma}
	\label{lemma:function_error_exact_conterpart}
	Assume Setting~\ref{setting:main},
		let
		$ ( \Omega, \F, \P ) $
		be a probability space,
		let
		$ ( V, \left \| \cdot \right \|_V ) $
		be an $ \R $-Banach space,
	let  
	$ \varsigma \in [0,\infty) $,
	$ p \in [1, \infty) $, 
	$ \alpha, c, h \in (0,\infty) $,
	$ Y \in \M( \F, \mathcal{B}(  V ) ) $, 
	$ r \in \M( \mathcal{B}( V ), \mathcal{B}( [0,\infty) ) ) $, 
	$ P \in L(H) $, 
	$ F \in \M( \mathcal{B}( V ),
	\mathcal{B}(H) ) $, 
 $ D \in \mathcal{B}( V ) $
 satisfy 
 $ \{
	v \in V \colon
	r( v )
	\leq 
	c
	h^{-\varsigma} \} \subseteq D $.
	Then we have 
	that
	\begin{equation}
	\begin{split} 
	\|
	\1_{ 	D
	}
	(Y) 
	\, 
	P
	F(Y  )
	-
	F( Y  )
	\|_{\L^p(\P; H)} 
	&
	\leq
	c^{- \alpha}
h^{\alpha \varsigma}
\| r( Y ) \|_{\L^{2p\alpha}(\P; \R)}^\alpha
\| P F ( Y  ) \|_{\L^{2p}(\P; H)}
\\
&
\quad
	+
\| ( P - \operatorname{Id}_H ) F(Y ) \|_{\L^p(\P; H)}
	.
	\end{split}
	\end{equation}
\end{lemma}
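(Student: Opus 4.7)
The plan is to decompose the error by adding and subtracting $P F(Y)$, so that
\begin{equation}
\mathbf{1}_{D}(Y) P F(Y) - F(Y) = -\mathbf{1}_{V \setminus D}(Y)\, P F(Y) + (P - \operatorname{Id}_H) F(Y),
\end{equation}
and then apply the triangle inequality in $\L^p(\P;H)$ to isolate the second term on the right-hand side of the desired estimate. This reduces the problem to bounding $\|\mathbf{1}_{V \setminus D}(Y) P F(Y)\|_{\L^p(\P;H)}$ by the first summand on the right-hand side.

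For that term, I would exploit the inclusion $\{v \in V \colon r(v) \leq c h^{-\varsigma}\} \subseteq D$, which gives $V \setminus D \subseteq \{v \in V \colon r(v) > c h^{-\varsigma}\}$. Hence on $\{Y \in V \setminus D\}$ we have $c^{-\alpha} h^{\alpha \varsigma} r(Y)^\alpha > 1$, so
\begin{equation}
\mathbf{1}_{V\setminus D}(Y) \leq c^{-\alpha} h^{\alpha \varsigma}\, r(Y)^\alpha
\end{equation}
pointwise on $\Omega$. Substituting this pointwise bound into $\|\mathbf{1}_{V\setminus D}(Y) P F(Y)\|_{\L^p(\P;H)}$ and applying the Cauchy--Schwarz inequality in the form $\|f g\|_{\L^p} \le \|f\|_{\L^{2p}}\|g\|_{\L^{2p}}$ yields
\begin{equation}
\|\mathbf{1}_{V\setminus D}(Y) P F(Y)\|_{\L^p(\P;H)} \leq c^{-\alpha} h^{\alpha \varsigma}\, \|r(Y)^\alpha\|_{\L^{2p}(\P;\R)}\, \|P F(Y)\|_{\L^{2p}(\P;H)},
\end{equation}
and the identity $\|r(Y)^\alpha\|_{\L^{2p}(\P;\R)} = \|r(Y)\|_{\L^{2p\alpha}(\P;\R)}^\alpha$ puts the bound into the required form. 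Combining with the triangle inequality step completes the proof.

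I do not expect any real obstacle; every step is essentially a one-line manipulation (decomposition, triangle inequality, Markov-type pointwise bound, Hölder). The only small point to keep in mind is that all objects are measurable so that the indicators, compositions, and $\L^p$-norms make sense, which is immediate from the measurability assumptions on $Y$, $r$, and $F$ and the continuity of $P \in L(H)$.
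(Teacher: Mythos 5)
Your proposal is correct and follows essentially the same route as the paper: the same decomposition into $-\1_{V\setminus D}(Y)\,P F(Y)+(P-\operatorname{Id}_H)F(Y)$, the triangle inequality, H\"older with exponents $(2p,2p)$, and a Markov/Chebyshev-type bound on the indicator (you state it pointwise via $\1_{V\setminus D}(Y)\leq c^{-\alpha}h^{\alpha\varsigma}r(Y)^{\alpha}$, which is valid since $r\geq 0$, whereas the paper phrases it as Markov's inequality for $\|\1_{V\setminus D}(Y)\|_{\L^{2p}(\P;\R)}$ — the same estimate). No gaps.
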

\begin{proof}[Proof
	of Lemma~\ref{lemma:function_error_exact_conterpart}]
	Observe that 
	the triangle inequality and
	H\"older's inequality verify that
	\begin{equation}
	\begin{split}
	\label{eq:local_monotonicity_implies}
	&
	\|
	\1_{ 	D 
	} 
	(Y ) 
	\, 
	P
	F(Y  )
	-
	F( Y  )
	\|_{\L^p(\P; H)}
	\\
	&
	\leq
	\|
	(
	\1_{ 	D 
	} 
	(Y ) 
	-
	1
	)  
	P
	F(Y  ) 
	\|_{\L^p(\P; H)}
	+
	\|
	P
	F(Y  )
	-
	F( Y )
	\|_{\L^p(\P; H)}
	\\
	&
	\leq
	\| \1_{ 	D 
	} 
	( Y ) 
	-
	1 \|_{\L^{2p}(\P; \R)}
	\| P F ( Y ) \|_{\L^{2p}(\P; H)}
	+
	\| ( P - \operatorname{Id}_H ) F( Y ) \|_{\L^p(\P; H)}
	.
	\end{split}
	\end{equation}
	Moreover, note that
	Markov's inequality
	yields 
	that
	\begin{equation}
	\begin{split}
	\| \1_{ 	D 
	} 
	( Y ) 
	-
	1 
	\|_{\L^{2p}(\P; \R)}
	&=
	\| \1_{ V \backslash D }(Y) \|_{\L^{2p}(\P; \R)}
	\leq 
	\| 
	\1_{ \{ r(Y) > ch^{-\varsigma} \} }
	\|_{\L^{2p}(\P; \R)}
	\\
	&
	=
	[
	\P ( |r( Y )|^{ 2 p \alpha } 
	>
	( c h^{- \varsigma } )^{ 2 p \alpha }  )
	]^{\nicefrac{1}{ ( 2p ) }}
	\leq
	(c h^{-\varsigma} )^{ - \alpha }
	( \E[ |r( Y )|^{ 2 p \alpha } ] )^{\nicefrac{1}{ ( 2 p ) }}.
	\end{split}
	\end{equation}
This and~\eqref{eq:local_monotonicity_implies}  
	give that 
	\begin{equation}
	\begin{split} 
	\|
	\1_{ 	D 
	} 
	( Y ) 
	\, 
	P
	F( Y )
	-
	F( Y )
	\|_{\L^p(\P; H)} 
	&
	\leq
	c^{ - \alpha }
	h^{ \alpha \varsigma }
	(
	\E[ | r( Y ) |^{ 2 p \alpha }]
	)^{\nicefrac{1}{ ( 2p ) }}
	\| P F ( Y ) \|_{\L^{2p}(\P; H)}
	\\
	& \quad	+
	\| ( P - \operatorname{Id}_H ) F( Y ) \|_{\L^p(\P; H)}
	.
	\end{split}
	\end{equation}
	The proof of Lemma~\ref{lemma:function_error_exact_conterpart} is hereby completed.
\end{proof}
\begin{lemma}
	\label{lemma:density_argument}
	Assume Setting~\ref{setting:main},
	let 
	$ C, c, \gamma \in [0, \infty) $, 
	$ \delta, \kappa \in [0,\gamma] $, 
	$ F \in \mathcal{C}( H_\gamma, H ) $,
	let 
	$ (P_I)_{ I \in \mathcal{P}(\H) } \subseteq L(H) $ 
	satisfy for all 
	$ I \in \mathcal{P} (\H) $,
	$ v \in H $ 
	that
	$ P_I( v )
	= \sum_{h \in I} \langle h, v \rangle_H h $,
	and assume for all
	$ I \in \mathcal{P}_0(\H) $,
	$ u, v \in P_I(H) $
	that
	$ \| P_I F( u ) - P_I F( v ) \|_H
	\leq
	C \| u - v \|_{H_\delta} ( 1 + \| u \|_{H_\kappa}^c + \| v \|_{H_\kappa}^c ) $.
	Then we have for all
	$ u, v \in H_\gamma $ that
	\begin{equation} 
	\label{eq:End results}
	\| F( u ) - F( v ) \|_H
	\leq
	C \| u - v \|_{H_\delta} ( 1 + \| u \|_{H_\kappa}^c + \| v \|_{H_\kappa}^c ) 
	.
	\end{equation}
\end{lemma}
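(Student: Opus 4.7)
The plan is to obtain \eqref{eq:End results} from the hypothesis by a Galerkin density argument. Since $H$ is a separable $\R$-Hilbert space, the orthonormal basis $\H \subseteq H$ is at most countable; fix an enumeration (or trivial finite listing) and let $(I_n)_{n\in\N} \subseteq \mathcal{P}_0(\H)$ be an increasing sequence of finite subsets with $\bigcup_{n\in\N} I_n = \H$. Since every $h \in \H$ is an eigenvector of $A$ with eigenvalue $\values_h$, the orthogonal projections $P_{I_n}$ commute with $(-A)^r$ on $H_r$ for each $r \in \R$, so that for every $x \in H_r$ one has $\|P_{I_n} x - x\|_{H_r} = \|P_{I_n}(-A)^r x - (-A)^r x\|_H \to 0$ as $n \to \infty$.

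Next I would apply this to the given vectors: fix $u, v \in H_\gamma$, and observe that $P_{I_n} u, P_{I_n} v \in P_{I_n}(H)$, so the hypothesis yields for every $n \in \N$ that
\begin{equation}
\label{eq:Galerkin_hyp_plan}
\|P_{I_n} F(P_{I_n} u) - P_{I_n} F(P_{I_n} v)\|_H
\leq
C \|P_{I_n} u - P_{I_n} v\|_{H_\delta}
\big(1 + \|P_{I_n} u\|_{H_\kappa}^c + \|P_{I_n} v\|_{H_\kappa}^c \big).
\end{equation}
Since $\delta, \kappa \in [0,\gamma]$, the continuous inclusions $H_\gamma \hookrightarrow H_\delta$ and $H_\gamma \hookrightarrow H_\kappa$ combined with the strong convergence $P_{I_n} \to \operatorname{Id}_H$ in $H_\gamma$ (and hence in $H_\delta$ and in $H_\kappa$) show that the right-hand side of \eqref{eq:Galerkin_hyp_plan} converges to $C\|u-v\|_{H_\delta}(1 + \|u\|_{H_\kappa}^c + \|v\|_{H_\kappa}^c)$.

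For the left-hand side, I would use the decomposition
\begin{equation}
P_{I_n} F(P_{I_n} u) - F(u)
=
P_{I_n}\bigl(F(P_{I_n} u) - F(u)\bigr)
+
\bigl(P_{I_n} F(u) - F(u)\bigr)
\end{equation}
together with the uniform bound $\|P_{I_n}\|_{L(H)} \leq 1$ to conclude that $P_{I_n} F(P_{I_n} u) \to F(u)$ in $H$; this uses the assumption $F \in \mathcal{C}(H_\gamma,H)$ together with $P_{I_n} u \to u$ in $H_\gamma$ for the first summand, and the strong convergence $P_{I_n} \to \operatorname{Id}_H$ on $H$ applied to the fixed vector $F(u) \in H$ for the second summand. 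The same argument gives $P_{I_n} F(P_{I_n} v) \to F(v)$ in $H$, so the left-hand side of \eqref{eq:Galerkin_hyp_plan} converges to $\|F(u)-F(v)\|_H$. Passing to the limit $n \to \infty$ in \eqref{eq:Galerkin_hyp_plan} then yields \eqref{eq:End results}.

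The argument is routine; the only point that requires some care is the justification that $P_{I_n} F(P_{I_n} u) \to F(u)$ in $H$, which relies on combining the strong convergence of the Galerkin projections on $H$ with the continuity of $F$ viewed as a map from $H_\gamma$ into $H$ (and analogously for $v$). Once this step is in place, everything else is just taking limits in \eqref{eq:Galerkin_hyp_plan}.
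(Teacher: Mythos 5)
Your proposal is correct and follows essentially the same Galerkin density argument as the paper: exhaust $\H$ by increasing finite sets $I_n$, apply the hypothesis to $P_{I_n}u, P_{I_n}v \in P_{I_n}(H)$, and pass to the limit using $P_{I_n}\to\operatorname{Id}$ strongly on $H_\delta$, $H_\kappa$, $H_\gamma$ together with $F\in\mathcal{C}(H_\gamma,H)$ and $\|P_{I_n}\|_{L(H)}\leq 1$. The only (harmless) difference is organizational: you take a single limit $n\to\infty$ on both sides of the projected inequality, whereas the paper introduces an auxiliary outer projection $P_{I_m}$, uses a five-term triangle inequality, and takes iterated limits in $n$ and then $m$.
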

\begin{proof}[Proof of Lemma~\ref{lemma:density_argument}]
	Throughout this proof let
	$ I_n \subseteq \H $, $ n \in \N $, 
	be sets which
	satisfy for all $ n \in \N $ that
	$ I_n \subseteq I_{n+1} $
	and
	$ \cup_{m \in \N } I_m = \H $.
	Note that the triangle inequality gives
	that
	for all
	$ m, n \in \N $,
	$ u, v \in H_\gamma $
	we have 
	that
	\begin{equation}
	\begin{split}
	\label{eq:Triangle}
	&
	\| F(u) - F(v) \|_V 
	\leq 
	\| F(u) - P_{I_m} F( u) \|_H
	+
	\| P_{I_m}  F( u )
	-
	P_{I_m} 
	F(P_{I_n} u )
	\|_H
	\\
	&
	+
	\| P_{I_m} 
	F(P_{I_n} u )
	-
	P_{I_m}
	F(P_{I_n} v ) 
	\|_H
	+
	\| P_{I_m} F( P_{I_n} v ) 
	-
	P_{I_m} 
	F( v ) 
	\|_H
	+
	\|
	P_{I_m} 
	F( v )
	-
	F(v)
	\|_H
	.
	\end{split}
	\end{equation}
	Next observe that 
	for all $ v \in H $ we have that
	\begin{equation} 
	\label{eq:limit1b}
	\limsup\nolimits_{n\to \infty}  
	\| v - P_{I_n} v \|_H 
	= 0 
	.
	\end{equation}
	This
	ensures that for all $ u, v \in H_\gamma $
	we have that
	\begin{equation}
	\begin{split}
	\label{eq:Estimate first two}
	\limsup\nolimits_{m \to \infty}
	\big(
	\| F(u) - P_{I_m} F( u) \|_H 
	+
	\|
	P_{I_m} 
	F( v )
	-
	F(v)
	\|_H
	\big) 
	=
	0.
	\end{split}
	\end{equation}
	In addition, observe that
	for all $ u \in H_\gamma $  we have that
	\begin{equation} 
	\label{eq:limit1a}
	\limsup\nolimits_{n\to \infty}  
	\| u - P_{I_n}u \|_{H_\gamma }
	= 0  
	.
	\end{equation}
	The
	assumption that $ F \in \mathcal{C}( H_\gamma, H ) $ 
	hence
	gives that for all 
	$ m \in \N $,
	$ u, v \in H_\gamma $
	we have that
	\begin{equation}
	\begin{split}
	\label{eq:Estimate second two}
	&
	\limsup\nolimits_{n \to \infty}
	\big( 
	\| P_{I_m}  F( u )
	-
	P_{I_m} 
	F(P_{I_n} u )
	\|_H
	+
	\| P_{I_m} F( P_{I_n} v ) 
	-
	P_{I_m} 
	F( v ) 
	\|_H 
	\big)
	\\
	&
	\leq
	\limsup\nolimits_{n \to \infty} 
	\big(
	\|  F( u )
	- 
	F(P_{I_n} u )
	\|_H
	+
	\| F( P_{I_n} v ) 
	- 
	F( v ) 
	\|_H 
	\big) 
	=
	0
	.
	\end{split}
	\end{equation}
	Moreover, note that
	the fact that	 
	$ \forall \, n \in \N $,
	$ u, v \in P_{I_n}(H) \colon 
	\| P_{I_n} F( u ) - P_{I_n} F( v ) \|_H
	\leq
	C \| u - v \|_{H_\delta} 
	( 1 + \| u \|_{H_\kappa}^c + \| v \|_{H_\kappa}^c ) $
	and the fact that
	$ \forall \, m \in \N $, 
	$ n \in ( [m, \infty) \cap \N ) $,
	$ u \in H \colon 
	\| P_{I_m} u \|_H
	=
	\| P_{I_m} P_{I_n} u \|_H 
	\leq 
	\| P_{I_n} u \|_H $ 
	yield that
	for all
$ m \in \N $, 
$ n \in ( [m, \infty) \cap \N ) $,
	$ u, v \in H_\gamma $
	we have that
	\begin{equation}
	\begin{split}
	\| P_{I_m} 
	F(P_{I_n} u )
	-
	P_{I_m}
	F(P_{I_n} v ) 
	\|_H
	&
	\leq
	\| P_{I_n} 
	F(P_{I_n} u )
	-
	P_{I_n}
	F(P_{I_n} v ) 
	\|_H
	\\
	&
	\leq 
	C
	\| P_{I_n} u - P_{I_n} v\|_{H_\delta}
	( 
	1 
	+ 
	\| P_{I_n} u \|_{H_\kappa}^c 
	+ 
	\| P_{I_n} v \|_{H_\kappa}^c 
	)
	.
	\end{split}
	\end{equation}
	The fact that
	$ \delta, \kappa \in [0, \gamma] $
	and~\eqref{eq:limit1a}
	therefore verify that
	for all 
	$ m \in \N $,
	$ u, v \in H_\gamma $ 
	we have 
	that
	\begin{equation}
	\begin{split} 
	\limsup\nolimits_{ 
		n \to \infty }
	\| P_{I_m} 
	F(P_{I_n} u )
	-
	P_{I_m}
	F(P_{I_n} v ) 
	\|_H
	&
	\leq
	C \| u - v \|_{H_\delta}
	( 1 + \| u \|_{H_\kappa}^c + \| v \|_{H_\kappa}^c ) 
	.
	\end{split} 
	\end{equation}
	Combining~\eqref{eq:Triangle} 
	and~\eqref{eq:Estimate second two}
	hence gives that
	for all
	$ m \in \N $,
	$ u, v \in H_\gamma $ we have that
	\begin{equation}
	\begin{split} 
	\|  
	F( u ) 
	-
	F ( v ) 
	\|_H
	&
	\leq
	\| F(u) - P_{I_m} F( u) \|_H 
	+
	C \| u - v \|_{H_\delta}
	( 1 + \| u \|_{H_\kappa}^c + \| v \|_{H_\kappa}^c )
	\\
	& 
	\quad  
	+
	\|
	P_{I_m} 
	F( v )
	-
	F(v)
	\|_H
	.
	\end{split} 
	\end{equation}
	This and~\eqref{eq:Estimate first two}
	justify~\eqref{eq:End results}.
	The proof of Lemma~\ref{lemma:density_argument}
	is hereby completed.
\end{proof}
\begin{proposition}
\label{proposition:Exact_to_numeric_general}
Assume Setting~\ref{setting:main},
let
$ T, \upnu, \varsigma, \alpha \in (0,\infty) $,
$ \aa, \iota, \rho \in [0, \infty) $,  
$ C, c, p \in [1, \infty) $, 
$ \beta \in [0,\nicefrac{1}{2}) $,
$ \gamma \in  
 [2 \beta, \nicefrac{1}{2} + \beta  ) $, 
$ \delta, \kappa \in [0,\gamma] $,  
$ \eta_1 \in [0, \nicefrac{1}{2}  + \beta  ) $,
$ \eta_2 \in [\eta_1, \nicefrac{1}{2}  + \beta  ) $, 
$ \alpha_1 \in [0, 1 - \eta_1 ) $,
$ \alpha_2 \in [0, 1 - \eta_2 ) $,   
$ B \in \HS(U, H_\beta ) $,
$  \varepsilon \in [0, ( \nicefrac{ \rho }{ p } ) 
\exp( - 2 ( \aa + \rho \| B \|_{\HS(U,H)}^2 ) T ) ] $,
$ F \in \mathcal{C}^1(  H_\gamma, H ) $, 
$ r \in \M ( \mathcal{B}(H_\gamma), \mathcal{B}( [0,\infty) ) ) $,
$ (D_h^I)_{ h\in (0,T], I \in \mathcal{P}_0(\H) } \subseteq \mathcal{B}(H_\gamma) $, 
let
$ \Phi \colon H \to [0, \infty) $
be a function,
let  
$ (P_I)_{ I \in \mathcal{P}(\H) } \subseteq L(H) $ 
satisfy for all 
$ I \in \mathcal{P}(\H) $,
$ x \in H $ 
that
$ P_I(x)
= \sum_{h \in I} \langle h, x \rangle_H h $,
assume for all
$ I \in \mathcal{P}_0(\H) $,  
$ h \in (0,T] $
that
$ \{
v \in P_I( H ) \colon
r( v )
\leq  
\upnu
h^{-\varsigma}
\}
\subseteq D_h^I $
and
$ ( P_I(H) \ni v \mapsto \Phi( v ) \in [0, \infty) )
\in \mathcal{C} ( P_I(H), [0, \infty) ) $,
assume for all  
$ I \in \mathcal{P}_0(\H) $,
$ x, y \in P_I( H ) $
that
$ \langle x, F(x) \rangle_H \leq \aa ( 1 + \| x \|_H^2 ) $,
$ \langle F'(x) y, y \rangle_H  \leq 
( \varepsilon  \| x \|_{H_{\nicefrac{1}{2}}}^2 
+ C ) \| y \|_H^2 
+ \|y \|_{H_{\nicefrac{1}{2}}}^2 $,
$
\| P_I( F( x ) - F( y ) ) \|_H
\leq
C \| x - y \|_{H_\delta} ( 1 + \| x \|_{H_\kappa}^c  + \| y \|_{H_\kappa}^c  ) $,
$ \langle x, A x + F(x+y) \rangle_H
\leq
\Phi(y) ( 1 + \| x \|_H^2 ) $,   
and
\begin{equation}
\label{eq:GivesMoments}
\Big[
\sup\nolimits_{ v \in H_{ \max \{ \gamma, \eta_2 \} } }
\tfrac{ \| F(v) \|_{H  } }
{ 1 + \| v \|_{H_{ \eta_2  } }^2 }
\Big] 
+
\Big[
\sup\nolimits_{ v \in H_{ \max \{ \gamma, \eta_1 \} }  }
\tfrac{ \| F(v) \|_{H_{ - \alpha_2 } } }
{ 1 + \| v \|_{ H_{ \eta_1 } }^2 }
\Big]
+
\Big[
\sup\nolimits_{ v \in H_{\gamma}  } 
\tfrac{ \| F(v) \|_{H_{-\alpha_1} } }{ 1 + \| v \|_H^2 } 
\Big]
< \infty,
\end{equation}
let
$ ( \Omega, \F, \P ) $
be a probability space with a normal filtration
$ ( \f_t )_{t \in [0,T]} $,
let $ (W_t)_{ t \in [0,T]} $
be an
$ \operatorname{Id}_U $-cylindrical
$ ( \f_t )_{t\in [0,T]} $-Wiener process,
let
$ \xi \in \L^{ 4 p \max \{ c, 2 \} }(\P |_{\f_0}; H_{ \max \{\gamma, \eta_2 \} } ) $
satisfy
$ \E[  \| \xi \|_H^{ 16 p } ] < \infty $, 
let 
$ X \colon [0,T] \times \Omega \to H_\gamma $
and
$ O \colon [0,T] \times \Omega \to H_\gamma $  
be $ ( \f_t )_{ t \in [0,T] } $-adapted 
stochastic processes 
w.c.s.p.\ 
which satisfy for all $ t \in [0,T] $ that
$ [ O_t ]_{\P, \mathcal{B}(H_\gamma) } = \int_0^t e^{(t-s)A} B \, dW_s $
and
\begin{equation}
\P\bigg(
X_t
= 
e^{tA} \xi  
+ 
\int_0^t e^{(t-s)A} F(X_s)
\,
ds
+
O_t
\bigg)
=
1,
\end{equation}
let
$ \y^{\theta, I}  \colon [0,T] \times \Omega \to P_I( H ) $,
$ \theta \in \varpi_T $,
$ I \in \mathcal{P}_0(\H) $, 
and
$ \O^{\theta, I } \colon [0,T] \times \Omega \to P_I( H ) $,
$ \theta \in \varpi_T $,
$ I \in \mathcal{P}_0(\H) $, 
be 
$ ( \f_t )_{t\in [0,T]} $-adapted
stochastic 
processes  
w.c.s.p.\
which satisfy for all
$ \theta \in \varpi_T $,
$ I \in \mathcal{P}_0(\H) $, 
$ t \in [0,T] $
that 
\begin{equation}
\label{eq:scheme}
\P \bigg(
\y_t^{\theta, I}
=
e^{tA}
P_I
\xi
+
\int_0^t
\1_{  D_{ |\theta|_T  }^I  }
\!
( \y^{\theta, I }_{\llcorner s \lrcorner_\theta })
\,
e^{(t-\llcorner s \lrcorner_\theta )A}
P_I F(
\y^{\theta, I }_{ \llcorner s \lrcorner_\theta  } 
) 
\,
ds
+
\O_t^{\theta, I}
\bigg) 
= 1
,
\end{equation}
and
assume for all  
$ \theta \in \varpi_T $,
$ I, \mathcal{I} \in \mathcal{P}_0(\H) $
with
$ I \subseteq \mathcal{I} $
that
\begin{equation} 
\label{eq:TimeRegularityNoise}
\sup\nolimits_{ s \in [0,T] } 
\| \O_s^{\theta, I} - \O_{ \llcorner s \lrcorner_\theta }^{\theta, I} 
\|_{\L^{4p}( \P; H_\delta ) } 
\leq 
C [ | \theta |_T]^\alpha 
,
\end{equation} 
\begin{equation} 
\label{eq:NoiseRates}
\sup\nolimits_{ s \in [0,T] } 
\| 
\O_s^{\theta, I} - P_{ \mathcal{I} } O_s
\|_{\L^{4p c}(\P; H_{ \max \{ \kappa, \delta \} } )}   
\leq 
C 
( \| P_{ \H \backslash I } (-A)^{-\iota}  \|_{L(H)}
+
[ | \theta |_T]^\alpha  )
,
\end{equation}
\begin{equation} 
\begin{split} 
\label{eq:ExpBound}
&
\sup\nolimits_{J, K\in \mathcal{P}_0(\H)} 
\sup\nolimits_{\vartheta \in \varpi_T} 
\int_0^T
\E\big[
\exp\!
\big(  
\rho
\|  
\y_s^{\vartheta, K}
-
\O_s^{\vartheta, K}
+
P_J O_s
+
e^{s A} P_{ J \backslash K } \xi
\|_H^2   
\big)
\big] 
\,
ds
< 
\infty
,
\end{split} 
\end{equation} 
\begin{equation} 
\begin{split}
\label{eq:SupBound1} 
\sup\nolimits_{J \in \mathcal{P}_0(\H)} 
\sup\nolimits_{\vartheta \in \varpi_T}
\sup\nolimits_{ s \in [0,T] }
\big[
\| P_J F ( \y_s^{\vartheta, J} ) \|_{\L^{4p}(\P; H_{  \gamma - \delta } )}
+
\| P_J F ( \y_s^{\vartheta, J} ) \|_{\L^{2p}(\P; H_{ \iota } )}
\big] 
< \infty, 
\end{split} 
\end{equation}
\begin{equation} 
\begin{split}
\label{eq:SupBound2} 
\text{and}
\quad
\sup\nolimits_{J\in \mathcal{P}_0(\H)} 
\sup\nolimits_{\vartheta \in \varpi_T}
\sup\nolimits_{ s \in [0,T] }
\big[ 
\| \y_s^{\vartheta, J} \|_{\L^{4 p c }( \P; H_\kappa ) } 
+
\| 
r( \y_s^{\vartheta, J } )
\|_{\L^{ \nicefrac{4p\alpha}{\varsigma} }(\P; \R) }
\big] < \infty 
.
\end{split} 
\end{equation} 
Then there exists $ \mathfrak{c} \in \R $
such that for all 
$ \theta \in \varpi_T $, 
$ I \in \mathcal{P}_0(\H) $
we have that
\begin{equation}
\label{eq:Theorem general result}
\sup\nolimits_{ t \in [0,T] }
\| X_t - \y^{\theta, I }_t \|_{\L^p(\P; H)}
\leq
\mathfrak{c} \big( \| P_{\H \backslash I } ( -A )^{- \min \{ \gamma - \delta, \iota \} } \|_{L(H)}
+
[ | \theta |_T ]^{ \min \{  \gamma - \delta, \alpha \} }
\big)
.
\end{equation}
\end{proposition}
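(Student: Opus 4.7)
The plan is to combine Proposition~\ref{proposition:main_error_estimate} (the key finite-dimensional strong error estimate), the truncation bound of Lemma~\ref{lemma:function_error_exact_conterpart}, and a spatial spectral Galerkin reduction. Fix $\theta\in\varpi_T$ and $I\in\mathcal{P}_0(\H)$. For each auxiliary $\mathcal{I}\in\mathcal{P}_0(\H)$ with $I\subseteq\mathcal{I}$, I would introduce the Galerkin approximation $X^{\mathcal{I}}\colon[0,T]\times\Omega\to P_{\mathcal{I}}(H)$ of $X$, solving $X_t^{\mathcal{I}} = e^{tA}P_{\mathcal{I}}\xi + \int_0^t e^{(t-s)A}P_{\mathcal{I}}F(X_s^{\mathcal{I}})\,ds + P_{\mathcal{I}}O_t$. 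Corollary~\ref{corollary:AprioriExact} supplies uniform $\L^p$-bounds for $X^{\mathcal{I}}$ in $H_\iota$, which combined with the local monotonicity of $F$, the growth hypothesis~\eqref{eq:GivesMoments}, and Gronwall's lemma yields the standard spatial Galerkin estimate $\sup_{t\in[0,T]}\|X_t - X_t^{\mathcal{I}}\|_{\L^p(\P;H)} \leq C\|P_{\H\backslash\mathcal{I}}(-A)^{-\iota}\|_{L(H)}$, which tends to $0$ as $\mathcal{I}\nearrow\H$. By the triangle inequality, the task reduces to bounding $\sup_{t\in[0,T]}\|X_t^{\mathcal{I}} - \y_t^{\theta,I}\|_{\L^p}$ uniformly in $\mathcal{I}\supseteq I$ by the claimed quantity.

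For the latter, I would invoke Corollary~\ref{corollary:main_error_estimate} (rather than Proposition~\ref{proposition:main_error_estimate} directly, since Corollary~\ref{corollary:main_error_estimate} permits distinct initial conditions via its free parameter $\zeta$) inside the finite-dimensional space $P_{\mathcal{I}}(H)$, identifying $F$ with $P_{\mathcal{I}}F|_{P_{\mathcal{I}}(H)}$, $O$ with $P_{\mathcal{I}}O$, $\O$ with $\O^{\theta,I}$, $\ff$ with the tamed-truncated drift $v\mapsto\mathbbm{1}_{D^I_{|\theta|_T}}(P_Iv)\,P_IF(P_Iv)$, $\xi$ with $P_I\xi$, and $\zeta$ with $P_{\mathcal{I}}\xi$, so that $X_{0,t}^{\zeta+O_0}$ coincides with $X_t^{\mathcal{I}}$ and the initial-condition mismatch contributes only the term $\|P_I\xi-P_{\mathcal{I}}\xi\|_{\L^p(\P;H)}\leq\|P_{\H\backslash I}(-A)^{-\gamma}\|_{L(H)}\|\xi\|_{\L^p(\P;H_\gamma)}$. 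Lemma~\ref{lemma:density_argument} ensures that the local Lipschitz bound transfers from the Galerkin subspaces to all of $H_\gamma$. Each term on the right-hand side of~\eqref{eq:app_error} is then controlled via the proposition's hypotheses: $\sup_s\|\O^{\theta,I}_s - P_{\mathcal{I}}O_s\|_{\L^p}$ by~\eqref{eq:NoiseRates}; $\sup_s\|\O^{\theta,I}_s - \O^{\theta,I}_{\llcorner s\lrcorner_\theta}\|_{\L^{4p}(\P;H_\delta)}$ by~\eqref{eq:TimeRegularityNoise}; the norms $\|\tfrac{\partial}{\partial x}X_{s,t}^{\y^{\theta,I}_s-\O^{\theta,I}_s+P_{\mathcal{I}}O_s+e^{sA}P_{\mathcal{I}\backslash I}\xi}\|_{\L^{2p}(\P;L(H))}$ via Lemma~\ref{lemma:inheritet} combined with hypothesis~\eqref{eq:ExpBound} (taken with $J=\mathcal{I}$, $K=I$); the $\L^q$-norms of $\ff(\y^{\theta,I}_s)$ and of $\y^{\theta,I}_s$ by~\eqref{eq:SupBound1}--\eqref{eq:SupBound2}; and most crucially the truncation discrepancy $\sup_s\|\ff(\y^{\theta,I}_s)-P_{\mathcal{I}}F(\y^{\theta,I}_s)\|_{\L^{2p}(\P;H)}$, which splits via the triangle inequality into a tamed-truncation remainder bounded by Lemma~\ref{lemma:function_error_exact_conterpart} and~\eqref{eq:SupBound2} (contributing a factor $C[|\theta|_T]^{\alpha\varsigma}$) plus a Galerkin remainder $\|(P_I-P_{\mathcal{I}})F(\y^{\theta,I}_s)\|_{\L^{2p}}\leq\|P_{\H\backslash I}(-A)^{-\iota}\|_{L(H)}\|F(\y^{\theta,I}_s)\|_{\L^{2p}(\P;H_\iota)}$ controlled by~\eqref{eq:SupBound1}.

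Collecting these contributions shows that every term is dominated by a constant multiple of $\|P_{\H\backslash I}(-A)^{-\min\{\gamma-\delta,\iota\}}\|_{L(H)} + [|\theta|_T]^{\min\{\gamma-\delta,\alpha\}}$, uniformly in $\mathcal{I}\supseteq I$; choosing an exhausting sequence $\mathcal{I}_n\nearrow\H$ then removes the auxiliary parameter and produces the constant $\mathfrak{c}$ satisfying~\eqref{eq:Theorem general result}. The main obstacle will be the careful bookkeeping of the many error contributions together with the $\mathcal{I}$-uniformity: every quantity entering the Corollary~\ref{corollary:main_error_estimate} estimate must be shown to be uniformly bounded in $\mathcal{I}$, which hinges on the $\mathcal{I}$-independent exponential-moment hypothesis~\eqref{eq:ExpBound} feeding Lemma~\ref{lemma:inheritet} to produce an $\mathcal{I}$-uniform bound for the derivative of the Galerkin flow, and one must verify that the exponents $\alpha\varsigma$, $1-\delta$, $\gamma-\delta$, and $\alpha$ conspire to produce the stated temporal rate $\min\{\gamma-\delta,\alpha\}$ in $|\theta|_T$, the tuning freedom of $\varsigma$ being essential so that the taming parameter does not degrade the final rate.
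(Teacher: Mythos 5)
Your overall architecture is the same as the paper's: introduce an auxiliary Galerkin level $\mathcal{I}\supseteq I$ (the paper's $I_n$ with $\cup_n(\cap_{m>n}I_m)=\H$), compare $\y^{\theta,I}$ with the $\mathcal{I}$-Galerkin solution via the finite-dimensional strong error estimate (the paper routes this through Proposition~\ref{proposition:main_error_estimate}, which is Corollary~\ref{corollary:main_error_estimate} with the initial-value parameter; its way of absorbing the mismatch $P_{\mathcal{I}\backslash I}\xi$ into a shifted $\O$ is equivalent to your choice $\zeta=P_{\mathcal{I}}\xi$), bound the derivative factor through Lemma~\ref{lemma:inheritet} fed by~\eqref{eq:ExpBound} with $J=\mathcal{I}$, $K=I$, treat the taming/truncation discrepancy with Lemma~\ref{lemma:function_error_exact_conterpart}, and finally let $\mathcal{I}\nearrow\H$. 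Two steps, however, do not hold as you wrote them.

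First, the claim that ``local monotonicity of $F$, \eqref{eq:GivesMoments}, and Gronwall's lemma'' yield a \emph{rated} spatial Galerkin estimate $\sup_t\|X_t-X_t^{\mathcal I}\|_{\L^p(\P;H)}\leq C\|P_{\H\backslash\mathcal I}(-A)^{-\iota}\|_{L(H)}$ is exactly the type of argument that fails for non-globally monotone $F$: the monotonicity constant is $\varepsilon\|x\|_{H_{\nicefrac{1}{2}}}^2+C$, so a Gronwall argument on the difference requires exponential integrability of $\int_0^T\|X_s\|_{H_{\nicefrac{1}{2}}}^2\,ds$ — the obstruction the whole Alekseev--Gr\"obner/exponential-moment machinery is built to circumvent — and no such rate is needed anyway. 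The paper only establishes $\limsup_{n\to\infty}\sup_{t\in[0,T]}\|\mathcal X^n_t-X_t\|_{\L^p(\P;H_\gamma)}=0$ \emph{without} a rate, by combining the uniform $H_\gamma$-moment bounds of Corollary~\ref{corollary:AprioriExact}, the local Lipschitz estimate transferred to all of $H_\gamma$ via Lemma~\ref{lemma:density_argument}, and the Galerkin convergence result of \cite[Corollary~6.5]{JentzenPusnik2019Published}; since $I$ stays fixed while $\mathcal I=I_n\to\H$, qualitative convergence suffices, and your argument should be repaired in the same way. Second, your bookkeeping of the truncation remainder as $C[|\theta|_T]^{\alpha\varsigma}$ would only produce the final temporal rate $\min\{\gamma-\delta,\alpha\varsigma\}$, strictly weaker than the claimed $\min\{\gamma-\delta,\alpha\}$, and there is no ``tuning freedom of $\varsigma$'' inside the proposition — $\varsigma$ is given. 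The correct move is to apply Lemma~\ref{lemma:function_error_exact_conterpart} with its exponent parameter equal to $\nicefrac{\alpha}{\varsigma}$ (a Markov inequality with the high power), which is precisely why hypothesis~\eqref{eq:SupBound2} requires $r(\y^{\vartheta,J})\in\L^{\nicefrac{4p\alpha}{\varsigma}}(\P;\R)$; this yields the factor $[|\theta|_T]^{\alpha}$ and hence the stated rate.
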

\begin{proof}[Proof of Proposition~\ref{proposition:Exact_to_numeric_general}]
Throughout this proof let
$ O^I
 \colon [0, T]
\times \Omega \to P_I( H ) $,
$ I \in \mathcal{P}_0(\H) $,
be the 
$ ( \f_t )_{ t \in [0,T] } $-adapted
stochastic processes  
which satisfy
for all 
$ I \in \mathcal{P}_0(\H) $, 
$ t\in [0, T] $
that  
$ O_t^I = P_I O_t $, 
let
$ \mathcal{A}_I \colon P_I(H) \to P_I(H) $,
$ I \in \mathcal{P}_0(\H) $,
satisfy for all 
$ I \in \mathcal{P}_0(\H) $,
$ v \in P_I(H) $
that
$ \mathcal{A}_I v = A v $, 
for every
$ I \in \mathcal{P}_0(\H) $
let
$ ( \mathcal{H}_{I, s}, \langle \cdot, \cdot \rangle_{ \mathcal{H}_{I, s} },
\left \| \cdot \right\|_{ \mathcal{H}_{I, s} } ) $,
$ s \in \R $,
be a family of interpolation spaces
associated
to $ - \mathcal{A}_I $,
and let
$ I_m \in 
( \mathcal{P}_0(\H) \backslash \{ \emptyset \} ) $,
$  m \in \N  $,
be sets which satisfy  
$ \cup_{n\in \N} ( \cap_{m\in \{n+1, n+2,\ldots \}} I_m ) = \H $.
Note that
the fact that
for all
$ I \in \mathcal{P}_0(\H) $, $ x \in P_I(H) $
we have that
\begin{equation} 
\label{eq:finite coercivity}
\langle x, P_I F(x) \rangle_H \leq \aa ( 1 + \| x \|_H^2 ), 
\end{equation} 
the fact that for all
$ I \in \mathcal{P}_0(\H) $,
$ x, y \in P_I( H ) $
we have that 
$ \langle (P_I F)'(x) y, y \rangle_H  \leq 
( \varepsilon  \| x \|_{H_{\nicefrac{1}{2}}}^2 
+ C ) \| y \|_H^2 
+ \|y \|_{H_{\nicefrac{1}{2}}}^2 $,
$ \langle x, A x + P_I F(x+y) \rangle_H
\leq
\Phi(y) ( 1 + \| x \|_H^2 ) $,   
and
\begin{equation} 
\label{eq:local Lip}
\| P_I( F( x ) - F( y ) ) \|_H
\leq
C \| x - y \|_{H_\delta} ( 1 + \| x \|_{H_\kappa}^c  + \| y \|_{H_\kappa}^c  )
,
\end{equation} 
Proposition~\ref{proposition:main_error_estimate}
(applies with
$ H = P_{I_n}(H) $,
$ \mathbb{H} = P_{I_n} ( \mathbb{H} ) $,
$ \values = ( I_n \ni h \mapsto \values_h \in \R ) $,
$ A = \mathcal{A}_{I_n} $,
$ ( H_s )_{ s \in \R } = ( \mathcal{H}_{I_n, s} )_{ s \in \R } $, 
$ T = T $, 
$ \theta = \theta $,
$ a = \aa $,
$ b = \aa $,
$ {\bf C} = C $,
$ \rho = \rho $, 
$ C = C $,
$ c = c $,
$ p = p $, 
$ \gamma = \gamma $,
$ \delta = \delta $, 
$ \kappa = \kappa $, 
$ B = ( U \ni u \mapsto P_{I_n} B(u) \in P_{I_n}(H) ) $,
$ \varepsilon = \varepsilon $,
$ F = ( P_{I_n}(H)  \ni x \mapsto P_{I_n} F( x ) \in P_{I_n}(H) ) $,
$ \ff = ( P_{I_n}(H) \ni x \mapsto 
\1_{ D_{ |\theta |_T }^I }( x ) P_I F( x ) \in P_{I_n}(H) ) $,
$ \Phi = ( P_{I_n}(H) \ni x \mapsto \Phi(x) \in [0, \infty) ) $,
$ (\Omega, \F, \P ) = ( \Omega, \F, \P ) $,
$ ( \f_t )_{ t \in [0,T]} = ( \f_t )_{ t \in [0,T]} $,
$ \xi = ( \Omega \ni \omega \mapsto P_{I_n} \xi( \omega) \in P_{I_n}(H) )$,
$ ( W_t )_{t\in [0,T]} 
= 
( W_t )_{t \in [0,T]} $,
$ O = O^{I_n} $, 
$ \y = ( [0,T] \times \Omega \ni (t, \omega)
\mapsto \y^{\theta, I}_t( \omega )
\in P_{ I_n }(H) ) $, 
$ \O = ( [0,T] \times \Omega \ni (t, \omega) 
\mapsto \O^{\theta, I}_t( \omega )
-
e^{tA}
P_{I_n \backslash I } \xi(\omega)
\in P_{ I_n }(H) ) $,
$ X = \mathcal{X}^n $
for  
$ \theta \in \varpi_T $,
$ I \in \mathcal{P}( I_n ) $,
$ n \in \N $ 
in the setting of  
Proposition~\ref{proposition:main_error_estimate}),
and the triangle inequality  
verify that
\begin{enumerate}[(a)] 
	\item \label{item:help_exact_mild_general2} we have that there exist
	$ ( \f_t )_{ t \in [0,T] } $-adapted
	stochastic processes 
	$ \mathcal{X}^n \colon [0,T] \times \Omega \to P_{I_n}(H) $,
	$ n \in \N $,
w.c.s.p.\
	which satisfy
	for all 
	$ n \in \N $,
	$ t \in [0,T] $
	that
	\begin{equation} 
	\label{eq:help_exact_mild_general2}
	\mathcal{X}_t^n
	= 
	e^{t A} P_{I_n} \xi 
	+
	\int_0^t e^{(t-u)A} P_{I_n} F (  \mathcal{X}_{u}^n ) 
	\,
	du
	+
	O_t^{I_n}  
	\end{equation}  
	and
	\item \label{item:first_num_estimate}
we have for all
$ \theta \in \varpi_T $,
$ I \in \mathcal{P}_0(\H) $, 
$ n \in \N $,
$ t \in [0,T] $
with
$ I \subseteq I_n $
that
\begin{equation}
\begin{split}
\label{eq:first_num_estimate}
&
\|  \y_t^{\theta,I} - \mathcal{X}_t^n \|_{ \L^p( \P; H ) }  
\leq
\sup\nolimits_{s \in [0,T]}
\| \O_s^{\theta, I} - O_s^{I_n} \|_{ \L^p( \P; H ) } 
+
\| 
P_{ I_n \backslash I } \xi  
\|_{ \L^p( \P; H ) } 
\\
& 
+ 
\tfrac{ C [ \max \{ T, 1 \} ]^2
}{ 1 - \gamma }
\exp\!
\big( 
\big( 
C
+
\rho	( 2 \aa + \|  B \|_{\HS(U,H)}^2 )  
\big) 
T
\big) 
\\
&
\cdot
\bigg[ 
\int_0^T 										\E 									\big[
e^{ \rho \| \y_s^{\theta,I} - \O_s^{\theta, I} + O_s^{I_n}
	+e^{sA}
	P_{ I_n \backslash I }
	\xi   \|_H^2 }
\big]
\, ds
\bigg] 
\Big\{
[| \theta |_T]^{\gamma - \delta}
\sup\nolimits_{s\in [0,T]}
\| P_I F ( \y_s^{\theta, I} ) \|_{ \L^{2p} (\P; H_{\gamma-\delta} )}
\\
&
+
\sup\nolimits_{s\in [0,T]}
\| 
\1_{ D_{ |\theta|_T }^I } ( \y_s^{\theta, I} )
P_I F ( \y_s^{\theta, I} ) 
- 
P_{I_n} F ( \y_s^{\theta, I} ) 
\|_{ \L^{ 2 p } ( \P; H ) }
\\
& 
+
\Big(   
2
[ | \theta |_T ]^{\gamma - \delta}
\sup\nolimits_{s\in [0,T]}
\|
P_I
F( \y_s^{\theta, I} )
\|_{\L^{4p}(\P; H)}
+
\sup\nolimits_{s\in [0,T]}
\| \O_s^{\theta, I} - \O^{\theta, I}_{ \llcorner s \lrcorner_\theta } \|_{\L^{4p}(\P; H_\delta)}
\\
&
+
[ | \theta |_T ]^{ \gamma - \delta }
\|  \xi \|_{\L^{4p}(\P; H_\gamma)}
+
\sup\nolimits_{s\in [0,T]}
\| \O_s^{\theta, I} - O_s^{I_n} \|_{\L^{4p}(\P; H_\delta)}
+
\| P_{I_n \backslash I} \xi \|_{\L^{4p}(\P; H_\delta)}
\Big)
\\
&
\cdot
\big[
1
+
2 
\sup\nolimits_{s\in [0,T]}
\|
\y_s^{\theta, I}
\|_{\L^{4p c }(\P; H_\kappa)}
+
\sup\nolimits_{s\in [0,T]}
\| \O_s^{\theta, I} - O_s^{I_n} \|_{\L^{4p c }(\P; H_\kappa)}
+
\| P_{I_n \backslash I} \xi \|_{\L^{4p c}(\P; H_\kappa)}
\big]^c 
\Big\}
.  
\end{split}
\end{equation}
\end{enumerate}
Moreover, observe that the triangle inequality 
gives that for all  
$ \theta \in \varpi_T $,
$ I \in \mathcal{P}_0(\H) $,
$ n \in \N $,
$ t \in [0,T] $
we have 
that
\begin{equation}
\label{eq:apply_alekseev_grobner_full_general}
\begin{split}
&
\| 
\y^{\theta, I }_t 
- 
X_t 
\|_{\mathcal{L}^p(\P; H)}
\leq
\| \y_t^{\theta, I} - \mathcal{X}_t^n 
\|_{\mathcal{L}^p(\P; H)}
+ 
\| \mathcal{X}_t^n - X_t  
\|_{\mathcal{L}^p(\P; H)}
.
\end{split}
\end{equation}
Next note that~\eqref{eq:GivesMoments},
\eqref{eq:finite coercivity},
the fact that
$ \xi \in \L^{ 8 p }( \P|_{\f_0}; H_{ \max \{\gamma, \eta_2 \} } ) $,
the fact that  
$ \E[ \| \xi \|_H^{ 16 p } ] < \infty $,
and
Corollary~\ref{corollary:AprioriExact}
(applies with
$ T = T $,
$ a = \aa $,
$ b = \aa $,
$ p = 2p $,
$ \beta = \beta $,
$ \gamma = \gamma $,
$ \eta_1 = \eta_1 $,
$ \eta_2 = \eta_2 $,
$ \iota = \max \{ \gamma, \eta_2 \} $,
$ \alpha_1 = \alpha_1 $,
$ \alpha_2 = \alpha_2 $, 
$ B = B $,
$ F = F $,
$ P_I = P_I $,
$ ( \Omega, \F, \P ) 
=
(
\Omega, \F, \P ) $,
$ (\f_s)_{ s \in [0,T]} = (\f_s)_{ s \in [0,T]} $,
$ ( W_s )_{ s \in [0,T] } = ( W_s )_{ s \in [0,T] } $,
$ \xi = \xi $,
$ X^{ I_n }_t = \mathcal{X}^n_t $,
$ O^{I}_t = O^{I}_t $
for
$ t \in [0,T] $,
$ n \in \N $,
$ I \in \mathcal{P}_0(\H) $
in the setting of Corollary~\ref{corollary:AprioriExact})
illustrate that
\begin{equation} 
\label{eq:CrucialMoments!!}
\sup\nolimits_{n \in \N } \sup\nolimits_{ t \in [0,T]} \| \mathcal{X}_t^n \|_{\L^{2 p}(\P; H_\gamma)}
< \infty 
.
\end{equation}
In addition, observe that~\eqref{eq:local Lip},
Lemma~\ref{lemma:density_argument}
(applies with
$ C = C $,
$ c = c $,
$ \gamma = \gamma $,
$ \delta = \delta $,
$ \kappa = \kappa $,
$ F = F $,
$ P_I = P_I $
for 
$ I \in \mathcal{P}( \H ) $
in the setting of
Lemma~\ref{lemma:density_argument}), 
and the fact that
$ \gamma \geq \max \{ 2 \beta, \kappa, \delta \} $
yield that for all
$ R \in (0,\infty) $,
$ x, y \in H_\gamma $ 
with
$ \max \{ \| x \|_{H_\gamma}, \| y \|_{H_\gamma} \} \leq R $
we have that
\begin{equation}
\begin{split}
&
\| F( x ) - F( y ) \|_{ H_{ 2 \beta - \gamma } }
\leq
 \| ( - A )^{ 2 \beta - \gamma } \|_{L(H)}
 \| F( x ) - F( y ) \|_H
\\
&
\leq
C
\| ( - A )^{ 2 \beta - \gamma } \|_{L(H)}
\| x - y \|_{H_\delta}
( 1 + 2 ( \| ( -A )^{\kappa- \gamma } \|_{L(H)}  R )^c )
\\
&
\leq
C
\| ( - A )^{ 2 \beta - \gamma } \|_{L(H)}
\| (-A)^{\delta - \gamma} \|_{L(H)}
\| x - y \|_{H_\gamma}
( 1 + 2 ( \| ( -A )^{\kappa- \gamma } \|_{L(H)}  R )^c )
< \infty 
.
\end{split}
\end{equation}
Combining this,
\eqref{eq:help_exact_mild_general2},
\eqref{eq:CrucialMoments!!},   
and
the fact that
$ 2 \beta - \gamma \leq 0 $
with, e.g., \cite[Corollary~6.5]{JentzenPusnik2019Published}
(applies with
$ H = H $,
$ U = U $,
$ \H = \H $,
$ \lambda = \values $,
$ A = A $, 
$ \gamma = \gamma $,
$ T = T $,
$ p = 2 p $,
$ ( \Omega, \F, \P  )
 = ( \Omega, \F, \P ) $,
 $ (\F_t)_{ t \in [0,T]} = (\f_t)_{ t \in [0,T]} $
 $ \xi = \xi $,
 $ (W_t)_{ t \in [0,T] } 
 =
 (W_t)_{ t \in [0,T] } $, 
 $ \eta = 2 ( \gamma - \beta ) $,
 $ F = ( H_\gamma \ni x \mapsto F(x) \in H_{ 2 \beta - \gamma } ) $,
 $ B = ( H_\gamma \ni x \mapsto B \in 
 \HS(U, H_{ \beta } ) ) $,
 $ I_n = I_n $,
 $ X^n = \mathcal{X}^n $, 
 $ X^0 = X $,
 $ q = p $,
 $ K
 =
 C
 \| (- A )^{ 2 \beta - \gamma } \|_{L(H)}
 \| (-A)^{\delta - \gamma} \|_{L(H)} 
 ( 1 + 2 ( \| ( -A )^{\kappa- \gamma } \|_{L(H)} R )^c )
 $
 for 
 $ n \in \N $,
 $ R \in (0,\infty) $
in the setting of~\cite[Corollary~6.5]{JentzenPusnik2019Published}) 
ensures  that
\begin{equation}
\label{eq:galerkin_no_rate}
\limsup\nolimits_{   n \to \infty}
\big(
\sup\nolimits_{t \in [0,T] }
 \| 
\mathcal{X}_t^n 
- 
X_t  
  \|_{\mathcal{L}^p(\P; H_\gamma)}
\big)
= 0
.
\end{equation}
In addition,
note that
the assumption that
for all 
$ I \in \mathcal{P}_0(\H) $,
$ h \in (0,T] $
we have that
$ \{ v \in P_I(H) \colon r( v ) \leq \upnu h^{-\varsigma} \}
\subseteq
D_h^I
$
and
Lemma~\ref{lemma:function_error_exact_conterpart}
(applies with 
$ ( \Omega, \F, \P ) 
=
( \Omega, \F, \P ) $,
$ V = P_I( H ) $,   
$ \varsigma = \varsigma $,
$ p = 2 p $,
$ \alpha = \frac{\alpha}{\varsigma} $,
$ c = \upnu $,
$ h = |\theta|_T $, 
$ Y = ( \Omega \ni \omega \mapsto \y^{\theta, I}_t(\omega) \in P_I(H) ) $,
$ r = ( P_I(H) \ni x \mapsto r(x) \in [0, \infty) ) $, 
$ P = P_I $,
$ F = ( P_I(H) \ni x \mapsto P_{I_n} F(x) \in H ) $,
$ D = D_{ | \theta |_T }^I $
for 
$ \theta \in \varpi_T $,
$ I \in \mathcal{P}_0( \H ) $,
$ n \in \N $,
$ t \in [0,T] $
in the setting of 
Lemma~\ref{lemma:function_error_exact_conterpart}) 
verify that for all  
$ \theta \in \varpi_T $,
$ I \in \mathcal{P}_0(\H) $,
$ n \in \N $, 
$ t \in [0,T] $
with
$ I \subseteq I_n $
we have
that
\begin{equation}
\begin{split}
\label{eq:LipsEstimate}
&
\|
\1_{
	D_{ |\theta|_T }^I
}
(
\y_t^{\theta, I }
)
\,
P_I 
F( \y_t^{\theta, I } )
-
P_{I_n}
F( \y_{ t }^{\theta, I} )
\|_{\L^{ 2 p }(\P; H)}
\\
&
=
\|
\1_{
	D_{ |\theta|_T }^I
}
(
\y_t^{\theta, I }
)
\,
P_I 
(
P_{I_n}
F( \y_t^{\theta, I } ) )
-
P_{I_n}
F( \y_{ t }^{\theta, I} )
\|_{\L^{ 2 p }(\P; H)}
\\
&  
\leq 
| \upnu |^{- \nicefrac{\alpha}{\varsigma} }
[ |\theta|_T  ]^{ \alpha }
\| 
r( \y_t^{\theta, I } )
\|_{\L^{ 4 p \alpha / \varsigma }(\P; \R) }^{ \nicefrac{\alpha}{\varsigma} }
\| P_I P_{I_n} F ( \y_t^{\theta, I } ) \|_{\L^{4 p}(\P; H)}
\\
&
\quad 
+
\| ( P_I - \operatorname{Id}_H ) P_{I_n} 
F( \y_t^{\theta, I } ) \|_{\L^{ 2 p }(\P; H)}
\\
&
=
| \upnu |^{- \nicefrac{\alpha}{\varsigma} }
[ |\theta|_T ]^{ \alpha }
\| 
r( \y_t^{\theta, I } )
\|_{\L^{ 4 p \alpha / \varsigma }(\P; \R) }^{ \nicefrac{\alpha}{\varsigma} }
\| P_I F ( \y_t^{\theta, I } ) \|_{\L^{4 p}(\P; H)}
+
\|  P_{I_n \backslash I} F( \y_t^{\theta, I } ) \|_{\L^{ 2 p }(\P; H)}
.
\end{split}
\end{equation}
Moreover, note that for all 
$ \theta \in \varpi_T $,
$ I \in \mathcal{P}_0(\H) $,
$ n \in \N $ 
with
$ I \subseteq I_n $
we have that
\begin{equation} 
\begin{split}
\label{eq:Triangle1}
&
 \sup\nolimits_{t \in [0,T]}
\| 
P_{ I_n \backslash I } F ( \y_t^{\theta, I}) 
\|_{\L^{ 2 p }(\P; H)}
\leq
\| P_{ I_n \backslash I } (-A)^{-\iota} \|_{L(H)}
\sup\nolimits_{t \in [0,T]}
\| 
P_{ I_n \backslash I } F ( \y_t^{\theta, I}) 
\|_{\L^{ 2 p }(\P; H_\iota)}
\\
&
\leq 
\| P_{ \H \backslash I } (-A)^{-\iota} \|_{L(H)}
\sup\nolimits_{ J \in \mathcal{P}_0(\H) }
\sup\nolimits_{t \in [0,T]}
\| 
P_{ J } F ( \y_t^{\theta, I}) 
\|_{\L^{ 2 p }(\P; H_\iota)}
.
\end{split} 
\end{equation}
In addition, observe that for all 
$ I \in \mathcal{P}_0(\H) $, 
$ n \in \N $
with
$ I \subseteq I_n $
we have that
\begin{equation}
\begin{split}
\label{eq:last eq}  
\| P_{I_n \backslash I} \xi \|_{\L^{4p}(\P; H_\delta)}
\leq
\| P_{ \H \backslash I } ( - A)^{\delta - \gamma } \|_{L(H) }
\| \xi \|_{\L^{4p}(\P; H_\gamma)}
.
\end{split}
\end{equation}
Next note that~\eqref{eq:NoiseRates}
ensures that  
for all 
$ \theta \in \varpi_T $,
$ I \in \mathcal{P}_0(\H) $,
$ n \in \N $
with
$ I \subseteq I_n $
we have that
\begin{equation} 
\begin{split}
&
\sup\nolimits_{ s \in [0,T] }
\| \O_s^{\theta, I} - O_s^{I_n} \|_{\L^{p}(\P; H)}
\\
&\leq
C
\max \{ \| (-A)^{ - \max \{ \kappa, \delta \} } \|_{L(H)}, 1 \}
( \| P_{ \H \backslash I} ( - A)^{-\iota} \|_{L(H)} + [ |\theta|_T ]^\alpha ),
\end{split}
\end{equation}
\begin{equation} 
\begin{split}
\label{eq:rates3}
&
 \sup\nolimits_{ s \in [0,T] }
 \| \O_s^{\theta, I} - O_s^{I_n} \|_{\L^{4p}(\P; H_\delta)}
 \\
 &
 \leq
 C
 \max \{ \| (-A)^{ \delta - \max \{ \kappa, \delta \} } \|_{L(H)}, 1 \}
 \big(
 \| P_{ \H \backslash I} ( - A )^{-\iota} \|_{L(H)}
 +
 [ | \theta |_T ]^\alpha
 \big)
 ,
 \end{split}
 \end{equation}
 and
 \begin{equation} 
 \begin{split}
 &
 \sup\nolimits_{ s \in [0,T] }
 \| \O_s^{\theta, I} - O_s^{I_n} \|_{\L^{4p c}(\P; H_\kappa)}
 \\
 &
 \leq
 C
 \max \{ \| (-A)^{ \kappa - \max \{ \kappa, \delta \} } \|_{L(H)}, 1 \}
 ( \| ( - A)^{-\iota} \|_{L(H)} + [ \max\{ T, 1 \} ]^\alpha ).
 \end{split}
 \end{equation}
 Combining~\eqref{eq:TimeRegularityNoise},
 item~\eqref{item:first_num_estimate},
 and
 \eqref{eq:LipsEstimate}--\eqref{eq:last eq} 
hence
gives that
for all 
$ \theta \in \varpi_T $,
$ I \in \mathcal{P}_0(\H) $ 
we have 
that
\begin{align}
\nonumber
& 
\limsup\nolimits_{n \to \infty}
\big(  
\sup\nolimits_{ t \in [0,T] }
\| \y_t^{ \theta, I } - \mathcal{X}_{ t}^n \|_{ \L^p( \P; H ) } 
\big) 
\\
\nonumber
& 
\leq   
C
\max \{ \| (-A)^{ - \max \{ \kappa, \delta \} } \|_{L(H)}, 1 \}
\big( \| P_{ \H \backslash I } ( - A)^{-\iota} \|_{L(H)} + [ |\theta|_T ]^\alpha \big)
+
	\| 
P_{ \H \backslash I } \xi  
\|_{ \L^p( \P; H ) } 
\\
\nonumber
&
\quad 
+
\tfrac{ C [ \max \{ T, 1 \} ]^2
 }{ 1 - \gamma } 
						\exp\!
 \big(  
				\big(		 C  
										+  
												\rho
												( 2 \aa + \|   B \|_{\HS(U,H)}^2 ) 
										\big) T
										\big)
\\
\nonumber
&
\quad
\cdot 
\bigg[ 
\sup\nolimits_{n \in \N}
										\int_0^T
										\E
										\big[
										e^{ \rho \| \y_s^{\theta,I} - \O_s^{\theta,I} + O_s^{ I_n}   
									+ e^{sA} P_{I_n \backslash I} \xi			
											\|_H^2 
									}
										\big]
						\, ds
					\bigg] 
\Big\{
[ | \theta |_T ]^{\gamma - \delta} 
\sup\nolimits_{s\in [0,T]}
\| P_I F ( \y_s^{\theta, I} ) \|_{ \L^{2p} (\P; H_{\gamma-\delta} )}
\\
\nonumber
&
\quad
+
| \upnu |^{- \nicefrac{\alpha}{\varsigma} }
\sup\nolimits_{t \in [0,T]}
\| 
r( \y_t^{\theta, I } )
\|_{\L^{ 4 p \alpha / \varsigma }(\P; \R) }^{ \nicefrac{\alpha}{\varsigma} }
\sup\nolimits_{s \in [0,T]}\| P_I F ( \y_s^{\theta, I } ) \|_{\L^{4 p}(\P; H)}
[ |\theta|_T ]^{ \alpha } 
\\
&
\quad
+
\| P_{ \H \backslash I } (-A)^{-\iota} \|_{L(H)}
\sup\nolimits_{ J \in \mathcal{P}_0(\H) }
\sup\nolimits_{s \in [0,T]}
\| 
P_{ J } F ( \y_s^{\theta, I}) 
\|_{\L^{ 2 p }(\P; H_\iota)}
\\
\nonumber
&
\quad 
+
\Big( 
2
[| \theta |_T]^{\gamma - \delta}
\sup\nolimits_{s\in [0,T]}
\|
P_I
F( \y_s^{\theta, I} )
\|_{\L^{4p}(\P; H)}
+
C [ | \theta |_T ]^\alpha
+
[ | \theta |_T ]^{ \gamma - \delta }
\| \xi \|_{\L^{4p}(\P; H_\gamma)}
\\
\nonumber
& 
\quad
+
C
\max \{ \| (-A)^{ \delta - \max \{ \kappa, \delta \} } \|_{L(H)}, 1 \}
(
\| P_{ \H \backslash I } ( - A )^{-\iota}  \|_{L(H)}
+
[ | \theta |_T ]^\alpha
)
\\
\nonumber
&
\quad
+
\| P_{ \H \backslash I } ( - A)^{\delta - \gamma } \|_{L(H) }
\| \xi \|_{\L^{4p}(\P; H_\gamma)}
\Big)
\big[
1
+
2 
\sup\nolimits_{s\in [0,T]}
\|
\y_s^{\theta, I}
\|_{\L^{4p c}(\P; H_\kappa)} 
\\
\nonumber
&
\quad
+
C
\max \{ \| (-A)^{ \kappa - \max \{ \kappa, \delta \} } \|_{L(H)}, 1 \}
\big(
\| ( - A )^{-\iota} \|_{L(H)}
+
[ \max \{ T, 1 \} ]^\alpha
\big)
+ 
\| \xi \|_{\L^{4p c}(\P; H_\kappa)}
\big]^c
\Big\}
.
\end{align}
This verifies that 
for all 
$ \theta \in \varpi_T $,
$ I \in \mathcal{P}_0(\H) $ 
we have that
\begin{align}
\label{eq:almost done}
\nonumber
&
\limsup\nolimits_{n \to \infty}
\big( 
\sup\nolimits_{ t \in [0,T] }
\| \y_t^{ \theta, I } - \mathcal{X}_{ t}^n \|_{ \L^p( \P; H ) } 
\big) 
\\
\nonumber
&
\leq   
C
\max \{ \| (-A)^{ - \max \{ \kappa, \delta \} } \|_{L(H)}, 1 \}
\| ( - A )^{ - \iota + \min \{ \gamma- \delta, \iota \} } \|_{L(H)}
\| P_{ \H \backslash I } ( - A )^{ - \min \{ \gamma - \delta, \iota \} }  \|_{L(H)}
\\
\nonumber
&
\quad
+
C
\max \{ \| (-A)^{ - \max \{ \kappa, \delta \} } \|_{L(H)}, 1 \}  [ |\theta|_T ]^\alpha
\\
\nonumber
&
\quad
+
	\| 
P_{ \H \backslash I } 
(-A)^{ - \min \{  \gamma- \delta, \iota \} } 
\|_{L(H)}
\| 
( -A )^{ \min \{  \gamma- \delta, \iota \} + \delta - \gamma }
\|_{L(H)}
\| 
\xi  
\|_{ \L^p( \P; H_{ \gamma - \delta } ) } 
\\
\nonumber
&
\quad
+
\tfrac{ C [ \max \{ T, 1 \} ]^2
}{ 1 - \gamma } 
\exp\!
\big(  
\big(		 C  
+  
\rho
( 2 \aa + \|   B \|_{\HS(U,H)}^2 ) 
\big) T
\big)
\big[
1
+
2 
\sup\nolimits_{s\in [0,T]}
\|
\y_s^{\theta, I}
\|_{\L^{4p c}(\P; H_\kappa)} 
\\
\nonumber
&
\quad
+
C
\max \{ \| (-A)^{ \kappa - \max \{ \kappa, \delta \} } \|_{L(H)}, 1 \}
\big(
\| ( - A )^{-\iota} \|_{L(H)}
+
[ \max \{ T, 1 \} ]^\alpha
\big) 
+ 
\| \xi \|_{\L^{4p c}(\P; H_\kappa)}
\big]^c 
\\
\nonumber
&
\quad
\cdot
\bigg[ 
\sup\nolimits_{n \in \N}
\int_0^T
\E
\big[
e^{ \rho \| \y_s^{\theta,I} - \O_s^{\theta,I} + O_s^{ I_n} + e^{sA} P_{I_n \backslash I} \xi \|_H^2 }
\big]
\, ds
\bigg]
\Big\{
\sup\nolimits_{s\in [0,T]}
\| P_I F ( \y_s^{\theta, I} ) \|_{ \L^{2p} (\P; H_{\gamma-\delta} )}
[ | \theta |_T ]^{\gamma - \delta} 
\\
&
\quad 
+
| \upnu |^{- \nicefrac{\alpha}{\varsigma} }
\sup\nolimits_{t \in [0,T]}
\| 
r( \y_t^{\theta, I } )
\|_{\L^{ 4 p \alpha / \varsigma }(\P; \R) }^{ \nicefrac{\alpha}{\varsigma} }
\sup\nolimits_{s \in [0,T]}\| P_I F ( \y_s^{\theta, I } ) \|_{\L^{ 4 p }(\P; H)}
[ |\theta|_T ]^{ \alpha } 
\\
\nonumber
&\quad
+
\sup\nolimits_{ J \in \mathcal{P}_0(\H) }
\sup\nolimits_{s \in [0,T]}
\| 
P_{ J } F ( \y_s^{\theta, I}) 
\|_{\L^{ 2 p }(\P; H_\iota)}
\\
\nonumber
&
\quad
\cdot 
\| ( - A )^{ - \iota + \min \{ \gamma- \delta, \iota \} } \|_{L(H)}
\| P_{ \H \backslash I } ( - A )^{ - \min \{ \gamma - \delta, \iota \} }  \|_{L(H)}
\\
\nonumber
&
\quad 
+
2
\sup\nolimits_{s\in [0,T]}
\|
P_I
F( \y_s^{\theta, I} )
\|_{\L^{4p}(\P; H)}
[| \theta |_T]^{\gamma - \delta}
+
C [ | \theta |_T ]^\alpha
+
\| \xi \|_{\L^{4p}(\P; H_\gamma)}
[ | \theta |_T ]^{ \gamma - \delta }
\\
\nonumber
&
\quad
+
C
\max \{ \| (-A)^{ \delta - \max \{ \kappa, \delta \} } \|_{L(H)}, 1 \}
\| ( - A )^{ - \iota + \min \{ \gamma- \delta, \iota \} } \|_{L(H)}
\| P_{ \H \backslash I } ( - A )^{ - \min \{ \gamma - \delta, \iota \} }  \|_{L(H)}
\\
\nonumber
&
\quad 
+
C 
\max \{ \| (-A)^{ \delta - \max \{ \kappa, \delta \} } \|_{L(H)}, 1 \}
[ | \theta |_T ]^\alpha 
\\
\nonumber
&
\quad
+
\| (-A)^{ \delta - \gamma + \min \{ \gamma - \delta, \iota \} } \|_{L(H)}
\| P_{ \H \backslash I } ( - A)^{ - \min \{ \gamma - \delta, \iota \} } \|_{L(H) }
\| \xi \|_{\L^{4p}(\P; H_\gamma)}
\Big\} 
.
\end{align}
Moreover, note that~\eqref{eq:apply_alekseev_grobner_full_general}
and~\eqref{eq:galerkin_no_rate}
ensure that
for all
$ \theta \in \varpi_T $,
$ I \in \mathcal{P}_0(\H) $,  
$ t \in [0,T] $
we have that
\begin{equation} 
\begin{split}
&
\| 
\y^{\theta, I }_t 
- 
X_t 
\|_{\mathcal{L}^p(\P; H)}
\leq
\limsup\nolimits_{n \to \infty}
\big( 
\sup\nolimits_{ t \in [0,T] }
\| \y_t^{ \theta, I } - \mathcal{X}_{ t}^n \|_{ \L^p( \P; H ) } 
\big) 
.
\end{split}
\end{equation}
Combining
the fact that
$ \xi \in \L^{ 4 p c }( \P; H_\gamma ) $,
\eqref{eq:ExpBound}--\eqref{eq:SupBound2},
and~\eqref{eq:almost done} 
therefore
justifies~\eqref{eq:Theorem general result}.
The proof of Proposition~\ref{proposition:Exact_to_numeric_general}
is hereby completed.
\end{proof}
\section[Strong convergence rates without assuming finite exponential moments]{Strong convergence rates for space-time discrete tamed-truncated exponential Euler-type approximations without assuming finite exponential moments}
\begin{setting} 
\label{setting:StrongApriori}
\sloppy 
Assume Setting~\ref{setting:main},
let
$ T \in (0,\infty) $, 
$ a, b, \upnu \in [0,\infty) $,
$ \varsigma \in (0, \nicefrac{1}{18} ) $,
$ \epsilon \in (0, 1 ] $,
$ \beta \in [0, \nicefrac{1}{2}) $, 
$ \gamma \in [0, \nicefrac{1}{2}  + \beta ) $,
$ B \in  \HS(U, H_\beta) $, 
$ F \in \M( \B(H_\gamma), \B(H) ) $,
$ (D^I_h)_{h\in (0,T], I \in \mathcal{P}_0(\H)} \subseteq \mathcal{B}(H_\gamma) $,
let 
$ (P_I)_{ I \in \mathcal{P}(\H) } \subseteq L(H) $ 
satisfy for all   
$ I \in \mathcal{P}(\H) $,
$ x \in H $ 
that
$ P_I(x)
= \sum_{h \in I} \langle h, x \rangle_H h $,
assume
for all  
$ I \in \mathcal{P}_0(\H) $,  
$ h \in (0,T] $,
$ x \in D_h^I $ 
that
$ D_h^I \subseteq \{ v \in P_I ( H ) \colon \| B \|_{\HS(U, H )} +   \epsilon\|  v \|_H^2 \leq  \upnu   h^{- \varsigma }  \} $,
$ \max\{\| P_I F(x) \|_H,
\| B \|_{\HS(U, H  )}
\} 
\leq \upnu  h^{-\varsigma} $,
and 
$ \left< x,   F(x) \right>_H
\leq a + b \| x \|_H^2 $,
let
$ ( \Omega, \F, \P ) $
be a probability space with a normal filtration
$ ( \f_t )_{t \in [0,T]} $,
let $ (W_t)_{t\in [0,T]} $
be an $ \operatorname{Id}_U $-cylindrical  
$ ( \f_t )_{t\in [0,T]} $-Wiener process, 
let 
$ \xi \in \M(
\f_0, \mathcal{B}(H_\gamma)) $
satisfy
$ \E[ \exp( \epsilon \| \xi \|_{H}^2 ) ]<\infty $,
and let
$ \y^{\theta, I } \colon [0,T] \times \Omega \to P_I( H ) $,
$ \theta \in \varpi_T $,
$ I \in \mathcal{P}_0(\H) $, 
be 
$ ( \f_t )_{t\in [0,T]} $-adapted
stochastic 
processes 
w.c.s.p.\ 
which satisfy for all 
$ \theta \in \varpi_T $,
$ I \in \mathcal{P}_0(\H) $, 
$ t \in [0,T] $
that
$ \y_0^{\theta,I}= P_I \xi $
and
\begin{equation} 
\begin{split}    
\label{eq:AppProcesses}
[
\y_t^{\theta, I}  
]_{\P, \mathcal{B}( P_I(H) ) }
&=   
\big[
e^{(t-\llcorner t \lrcorner_\theta )A}
\y^{\theta, I }_{
	\llcorner t \lrcorner_\theta 
}  
+
\1_{  D_{ |\theta|_T }^I  }
\!
( \y^{\theta, I }_{\llcorner t \lrcorner_\theta })
\,
e^{(t-\llcorner t \lrcorner_\theta )A}
P_I F(
\y^{\theta, I }_{ \llcorner t \lrcorner_\theta  } 
) 
(
t - \llcorner t \lrcorner_\theta 
)  
\big ]_{\P, \mathcal{B}( P_I(H) ) }
\\
&
\quad 
+
\frac{
	\int_{ \llcorner t \lrcorner_\theta  }^t
	\1_{  D_{ |\theta|_T  }^I  }
	\!
	( \y^{\theta, I }_{\llcorner t \lrcorner_\theta })
	\,
	e^{(t- \llcorner t \lrcorner_\theta  )A}
	P_I B 
	\, 
	dW_s
}{
	1 + 
	\| 
	\int_{ \llcorner t \lrcorner_\theta }^t
	P_I B 
	\, 
	dW_s 
	\|_H^2
} 
.
\end{split}
\end{equation}
\end{setting}
\subsection[Finite exponential moments for tamed-truncated Euler-type approximations]{Finite exponential moments for tamed-truncated Euler-type approximations}
\label{subsection:StrongAprioriBoundNumeric}
\sloppy 
In this subsection we establish 
in Corollary~\ref{Corollary:full_discrete_scheme_convergence} 
below uniformly bounded exponential moments for the space-time discrete tamed-truncated exponential Euler-type approximation processes 
$ ( \y_t^{\theta, I} )_{ t \in [0,T] } $, 
$ \theta \in \varpi_T $,
$ I \in \mathcal{P}_0(\H) $,
(see~\eqref{eq:AppProcesses} above). 
Our proof of Corollary~\ref{Corollary:full_discrete_scheme_convergence} 
uses the exponential moment estimate in~\cite[Corollary~3.4]{JentzenPusnik2018Published}. 
We then employ 
Corollary~\ref{Corollary:full_discrete_scheme_convergence} to establish in Corollary~\ref{corollary:uniform_H_moment_bounds} below for every 
$ p \in (0,\infty) $ uniformly bounded 
$ \mathcal{L}^p $-moments for the considered approximation processes.
 Moreover, combining Corollary~\ref{corollary:uniform_H_moment_bounds} 
 with~\cite[Corollary~3.1]{JentzenLindnerPusnik2017b}
 and~\cite[Lemma~3.4]{JentzenLindnerPusnik2017c}
  allows us to establish in Corollary~\ref{corollary:AprioriNumApp} below for every $ p \in (0,\infty) $ strengthened uniformly bounded 
  $ \mathcal{L}^p $-moments for the considered approximation processes.
\begin{lemma}
	\label{lemma:Finite dimensional process Wn}
	Assume Setting~\ref{setting:Notation},
	let
	$ ( H, \langle \cdot, \cdot \rangle_H, \left \| \cdot \right\|_H ) $ 
	be a non-zero separable $ \R $-Hilbert space,
	let 
	$ ( U, \langle \cdot, \cdot \rangle_U, \left \| \cdot \right\|_U ) $ 
	be a separable $ \R $-Hilbert space,
	let
	$ \mathfrak{N} = [1, \dim(H)] \cap \N $, 
	let $ ( h_n )_{ n \in \mathfrak{N} } \subseteq H $
	be an orthonormal basis of $ H $,
	let 
	$ \H = \{ h_n \colon n \in \mathfrak{N} \} $,
	let $ B \colon U \to H $ be a linear function,
	let
	$ (P_I)_{ I \in \mathcal{P}(\H)} \subseteq L(H) $
	satisfy for all
	$ I \in \mathcal{P}(\H) $,
	$ v \in H $ that
	$ P_I(v) = \sum_{h \in I} 
	\langle h, v \rangle_H h $,
	for every
	$ n \in \mathfrak{N} $
	let
	$ \U_n \subseteq [ \operatorname{ker} 
	( P_{ \{ h_1, h_2, \ldots, h_n \} } B ) ]^\perp $
	be an orthonormal basis of 
	$ [ \operatorname{ker} ( P_{ \{ h_1, h_2, \ldots, h_n \} } B ) ]^\perp $, 
	assume for all 
	$ n \in ( \mathfrak{N} \backslash \{\sup(\mathfrak{N})\} ) $
	that
	$ \U_n \subseteq \U_{n+1} $, 
	and let
	$ ( \mathfrak{P}_I )_{I \in \mathcal{P}( \cup_{ n \in \mathfrak{N} } \U_n ) } \subseteq L( U ) $ 
	satisfy
	for all 
	$ I \in \mathcal{P}( \cup_{ n \in \mathfrak{N} } \U_n ) $,
	$ u \in U $ 
	that
	$ \mathfrak{P}_I u
	=
	\sum_{ \mathfrak{u} \in I }
	\langle \mathfrak{u}, u \rangle_U 
	\mathfrak{u} $.
	Then  
	there exists 
	$ \Gamma \colon \mathcal{P}_0(\H) \to \mathfrak{N} $
	which satisfies that
	\begin{enumerate}[(i)]
		\item \label{eq:Exist embedding}
		we have for all $ I \in \mathcal{P}_0(\H) $ that	
		$ [ \operatorname{ker}(P_I B) ]^\perp 
		\subseteq 
		\mathfrak{P}_{
			\U_{ \Gamma(I) }
		}(U) $,
		\item \label{item:Characterize} we have for all $ n \in \mathfrak{N} $ that
		$ \Gamma( \{ h_1, h_2, \ldots, h_n\} )
		\leq n $,
		and
		\item\label{item:Main stochastic integral} we have for all $ I \in \mathcal{P}_0(\H) $ 
		that
		$ P_I B 
		=  
		P_I B \mathfrak{ P}_{ \U_{\Gamma(I)} } 
		$.
	\end{enumerate} 
\end{lemma}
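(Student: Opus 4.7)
The plan is to define $\Gamma(I)$ as the smallest $n \in \mathfrak{N}$ for which $I \subseteq \{h_1,\ldots,h_n\}$. Since every $I \in \mathcal{P}_0(\H)$ is a finite subset of the basis $\H = \{h_n : n \in \mathfrak{N}\}$, this minimum exists as an element of $\mathfrak{N}$, so $\Gamma$ is a well-defined function from $\mathcal{P}_0(\H)$ to $\mathfrak{N}$. Item~\eqref{item:Characterize} is then immediate: if $I = \{h_1, h_2, \ldots, h_n\}$ then $I \subseteq \{h_1, \ldots, h_n\}$, so $\Gamma(I) \leq n$.

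For item~\eqref{eq:Exist embedding}, the key observation is the monotonicity
\[
I \subseteq \{h_1,\ldots,h_{\Gamma(I)}\}
\;\Longrightarrow\;
P_I = P_I \circ P_{\{h_1,\ldots,h_{\Gamma(I)}\}}
\;\Longrightarrow\;
\ker(P_{\{h_1,\ldots,h_{\Gamma(I)}\}} B) \subseteq \ker(P_I B).
\]
Taking orthogonal complements reverses the inclusion, giving
\[
[\ker(P_I B)]^\perp \subseteq [\ker(P_{\{h_1,\ldots,h_{\Gamma(I)}\}} B)]^\perp.
\]
Since $\U_{\Gamma(I)}$ is by hypothesis an orthonormal basis of $[\ker(P_{\{h_1,\ldots,h_{\Gamma(I)}\}} B)]^\perp$, this latter space coincides with $\mathfrak{P}_{\U_{\Gamma(I)}}(U) = \overline{\operatorname{span}}(\U_{\Gamma(I)})$, which establishes~\eqref{eq:Exist embedding}. (Implicitly one needs that $[\ker(P_{\{h_1,\ldots,h_n\}} B)]^\perp$ is at most $n$-dimensional so that $\U_n$ actually spans the closed subspace, which follows from $\operatorname{range}(P_{\{h_1,\ldots,h_n\}} B) \subseteq P_{\{h_1,\ldots,h_n\}}(H)$ being finite-dimensional together with $[\ker(T)]^\perp = \overline{\operatorname{range}(T^*)}$ in Hilbert spaces.)

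For item~\eqref{item:Main stochastic integral}, we check the identity $P_I B = P_I B \mathfrak{P}_{\U_{\Gamma(I)}}$ pointwise. Fix $u \in U$; then $u - \mathfrak{P}_{\U_{\Gamma(I)}} u$ is orthogonal to $\operatorname{span}(\U_{\Gamma(I)}) = [\ker(P_{\{h_1,\ldots,h_{\Gamma(I)}\}} B)]^\perp$, hence
\[
u - \mathfrak{P}_{\U_{\Gamma(I)}} u \in \ker(P_{\{h_1,\ldots,h_{\Gamma(I)}\}} B).
\]
Applying $P_I = P_I \circ P_{\{h_1,\ldots,h_{\Gamma(I)}\}}$ and the linearity of $B$ yields
\[
P_I B\bigl(u - \mathfrak{P}_{\U_{\Gamma(I)}} u\bigr)
= P_I \bigl( P_{\{h_1,\ldots,h_{\Gamma(I)}\}} B (u - \mathfrak{P}_{\U_{\Gamma(I)}} u)\bigr) = 0,
\]
so $P_I B u = P_I B \mathfrak{P}_{\U_{\Gamma(I)}} u$, as required.

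There is no genuine obstacle here — this is an orthogonal-projection bookkeeping lemma whose content is entirely that a finite subset $I$ of $\H$ is contained in some initial segment $\{h_1,\ldots,h_n\}$ and that the chain $(\U_n)$ is compatible with these segments. The only mild care point is keeping track of which ambient space each projection ($P_I$ in $H$ versus $\mathfrak{P}_{\U_n}$ in $U$) lives in, and correctly invoking the identification $[\ker(T)]^\perp = \overline{\operatorname{range}(T^*)}$ to ensure $\U_{\Gamma(I)}$ has enough elements to span the relevant closed subspace.
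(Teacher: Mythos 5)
Your proposal is correct and follows essentially the same route as the paper: both hinge on the observations that every finite $I\subseteq\H$ lies in an initial segment $\{h_1,\ldots,h_n\}$, that this forces $\ker(P_{\{h_1,\ldots,h_n\}}B)\subseteq\ker(P_IB)$ and hence $[\ker(P_IB)]^\perp\subseteq[\ker(P_{\{h_1,\ldots,h_n\}}B)]^\perp=\mathfrak{P}_{\U_n}(U)$, and then item~(iii) follows by the projection decomposition you spell out. The only cosmetic difference is that you take $\Gamma(I)$ to be the least $n$ with $I\subseteq\{h_1,\ldots,h_n\}$ while the paper takes the least $n$ with $[\ker(P_IB)]^\perp\subseteq\mathfrak{P}_{\U_n}(U)$; both choices satisfy items~(i)--(iii) by the same argument.
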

\begin{proof}[Proof of Lemma~\ref{lemma:Finite dimensional process Wn}]
	Throughout this proof
	let 
	$ \Gamma \colon \mathcal{P}_0(\H ) \to \N \cup \{ \infty \} $
	satisfy for all 
	$ I \in \mathcal{P}_0(\H ) $  
	that
	\begin{equation}
	\label{eq:first step}
	\Gamma( I ) 
	=  
	\inf 
	(
	\{
	n \in \mathfrak{N} 
	\colon 
	[ \operatorname{ker}(P_IB)]^\perp
	\subseteq 
	\mathfrak{P}_{ \U_n }(U)
	\}
	\cup \{ \infty \} ).
	\end{equation}
	Observe that for all $ n \in \mathfrak{N} $ we have that
	\begin{equation} 
	\label{eq:Necessary equality}
	[ \operatorname{ker}( P_{ \{h_1, h_2, \ldots, h_n \} } B ) ]^\perp 
	=
	\mathfrak{P}_{  \U_n } ( U ) 
	.
	\end{equation}  
	Moreover, note that for every 
	$ I \in \mathcal{P}_0(\H) $
	there exists $ n \in \mathfrak{N} $
	such that
	$ I \subseteq \{ h_1, h_2, \ldots, h_n\} $.
	This ensures that 
	for every $ I \in \mathcal{P}_0(\H) $
	there exists $ n \in \mathfrak{N} $
	such that
	\begin{equation} 
	\operatorname{ker}(
	P_{\{h_1, h_2, \ldots,h_n\}}B)
	\subseteq \operatorname{ker}(P_IB)
	.
	\end{equation} 
	This 
	and~\eqref{eq:Necessary equality} 
	give that
	for every 
	$ I \in \mathcal{P}_0(\H) $
	there exists
	$ n \in \mathfrak{N} $
	such that
	\begin{equation}
	\label{eq:item 1}
	[ \operatorname{ker}(P_I B) ]^\perp 
	\subseteq
	\mathfrak{P}_{\U_n}(U) 
	.
	\end{equation}
	Therefore,
	we obtain that
	for all
	$ I \in \mathcal{P}_0(\H) $
	we have that
	$ \Gamma(I) \in \mathfrak{N} $.
	Combining this, 
	\eqref{eq:first step}, 
	and
	\eqref{eq:Necessary equality}
	justifies items~\eqref{eq:Exist embedding}
	and~\eqref{item:Characterize}.
	Moreover, note that
	item~\eqref{eq:Exist embedding} 
	gives that
	for all 
	$ I \in \mathcal{P}_0(\H ) $
	we have that 
	\begin{equation} 
	P_I B 
	=  
	P_I B \mathfrak{ P}_{ \U_{ \Gamma(I) } }
	.
	\end{equation} 
	This gives
	item~\eqref{item:Main stochastic integral}.
	The proof of Lemma~\ref{lemma:Finite dimensional process Wn}
	is hereby completed.
\end{proof}
\begin{corollary}
	\label{corollary:Finite dimensional process Wn}
	\sloppy 
	Assume Setting~\ref{setting:Notation},
	let
	$ ( H, \langle \cdot, \cdot \rangle_H, \left \| \cdot \right\|_H ) $ 
	be a non-zero separable $ \R $-Hilbert space,
	let 
	$ ( U, \langle \cdot, \cdot \rangle_U, \left \| \cdot \right\|_U ) $ 
	be a separable $ \R $-Hilbert space, 
	let
	$ \mathfrak{N} = [1, \dim(H)] \cap \N $,
	let $ (h_N)_{N \in \mathfrak{N}} \subseteq H $
	be an orthonormal basis of $ H $,
	let 
	$ T \in (0, \infty) $,
	$ B \in \HS(U,H) $,
	let
	$ \mathbb{B} \in L(H, U) $
	satisfy for all
	$ v \in H $, $ u \in U $ that
	$ \langle B u, v \rangle_H
	=
	\langle u, \mathbb{B} v \rangle_U $, 
	let
	$ (P_N)_{ N \in \mathfrak{N} } \subseteq L(H) $
	satisfy for all
	$ N \in \mathfrak{N} $,
	$ v \in H $ 
	that
	$ P_N(v) = \sum_{n = 1}^N
	\langle h_n, v \rangle_H h_n $,
	for every $ N \in \mathfrak{N} $
	let 
	$ \U_N \subseteq 
	[ \operatorname{ker}( P_N B ) ]^\perp $
	be an orthonormal basis of
	$ [ \operatorname{ker}( P_N B ) ]^\perp $,
	assume for all 
	$ N \in ( \mathfrak{N} \backslash \{ \sup( \mathfrak{N} ) \} ) $
	that
	$ \U_N \subseteq \U_{N+1} $,
	let
	$ ( \mathfrak{P}_N )_{ N \in \mathfrak{N} } \subseteq L(U) $
	satisfy for all $ N \in \mathfrak{N} $,
	$ u \in U $
	that
	$ \mathfrak{P}_N u = \sum_{ \mathfrak{u} \in \U_N }
	\langle \mathfrak{u}, u \rangle_U \mathfrak{u} $,
	let $ ( \Omega, \F, \P ) $
	be a probability space,
	let
	$ (W_t)_{t\in [0,T]} $
	be an
	$ \operatorname{Id}_U $-cylindrical  
	Wiener process,
	and
	for every $ N \in \mathfrak{N} $ let 
	$ W^N \colon [0,T] \times \Omega \to P_N(H) $
	be a stochastic process
	w.c.s.p.\
	which
	satisfies for all  
	$ t \in [0,T] $ 
	that
	$ [ W_t^N ]_{\P, \B(P_N(H)) } = \int_0^t P_N B \, dW_s $.
	Then
	\begin{enumerate}[(i)]
		\item \label{item:computable covariance}
		we have for all $ N \in \mathfrak{N} $
		that
		$ P_N B \mathfrak{P}_N = P_N B $, 
		\item \label{item:W} we have for all $ N \in \mathfrak{N} $, 
		$ t \in [0,T] $ that
		$ [ W_t^N ]_{\P, \B(P_N(H)) } = \int_0^t P_N B  \mathfrak{P}_N \, dW_s $,
		and
		\item \label{item:Wiener process} we have for all 
		$ N \in \mathfrak{N} $
		that
		$ ( W_t^N )_{ t \in [0,T] } $
		is a
		$ ( P_N B \mathbb{B} |_{ P_N(H) } ) $-Wiener
		process. 
	\end{enumerate}
\end{corollary}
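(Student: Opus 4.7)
My plan is to handle the three assertions sequentially, using Lemma~\ref{lemma:Finite dimensional process Wn} for the structural identity~\eqref{item:computable covariance}, deducing~\eqref{item:W} immediately from~\eqref{item:computable covariance} by linearity of the stochastic integral, and computing the covariance via the It\^o isometry for~\eqref{item:Wiener process}.

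For item~\eqref{item:computable covariance}, the key observation is that $ \mathfrak{P}_N $ is, by construction, the orthogonal projection of $ U $ onto the finite-dimensional subspace $ [\operatorname{ker}(P_N B)]^\perp $ (finite-dimensional because $ P_N B $ has finite rank). Consequently $ \mathfrak{P}_N $ acts as the identity on $ [\operatorname{ker}(P_N B)]^\perp $ and vanishes on $ \operatorname{ker}(P_N B) $, so for every $ u \in U $ the decomposition $ u = \mathfrak{P}_N u + (u - \mathfrak{P}_N u) $ has $ u - \mathfrak{P}_N u \in \operatorname{ker}(P_N B) $, and hence $ P_N B u = P_N B \mathfrak{P}_N u $. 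Equivalently, this is a direct instance of item~\eqref{item:Main stochastic integral} of Lemma~\ref{lemma:Finite dimensional process Wn} applied to $ I = \{h_1, \ldots, h_N\} $, combined with the nesting assumption $ \U_{\Gamma(I)} \subseteq \U_N $. Item~\eqref{item:W} then follows at once from item~\eqref{item:computable covariance} since the two integrands $ P_N B $ and $ P_N B \mathfrak{P}_N $ coincide as elements of $ \HS(U, P_N(H)) $.

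For item~\eqref{item:Wiener process}, I would verify the defining properties of a $ Q $-Wiener process on the finite-dimensional Hilbert space $ P_N(H) $ with covariance operator $ Q = P_N B \mathbb{B}|_{P_N(H)} $. Continuous sample paths hold by assumption, and since $ W^N $ is a stochastic integral of a deterministic Hilbert-Schmidt integrand against an $ \operatorname{Id}_U $-cylindrical Wiener process, it is centered Gaussian with independent increments. The covariance is then computed via the It\^o isometry: for all $ v, w \in P_N(H) $ and $ 0 \leq s \leq t \leq T $,
\begin{equation}
\E\!\big[\langle W_t^N - W_s^N, v\rangle_H \langle W_t^N - W_s^N, w\rangle_H\big]
= (t-s)\,\langle (P_N B)(P_N B)^\ast v, w\rangle_H.
\end{equation}
Using $ P_N^\ast = P_N $ and $ B^\ast = \mathbb{B} $, one obtains $ (P_N B)(P_N B)^\ast v = P_N B \mathbb{B} P_N v = P_N B \mathbb{B} v $ for $ v \in P_N(H) $, which matches the claimed covariance operator.

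The argument is essentially routine and presents no genuine obstacle; the only point requiring care is the identification of the covariance operator produced by the It\^o isometry with the operator $ P_N B \mathbb{B}|_{P_N(H)} $ named in the statement, which reduces to the observation that $ P_N v = v $ whenever $ v \in P_N(H) $, together with the elementary verification that $ P_N B \mathbb{B}|_{P_N(H)} $ is indeed non-negative and self-adjoint (automatic trace-class, by finite-dimensionality).
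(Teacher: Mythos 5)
Your proposal is correct, and it reaches the same three conclusions by a somewhat more self-contained route than the paper. For items~(i) and~(ii) the paper invokes Lemma~\ref{lemma:Finite dimensional process Wn} (the auxiliary result built around the map $\Gamma$ on arbitrary finite index sets $I\in\mathcal{P}_0(\H)$), whereas you observe directly that $\mathfrak{P}_N$ is the orthogonal projection onto $[\operatorname{ker}(P_N B)]^\perp$, so that $u-\mathfrak{P}_N u\in\operatorname{ker}(P_N B)$ and hence $P_N B u = P_N B\mathfrak{P}_N u$; in this special case $I=\{h_1,\dots,h_N\}$ that argument is exactly the kernel identity underlying the lemma's proof and does not even require the nesting hypothesis $\U_N\subseteq\U_{N+1}$ or the map $\Gamma$, so your route makes transparent why~(i) is elementary here, while the paper's route simply recycles machinery it needs anyway for general $I$. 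For item~(iii) the paper outsources the identification of $(W^N_t)_{t\in[0,T]}$ as a $(P_N B\mathbb{B}|_{P_N(H)})$-Wiener process to the cited result \cite[Lemma~3.2]{JentzenPusnik2018Published} (which also handles the filtration and measurability bookkeeping), whereas you verify the defining properties directly: continuity by assumption, centered Gaussianity and independent increments from the deterministic integrand, and the covariance $(t-s)\,(P_N B)(P_N B)^\ast = (t-s)\,P_N B\mathbb{B}|_{P_N(H)}$ via the It\^o isometry together with $(P_N B)^\ast v=\mathbb{B}v$ for $v\in P_N(H)$. Both computations are sound; the only points to state explicitly in a written-out version are that $W^N$ is merely a continuous modification of the stochastic integral (which is harmless since all the properties being checked are determined by finite-dimensional distributions plus the assumed path continuity) and that joint independence of increments over disjoint intervals follows from joint Gaussianity and vanishing cross-covariances. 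Your direct verification is marginally longer but avoids an external citation; the paper's is shorter at the cost of relying on the referenced lemma.
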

\begin{proof}[Proof of Corollary~\ref{corollary:Finite dimensional process Wn}]
	Throughout this proof 
	let
	$ ( \f_t )_{ t \in [0,T] } $
	be the normal filtration generated by $ (W_t)_{ t \in [0,T] } $.
	Observe that
	Lemma~\ref{lemma:Finite dimensional process Wn}
	(applies with
	$ H = H $,
	$ U = U $,
	$ \mathfrak{N} = \mathfrak{N} $,
	$ h_n = h_n $,
	$ B = B $, 
	$ P_{ \{ h_1, h_2, \ldots, h_n \} } = P_n $,
	$ \U_n = \U_n $, 
	$ \mathfrak{P}_{ \U_n } 
	= 
	\mathfrak{P}_n $
	for 
	$ n \in \mathfrak{N} $
	in the setting of
	Lemma~\ref{lemma:Finite dimensional process Wn})
	ensures that 
	for all
	$ N \in \mathfrak{N} $, 
	$ t \in [0,T] $
	we have 
	that
	\begin{equation}
	\label{eq:Change 1}
	P_N B 
	=
	P_N B 
	\mathfrak{P}_N 
	.
	\end{equation} 
	This 
	justifies items~\eqref{item:computable covariance}
	and~\eqref{item:W}. 
	Combining~\eqref{eq:Change 1}
	and, e.g, \cite[Lemma~3.2]{JentzenPusnik2018Published}
	(applies with
	$ H = P_N(H) $,
	$ U = U $,
	$ T = T $,
	$ Q = \operatorname{Id}_U $,
	$ (\Omega, \F, \P, ( \F_t )_{ t \in [0,T] } ) 
	=
	(\Omega, \F, \P, ( \f_t )_{ t \in [0,T] } ) $,
	$ (W_t)_{ t \in [0,T]} 
	=
	(W_t)_{ t \in [0,T]} $,
	$ R = ( U \ni u \mapsto P_N B (u) \in P_N(H) ) $,
	$ (\tilde W_t)_{ t \in [0,T] } = (W_t^N)_{ t \in [0,T] } $
	for 
	$ N \in \mathfrak{N} $
	in the setting
	of~\cite[Lemma~3.2]{JentzenPusnik2018Published})
	justifies
	item~\eqref{item:Wiener process}.
	The proof of Corollary~\ref{corollary:Finite dimensional process Wn}
	is hereby completed.
\end{proof} 
\begin{lemma}
	\label{lemma:Continuous adapted modification}
	Assume Setting~\ref{setting:main},
	let
	$ T \in (0,\infty) $, 
	$ \theta \in \varpi_T $,
	$ \beta \in [0, \nicefrac{1}{2}) $, 
	$ \gamma \in [0, \nicefrac{1}{2} + \beta ) $,
	$ B \in  \HS(U, H_\beta) $, 
	$ F \in \M( \B(H_\gamma), \B(H) ) $,
	$ D \in \mathcal{B}(H_\gamma) $,
	let
	$ ( \Omega, \F, \P ) $
	be a probability space with a normal filtration
	$ ( \f_t )_{t \in [0,T]} $,
	let $ (W_t)_{t\in [0,T]} $
	be an $ \operatorname{Id}_U $-cylindrical  
	$ ( \f_t )_{t\in [0,T]} $-Wiener process, 
	let
	$ \xi \in \M(
	\f_0, \mathcal{B}(H_\gamma)) $, 
	$ I \in \mathcal{P}_0(\H) $,
	$ P \in L(H) $ 
	satisfy for all  
	$ x \in H $
	that
	$ P(x)
	= \sum_{h \in I} \langle h, x \rangle_H h $,
	let
	$ \mathcal{W} \colon[0,T]\times\Omega\to P(H) $ 
	be a stochastic process 
	w.c.s.p.\ 
	which satisfies
	for all 
	$ t \in [0,T] $
	that 
	$ [ \mathcal{W}_t ]_{\P,\B(P(H))} = \int_0^t P B \, dW_s $,
	and let
	$ \y \colon [0,T] \times \Omega \to P( H ) $ 
	be 
	an
	$ ( \f_t )_{t\in [0,T]} $-adapted
	stochastic process
	which satisfies for all
	$ t \in [0,T] $
	that
	$ \y_0 = P \xi $
	and
	\begin{equation} 
	\begin{split}    
	\label{eq:AppProcesses 2}
	[
	\y_t   
	]_{\P, \mathcal{B}( P(H) ) }
	&=   
	\big[
	e^{(t-\llcorner t \lrcorner_\theta )A}
	\y_{
		\llcorner t \lrcorner_\theta 
	}  
	+
	\1_{  D } 
	( \y_{\llcorner t \lrcorner_\theta })
	\,
	e^{(t-\llcorner t \lrcorner_\theta )A}
	P  F(
	\y_{ \llcorner t \lrcorner_\theta  } 
	) 
	(
	t - \llcorner t \lrcorner_\theta 
	)  
	\big ]_{\P, \mathcal{B}( P (H) ) }
	\\
	&
	\quad 
	+
	\frac{
		\int_{ \llcorner t \lrcorner_\theta  }^t
		\1_{  D    } 
		( \y_{\llcorner t \lrcorner_\theta })
		\,
		e^{(t- \llcorner t \lrcorner_\theta  )A}
		P B 
		\, 
		dW_s
	}{
		1 + 
		\| 
		\int_{ \llcorner t \lrcorner_\theta }^t
		P  B 
		\, 
		dW_s 
		\|_H^2
	} 
	.
	\end{split}
	\end{equation}
	Then   
	there exists
	an $ ( \f_t )_{ t \in [0,T] } $-adapted
	stochastic process
	$ \X \colon [0,T] \times \Omega \to P(H) $
	w.c.s.p.\
	which satisfies
	that
	\begin{enumerate}[(i)]
		\item \label{item:initial} we have that $ \X_0 = P \xi $,
		\item \label{item:base recursion} we have for all $ t \in [0,T] $ that
		\begin{equation} 
		\begin{split} 
		\!\!\! 
		\X_t   
		&=   
		e^{(t-\llcorner t \lrcorner_\theta )A}
		\X_{
			\llcorner t \lrcorner_\theta 
		} 
		+
		\1_{  D } 
		( \X_{\llcorner t \lrcorner_\theta })
		\,
		e^{(t-\llcorner t \lrcorner_\theta )A}
		\bigg[ 
		P  F(
		\X_{ \llcorner t \lrcorner_\theta  } 
		) 
		(
		t - \llcorner t \lrcorner_\theta 
		)  
		+
		\frac{
			( \mathcal{W}_t - \mathcal{W}_{ \llcorner t \lrcorner_\theta } )
		}{
			1 + 
			\| 
			\mathcal{W}_t - \mathcal{W}_{ \llcorner t \lrcorner_\theta } 
			\|_H^2
		} 
		\bigg]
		,
		\end{split}
		\end{equation}
		\item \label{item:Same relation}
		we have for all $ t \in [0,T] $ that 
		\begin{equation} 
		\begin{split}    
		\label{eq:AppProcesses 3}
		[
		\X_t   
		]_{\P, \mathcal{B}( P(H) ) }
		&=   
		\big[
		e^{(t-\llcorner t \lrcorner_\theta )A}
		\X_{
			\llcorner t \lrcorner_\theta 
		}  
		+
		\1_{  D } 
		( \X_{\llcorner t \lrcorner_\theta })
		\,
		e^{(t-\llcorner t \lrcorner_\theta )A}
		P  F(
		\X_{ \llcorner t \lrcorner_\theta  } 
		) 
		(
		t - \llcorner t \lrcorner_\theta 
		)  
		\big ]_{\P, \mathcal{B}( P (H) ) }
		\\
		&
		\quad 
		+
		\frac{
			\int_{ \llcorner t \lrcorner_\theta  }^t
			\1_{  D    } 
			( \X_{\llcorner t \lrcorner_\theta })
			\,
			e^{(t- \llcorner t \lrcorner_\theta  )A}
			P B 
			\, 
			dW_s
		}{
			1 + 
			\| 
			\int_{ \llcorner t \lrcorner_\theta }^t
			P  B 
			\, 
			dW_s 
			\|_H^2
		} 
		,
		\end{split}
		\end{equation}
		and
		\item \label{item:modified}
		we have 
		for all
		$ t \in [0,T] $ that
		$ \P( \X_t = \y_t ) = 1 $.
	\end{enumerate}
\end{lemma}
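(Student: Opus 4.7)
The plan is to build the desired process $\X$ pathwise via the recursion in item~\eqref{item:base recursion}, using the continuous Wiener modification $\mathcal{W}$ in place of the stochastic integral. I would enumerate $\theta$ as $0=t_0<t_1<\ldots<t_M=T$, set $\X_0=P\xi$, and then inductively define, for each $k\in\{0,1,\ldots,M-1\}$ and each $t\in(t_k,t_{k+1}]$,
\[
\X_t := e^{(t-t_k)A}\X_{t_k} + \1_D(\X_{t_k})\,e^{(t-t_k)A}\bigg[PF(\X_{t_k})(t-t_k) + \frac{\mathcal{W}_t-\mathcal{W}_{t_k}}{1+\|\mathcal{W}_t-\mathcal{W}_{t_k}\|_H^2}\bigg].
\]
Items~\eqref{item:initial} and~\eqref{item:base recursion} then hold by construction. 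Continuity of the sample paths of $\X$ is inherited from the continuity of $\mathcal{W}$ and the fact that $P(H)$ is finite-dimensional (so $[s,T]\ni t\mapsto e^{(t-s)A}\in L(P(H))$ is continuous), together with the observation that at each knot $t_{k+1}$ the right-limit of the formula for the next subinterval reduces to $\X_{t_{k+1}}$. Adaptedness follows by induction: once $\X_{t_k}$ is $\f_{t_k}$-measurable, the defining formula above is a Borel function of $\X_{t_k}$ and of the $\f_t$-measurable increment $\mathcal{W}_t-\mathcal{W}_{t_k}$.

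To obtain item~\eqref{item:Same relation}, I would pass from the pathwise formula in~\eqref{item:base recursion} to the equivalence-class identity~\eqref{eq:AppProcesses 3} by using the defining property of $\mathcal{W}$, namely $[\mathcal{W}_t-\mathcal{W}_{\llcorner t\lrcorner_\theta}]_{\P,\B(P(H))}=\int_{\llcorner t\lrcorner_\theta}^t PB\,dW_s$ for every $t\in[0,T]$. Because $\1_D(\X_{\llcorner t\lrcorner_\theta})$ and $e^{(t-\llcorner t\lrcorner_\theta)A}$ are $\f_{\llcorner t\lrcorner_\theta}$-measurable prefactors, they can be pulled inside the stochastic integral almost surely, yielding the exact form in~\eqref{eq:AppProcesses 3}. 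For item~\eqref{item:modified} I would proceed by induction on $k$, proving the stronger statement that $\P(\X_t=\y_t)=1$ for every $t\in[0,t_k]$: the base case is immediate since $\X_0=P\xi=\y_0$, and in the inductive step the recursion~\eqref{eq:AppProcesses 2} for $\y$ together with the recursion for $\X$ expresses both $\y_t$ and $\X_t$, for $t\in(t_k,t_{k+1}]$, as the same Borel function of the value at $t_k$ and of the noise increment $\int_{t_k}^t PB\,dW_s$ (equivalently, $\mathcal{W}_t-\mathcal{W}_{t_k}$), whence $\P(\X_t=\y_t)=1$; specializing $t=t_{k+1}$ closes the induction.

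The main technical subtlety lies in this last step: $\y$ is only given through an equivalence-class identity at each time, so one must carefully track that the nonlinear taming operation $v\mapsto v/(1+\|v\|_H^2)$, applied to the continuous process $\mathcal{W}_t-\mathcal{W}_{\llcorner t\lrcorner_\theta}$, coincides $\P$-a.s.\ with the corresponding taming applied to the stochastic integral inside~\eqref{eq:AppProcesses 2}, and that the composition with the indicator $\1_D$ (a discontinuous function) is nevertheless consistent on the $\P$-full event on which $\X_{t_k}=\y_{t_k}$. Handling the interplay of the $\P$-null exceptional sets across the finite number of knot points $t_0,t_1,\ldots,t_M$ is routine (a finite union of $\P$-null sets is $\P$-null), but each reduction step—pulling measurable prefactors into stochastic integrals, identifying continuous modifications, and transferring almost-sure identities through continuous nonlinearities—needs to be written out with some care.
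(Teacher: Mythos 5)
Your proposal is correct and follows essentially the same route as the paper: define $\X$ pathwise by the recursion with the continuous modification $\mathcal{W}$, get items (i)--(ii) and continuity/adaptedness by construction and induction over the grid, recover (iii) by pulling the $\f_{\llcorner t\lrcorner_\theta}$-measurable prefactors into the stochastic integral and using that $\mathcal{W}$-increments agree $\P$-a.s.\ with the integral increments, and obtain (iv) by induction over the knots comparing the two recursions. The only difference is presentational: the paper does not enumerate the partition explicitly, but the underlying argument is the same.
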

\begin{proof}[Proof of Lemma~\ref{lemma:Continuous adapted modification}]
	Throughout this proof let
	$ \X \colon[0,T]\times\Omega\to P(H) $
	be the stochastic process
	which satisfies for all
	$ t \in [0,T] $ 
	that
	$ \X_0 = P\xi $
	and
	\begin{equation} 
	\begin{split} 
	\label{eq:Introduce mathcal X} 
	\X_t   
	&=   
	e^{(t-\llcorner t \lrcorner_\theta )A}
	\X_{
		\llcorner t \lrcorner_\theta 
	} 
	+
	\1_{  D } 
	( \X_{\llcorner t \lrcorner_\theta })
	\,
	e^{(t-\llcorner t \lrcorner_\theta )A}
	\bigg[ 
	P  F(
	\X_{ \llcorner t \lrcorner_\theta  } 
	) 
	(
	t - \llcorner t \lrcorner_\theta 
	)  
	+
	\frac{
		( \mathcal{W}_t - \mathcal{W}_{ \llcorner t \lrcorner_\theta } )
	}{
		1 + 
		\| 
		\mathcal{W}_t - \mathcal{W}_{ \llcorner t \lrcorner_\theta } 
		\|_H^2
	} 
	\bigg]
	.
	\end{split}
	\end{equation}
	Note that the fact that for all
	$ s \in [0,T] $ 
	we have that
	$ ( [s, T] \times H \ni ( t, x ) \mapsto e^{(t - s)A} x \in P(H) )
	\in \mathcal{C}([s,T], P(H) ) $,
	the fact that
$ \mathcal{W} $
	has 
	continuous sample paths, 
	and~\eqref{eq:Introduce mathcal X}
	ensure that
	$ \mathcal{X} $ has
	continuous sample paths. 
	Moreover, observe that the assumption that
	$ ( \f_t )_{t \in [0,T]} $ is a normal filtration
	and the assumption that
	for all
	$ t \in [0,T] $
	we have that 
	$ [ \mathcal{W}_t ]_{\P,\B(P(H))} = \int_0^t P B \, dW_s $
	yield that
	$ \mathcal{W} $
	is $ (\f_t)_{ t \in [0,T] } $-adapted.
	Combining this, 
	\eqref{eq:Introduce mathcal X},
	the fact that
	$ \xi \in \M(
	\f_0, \mathcal{B}(P(H))) $, 
	and the assumption that
	$ ( \f_t )_{t \in [0,T]} $ is a normal filtration
	therefore
	yields that
	$ \mathcal{X} $
	is $ (\f_t)_{ t \in [0,T] } $-adapted.
	This,
	\eqref{eq:Introduce mathcal X},
	and the fact that
$ \mathcal{X} $ has 
continuous sample paths 
	justify items~\eqref{item:initial}
	and~\eqref{item:base recursion}.
	Next note that the fact that
	$ \mathcal{X} $
	is $ (\f_t)_{ t \in [0,T] } $-adapted
	ensures that 
	for all $ t \in [0,T] $ we have that 
	\begin{equation}
	\begin{split}
	\bigg[ 
	\1_{  D } 
	( \X_{\llcorner t \lrcorner_\theta })
	\,
	e^{(t-\llcorner t \lrcorner_\theta )A}
	\frac{ 
		( \mathcal{W}_t - \mathcal{W}_{ \llcorner t \lrcorner_\theta } )
	}{
		1 + 
		\| 
		\mathcal{W}_t - \mathcal{W}_{ \llcorner t \lrcorner_\theta } 
		\|_H^2
	}  
	\bigg]_{\P, \B( P( H ) )}
	=
	\frac{
		\int_{ \llcorner t \lrcorner_\theta  }^t
		\1_{  D    } 
		( \X_{\llcorner t \lrcorner_\theta })
		\,
		e^{(t- \llcorner t \lrcorner_\theta  )A}
		P B 
		\, 
		dW_s
	}{
		1 + 
		\| 
		\int_{ \llcorner t \lrcorner_\theta }^t
		P  B 
		\, 
		dW_s 
		\|_H^2
	}
	.
	\end{split}
	\end{equation}
	Combining this and~\eqref{eq:Introduce mathcal X} 
	illustrates that for all
	$ t \in [0,T] $
	we have that 
	\begin{equation} 
	\begin{split}     
	\label{eq:recursion again}
	[
	\X_t   
	]_{\P, \mathcal{B}( P(H) ) }
	&=   
	\big[
	e^{(t-\llcorner t \lrcorner_\theta )A}
	\X_{
		\llcorner t \lrcorner_\theta 
	}   
	+
	\1_{  D } 
	( \X_{\llcorner t \lrcorner_\theta })
	\,
	e^{(t-\llcorner t \lrcorner_\theta )A}
	P  F(
	\X_{ \llcorner t \lrcorner_\theta  } 
	) 
	(
	t - \llcorner t \lrcorner_\theta 
	)  
	\big ]_{\P, \mathcal{B}( P (H) ) }
	\\
	&
		\quad 
	+
	\frac{
		\int_{ \llcorner t \lrcorner_\theta  }^t
		\1_{  D    } 
		( \X_{\llcorner t \lrcorner_\theta })
		\,
		e^{(t- \llcorner t \lrcorner_\theta  )A}
		P B 
		\, 
		dW_s
	}{
		1 + 
		\| 
		\int_{ \llcorner t \lrcorner_\theta }^t
		P  B 
		\, 
		dW_s 
		\|_H^2
	} 
	.
	\end{split}
	\end{equation}
	This justifies item~\eqref{item:Same relation}.
	Moreover, observe 
	that~\eqref{eq:AppProcesses 2},
	\eqref{eq:recursion again},
	and
	item~\eqref{item:initial}  
	assure that 
	for all $ t \in [0,T] $
	we have that
	\begin{equation} 
	\P(
	\X_t =
	X_t 
	) 
	=
	1 
	.
	\end{equation}
	This justifies item~\eqref{item:modified}.
	The proof of Lemma~\ref{lemma:Continuous adapted modification} is hereby completed.
\end{proof}
\begin{corollary}
	\label{Corollary:full_discrete_scheme_convergence}
	Assume Setting~\ref{setting:StrongApriori}.
	Then 
	\begin{equation}
	\begin{split} 
	\label{eq:exponential_moments} 
	& 
	\sup\nolimits_{ \theta \in \varpi_T }
	\sup\nolimits_{ I \in \mathcal{P}_0(\H) } 
	\sup\nolimits_{t\in [0,T]}
	\E\!\left[ 
	\exp \! \left( 
	\frac{    
		\epsilon  
	}
	{
		e^{ 2(b + \| B \|_{\HS(U, H )}^2 ) T }
	} \| \y^{ \theta, I}_t \|_H^2
	\right) 
	\right]
	< \infty.
	\end{split}
	\end{equation}
\end{corollary}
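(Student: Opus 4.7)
The plan is to reduce the corollary to a single application of the general exponential moment estimate in \cite[Corollary~3.4]{JentzenPusnik2018Published}. The preparation splits into a measurability clean-up and an identification of the driving noise, after which the substantive work is the verification of the one-step exponential hypothesis of that general result.

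First, I would fix $\theta \in \varpi_T$ and $I \in \mathcal{P}_0(\H)$, and invoke Lemma~\ref{lemma:Continuous adapted modification} to produce an $(\f_t)_{t\in[0,T]}$-adapted stochastic process $\mathcal{X}^{\theta,I}\colon [0,T]\times\Omega\to P_I(H)$ with continuous sample paths such that $\P(\mathcal{X}^{\theta,I}_t = \y^{\theta,I}_t)=1$ for every $t\in [0,T]$, and such that $\mathcal{X}^{\theta,I}$ satisfies the pointwise (rather than only modification) recursion
\begin{equation*}
\mathcal{X}^{\theta,I}_t = e^{(t-\llcorner t\lrcorner_\theta)A}\mathcal{X}^{\theta,I}_{\llcorner t\lrcorner_\theta} + \1_{D^I_{|\theta|_T}}\!(\mathcal{X}^{\theta,I}_{\llcorner t\lrcorner_\theta})\, e^{(t-\llcorner t\lrcorner_\theta)A}\!\left[P_IF(\mathcal{X}^{\theta,I}_{\llcorner t\lrcorner_\theta})(t-\llcorner t\lrcorner_\theta) + \tfrac{\mathcal{W}^I_t - \mathcal{W}^I_{\llcorner t\lrcorner_\theta}}{1+\|\mathcal{W}^I_t - \mathcal{W}^I_{\llcorner t\lrcorner_\theta}\|_H^2}\right],
\end{equation*}
where $\mathcal{W}^I\colon [0,T]\times\Omega\to P_I(H)$ is a continuous $(\f_t)$-adapted modification of $\int_0^\cdot P_IB\,dW_s$. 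Since $\E[\exp(c\|\y^{\theta,I}_t\|_H^2)]=\E[\exp(c\|\mathcal{X}^{\theta,I}_t\|_H^2)]$ for every $c,t$, it suffices to establish the exponential moment bound for $\mathcal{X}^{\theta,I}$.

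Next I would note that by Corollary~\ref{corollary:Finite dimensional process Wn} the driving process $\mathcal{W}^I$ is a finite-dimensional Wiener process on $P_I(H)$ with covariance operator $Q_I := P_IB\mathbb{B}|_{P_I(H)}\in L(P_I(H))$ satisfying $\operatorname{trace}(Q_I)=\|P_IB\|_{\HS(U,H)}^2\le \|B\|_{\HS(U,H)}^2$, and $\mathcal{X}^{\theta,I}$ is a genuine $\R^{|I|}$-valued tamed-truncated exponential Euler recursion driven by $\mathcal{W}^I$. This puts us precisely in the setting of \cite[Corollary~3.4]{JentzenPusnik2018Published}, applied to the projected drift $P_IF|_{P_I(H)}$, the diffusion $P_IB$, the generator $\mathcal{A}_I=A|_{P_I(H)}$, the truncation set $D^I_{|\theta|_T}$, and the Lyapunov function $U\colon P_I(H)\to[0,\infty)$ defined by $U(x)=\frac{\epsilon}{\exp(2(b+\|B\|_{\HS(U,H)}^2)T)}\|x\|_H^2$.

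The heart of the argument — and the step I expect to require the most care — is the verification of the one-step exponential inequality required by that reference: for every grid interval of length $h=t-\llcorner t\lrcorner_\theta\le |\theta|_T\le T$ and every $x\in P_I(H)$,
\begin{equation*}
\E\!\left[\exp\!\big(e^{2(b+\|B\|_{\HS(U,H)}^2)h}\,U(\Psi_h(x,\mathcal{W}^I_t-\mathcal{W}^I_{\llcorner t\lrcorner_\theta}))\big)\right] \le \exp(U(x))\cdot(1+C h),
\end{equation*}
where $\Psi_h$ denotes the one-step map in the recursion above. On $\{x\notin D^I_{|\theta|_T}\}$ the map reduces to $\Psi_h(x,\cdot)=e^{hA}x$, for which the contractivity $\|e^{hA}x\|_H\le\|x\|_H$ yields the inequality trivially. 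On $\{x\in D^I_{|\theta|_T}\}$ the bounds $\epsilon\|x\|_H^2\le\upnu h^{-\varsigma}$, $\|P_IF(x)\|_H\le\upnu h^{-\varsigma}$, and $\|B\|_{\HS(U,H)}\le\upnu h^{-\varsigma}$, together with the a priori bound $\|G/(1+\|G\|_H^2)\|_H\le 1/2$ for $G=\mathcal{W}^I_t-\mathcal{W}^I_{\llcorner t\lrcorner_\theta}$, expansion of $\|\Psi_h(x,G)\|_H^2$, the coercivity $\langle x,F(x)\rangle_H\le a+b\|x\|_H^2$, and the Gaussian exponential integral $\E[\exp(c\|G\|_H^2)]=\det(\operatorname{Id}-2ch Q_I)^{-1/2}=1+c h\operatorname{trace}(Q_I)+O(h^2)$ (for sufficiently small $ch$), combine to produce the required bound with lower-order correction terms of the form $h^{k(1-2\varsigma)}$ for $k\ge 1$; the threshold $\varsigma<\nicefrac{1}{18}$ guarantees that all such corrections are absorbed into the $Ch$-term uniformly in $\theta,I$. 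Chaining this one-step estimate along the grid $\theta$ via the tower property, and then exploiting that $U$ already incorporates the compensating factor $\exp(-2(b+\|B\|_{\HS(U,H)}^2)T)$, yields the uniform bound \eqref{eq:exponential_moments}.
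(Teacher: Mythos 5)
Your opening moves coincide with the paper's: pass to the adapted continuous modification via Lemma~\ref{lemma:Continuous adapted modification}, rewrite the noise in finitely many directions of $U$ (the paper does this with Lemma~\ref{lemma:Finite dimensional process Wn}, which yields $P_IB=P_IB\mathfrak{P}_{\U_{\Gamma(I)}}$), and then invoke \cite[Corollary~3.4]{JentzenPusnik2018Published}. But from there you diverge in a way that creates a genuine gap. The cited corollary does not ask you to supply a one-step exponential inequality; its hypotheses are purely structural, and the paper's proof consists exactly of checking them from Setting~\ref{setting:StrongApriori}: the inclusion $D_h^I\subseteq\{v\in P_I(H)\colon \|B\|_{\HS(U,H)}+\epsilon\|v\|_H^2\le c\,h^{-\varsigma}\}$, the bound $\max\{\|P_IF(x)\|_H,\|P_IB\mathfrak{P}_{\U_{\Gamma(I)}}\|_{\HS(U,H)}\}\le c\,h^{-\varsigma}$ on $D_h^I$, the coercivity $\langle x,P_IF(x)\rangle_H\le a+b\|x\|_H^2$, and finally the elementary observation that $\epsilon\le 1$ lets one replace the exponent $\epsilon e^{-2(b+\epsilon\|B\|_{\HS(U,H)}^2)t}$ delivered by the corollary by $\epsilon e^{-2(b+\|B\|_{\HS(U,H)}^2)T}$. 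Your proposal instead re-derives the core of the cited result, and only in sketch form; the delicate bookkeeping that makes $\varsigma<\nicefrac{1}{18}$ sufficient is asserted, not proved.

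Moreover, the one-step inequality you formulate is false as stated, because the time discount is applied in the wrong direction. With $U(x)=\epsilon e^{-2(b+\|B\|_{\HS(U,H)}^2)T}\|x\|_H^2$ and a state-independent factor $(1+Ch)$, the claim $\E[\exp(e^{2(b+\|B\|_{\HS(U,H)}^2)h}U(\Psi_h(x,G)))]\le e^{U(x)}(1+Ch)$ fails off the truncation set: there $\Psi_h(x,\cdot)=e^{hA}x$, and you would need $\|e^{hA}x\|_H^2\le e^{-2(b+\|B\|_{\HS(U,H)}^2)h}\|x\|_H^2$, which is not implied by $\sup_{h\in\H}\values_h<0$ (the semigroup contracts, but not at rate $b+\|B\|_{\HS(U,H)}^2$), so the discrepancy is of order $h\|x\|_H^2$ and is unbounded in $x$. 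It also fails on the truncation set, where $\|x\|_H^2$ may be as large as order $h^{-\varsigma}/\epsilon$ and the drift growth of order $h(a+b\|x\|_H^2)$ together with the inflation factor $e^{2(b+\|B\|_{\HS(U,H)}^2)h}$ again produces an error of order $h\|x\|_H^2$, which cannot be absorbed into $\log(1+Ch)$. The correct (and standard) structure discounts the Lyapunov exponent forward in time, i.e.\ one controls $\E[\exp(\epsilon e^{-2(b+\epsilon\|B\|_{\HS(U,H)}^2)t}\|\y_t^{\theta,I}\|_H^2)]$, so that the pure-semigroup step off the truncation set is trivially monotone; chaining those discounted one-step estimates is precisely what \cite[Corollary~3.4]{JentzenPusnik2018Published} packages, and is what the paper uses rather than proving it anew.
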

\begin{proof}[Proof of Corollary~\ref{Corollary:full_discrete_scheme_convergence}]
		\sloppy 
	Throughout this proof 
	let
	$ c = 2\max\{
	\epsilon a,
	\epsilon \| B \|_{\HS(U, H )}, \epsilon, \upnu, 1 \} $,
	let
	$ \mathfrak{N} = [1, \dim(H)] \cap \N $,
	let 
	$ h_n \in H $,
	$ n \in \mathfrak{N} $,  
	satisfy for all 
	$ m, n \in \N $
	that 
	$ h_m \neq h_n $
	and
	$ \mathbb{H} = \{ h_N \colon N \in \mathfrak{N} \} $,
	let
	$ \U_1 \subseteq [ \operatorname{ker} 
	( P_{ \{ h_1 \} } B ) ]^\perp $
	be an orthonormal basis of 
	$ [ \operatorname{ker} ( P_{ \{ h_1 \} } B ) ]^\perp $,
	for every 
	$ n \in ( [2, \infty) \cap \mathfrak{N} ) $ 
	let
	$ \U_n \subseteq [ \operatorname{ker} 
	( P_{ \{ h_1, h_2, \ldots, h_n \} } B ) ]^\perp $
	be an orthonormal basis of 
	$ [ \operatorname{ker} ( P_{ \{ h_1, h_2, \ldots, h_n \} } B ) ]^\perp $
	with 
	$ \U_{n-1} \subseteq \U_n $, 
		let 
		$ \mathcal{U} \subseteq U $
		be an orthonormal basis of $ U $
		with
		$ \mathcal{U} \supseteq \cup_{ n \in \N } \U_n $, 
	let
	$ \mathfrak{P}_I \in L( U ) $,
	$ I \in \mathcal{P}( \mathcal{U} ) $, 
	satisfy
	for all 
	$ I \in \mathcal{P}( \mathcal{U} ) $,
	$ u \in U $ 
	that
	$ \mathfrak{P}_I u
	=
	\sum_{ \mathfrak{u} \in I }
	\langle \mathfrak{u}, u \rangle_U 
	\mathfrak{u} $,
	and let
	$ \mathfrak{X}^{\theta, I, J} \colon [0,T] \times \Omega 
	\to P_I(H) $,
	$ \theta \in \varpi_T $,
	$ I \in \mathcal{P}_0(\H) $,
	$ J \in \mathcal{P}_0(\mathcal{U}) $,
	be $ ( \f_t )_{ t \in [0,T] } $-adapted
	stochastic processes
	which satisfy for all
	$ \theta \in \varpi_T $,
	$ I \in \mathcal{P}_0(\H) $,
	$ J \in \mathcal{P}_0(\mathcal{U}) $,
	$ t \in [0,T] $ that
	$ \mathfrak{X}_0^{\theta, I, J}= P_I \xi $
	and
	\begin{equation} 
	\begin{split}     
	[
	\mathfrak{X}_t^{\theta, I, J}  
	]_{\P, \mathcal{B}( P_I(H) ) }
	&=   
	\big[
	e^{(t-\llcorner t \lrcorner_\theta )A}
	\mathfrak{X}^{\theta, I, J}_{
		\llcorner t \lrcorner_\theta 
	}  
	+
	\1_{  D_{ |\theta|_T }^I  }
	\!
	( \mathfrak{X}^{\theta, I, J }_{\llcorner t \lrcorner_\theta })
	\,
	e^{(t-\llcorner t \lrcorner_\theta )A}
	P_I F(
	\mathfrak{X}^{\theta, I, J }_{ \llcorner t \lrcorner_\theta  } 
	) 
	(
	t - \llcorner t \lrcorner_\theta 
	)  
	\big ]_{\P, \mathcal{B}( P_I(H) ) }
	\\
	&
	\quad 
	+
	\frac{
		\int_{ \llcorner t \lrcorner_\theta  }^t
		\1_{  D_{ |\theta|_T  }^I  }
		\!
		( \mathfrak{X}^{\theta, I, J }_{\llcorner t \lrcorner_\theta })
		\,
		e^{(t- \llcorner t \lrcorner_\theta  )A}
		P_I B \mathfrak{P}_J
		\, 
		dW_s
	}{
		1 + 
		\| 
		\int_{ \llcorner t \lrcorner_\theta }^t
		P_I B  \mathfrak{P}_J
		\, 
		dW_s 
		\|_H^2
	}  
	.
	\end{split}
	\end{equation}
	Observe that
	Lemma~\ref{lemma:Continuous adapted modification}
	(applies with
	$ T = T $,
	$ \theta = \theta $,
	$ \beta = \beta $,
	$ \gamma = \gamma $,
	$ B = B \mathfrak{P}_J $,
	$ F = F $,
	$ D = D_{ | \theta |_T }^I $,
	$ ( \Omega, \F, \P ) = ( \Omega, \F, \P ) $,
	$ ( \f_t )_{ t \in [0,T] } = ( \f_t )_{ t \in [0,T] } $,
	$ ( W_t )_{ t \in [0,T] } = ( W_t )_{ t \in [0,T] } $,
	$ \xi = \xi $,
	$ I = I $,
	$ P = P_I $,
	$ \y^{\theta, I} = \mathfrak{X}^{\theta, I, J} $
	for
	$ \theta \in \varpi_T $,
	$ I \in \mathcal{P}_0(\H) $,
	$ J \in \mathcal{P}_0(\mathcal{U}) $
	in the setting of 
	Lemma~\ref{lemma:Continuous adapted modification})
	ensures that there exist
	$ ( \f_t )_{ t \in [0,T] } $-adapted
	stochastic processes 
	$ \X^{\theta, I, J} \colon [0,T] \times \Omega \to P_I(H) $,
	$ \theta \in \varpi_T $,
	$ I \in \mathcal{P}_0(\H) $,
	$ J \in \mathcal{P}_0(\mathcal{U}) $, 
	w.c.s.p.\
	which satisfy for all 
	$ \theta \in \varpi_T $,
	$ I \in \mathcal{P}_0(\H) $,
	$ J \in \mathcal{P}_0(\mathcal{U}) $,
	$ t \in [0, T] $ that
	$ \X_0^{\theta, I, J}= P_I \xi $
	and
	\begin{equation} 
	\begin{split}   
	\label{eq: introduce new X}  
	[
	\X_t^{\theta, I, J}  
	]_{\P, \mathcal{B}( P_I(H) ) }
	&=   
	\big[
	e^{(t-\llcorner t \lrcorner_\theta )A}
	\X^{\theta, I, J}_{
		\llcorner t \lrcorner_\theta 
	}  
	+
	\1_{  D_{ |\theta|_T }^I  }
	\!
	( \X^{\theta, I, J }_{\llcorner t \lrcorner_\theta })
	\,
	e^{(t-\llcorner t \lrcorner_\theta )A}
	P_I F(
	\X^{\theta, I, J }_{ \llcorner t \lrcorner_\theta  } 
	) 
	(
	t - \llcorner t \lrcorner_\theta 
	)  
	\big ]_{\P, \mathcal{B}( P_I(H) ) }
	\\
	&
	\quad 
	+
	\frac{
		\int_{ \llcorner t \lrcorner_\theta  }^t
		\1_{  D_{ |\theta|_T  }^I  }
		\!
		( \X^{\theta, I, J }_{\llcorner t \lrcorner_\theta })
		\,
		e^{(t- \llcorner t \lrcorner_\theta  )A}
		P_I B \mathfrak{P}_J
		\, 
		dW_s
	}{
		1 + 
		\| 
		\int_{ \llcorner t \lrcorner_\theta }^t
		P_I B  \mathfrak{P}_J
		\, 
		dW_s 
		\|_H^2
	}  
	.
	\end{split}
	\end{equation}
	Next note that
	Lemma~\ref{lemma:Finite dimensional process Wn}
	(applies with
	$ H = H $,
	$ U = U $,
	$ \mathfrak{N} = \mathfrak{N} $,
	$ h_n = h_n $,
	$ \H = \H $,
	$ B = (U \ni u \mapsto B(u) \in H ) $,
	$ P_I = P_I $,
	$ \U_n = \U_n $,
	$ \mathfrak{P}_J = \mathfrak{P}_J $
	for 
	$ I \in \mathcal{P}(\H) $,
	$ n \in \mathfrak{N} $,
	$ J \in \mathcal{P}( \cup_{ n \in \mathfrak{N} } \U_n ) $
	in the setting of
	Lemma~\ref{lemma:Finite dimensional process Wn})
	assures that
	there exists 
	$ \Gamma \colon \mathcal{P}_0(\H) \to \mathfrak{N} $
	which satisfies
	for all  
	$ I \in \mathcal{P}_0(\H) $ 
	that 
	\begin{equation} 
	\label{eq:identity some}
	P_I B   
	=  
	P_I B \mathfrak{ P}_{ \U_{ \Gamma(I) } } 
	.
	\end{equation}
	Combining~\eqref{eq:AppProcesses}
	and~\eqref{eq:identity some} 
	illustrates that for all 
	$ \theta \in \varpi_T $,
	$ I \in \mathcal{P}_0(\H) $, 
	$ t \in [0,T] $ 
	we have that
	\begin{equation} 
	\begin{split}   
	\label{eq:CorrectScheme} 
	[
	\y_t^{\theta, I}  
	]_{\P, \mathcal{B}( P_I(H) ) }
	&=   
	\big[
	e^{(t-\llcorner t \lrcorner_\theta )A}
	\y^{\theta, I }_{
		\llcorner t \lrcorner_\theta 
	}  
	+
	\1_{  D_{ |\theta|_T  }^I  }
	\!
	( \y^{\theta, I }_{\llcorner t \lrcorner_\theta })
	\,
	e^{(t-\llcorner t \lrcorner_\theta )A}
	P_I F(
	\y^{\theta, I }_{ \llcorner t \lrcorner_\theta  } 
	) 
	(
	t - \llcorner t \lrcorner_\theta 
	)  
	\big ]_{\P, \mathcal{B}( P_I(H) ) }
	\\
	&
	\quad 
	+
	\frac{
		\int_{ \llcorner t \lrcorner_\theta  }^t
		\1_{  D_{ |\theta|_T  }^I  }
		\!
		( \y^{\theta, I }_{\llcorner t \lrcorner_\theta })
		\,
		e^{(t- \llcorner t \lrcorner_\theta  )A}
		P_I B \mathfrak{ P}_{ \U_{ \Gamma(I) } }
		\, 
		dW_s
	}{
		1 + 
		\| 
		\int_{ \llcorner t \lrcorner_\theta }^t
		P_I B \mathfrak{ P}_{ \U_{ \Gamma(I) } }
		\, 
		dW_s 
		\|_H^2
	} 
	.
	\end{split}
	\end{equation}
	This and~\eqref{eq: introduce new X} 
	ensure that for all
	$ \theta \in \varpi_T $,
	$ I \in \mathcal{P}_0(\H) $,
	$ t \in [0,T] $
	we have that
	\begin{equation}
	\label{eq:important equal}
	\y^{\theta, I} 
	=
	\X^{\theta, I, \U_{\Gamma(I)} }
	.
	\end{equation}
	In addition, note that for all 
	$ I \in \mathcal{P}_0(\H) $,
	$ h \in (0,T] $
	we have that
	\begin{equation}
	\label{eq:Needed1}
	D_h^I 
	\subseteq 
	\{ v \in P_I( H ) \colon 
	\| B \|_{\HS(U, H )} +   \epsilon\|  v \|_H^2 \leq  \upnu   h^{- \varsigma }  \}
	\subseteq 
	\{ v \in H \colon \| B \|_{\HS(U, H )} +   \epsilon\|  v \|_H^2 \leq  c  h^{- \varsigma }  \}
	.
	\end{equation}
	Furthermore, observe
	that
	for all 
		$ I \in \mathcal{P}_0(\H) $,
	$ h \in (0,T] $,   
	$ x \in D_h^I $
	we have that
	\begin{equation} 
	\begin{split} 
	\label{eq:Needed20}
	\max\{\| P_I F(x) \|_H,
	\| 
	P_I B \mathfrak{ P}_{ \U_{ \Gamma(I) } }
	\|_{\HS(U, H  )}
	\} 
	\leq 
	\max\{\| P_I F(x) \|_H,
	\|  B \|_{\HS(U, H  )}
	\} 
	\leq \upnu  h^{-\varsigma}
	\leq 
	c  h^{-\varsigma}
	.
	\end{split} 
	\end{equation}
	Moreover,  note that
	the fact that 
	for all 
	$ I \in \mathcal{P}_0(\H) $,
	$ h \in (0,T] $
	we have that
	$ D_h^I \subseteq P_I(H) $
	illustrates
	that
	for all 
		$ I \in \mathcal{P}_0(\H) $,
	$ h \in (0,T] $,
	$ x \in D_h^I $
	we have 
	that
	\begin{equation}
	\label{eq:Needed3}
	\left< x, P_I F(x) \right>_H
	=
	\left< x, F(x) \right>_H
	\leq 
	a + b \| x \|_H^2
	.
	\end{equation}
	Combining
	this and~\eqref{eq:CorrectScheme}--\eqref{eq:Needed20}  
	with~\cite[Corollary~3.4]{JentzenPusnik2018Published}
	(applies with
	$ H = H $,
	$ U = U $,
	$ \H = \H $,
		$ \mathbb{U} = \mathcal{U} $,
	$ \lambda = \values $,
	$ A = A $,
	$ T = T $,
	$ \gamma = \gamma $,
	$ \delta = \varsigma $,
	$ ( \Omega, \F, \P, ( \F_t )_{ t \in [0,T] } )
	=
	( \Omega, \F, \P, ( \f_t )_{ t \in [0,T] } ) $,
	$ (W_t)_{ t \in [0,T] }
	=
	(W_t)_{ t \in [0,T] } $, 
	$ \xi = \xi $,
	$ F = F $,
	$ B = ( H_\gamma \ni x \mapsto B \in \HS(U,H) ) $,
	$ D_h^I = D_h^I $,
	$ P_I = P_I $, 
	$ \hat P_J = 
	\mathfrak{P}_J $, 
	$ \vartheta = \| B \|_{\HS(U,H)}^2 $,
	$ b_1 = a $,
	$ b_2 = b $,
	$ \varepsilon = \epsilon $,
	$ \varsigma = \varsigma $,
	$ c = c $,
	$ Y^{\theta, I, J } = \X^{\theta, I, J} $
	for
	$ \theta \in \varpi_T $,
	$ I \in \mathcal{P}_0(\H) $, 
	$ J \in \mathcal{P}_0( \mathcal{U} ) $,
	$ h \in (0,T] $   
	in the setting of~\cite[Corollary~3.4]{JentzenPusnik2018Published}) 
	yields that
	\begin{equation}
	\begin{split} 
	\label{eq:exponential_moments2} 
	& 
	\sup\nolimits_{ \theta \in \varpi_T }
	\sup\nolimits_{ I \in \mathcal{P}_0(\H) } 
	\sup\nolimits_{t\in [0,T]}
	\E\!\left[ 
	\exp \! \left(
	\frac{    
		\epsilon \| \y^{ \theta, I}_t \|_H^2
	}
	{
		e^{ 2(b + \epsilon \| B \|_{\HS(U, H )}^2 ) t }
	} 
	\right) 
	\right]
	< \infty.
	\end{split}
	\end{equation}
	In addition, note that
	the fact that $ \epsilon \leq 1 $
	assures that for all $ t \in [0,T] $ we have that
	\begin{equation}
	\frac{    
		\epsilon  
	}
	{
		e^{ 2(b + \epsilon \| B \|_{\HS(U, H )}^2 ) t }
	} 
	\geq
	\frac{    
		\epsilon  
	}
	{
		e^{ 2(b + \epsilon \| B \|_{\HS(U, H )}^2 ) T }
	}
	\geq
	\frac{    
		\epsilon  
	}
	{
		e^{ 2(b + \| B \|_{\HS(U, H )}^2 ) T }
	}
	.
	\end{equation}
	This and~\eqref{eq:exponential_moments2}  
	justify~\eqref{eq:exponential_moments}.
	The proof of Corollary~\ref{Corollary:full_discrete_scheme_convergence}
	is hereby completed.
\end{proof}
\begin{corollary}
	\label{corollary:uniform_H_moment_bounds}
	Assume Setting~\ref{setting:StrongApriori}
	and
	let  
	$ p \in (0,\infty) $. 
	Then we have that
	\begin{equation}
	\sup\nolimits_{I \in \mathcal{P}_0(\H)}
	\sup\nolimits_{\theta \in \varpi_T}
	\sup\nolimits_{t\in [0,T]}
	\| \y_t^{ \theta, I } \|_{\L^p(\P; H)}
	< \infty .
	\end{equation}
\end{corollary}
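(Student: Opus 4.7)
The plan is to derive the claim directly from the uniformly bounded Gaussian-type exponential moments established in Corollary~\ref{Corollary:full_discrete_scheme_convergence} above, using the elementary fact that on a probability space an exponential moment of the form $\E[\exp(c \|X\|_H^2)]$ dominates every polynomial moment $\E[\|X\|_H^p]$ with a constant depending only on $c$ and $p$.

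First I would introduce the abbreviation $\mathfrak{c} := \epsilon \exp(-2(b+\|B\|_{\HS(U,H)}^2)T) \in (0,\infty)$ and note that Corollary~\ref{Corollary:full_discrete_scheme_convergence} yields
\begin{equation}
\mathfrak{K} \,:=\, \sup\nolimits_{\theta \in \varpi_T}\, \sup\nolimits_{I \in \mathcal{P}_0(\H)}\, \sup\nolimits_{t \in [0,T]} \E\!\left[ \exp\!\left( \mathfrak{c} \, \| \y_t^{\theta, I} \|_H^2 \right) \right] < \infty.
\end{equation}
Next, for the fixed $p \in (0,\infty)$ I would invoke the elementary one-variable optimization: the map $[0,\infty) \ni y \mapsto y^{p/2} e^{-\mathfrak{c} y} \in [0,\infty)$ is smooth, tends to $0$ at infinity, and attains its unique maximum at $y^\star = p/(2\mathfrak{c})$ with value $(p/(2e\mathfrak{c}))^{p/2}$. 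This yields the pointwise inequality
\begin{equation}
y^{p/2} \,\leq\, \bigl( \tfrac{p}{2 e \mathfrak{c}} \bigr)^{\!p/2} \exp( \mathfrak{c} \, y ) \qquad \text{for all } y \in [0,\infty).
\end{equation}

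Applying this with $y = \|\y_t^{\theta, I}(\omega)\|_H^2$, taking expectation, and using the bound on $\mathfrak{K}$ then gives
\begin{equation}
\E\bigl[ \|\y_t^{\theta, I}\|_H^{\, p} \bigr] \,\leq\, \bigl( \tfrac{p}{2 e \mathfrak{c}} \bigr)^{\!p/2} \E\!\left[ \exp\!\left( \mathfrak{c} \, \| \y_t^{\theta, I} \|_H^2 \right) \right] \,\leq\, \bigl( \tfrac{p}{2 e \mathfrak{c}} \bigr)^{\!p/2} \mathfrak{K}
\end{equation}
uniformly in $\theta \in \varpi_T$, $I \in \mathcal{P}_0(\H)$, and $t \in [0,T]$. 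Taking the $p$-th root and the triple supremum delivers the asserted uniform $\mathcal{L}^p$-bound.

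This step encounters no genuine obstacle: the entire difficulty of establishing uniform integrability of the tamed-truncated scheme has been absorbed into Corollary~\ref{Corollary:full_discrete_scheme_convergence} (and thus ultimately into~\cite[Corollary~3.4]{JentzenPusnik2018Published}), and the remaining argument is simply the classical inheritance of polynomial moments from Gaussian-type exponential moments. Thus I expect the proof to be essentially one display, invoking Corollary~\ref{Corollary:full_discrete_scheme_convergence} together with the elementary estimate above.
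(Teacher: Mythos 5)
Your proposal is correct and follows essentially the same route as the paper: both deduce the uniform $\mathcal{L}^p$-bound from the uniform exponential moment estimate of Corollary~\ref{Corollary:full_discrete_scheme_convergence} via an elementary pointwise domination of $y^{p/2}$ by a constant times $e^{\mathfrak{c}y}$ (the paper realizes this inequality through Young's inequality and a factorial constant, you through one-variable optimization, which is an immaterial difference).
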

\begin{proof}[Proof of Corollary~\ref{corollary:uniform_H_moment_bounds}]
	Throughout this proof let
	$ N \in 
	( [ \frac{p}{2} , \frac{p}{2}  + 1) \cap \N ) $.
	Observe that 
	Corollary~\ref{Corollary:full_discrete_scheme_convergence}
	yields that there exists $ M \in [0,\infty) $
	such that for all 
	$ \theta \in \varpi_T $,
	$ I \in \mathcal{P}_0(\H) $,
	$ t \in [0,T] $,
	$ \varepsilon \in (0,
		\epsilon 
		\exp( - 2(b + \| B \|_{\HS(U, H )}^2 ) T )
	] $
	we have that
	\begin{equation}
	\begin{split} 
	\label{eq:First step estimate}
	&  
	\E \big[ 
	\exp \! \big( 
		\varepsilon \| \y^{ \theta, I}_t \|_H^2 
	\big) 
	\big]
	\leq M.
	\end{split}
	\end{equation}
	In addition, note that
	Young's inequality ensures that
	for all  
	$ x \in (0, \infty) $ 
	we have that
	\begin{equation}
	\begin{split} 
	x^{ \nicefrac{p}{2} }
	&=  
	x^{(N-1)(N- ( p/2 ) )} 
	x^{N( ( p/2 ) -N+1)}  \leq  (N-\tfrac{p}{2}) x^{N-1} + (\tfrac{p}{2}-N+1) x^N
	\\
	&\leq N x^{N-1} +x^N
	= (N!)  \Big(   \tfrac{  x^{N-1}  }{ (N-1)! }  +  \tfrac{  x^N } { N! } \Big)
	\leq (N!) e^x
	.
	\end{split}
	\end{equation}
Therefore, we obtain that for all 
$ \theta \in \varpi_T $,
$ I \in \mathcal{P}_0(\H) $,
$ t \in [0,T] $ 
we have that
\begin{equation}
\E\!\left[ 
 \big|
\varepsilon \| \y^{ \theta, I}_t \|_H^2 
 \big|^{ \nicefrac{p}{2} }
\right] 
\leq 
(N!)
\E\big[ 
\exp \! \big( 
\varepsilon \| \y^{ \theta, I}_t \|_H^2 
\big)
\big] 
.
\end{equation}
This 
and~\eqref{eq:First step estimate}  
	give that
	there exists $ M \in [0, \infty) $
	such that 
	for all 
	$ \theta \in \varpi_T $,
	$ I \in \mathcal{P}_0(\H) $,
	$ t \in [0,T] $,
	$ \varepsilon \in (0,
	\epsilon 
	\exp( - 2(b + \| B \|_{\HS(U, H )}^2 ) T )
	] $
	we have that
	\begin{equation}
	\begin{split} 
	\left(
	\E\!\left[ 
	\big| 
	  \varepsilon  \| \y_t^{\theta, I} \|_H^2  
	\big|^{\nicefrac{p}{2}} 
	\right]
	\right)^{ \nicefrac{2}{p} }
	\leq
	(
	( N! )
	M
	)^{ \nicefrac{2}{p}  }
	.
	\end{split}
	\end{equation}
	This concludes the proof of Corollary~\ref{corollary:uniform_H_moment_bounds}.
\end{proof} 
\begin{corollary}
	\label{corollary:AprioriNumApp}
	Assume Setting~\ref{setting:StrongApriori},
	let  
	$ p \in (0, \infty) $,
	$ \eta_1 \in [0, \nicefrac{1}{2}  + \beta  ) $,
	$ \eta_2 \in [ \eta_1, \nicefrac{1}{2} + \beta  ) $,  
	$ \iota \in [\eta_2, \nicefrac{1}{2} + \beta  ) $,   
	$ \alpha_1 \in [0, 1 - \eta_1 ) $,
	$ \alpha_2 \in [0, 1 - \eta_2 ) $,  
	and assume that 
	$ \E [ \| \xi \|_{ H_{ \iota } }^{4 \max \{p, 1\} } ] < \infty $
	and
	\begin{equation}
	\label{eq:AssumptionF}
	\Big[
	\sup\nolimits_{ v \in H_{ \max \{ \gamma, \eta_2 \} }  }
	\tfrac{ \| F(v) \|_{H  } }
	{ 1 + \| v \|_{H_{ \eta_2 } }^2 }
	\Big]
	+
	\Big[
	\sup\nolimits_{ v \in H_{ \max \{ \gamma, \eta_1 \} }  }
	\tfrac{ \| F(v) \|_{H_{ - \alpha_2 } } }
	{ 1 + \| v \|_{ H_{\eta_1} }^2 }
	\Big]
	+
	\Big[
	\sup\nolimits_{ v \in H_{\gamma} } 
	\tfrac{ \| F(v) \|_{H_{-\alpha_1} } }{ 1 + \| v \|_H^2 } 
	\Big]
	< \infty
	.
	\end{equation}
	Then  
	we have that
	\begin{equation}
	\label{eq:AprioriBound}
	\sup\nolimits_{ \theta \in \varpi_T }
	\sup\nolimits_{I \in \mathcal{P}_0(\H)} \sup\nolimits_{ t \in [0,T] }
	\| \y_t^{ \theta, I }  \|_{\L^{ p }(\P; H_\iota )}
	<
	\infty 
	. 
	\end{equation} 
\end{corollary}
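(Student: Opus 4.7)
The plan is to bootstrap the uniform $\mathcal{L}^p(\P;H)$-moment bounds supplied by Corollary~\ref{corollary:uniform_H_moment_bounds} to the stronger $\mathcal{L}^p(\P;H_\iota)$-moment bounds asserted in~\eqref{eq:AprioriBound}, exploiting (a) the smoothing of the analytic semigroup, (b) the quadratic growth assumption~\eqref{eq:AssumptionF} on $F$, and (c) the regularising properties of the tamed-truncated stochastic convolution appearing in~\eqref{eq:AppProcesses}. The argument proceeds by rewriting $\y^{\theta,I}$ in mild form and decomposing it into a deterministic semigroup term, a drift integral, and a stochastic convolution, each of which will be estimated separately.

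First, I would rewrite the one-step recursion~\eqref{eq:AppProcesses} as the mild-type identity
\begin{equation*}
\y_t^{\theta,I}
=
e^{tA} P_I \xi
+
\int_0^t e^{(t-\llcorner s\lrcorner_\theta)A}\,
\1_{D_{|\theta|_T}^I}(\y^{\theta,I}_{\llcorner s\lrcorner_\theta})\,
P_I F(\y^{\theta,I}_{\llcorner s\lrcorner_\theta})\,ds
+
\mathcal{O}_t^{\theta,I},
\end{equation*}
where $\mathcal{O}^{\theta,I}$ is the tamed-truncated stochastic convolution induced by the noise increments in~\eqref{eq:AppProcesses}. The key observation is that the truncation forces $\|P_I F(\y^{\theta,I}_{\llcorner s\lrcorner_\theta})\|_H \leq \upnu |\theta|_T^{-\varsigma}$ pathwise on the indicator set, while~\eqref{eq:AssumptionF} together with the uniform $\mathcal{L}^q(\P;H)$-bounds from Corollary~\ref{corollary:uniform_H_moment_bounds} (valid for every $q\in(0,\infty)$) control the nonlinearity in the negative-order spaces $H_{-\alpha_1}$, $H_{-\alpha_2}$, and $H$ in terms of $H$-, $H_{\eta_1}$-, and $H_{\eta_2}$-norms respectively.

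Next, I would apply~\cite[Corollary~3.1]{JentzenLindnerPusnik2017b} to the tamed-truncated stochastic convolution $\mathcal{O}^{\theta,I}$ to obtain a uniform bound
\begin{equation*}
\sup\nolimits_{\theta\in\varpi_T}\sup\nolimits_{I\in\mathcal{P}_0(\H)}\sup\nolimits_{t\in[0,T]}
\|\mathcal{O}_t^{\theta,I}\|_{\mathcal{L}^{q}(\P;H_\iota)} < \infty
\end{equation*}
for every $q\in(0,\infty)$, using that $B\in\HS(U,H_\beta)$ and $\iota<\nicefrac{1}{2}+\beta$. Together with the assumption $\xi\in\mathcal{L}^{4\max\{p,1\}}(\P;H_\iota)$, this takes care of the initial condition and the noise contribution in the $H_\iota$-norm. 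Then I would invoke the bootstrap scheme of~\cite[Lemma~3.4]{JentzenLindnerPusnik2017c}, with the three levels of negative-order bounds in~\eqref{eq:AssumptionF} corresponding to the three growth estimates needed there; one chains these through the spaces $H\to H_{\eta_1}\to H_{\eta_2}\to H_\iota$ (interpolating with $\alpha_1$ and $\alpha_2$ to ensure integrability of the semigroup singularity $t\mapsto t^{-\eta_i-\alpha_j}$), starting from the $H$-bound of Corollary~\ref{corollary:uniform_H_moment_bounds} and concluding with~\eqref{eq:AprioriBound}.

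The main technical obstacle will be verifying the precise hypotheses of~\cite[Lemma~3.4]{JentzenLindnerPusnik2017c} for the truncated scheme: namely that the truncated drift $x\mapsto \1_{D_{|\theta|_T}^I}(x)P_I F(x)$ still satisfies the quadratic growth bounds of~\eqref{eq:AssumptionF} (which it does, since truncation only makes the map smaller in norm), that the drift is evaluated at the piecewise-constant interpolant $\y^{\theta,I}_{\llcorner\cdot\lrcorner_\theta}$ rather than at $\y^{\theta,I}$ itself (which requires identifying $\llcorner\cdot\lrcorner_\theta$ with the step-function $\kappa$ appearing in the cited lemma), and that the parameter constraints $\alpha_1 < 1-\eta_1$, $\alpha_2 < 1-\eta_2$, $\iota < \nicefrac{1}{2}+\beta$ are compatible with integrability of the resulting semigroup-kernel convolutions. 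Once the parameter-matching has been done, the conclusion follows by a direct application of the cited bootstrap result.
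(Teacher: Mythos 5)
Your proposal follows essentially the same route as the paper's proof: write $\y^{\theta,I}$ in mild form with the tamed-truncated stochastic convolution $\O^{\theta,I}$, bound $\O^{\theta,I}$ uniformly in $\L^{q}(\P;H_\iota)$ via \cite[Corollary~3.1]{JentzenLindnerPusnik2017b}, bound $\y^{\theta,I}$ uniformly in $\L^{q}(\P;H)$ via Corollary~\ref{corollary:uniform_H_moment_bounds}, and then bootstrap to $H_\iota$ through \cite[Lemma~3.4]{JentzenLindnerPusnik2017c} with the truncated drift $x\mapsto\1_{D^I_{|\theta|_T}}(x)P_IF(x)$ evaluated along $\llcorner\cdot\lrcorner_\theta$ (playing the role of $\kappa$ there), exactly as the paper does. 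The only detail you leave implicit is the final H\"older step passing from the exponent $\max\{p,1\}$ to $p$ when $p<1$, which is routine.
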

\begin{proof}[Proof of Corollary~\ref{corollary:AprioriNumApp}]
	Throughout this proof let 
	$ ( \mathbb{G}_t )_{ t \in [0, T] } $
	be the normal filtration
	generated by 
	$ ( W_t )_{ t \in [0,T] } $,
	let $ \mathbb{U} $ be an orthonormal 
	basis of $ U $, 
	and let
	$ \O^{\theta, I} \colon [0,T] 
	\times \Omega \to P_I(H) $, 
		$ \theta \in \varpi_T $,
	$ I \in \mathcal{P}_0(\H) $,
	be stochastic processes 
	which satisfy for all 
		$ \theta \in \varpi_T $, 
	$ I \in \mathcal{P}_0(\H) $,
	$ t \in [0,T] $ 
	that 
	\begin{equation} 
	\O_t^{\theta, I} 
	= 
	\y_t^{\theta, I} 
	- 
	\bigg( 
	e^{tA} P_I \xi
	+ 
	\int_0^t
	\1_{ D_{|\theta|_T}^I }
	\!
	( \y_{\llcorner s \lrcorner_\theta}^{\theta, I} )
	\,
	e^{(t-\llcorner s \lrcorner_\theta)A}  
	P_I F( \y_{\llcorner s \lrcorner_\theta}^{\theta, I} )
	\,
	ds 
	\bigg) 
	.
	\end{equation} 
	Observe that~\cite[Corollary~3.1]{JentzenLindnerPusnik2017b}
	(applies with 
	$ H = H $,
	$ U = U $,
	$ \H = \H $,
	$ \values = \values $, 
	$ A = A $, 
	$ \beta = \beta $,
	$ T = T $, 
	$ ( \Omega, \F, \P, ( \f_t )_{ t \in [0,T]}  ) =
	( \Omega, \F, \P, ( \mathbb{G}_t )_{ t \in [0,T]} ) $,
	$ ( W_t )_{ t \in [0,T] } 
	=
	( W_t )_{ t \in [0,T] } $,  
	$ B = B $,
	$ \mathbb{U} = \mathbb{U} $,
	$ P_I = P_I $,  
	$ \hat P_{ \mathbb{U} } = \operatorname{Id}_U $,
	$ \chi^{ \theta, I, \mathbb{U} } =
	( [0,T] \times \Omega \ni (t, \omega)
	\mapsto 
	\1_{ D_{ |\theta|_T}^I } \! ( \y_t^{ \theta, I } ( \omega ) ) 
	\in [0,1] 
	) $,
	$ \O^{ \theta, I, \mathbb{U} } = \O^{ \theta, I } $, 
	$ p = \max \{p, 1\} $, 
	$ \gamma = \iota $ 
	for  
	$ \theta \in \varpi_T $,
	$ I \in \mathcal{P}_0(\H) $ 
	in the setting of~\cite[Corollary~3.1]{JentzenLindnerPusnik2017b}) 
	yields that
	\begin{equation} 
	\label{eq:Apriori1}
	\sup\nolimits_{\theta \in \varpi_T }
	\sup\nolimits_{ I \in \mathcal{P}_0(\H) }
	\sup\nolimits_{ t \in [0,T] }
	\| \O_t^{ \theta, I } \|_{ \L^{4 \max \{p, 1\} }( \P; H_{ \iota } )} 
	< \infty 
	.
	\end{equation}
	Next note that
	Corollary~\ref{corollary:uniform_H_moment_bounds}
	(applies with
	$ p = 8 \max \{p, 1\} $
	in the setting of Corollary~\ref{corollary:uniform_H_moment_bounds})
	verifies that
	\begin{equation}
	\label{eq:Apriori2}
	\sup\nolimits_{ \theta \in \varpi_T }
	\sup\nolimits_{I \in \mathcal{P}_0(\H)} \sup\nolimits_{ t \in [0,T] }
	\| \y_t^{\theta, I}
	\|_{\L^{8 \max \{p, 1\} }(\P; H ) }
	< \infty  
	.
	\end{equation}
	Combining
	this, 
	\eqref{eq:AssumptionF},
	and~\eqref{eq:Apriori1} 
	with, e.g., \cite[Lemma~3.4]{JentzenLindnerPusnik2017c}
	(applies with
	$ H = H $,
	$ \H = \H $,
	$ \values = \values $,
	$ ( \Omega, \F, \P) = ( \Omega, \F, \P ) $,
	$ T = T $,
	$ \beta = \nicefrac{1}{2}  + \beta $,
	$ \gamma = \gamma $,
	$ \xi = ( \Omega \ni \omega \mapsto P_I(\xi(\omega)) \in H_{ \nicefrac{1}{2} + \beta } ) $,
	$ F = ( H_\gamma \ni x \mapsto \1_{ D_{ | \theta |_T }^I }(x) P_I F(x) \in H ) $,
	$ \kappa = ( [0,T] \ni t \mapsto \llcorner t \lrcorner_\theta \in [0,T] ) $, 
	$ Z = ( [0,T] \times \Omega \ni (t,\omega)
	\mapsto \y_{ \llcorner t \lrcorner_\theta }^{ \theta, I }
	(\omega) \in H_\gamma ) $,
	$ O = ( [0,T] \times \Omega \ni (t, \omega ) 
	\mapsto 
	\O_t^{ \theta, I}(\omega) 
	\in H_{ \nicefrac{1}{2} + \beta } ) $, 
	$ Y = ( [0,T] \times \Omega \ni (t, \omega) \mapsto 
	\y_t^{ \theta, I }( \omega ) \in H ) $,
	$ p = \max \{p, 1\} $,
	$ \rho = \eta_1 $,
	$ \eta = \eta_2 $,
	$ \iota = \iota $,
	$ \alpha_1 = \alpha_1 $,
	$ \alpha_2 = \alpha_2 $
	for  
	$ \theta \in \varpi_T $,
	$ I \in \mathcal{P}_0(\H) $ 
	in the setting of~\cite[Lemma~3.4]{JentzenLindnerPusnik2017c})
	yields that
	\begin{equation} 
	\sup\nolimits_{ \theta \in \varpi_T }
	\sup\nolimits_{I \in \mathcal{P}_0(\H)} \sup\nolimits_{ t \in [0,T] }
	\| \y_t^{ \theta, I }  \|_{\L^{ \max \{p, 1\} }(\P; H_\iota )}
	<
	\infty 
	. 
	\end{equation}
	H\"older's inequality
	therefore	 
	justifies~\eqref{eq:AprioriBound}.
	The proof of 
	Corollary~\ref{corollary:AprioriNumApp}
	is hereby completed.
\end{proof}
\subsection[Strong error estimates for tamed-truncated Euler-type approximations]{Strong error estimates for tamed-truncated Euler-type approximations}
\label{subsection:StrongRates}
In this subsection we establish the main result
of this article
in Theorem~\ref{theorem:Exact_to_numeric} below.
To do so, we first
prove
an elementary exponential
moment estimate in
Lemma~\ref{lemma:chi2_estimate}. 
Combining 
Corollaries~\ref{Corollary:full_discrete_scheme_convergence}--\ref{corollary:AprioriNumApp},
Lemma~\ref{lemma:chi2_estimate},
and~\cite[Corollaries~3.2--3.4]{JentzenLindnerPusnik2017b} 
allows us to 
apply
Proposition~\ref{proposition:Exact_to_numeric_general}
to derive
in 
Theorem~\ref{theorem:Exact_to_numeric}
strong convergence rates
for the numerical approximations
$ ( \y_t^{\theta,I})_{ t \in [0,T]} $,
$ \theta \in \varpi_T $,
$ I \in \mathcal{P}_0(\H) $,
(see~\eqref{eq:Scheme} below)
for a general class
of 
semilinear
SPDEs
with additive noise
and a possibly non-globally monotone nonlinearity.
Moreover, in
Corollary~\ref{corollary:Exact_to_numeric}
we briefly present and prove a simplified version of 
Theorem~\ref{theorem:Exact_to_numeric}.
\begin{lemma}
\label{lemma:chi2_estimate}
Assume Setting~\ref{setting:main},
let 
$ T \in (0, \infty) $,
$ B \in \HS( U, H ) $,
let
$ ( P_I )_{ I \in \mathcal{P}_0(\H ) } \subseteq L(H) $
satisfy for all  
$ I \in \mathcal{P}_0(\H) $,
$ v \in H $ that
$ P_I(v) = \sum_{ h \in I } 
\langle h, v \rangle_H h $,
	let $ ( \Omega, \F, \P ) $ be a probability space, 
	and
	let 
	$ (W_t)_{t\in [0,T]} $
	be an $ \operatorname{Id}_U $-cylindrical
	Wiener process.
	Then we have 
	for all $ t \in [0, T] $
	with $ 2 t  \| B \|_{\HS(U, H)}^2 < 1 $
	that
	\begin{equation}
	\sup\nolimits_{ I \in \mathcal{P}_0(\H) }
	\E \big[ e^{\| \int_0^t e^{(t-s)A} P_I B   
		\,  
		dW_s \|_H^2} \big] 
	\leq 
	\tfrac{2}{ 1 
			- 
			4t^2 
			\| B\|_{\HS(U, H)}^4 
		}
	.
	\end{equation}
\end{lemma}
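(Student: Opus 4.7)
The plan is to reduce the statement to an explicit computation for finite-dimensional centered Gaussians. For fixed $I \in \mathcal{P}_0(\H)$, the set $I$ is finite, the space $P_I(H)$ is finite-dimensional, and the operator $A$ preserves $P_I(H)$ since $A$ is diagonal with respect to $\H$. Hence $Z_I := \int_0^t e^{(t-s)A} P_I B \, dW_s$ is well-defined and lives in $P_I(H)$. The random variable $Z_I$ is a centered Gaussian in $P_I(H)$ with covariance operator
\begin{equation}
Q_I = \int_0^t e^{(t-s)A} P_I B B^* P_I e^{(t-s)A^*} \, ds \in L(P_I(H)).
\end{equation}
Diagonalising $Q_I$ with eigenvalues $\lambda_1, \ldots, \lambda_{d_I} \geq 0$ and eigenvectors $(e_j)_{j=1}^{d_I}$ yields the representation $\|Z_I\|_H^2 = \sum_{j=1}^{d_I} \lambda_j \xi_j^2$ with i.i.d.\ standard normal $\xi_j$, and therefore
\begin{equation}
\E\big[e^{\|Z_I\|_H^2}\big] = \prod_{j=1}^{d_I} \E[e^{\lambda_j \xi_j^2}] = \prod_{j=1}^{d_I} (1 - 2\lambda_j)^{-1/2},
\end{equation}
valid provided every $2\lambda_j < 1$.

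The second step is to control the eigenvalues via the trace. Using that $\sup_{h \in \H} \values_h < 0$ gives $\|e^{(t-s)A}\|_{L(H)} \leq 1$, whence
\begin{equation}
\sum_{j=1}^{d_I} \lambda_j = \tr(Q_I) = \int_0^t \|e^{(t-s)A} P_I B\|_{\HS(U,H)}^2 \, ds \leq t \|B\|_{\HS(U,H)}^2.
\end{equation}
Combined with the assumption $2t\|B\|_{\HS(U,H)}^2 < 1$, this ensures $2\lambda_j \leq 2 \sum_i \lambda_i < 1$ for each $j$, so the product representation above is valid. An easy induction on the number of factors yields the elementary inequality $\prod_{j}(1 - y_j) \geq 1 - \sum_j y_j$ whenever $y_j \in [0,1)$ and $\sum_j y_j < 1$. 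Applying this with $y_j = 2\lambda_j$ gives
\begin{equation}
\E\big[e^{\|Z_I\|_H^2}\big] \leq \big(1 - 2 \tr(Q_I)\big)^{-1/2} \leq \big(1 - 2t\|B\|_{\HS(U,H)}^2\big)^{-1/2}.
\end{equation}

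The final step is the cosmetic estimate bringing the bound into the stated form. Setting $x = 2t\|B\|_{\HS(U,H)}^2 \in [0,1)$, the desired conclusion is
\begin{equation}
(1-x)^{-1/2} \leq \tfrac{2}{(1-x)(1+x)},
\end{equation}
which is equivalent to $\sqrt{1-x}\,(1+x) \leq 2$; this holds on $[0,1]$ since an elementary derivative check shows that the maximum of the left-hand side is attained at $x = 1/3$ with value $\frac{4}{3}\sqrt{2/3} < 2$. Taking the supremum over $I \in \mathcal{P}_0(\H)$ then yields the claim. The entire argument is elementary; the only slightly delicate point is ensuring the inequality $\prod(1-y_j) \geq 1 - \sum y_j$ (with the right sign and convexity direction), which I would verify by straightforward induction.
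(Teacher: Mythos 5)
Your proof is correct, but it takes a genuinely different route from the paper. The paper never touches the Gaussian law of the convolution directly: it applies It\^o's formula to $\| O_t^I \|_H^p$ for the convolution process $O^I$, uses $\langle x, Ax\rangle_H \leq 0$ and a Burkholder--Davis--Gundy-type bound to justify dropping the martingale term, iterates the resulting integral inequality to obtain the even-moment bound $\E[\|O_{t}^I\|_H^{2n}] \leq \tfrac{(2n)!\,\|B\|_{\HS(U,H)}^{2n}}{2^n n!} t^n$, and then sums the exponential series using the elementary estimates $e^x \leq 2\sum_{n\geq 0} \tfrac{x^{2n}}{(2n)!}$ and $(4n)! \leq 2^{4n}[(2n)!]^2$, which produces exactly the geometric series $2\sum_n (4t^2\|B\|^4)^n = \tfrac{2}{1-4t^2\|B\|_{\HS(U,H)}^4}$. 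You instead exploit the exact Gaussian structure: diagonalizing the covariance $Q_I$, using the $\chi^2$ moment generating function $\E[e^{\lambda \xi^2}] = (1-2\lambda)^{-1/2}$, the Weierstrass-type inequality $\prod_j(1-y_j) \geq 1 - \sum_j y_j$, and the trace bound $\tr(Q_I) \leq t\|B\|_{\HS(U,H)}^2$ (both arguments use the contractivity $\|e^{sA}\|_{L(H)}\leq 1$ coming from $\sup_{h\in\H}\values_h<0$). Your intermediate bound $(1-2t\|B\|_{\HS(U,H)}^2)^{-1/2}$ is in fact sharper than the stated constant, and your final elementary check that $\sqrt{1-x}\,(1+x)\leq 2$ correctly recovers the form $\tfrac{2}{1-4t^2\|B\|^4}$ since $4t^2\|B\|^4 = x^2$ with $x = 2t\|B\|_{\HS(U,H)}^2$. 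The only step you state without proof is that the Wiener integral of the deterministic integrand $s\mapsto e^{(t-s)A}P_IB$ is centered Gaussian on the finite-dimensional space $P_I(H)$ with covariance $Q_I$; this is standard (e.g.\ Da Prato--Zabczyk) and not a gap, though in the paper's style it would deserve a citation. What each approach buys: yours is shorter and gives a better constant; the paper's moment-recursion argument avoids invoking the explicit Gaussian/MGF machinery and is the kind of argument that transfers more readily to situations where the integrand is not deterministic.
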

\begin{proof}[Proof of Lemma~\ref{lemma:chi2_estimate}]
Throughout this proof let
$ \mathbb{U} \subseteq U $ 
be an orthonormal basis of $ U $,
let $ ( \f_t )_{ t \in [0,T] } $
be the normal filtration generated by $ (W_t)_{ t \in [0,T] } $, 
and let
$ O^I \colon [0,T] \times \Omega \to P_I(H) $,
$ I \in \mathcal{P}_0(\H) $,
be $ ( \f_t )_{ t \in [0,T] } $-adapted
stochastic processes
w.c.s.p.\ 
which satisfy for all 
$ I \in \mathcal{P}_0(\H ) $,
$ t \in [0,T] $ 
that
$ [ O_t^I ]_{ \P, \B( P_I(H) ) }= \int_0^t P_I e^{(t-s)A} B \, dW_s $.
Observe that for all 
$ I \in \mathcal{P}_0(\H) $,
$ t \in [0,T] $ 
we have that
\begin{equation}
[ O_t^I ]_{ \P, \B( P_I(H)) } 
=
\bigg[ 
\int_0^t A O_s^I \, ds
\bigg ]_{ \P, \B( P_I(H)) }
+
\int_0^t P_I B \, dW_s
.
\end{equation}
	 It\^o's formula therefore yields that for all 
	 $ p \in [2, \infty) $,
	$ I \in \mathcal{P}_0(\H ) $,
	$ t \in [0,T] $ 
	we have that
	\begin{multline}
	\label{eq:ItoNoise}
	[  \| O_t^I \|_H^p ]_{ \P, \B( \R ) } 
	=
	\bigg[ 
	\int_0^t p \| O_s^I \|_H^{p-2}
	\langle O_s^I, A O_s^I \rangle_H \, ds
	\bigg ]_{ \P, \B( \R ) }
	+
	\int_0^t p \| O_s^I \|_H^{p-2}
	\langle O_s^I, B \, dW_s \rangle_H
	\\
	+
	\bigg[ 
	\tfrac{1}{2}
	\int_0^t
	\sum_{ { \bf u }  \in \mathbb{U} }
	\big[
	p 
	\| O_s^I \|_H^{p-2}
	\| B { \bf u }  \|_H^2
	+
	p(p-2)
	\1_{ \{ O_s^I \neq 0 \} }
	\| O_s^I \|_H^{p-4}
	| \langle O_s^I, B { \bf u }  \rangle_H |^2
	\big]
	\, ds
	\bigg ]_{ \P, \B( \R ) }
	.
	\end{multline}
	Moreover, note that
	the
	Burkholder-Davis-Gundy-type inequality in Da Prato \& Zabczyk~\cite[Lemma~7.7]{dz92} 
	verifies that
	for all  
	$ p \in [2, \infty) $,
	$ I \in \mathcal{P}_0(\H ) $,
	$ t \in [0,T] $ 
	we have that
	\begin{equation}
	\begin{split}
	&
	\int_0^t 
	\E \big[
	\| O_s^I \|_H^{2(p-2)} 
	\| ( U \ni u \mapsto \langle O_s^I, B( u ) \rangle_H \in \R ) \|_{\HS(U,\R)}^2
	\big]
	\, d s
	\\
	&
	\leq
	\int_0^t
	\E \big[  
	\| O_s^I \|_H^{2(p-1)}
	\| B \|_{\HS(U,H)}^2
	\big]
	\, d s
	=
	\| B \|_{\HS(U,H)}^2
	\int_0^t
	\| O_s^I \|_{ \L^{2(p-1) }(\P; H ) }^{2(p-1)}
	\, ds
	\\
	&
	\leq 
	\| B \|_{\HS(U,H)}^2
	\int_0^t
	[ (p-1) ( 2p - 3 ) ]^{(p-1)} 
	\bigg[  
	\int_0^s
	\| P_I e^{(s-u)A} B \|_{ \HS(U,H) }^2
	\, du
	\bigg]^{ (p-1) }
	\, ds
	\\
	&
	\leq 
	\| B \|_{\HS(U,H)}^2
	\int_0^t
	[ (p-1) ( 2p - 3 ) ]^{(p-1)}
	\bigg[  
	\int_0^s
	\| e^{(s-u)A} \|_{L(H)}^2
	\| B \|_{ \HS(U,H) }^2
	\, du
	\bigg]^{ (p-1) }
	\, ds
	\\
	&
	\leq 
	\| B \|_{\HS(U,H)}^{ 2 p }
	[ (p-1) ( 2p - 3 ) ]^{(p-1)}
	\int_0^t
	\bigg[  
	\int_0^s 
	\, du
	\bigg]^{ ( p - 1 ) }
	\, ds
	< \infty .
	\end{split}
	\end{equation}
	Combining~\eqref{eq:ItoNoise},
	the fact that
	for all
	$ x \in H_1 $
	we have that
	$ \langle x, Ax \rangle_H
	=
	-
	\| x \|_{ H_{ \nicefrac{1}{2} } }^2
	\leq 0 $,
	Cauchy-Schwarz's inequality,
	and
	Tonelli's theorem
	therefore gives that for all 
	$ p \in [2, \infty) $,
	$ I \in \mathcal{P}_0(\H ) $,
	$ t \in [0,T] $
	we have that
	\begin{equation}
	\begin{split}
	&
	\E[ \| O_t^I \|_H^p ]  
	\leq 
	\tfrac{1}{2}
	\E
	\bigg[ 
	\int_0^t
	\sum_{ { \bf u }  \in \mathbb{U} }
	\big[
	p 
	\| O_s^I \|_H^{p-2}
	\| B { \bf u }  \|_H^2
	+
	p(p-2)
	\1_{ \{ O_s^I \neq 0 \} }
	\| O_s^I \|_H^{p-2}
	\| B { \bf u }  \|_H^2
	\big]
	\, ds
	\bigg] 
	\\
	&
	=
	\tfrac{1}{2}
	\| B \|_{ \HS(U,H)}^2
	\E
	\bigg[ 
	\int_0^t 
	\big[
	p 
	\| O_s^I \|_H^{p-2} 
	+
	p(p-2)
	\1_{ \{ O_s^I \neq 0 \} }
	\| O_s^I \|_H^{p-2} 
	\big]
	\, ds
	\bigg] 
	\\
	&
	=
	\tfrac{1}{2}
	\| B \|_{ \HS(U,H)}^2 
	\int_0^t 
	\E
	\big[
	p 
	\| O_s^I \|_H^{p-2} 
	+
	p(p-2) 
	\| O_s^I \|_H^{p-2} 
	\big]
	\, ds 
	=
	\tfrac{ p (p-1) \| B \|_{ \HS(U,H)}^2 }{2}
	\int_0^t 
	\E
	\big[ 
	\| O_s^I \|_H^{p-2}  
	\big]
	\, ds 
	.
	\end{split}
	\end{equation} 
	This ensures that for all 
	$ I \in \mathcal{P}_0(\H) $,
	$ n \in \N $,
	$ t_0 \in [0,T] $ 
	we have that
	\begin{equation}
	\begin{split}
	\label{eq:same_exp}
	\E [ \| O_{ t_0 }^I \|_H^{ 2 n } ]
	&
	\leq
	\tfrac{ 2 n (2 n -1) \| B \|_{ \HS(U,H)}^2 }{2} 
	\int_0^{ t_0 }
	\E
	\big[ 
	\| O_s^I \|_H^{2(n-1)}  
	\big]
	\, ds 
	\\
	&
	\leq 
	\tfrac{ ( 2 n )! \| B \|_{ \HS(U,H)}^{2 n} }{2^n}  
	\int_0^{ t_0 }
	\int_0^{t_1}
	\cdots
	\int_0^{t_{n-1}}
	\, dt_n
	\cdots
	dt_2
	\, 
	dt_1
	=
	\tfrac{ ( 2 n )! \| B \|_{ \HS(U,H)}^{2 n} }{2^n n!}  
	t_0^n
	.
	\end{split}
	\end{equation}
	Moreover, note that for all $ x \in [0, \infty) $ we have that
	$ 
	e^x \leq 2  \smallsum_{n=0}^\infty
	\tfrac{x^{2n}}{(2n)!} 
	$
	(see, e.g., Hutzenthaler et al.\  \cite[Lemma~2.4]{HutzenthalerJentzenWang2017Published}).
	Combining this, \eqref{eq:same_exp},
	Tonelli's theorem,
	and the fact that
	for all
	$ n \in \N $
	we have that 
	$ (4n)! \leq 2^{4n} [(2n)!]^2 $
	gives that for all  
	$ I \in \mathcal{P}_0(\H) $,
	$ t \in [0, \infty) $
	with 
	$ 2 t  \| B \|_{\HS(U, H)}^2 < 1 $ 
	we have that
	\begin{equation}
	\begin{split}
	\E\Big[ 
	e^{ \| \int_0^t e^{(t-s)A} P_I B  
		\, 
		dW_s \|_H^2 }
	\Big]
	&
	=
	\E\big[
	e^{ \| O_t^I \|_H^2 }
	\big]
	\leq
	2 \,
	\E \Big[
		\sum\nolimits_{n=0}^\infty
		\tfrac{   \| 
			O_t^I \|_H^{4 n} }{(2n)!} 
			\Big]
	=
	2 
	\sum\nolimits_{n=0}^\infty
	\tfrac{ \E [ \| 
	O_t^I  \|_H^{4 n}]}{(2 n)!} 
	\\
	&
	\leq
	2 
	\sum\nolimits_{n=0}^\infty
	\tfrac{ (4n)! \| B\|_{\HS(U, H)}^{4n} 
		t^{2n}}{ [(2n)!]^2 2^{2n}   } 
	\leq
		2 
		\sum\nolimits_{n=0}^\infty
		 2^{2n} \| B\|_{\HS(U, H)}^{4n} t^{2n}
	\\
	&
	=
	2 
	\sum\nolimits_{n=0}^\infty
	\big( 
	4
	\| B\|_{\HS(U, H)}^4    
	t^2 
	\big)^n
	=  
	\tfrac{2}{ 1 
		- 
		4t^2 
		\| B\|_{\HS(U, H)}^4 
	}
	.
	\end{split}
	\end{equation}
	This
	concludes 
	the proof of Lemma~\ref{lemma:chi2_estimate}.
\end{proof}
\begin{theorem} 
	\label{theorem:Exact_to_numeric}
Assume Setting~\ref{setting:main},
\sloppy 
let  
$ T, \upnu \in (0,\infty) $, 
$ \varsigma \in (0, \nicefrac{1}{18} ) $, 
$ \aa \in [0, \infty) $,
$ C, c, p \in [1,\infty) $,  
$ \beta \in [0, \nicefrac{1}{2}) $,
$ \gamma \in [ 2 \beta, \nicefrac{1}{2} + \beta ) \cap (0,\infty) $,
$ \delta \in ( \gamma - \nicefrac{1}{2}, \gamma) \cap [0, \infty) $, 
$ \kappa \in [0, \gamma] \cap [0, \nicefrac{1}{2} + \beta - \gamma + \delta ) $,
$ \eta_0 = 0 $,
$ \sigma, \nu, \eta_1 \in [0, \nicefrac{1}{2} + \beta ) $,
$ \eta_2 \in [\eta_1, \nicefrac{1}{2} + \beta ) $,
$ \alpha_1 \in [0, 1 - \eta_1 ) $, 
$ \alpha_2 \in [0, 1 - \eta_2 ) $, 
$ \alpha_3 = 0 $,
$ B \in  \HS(U, H_\beta) $,
$ \epsilon \in (0, \exp( - 2 ( \aa + \| B \|_{\HS(U, H )}^2 ) T ) ] $,
$ \varepsilon \in [0, 
\frac{1 }{ 16 p  }
   \exp( - 2 ( \aa + \| B \|_{\HS(U, H )}^2 ) T )
   \min \{ \epsilon \exp( -2( \aa + \| B \|_{\HS(U, H )}^2 ) T )  ,
\nicefrac{ 1 }{ ( 8 
	\max \{ \| B \|_{\HS(U, H )}^2, 1 \}
	\max \{ T, 1 \}
	)^2
}
\} ) $,
$ F \in \mathcal{C}^1(  H_\gamma, H ) $,
$ r \in \M ( \mathcal{B}(H_\gamma), \mathcal{B}( [0,\infty) ) ) $, 
$ (D_h^I)_{h\in (0,T], I \in \mathcal{P}_0(\H)} \subseteq \mathcal{B}(H_\gamma) $,
let
$ \Phi \colon H \to [0, \infty) $
be a function,
let 
$ ( P_I )_{ I \in \mathcal{P}(\H) } \subseteq L(H) $
satisfy for all  
$ I \in \mathcal{P}(\H) $,
$ x \in H $
that
$ P_I(x)
= \sum_{h \in I} \langle h, x \rangle_H h $,
assume for all
$ I \in \mathcal{P}_0(\H) $, 
$ h \in (0,T] $ 
that 
$ D_h^I = \{ v \in P_I( H ) \colon r(v) \leq  \upnu  h^{- \varsigma }  \} $
and
$ ( P_I(H) \ni v \mapsto \Phi(v) \in [0, \infty) )
\in 
\mathcal{C}( P_I(H), [0, \infty) ) $,
assume for all  
$ I \in \mathcal{P}_0(\H) $, 
$ h \in (0,T] $,
$ x \in D_h^I $
that
$ \max\{\| P_I F(x) \|_H,
\| B \|_{\HS(U, H  )}
\} 
\leq \upnu h^{-\varsigma} $,
assume for all    
$ I \in \mathcal{P}_0(\H) $,
$ x, y \in P_I( H ) $ 
that   
$ \| B \|_{\HS(U, H )} + \epsilon \| x \|_H^2 \leq r(x) \leq 
C ( 1 + \| x \|_{ H_\nu }^2 ) $,
$ \langle x, F(x) \rangle_H \leq \aa ( 1 + \| x \|_H^2 ) $,
$ \langle F'(x) y, y \rangle_H  \leq 
( \varepsilon \| x \|_{H_{\nicefrac{1}{2}}}^2 
+ C  ) \| y \|_H^2 
+ \|y \|_{H_{\nicefrac{1}{2}}}^2 $,
$ \| P_I( F( x ) - F( y ) ) \|_H
\leq
C \| x - y \|_{H_\delta} 
( 1 + \| x \|_{H_\kappa}^c  + \| y \|_{H_\kappa}^c ) $,
$ \langle x, A x + F(x+y) \rangle_H
\leq
\Phi(y) ( 1 + \| x \|_H^2 ) $,   
and
\begin{equation}
\Big[
\sup\nolimits_{ J \in \mathcal{P}_0(\H) } 
\sup\nolimits_{ v \in P_J(H) }
\tfrac{ \| P_J F(v) \|_{H_{ \gamma - \delta }} }{
	1 + \| v \|_{H_{\sigma}}^2	
} 
\Big]
+
\sum_{i=0}^2
\Big[
\sup\nolimits_{ v \in H_{ \max \{ \gamma, \eta_i \} } } 
\tfrac{ \| F(v) \|_{H_{ - \alpha_{i+1} } } }{ 1 + \| v \|_{H_{\eta_i}}^2 } 
\Big]
<
\infty
,
\end{equation}
let
$ ( \Omega, \F, \P ) $
be a probability space with a normal filtration
$ ( \f_t )_{t \in [0,T]} $,
let $ (W_t)_{t\in [0,T]} $
be an $ \operatorname{Id}_U $-cylindrical  
$ ( \f_t )_{t\in [0,T]} $-Wiener process, 
let
$ \xi \in \L^{ 32 p c \max \{  ( \gamma - \delta ) / \varsigma, 1 \} }(\P|_{ \f_0 } ;H_{ \max \{ \eta_2, \sigma, \nu, \gamma \} } ) $
satisfy 
$ \E[ \exp ( \epsilon \| \xi \|_H^2 ) ] < \infty $,
let 
$ X \colon [0,T] \times \Omega \to H_\gamma $
be an 
$ (\f_t )_{t\in [0,T]} $-adapted
stochastic process
w.c.s.p.\
which satisfies for all $ t \in [0,T] $ that
\begin{equation}
[ X_t ]_{\P, \B(H_\gamma)} 
= 
\bigg[
e^{tA} \xi 
+
\int_0^t e^{(t-s)A} F(X_s) \, ds
\bigg]_{\P, \B(H_\gamma)} 
+
\int_0^t e^{(t-s)A} B \, dW_s,
\end{equation}
and
let
$ \y^{\theta, I } \colon [0,T] \times \Omega \to P_I( H ) $,
$ \theta \in \varpi_T $,
$ I \in \mathcal{P}_0(\H) $, 
be 
$ ( \f_t )_{t\in [0,T]} $-adapted
stochastic 
processes 
which satisfy for all 
$ \theta \in \varpi_T $,
$ I \in \mathcal{P}_0(\H) $, 
$ t \in [0,T] $
that 
$ \y_0^{\theta,I}= P_I \xi $
and
\begin{equation} 
\begin{split}  
\label{eq:Scheme}  
[
\y_t^{\theta, I}  
]_{\P, \mathcal{B}( P_I(H) ) }
&=   
\big[
e^{(t-\llcorner t \lrcorner_\theta )A}
\y^{\theta, I }_{
	\llcorner t \lrcorner_\theta 
}  
+
\1_{  D_{ |\theta|_T  }^I  }
\!
(\y^{\theta, I }_{\llcorner t \lrcorner_\theta })
\,
e^{(t-\llcorner t \lrcorner_\theta )A}
P_I F(
\y^{\theta, I }_{ \llcorner t \lrcorner_\theta  } 
) 
(
t - \llcorner t \lrcorner_\theta 
)  
\big ]_{\P, \mathcal{B}( P_I(H) ) }
\\
&
\quad 
+
\frac{
	\int_{ \llcorner t \lrcorner_\theta  }^t
	\1_{  D_{ |\theta|_T  }^I  }
	\!
	(\y^{\theta, I }_{\llcorner t \lrcorner_\theta })
	\,
	e^{(t- \llcorner t \lrcorner_\theta  )A}
	P_I B 
	\, 
	dW_s
}{
	1 + 
	\| 
	\int_{ \llcorner t \lrcorner_\theta }^t
	P_I B 
	\, 
	dW_s 
	\|_H^2
} 
.
\end{split}
\end{equation}  
Then there exists 
$ \mathfrak{c} \in \R $
such that for all  
$ \theta \in \varpi_T $, 
$ I \in \mathcal{P}_0(\H) $
we have that
\begin{equation}
\label{eq:Main convergence estimate}
\sup\nolimits_{ t \in [0,T] }
\| X_t - \y^{\theta, I }_t \|_{\L^p(\P; H)}
\leq
\mathfrak{c} \big( \| P_{\H \backslash I } ( -A )^{ \delta - \gamma } \|_{L(H)}
+
[| \theta |_T]^{ \gamma - \delta  }
\big)
.
\end{equation} 
\end{theorem}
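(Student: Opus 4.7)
The strategy is to verify all hypotheses of Proposition~\ref{proposition:Exact_to_numeric_general} and then invoke it with appropriately chosen parameters. The natural choice is to take $\alpha = \gamma - \delta$, $\iota = \gamma - \delta$, to set the exponential moment parameter as
\begin{equation}
\rho = \tfrac{\epsilon}{\exp(2(\aa + \|B\|_{\HS(U,H)}^2)T)},
\end{equation}
and to check that the smallness assumption imposed on $\varepsilon$ indeed implies $\varepsilon \leq (\rho/p)\exp(-2(\aa + \rho\|B\|_{\HS(U,H)}^2)T)$. First I would define the auxiliary noise processes $\O^{\theta, I}$ as the difference between $\y^{\theta,I}$ and its deterministic-plus-initial contribution, so that~\eqref{eq:scheme} is satisfied by construction; the cut-off set $D_h^I$ is of the required $\{r \leq \upnu h^{-\varsigma}\}$-form by hypothesis, and the coercivity and local-Lipschitz hypotheses on $F$ translate directly to the ones required in Proposition~\ref{proposition:Exact_to_numeric_general}.

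The three delicate bundles of bounds that remain to be verified are~\eqref{eq:TimeRegularityNoise}--\eqref{eq:NoiseRates} on the noise, the uniform exponential-moment bound~\eqref{eq:ExpBound}, and the $\L^p$-bounds~\eqref{eq:SupBound1}--\eqref{eq:SupBound2} on $\y^{\theta,I}$ and $P_IF(\y^{\theta,I})$. For the noise bounds I would invoke the Corollaries~3.2--3.4 of~\cite{JentzenLindnerPusnik2017b} applied to the taming factor $(1+\|\int_{\llcorner s\lrcorner_\theta}^s P_I B\,dW_u\|_H^2)^{-1}\1_{D_{|\theta|_T}^I}(\y^{\theta,I}_{\llcorner s\lrcorner_\theta})$ combined with $P_I B$; the scaling $\varsigma < \nicefrac{1}{18}$ and the regularity $B \in \HS(U,H_\beta)$ give exactly the exponents $\alpha = \gamma - \delta$ in the temporal part and $\|P_{\H\backslash I}(-A)^{-(\gamma-\delta)}\|_{L(H)}$ in the spatial part. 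For~\eqref{eq:ExpBound} the key observation is that
\begin{equation}
\|\y_s^{\theta,I} - \O_s^{\theta,I} + P_J O_s + e^{sA}P_{J\backslash I}\xi\|_H^2 \leq 3\|\y_s^{\theta,I}\|_H^2 + 3\|P_J O_s - \O_s^{\theta,I}\|_H^2 + 3\|e^{sA}\xi\|_H^2,
\end{equation}
so by the Cauchy--Schwarz inequality the required exponential moment splits into (i) an exponential moment of $\|\y_s^{\theta,I}\|_H^2$ controlled by Corollary~\ref{Corollary:full_discrete_scheme_convergence} with the prefactor chosen so that $3\rho$ still falls below the threshold appearing there, (ii) an exponential moment of the stochastic-convolution-type term $\|P_J O_s - \O_s^{\theta,I}\|_H^2$ handled via Lemma~\ref{lemma:chi2_estimate} (whose smallness condition $2t\|B\|_{\HS(U,H)}^2 < 1$ is exactly why $\varepsilon$ must be chosen so small in terms of $\|B\|_{\HS(U,H)}^2$ and $T$), and (iii) a deterministic exponential moment of $\|e^{sA}\xi\|_H^2 \leq \|\xi\|_H^2$ finite by assumption.

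For~\eqref{eq:SupBound1}--\eqref{eq:SupBound2} I would use Corollary~\ref{corollary:AprioriNumApp} with $\iota$ chosen as $\max\{\eta_2,\sigma,\nu,\gamma\}$ to obtain uniform $\L^q$-bounds of $\|\y_s^{\theta,I}\|_{H_\iota}$ for every $q \in (0,\infty)$; this is where the rather strong integrability assumption on $\xi$ enters the argument. The quadratic growth assumption on $\|P_JF(v)\|_{H_{\gamma-\delta}}$ combined with the $H_\sigma$-moments then yields~\eqref{eq:SupBound1}, while the bound $r(x) \leq C(1+\|x\|_{H_\nu}^2)$ combined with the $H_\nu$-moments yields the $r$-moment bound in~\eqref{eq:SupBound2}; the $H_\kappa$-moment bound in~\eqref{eq:SupBound2} is immediate since $\kappa \leq \iota$. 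The main technical obstacle is the careful matching of integrability exponents: Proposition~\ref{proposition:Exact_to_numeric_general} asks for $4p\alpha/\varsigma$-th moments of $r(\y^{\theta,I})$, which via the assumption $r(x) \leq C(1+\|x\|_{H_\nu}^2)$ translates into an $8p(\gamma-\delta)/\varsigma$-th moment of $\|\y^{\theta,I}\|_{H_\nu}$, and this is exactly why the integrability exponent $32pc\max\{(\gamma-\delta)/\varsigma, 1\}$ of $\xi$ was chosen. Once all these pieces are in place, applying Proposition~\ref{proposition:Exact_to_numeric_general} yields~\eqref{eq:Main convergence estimate} with the announced convergence rates.
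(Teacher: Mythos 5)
Your overall route is the same as the paper's: pass to Proposition~\ref{proposition:Exact_to_numeric_general} with $\alpha=\iota=\gamma-\delta$, verify its hypotheses~\eqref{eq:TimeRegularityNoise}--\eqref{eq:SupBound2} via \cite[Corollaries~3.2--3.4]{JentzenLindnerPusnik2017b}, Corollary~\ref{Corollary:full_discrete_scheme_convergence}, Corollary~\ref{corollary:AprioriNumApp} (with the inner regularity parameter $\max\{\eta_2,\sigma,\nu,\gamma\}$), and Lemma~\ref{lemma:chi2_estimate}, with the exponent bookkeeping you describe. However, there are two concrete inaccuracies in your verification of the exponential-moment hypothesis~\eqref{eq:ExpBound}. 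First, the term you call ``stochastic-convolution-type'', $\|P_J O_s-\O_s^{\theta,J}\|_H^2$, is not covered by Lemma~\ref{lemma:chi2_estimate}: that lemma applies only to genuine Gaussian convolutions $\int_0^t e^{(t-s)A}P_I B\,dW_s$, whereas $\O^{\theta,J}$ is the scheme's accumulated \emph{tamed, truncated} noise (it carries the indicators $\1_{D^J_{|\theta|_T}}$ and the denominator $1+\|\cdot\|_H^2$) and is not of that form. Its exponential moments require the dedicated estimate \cite[Corollary~3.4]{JentzenLindnerPusnik2017b}; the paper accordingly splits the exponent into \emph{four} squares ($\y$, $\O$, $P_JO$, $\xi$) and applies a four-factor H\"older inequality, so that each factor carries $16\rho$. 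Second, and relatedly, your displayed choice $\rho=\epsilon\exp(-2(\aa+\|B\|_{\HS(U,H)}^2)T)$ is too large: after the splitting one needs $16\rho$ (or $9\rho$ with your three-term split) to lie below the threshold $\epsilon\exp(-2(\aa+\|B\|_{\HS(U,H)}^2)T)$ of Corollary~\ref{Corollary:full_discrete_scheme_convergence} and below $(8\max\{\|B\|_{\HS(U,H)}^2,1\}\max\{T,1\})^{-2}$ for the Gaussian and scheme-noise exponential moments, while still satisfying $\rho\geq \varepsilon p\,e^{2(\aa+\|B\|_{\HS(U,H)}^2)T}$ as required by Proposition~\ref{proposition:Exact_to_numeric_general}; this is exactly the interval the paper's $\rho$ is taken from, and the $\tfrac{1}{16p}$ in the hypothesis on $\varepsilon$ is what makes it nonempty.

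Two smaller points you skip but which are needed to invoke Proposition~\ref{proposition:Exact_to_numeric_general} as stated: the processes $\y^{\theta,I}$ in the theorem are only assumed adapted, while the proposition requires approximations and auxiliary noise processes with continuous sample paths, so one first replaces $\y^{\theta,I}$ by the continuous adapted modifications furnished by Lemma~\ref{lemma:Continuous adapted modification} before defining $\O^{\theta,I}$; and the spatial--temporal rate in hypothesis~\eqref{eq:NoiseRates} is not automatic from \cite[Corollary~3.3]{JentzenLindnerPusnik2017b} alone --- it needs the Markov-inequality estimate $\|1-\1_{D^I_{|\theta|_T}}(\y^{\theta,I}_{\llcorner t\lrcorner_\theta})\|_{\L^{4pc}(\P;\R)}\leq \mathbf{C}\,[|\theta|_T]^{\gamma-\delta}$, obtained from $r(x)\leq C(1+\|x\|_{H_\nu}^2)$ together with the high-order $H_\nu$-moments of the scheme (this is the second place, besides~\eqref{eq:SupBound2}, where the exponent $32pc\max\{(\gamma-\delta)/\varsigma,1\}$ on $\xi$ is consumed). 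With these repairs your argument coincides with the paper's proof.
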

\begin{proof}[Proof of Theorem~\ref{theorem:Exact_to_numeric}]
	Throughout this proof 
	let 
	$ \rho \in (0, \infty) $ satisfy that
	\begin{equation}
	\varepsilon p e^{ 2 ( \aa + \| B \|_{\HS(U, H )}^2 ) T }
	\leq
	\rho 
	<
	\tfrac{1}{16}
	\min \! \Big \{
	\epsilon e^{ - 2 ( \aa + \| B \|_{\HS(U, H )}^2 ) T },
	\tfrac{ 1 }{ 
		(
		8 
	\max \{ \| B \|_{\HS(U, H )}^2, 1 \}
	\max \{ T, 1 \}
	)^2
	}
	\Big\}
	.
	\end{equation}
	Note that
	Lemma~\ref{lemma:Continuous adapted modification}
	(applies with
	$ T = T $,
	$ \theta = \theta $,
	$ \beta = \beta $,
	$ \gamma = \gamma $,
	$ B = B $,
	$ F = F $,
	$ D = D_{ |\theta|_T }^I $,
	$ ( \Omega, \F, \P ) = ( \Omega, \F, \P ) $,
	$ ( \f_t )_{ t \in [0,T] } = ( \f_t )_{ t \in [0,T] } $,
	$ ( W_t )_{ t \in [0,T] } = ( W_t )_{ t \in [0,T] } $,
	$ \xi = \xi $,
	$ I = I $,
	$ P = P_I $,
	$ \y = \y^{\theta, I} $
	for
	$ \theta \in \varpi_T $,
	$ I \in \mathcal{P}_0(\H) $
	in the setting of 
	Lemma~\ref{lemma:Continuous adapted modification})
	verifies that there
	exist
	 $ ( \f_t )_{ t \in [0,T] } $-adapted
	stochastic processes
	$ \X^{\theta, I} \colon [0,T] \times \Omega \to P_I(H) $,
	$ \theta \in \varpi_T $,
	$ I \in \mathcal{P}_0(\H) $,
	w.c.s.p.\ 
	which satisfy
	for all
	$ \theta \in \varpi_T $,
	$ I \in \mathcal{P}_0(\H) $,
	$ t \in [0,T] $ 
	that
	$ \P( \X_t^{\theta, I} = \y_t^{\theta, I} ) = 1 $
 	and
		\begin{equation} 
		\begin{split}     
		\label{eq:less assumptions}
		[
		\X_t^{\theta, I}   
		]_{\P, \mathcal{B}( P_I(H) ) }
		&=   
		\big[
		e^{(t-\llcorner t \lrcorner_\theta )A}
		\X_{
			\llcorner t \lrcorner_\theta 
		}^{\theta, I}  
		+
		\1_{  D_{ |\theta|_T }^I } 
		( \X_{\llcorner t \lrcorner_\theta }^{\theta, I} )
		\,
		e^{(t-\llcorner t \lrcorner_\theta )A}
		P_I F(
		\X_{ \llcorner t \lrcorner_\theta  }^{\theta, I} 
		) 
		(
		t - \llcorner t \lrcorner_\theta 
		)  
		\big ]_{\P, \mathcal{B}( P_I (H) ) }
		\\
		&
		\quad 
		+
		\frac{
			\int_{ \llcorner t \lrcorner_\theta  }^t
			\1_{  D_{ |\theta|_T }^I } 
			( \X_{\llcorner t \lrcorner_\theta }^{\theta, I} )
			\,
			e^{(t- \llcorner t \lrcorner_\theta  )A}
			P_I B 
			\, 
			dW_s
		}{
			1 + 
			\| 
			\int_{ \llcorner t \lrcorner_\theta }^t
			P_I  B 
			\, 
			dW_s 
			\|_H^2
		} 
		.
		\end{split}
		\end{equation}
	Next
	let
	$ \O^{\theta, I} \colon [0,T] 
	\times \Omega \to P_I(H) $, 
		$ \theta \in \varpi_T $,
	$ I \in \mathcal{P}_0(\H) $,
	be stochastic processes 
	which satisfy for all 
		$ \theta \in \varpi_T $, 
	$ I \in \mathcal{P}_0(\H) $,
	$ t \in [0,T] $ 
	that 
	\begin{equation} 
	\label{eq:Identity_Clarity}
	\O_t^{\theta, I} 
	= 
	\X_t^{\theta, I} 
	- 
	\bigg( 
	e^{tA} P_I \xi
	+ 
	\int_0^t
	\1_{ D_{|\theta|_T}^I }
	\!
	( \X_{\llcorner s \lrcorner_\theta}^{\theta, I} )
	\,
	e^{(t-\llcorner s \lrcorner_\theta)A}  
	P_I F( \X_{\llcorner s \lrcorner_\theta}^{\theta, I} )
	\,
	ds 
	\bigg) 
	.
	\end{equation} 
	We intend to verify Theorem~\ref{theorem:Exact_to_numeric} 
	through an application of 
	Proposition~\ref{proposition:Exact_to_numeric_general} 
	(applies with 
	$ \alpha=\gamma-\delta $,
	$ \iota = \gamma - \delta $,
	$ \y^{\theta, I } = \X^{\theta, I} $,
	$ \O^{\theta, I} = \O^{\theta, I} $
	for 
		$ \theta \in \varpi_T $, 
	$ I \in \mathcal{P}_0(\H) $
	in the setting of Proposition~\ref{proposition:Exact_to_numeric_general}).
	For this we now verify the hypotheses~\eqref{eq:TimeRegularityNoise}--\eqref{eq:SupBound2}  
	in Proposition~\ref{proposition:Exact_to_numeric_general}.
	Observe that~\eqref{eq:less assumptions}  
	and~\eqref{eq:Identity_Clarity}
	give that
	for all 
		$ \theta \in \varpi_T $,
	$ I \in \mathcal{P}_0(\H) $, 
	$ t \in [0,T] $ 
	we have that
	\begin{equation}
	\begin{split}
	\label{eq:SPlit1}
	[
	\O_t^{\theta, I}
	]_{\P, \mathcal{B}( P_I(H) ) } 
	& 
	= 
	[
	e^{(t-\llcorner t \lrcorner_\theta )A} 
	\O_{ \llcorner t \lrcorner_\theta }^{\theta, I}
	]_{\P, \mathcal{B}( P_I(H) )}  
	+ 
	\frac{
		\int_{ \llcorner t \lrcorner_\theta }^t
		\1_{ D_{ | \theta |_T }^I }
		\!
		(
		\X_{ \llcorner t \lrcorner_\theta }^{\theta, I}   )
		\,
		e^{(t-\llcorner t \lrcorner_\theta )A}   
		P_I B 
		\, 
		dW_s
	}{
		1 + 
		\|
		\int_{\llcorner t \lrcorner_\theta}^t  
		P_I B 
		\, 
		dW_s
		\|_H^2
	}   
	.
	\end{split}
	\end{equation}
	This and~\cite[Corollary~3.2]{JentzenLindnerPusnik2017b}
	(applies with
	$ H = H $,
	$ U = U $,
	$ \H = \H $,
	$ \values = \values $,
	$ A = A $,
	$ \beta = \beta $,
	$ T = T $,
	$ ( \Omega, \F, \P, ( \f_t )_{ t \in [0,T] } ) = ( \Omega, \F, \P, ( \f_t )_{ t \in [0,T] } ) $,
	$ ( W_t )_{ t \in [0,T] } = ( W_t )_{ t \in [0,T] } $, 
	$ B = B $,
	$ P_I = P_I $, 
	$ \hat P_{ \mathbb{U} } = \operatorname{Id}_U $,
	$ \chi^{\theta, I, \mathbb{U} } = ( [0,T] \times \Omega \ni (t, \omega) 
	\mapsto 
	\1_{  D_{ |\theta|_T  }^I  }
	\!
	( \X^{\theta, I }_t ( \omega ) ) \in [0,1] ) $, 
	$ \O^{ \theta, I, \mathbb{U}  } = \O^{ \theta, I } $,
	$ p = 4 p $, 
	$ \gamma = \delta $,
	$ \rho = \gamma - \delta $
	for 
	$ \theta \in \varpi_T $,
	$ I \in \mathcal{P}_0(\H) $
	in the setting of~\cite[Corollary~3.2]{JentzenLindnerPusnik2017b})
	yield that there 
	exists $ \mathfrak{C} \in \R $
	which satisfies 
	that for all
	$ \theta \in \varpi_T $
	we have
	that
	\begin{equation}
	\begin{split}
	\label{eq:no3}
	&
	\sup\nolimits_{ I \in \mathcal{P}_0(\H) }
	\sup\nolimits_{ s \in [0,T] } 
	\| \O_s^{\theta, I} - \O_{ \llcorner s \lrcorner_\theta }^{\theta, I} 
	\|_{\L^{4p}( \P; H_\delta ) } 
	\leq 
		\mathfrak{C}
		\,
		[| \theta |_T]^{ \gamma - \delta } 
		.
	\end{split}
	\end{equation}
	Moreover, note that 
	the fact that $ \gamma < \nicefrac{1}{2} + \beta $
	ensures that
	there exists an
	$ ( \f_t )_{ t \in [0,T] } $-adapted 
	stochastic process
	$ O \colon [0,T] \times \Omega \to H_\gamma $
	w.c.s.p.\ 
	which satisfies for all 
	$ t \in [0,T] $ that
	\begin{equation} 
	\label{eq:involve O}
	[ 
	O_t
	]_{\P, \B(H_\gamma)} = \int_0^t e^{ (t-s) A } B \, dW_s 
	\end{equation} 
		(cf., e.g., \cite[Lemma~5.5]{JentzenLindnerPusnik2017c}).
	Next let 
	$ O^I  \colon [0,T] 
	\times \Omega \to P_I(H) $, 
	$ I \in \mathcal{P}_0(\H) $, 
	be stochastic processes 
	which satisfy for all 
	$ I \in \mathcal{P}_0(\H) $, 
	$ t \in [0,T] $ 
	that
	\begin{equation} 
	\label{eq:some equality}
	O_t^I = P_I O_t
	.
	\end{equation} 
	Observe that~\eqref{eq:some equality}
	and 
	H\"older's inequality give
	that for all 
	$ \theta \in \varpi_T $,
	$ I, J \in \mathcal{P}_0(H) $,
	$ s \in [0,T] $
	we have that
	\begin{equation}
	\begin{split}
	\label{eq:Exp0}
	& 
	\E\big[
	\exp\!
	\big(  
	\rho
	\|  
	\X_s^{\theta, J}
	-
	\O_s^{\theta, J}
	+
	O_s^I
	+
	e^{sA} P_{ I \backslash J } \xi 
	\|_H^2   
	\big)
	\big]  
	\\
	&
	\leq
	\E\big[
	\exp\!
	\big(
	4  
	\rho
	(
		\|  
	\X_s^{\theta, J}
	\|_H^2
	+
	\|
	\O_s^{\theta, J}
	\|_H^2
	+
	\|
	O_s^I
	\|_H^2 
	+
	\|
	\xi  
	\|_H^2    
	)
	\big)
	\big] 
	\\
	&
	\leq
	\big[
	\E [
	\exp
	 (
	16
	\rho 
	\|  
	\X_s^{\theta, J}
	\|_H^2 
	 )
	 ] 
 	\,
	\E [
	\exp
	 (
	16
	\rho
	\|
	\O_s^{\theta, J}
	\|_H^2 
	 )
	 ]
 	\,
	\E [
	\exp
	 (
	16
	\rho
	\|
	O_s^I
	\|_H^2 
	 )
	 ]  
 	\,
	\E [
	\exp
	 (
	16
	\rho
	\|
	\xi
	\|_H^2 
	 )
	 ]
	\big]^{\nicefrac{1}{4}} 
	.
	\end{split}
	\end{equation}
	Moreover, note that
	the assumption that
	for all    
	$ I \in \mathcal{P}_0(\H) $,
	$ x \in P_I( H ) $ 
	we have that   
	\begin{equation} 
	\label{eq:contintion on D}
	\| B \|_{\HS(U, H )} + \epsilon \| x \|_H^2 \leq r( x ) \leq 
	C ( 1 + \| x \|_{ H_\nu }^2 )
	\end{equation} 
	and the assumption that for all
		$ I \in \mathcal{P}_0(\H) $,
	$ h \in (0,T] $
	we have that 
	\begin{equation} 
	\label{eq:AssIndicator}
	 D_h^I = \{ v \in P_I( H ) \colon r(v) \leq  \upnu  h^{- \varsigma }  \} 
	 \end{equation}
	ensure that
	for all
	$ I \in \mathcal{P}_0( \H ) $,
	$ h \in (0,T] $
	we have that 
	\begin{equation} 
	\label{eq:AssIndicator3}
	\{ v \in P_I(H) \colon 
	C ( 1 + \| v \|_{ H_\nu }^2 ) \leq \upnu h^{-\varsigma} \}
	\subseteq 
	D_h^I \subseteq \{ v \in P_I( H ) \colon  \| B \|_{\HS(U, H )} +  \epsilon \|  v \|_H^2 \leq  \upnu  h^{- \varsigma }  \}
	.
	\end{equation} 
	Combining 
	this, 
the assumption that
for all 
$ I \in \mathcal{P}_0(\H) $, 
$ h \in (0,T] $, 
$ x \in D_h^I $
we have that
\begin{equation} 
\label{eq:AssIndicator2} 
\max\{\| P_I F(x) \|_H,
\| B \|_{\HS(U, H  )}
\} 
\leq \upnu h^{-\varsigma},
\end{equation} 
	the assumption that $ \E[ \exp ( \epsilon \| \xi \|_H^2 ) ] < \infty $,
	the fact that $ \xi \in \M( \f_0, \B(H_\gamma) ) $,
 	the assumption that
 	for all
 	$ I \in \mathcal{P}_0(\H) $, 
 	$ x \in P_I(H) $
 	we have that
 	\begin{equation}
 	\label{eq:coercivity use} 
 	\langle x, F(x) \rangle_H \leq \aa ( 1+  \| x \|_H^2 ),
 	\end{equation} 
	the fact that
	$ 16 \rho \leq \epsilon 
	\exp( - 2 ( \aa + \| B \|_{\HS(U, H )}^2 ) T ) $,
	\eqref{eq:less assumptions},
	and 
	Corollary~\ref{Corollary:full_discrete_scheme_convergence} 
	(applies with
	$ T = T $, 
	$ a = \aa $,
	$ b = \aa $,
		$ \upnu = \upnu $,
	$ \varsigma = \varsigma $, 
	$ \epsilon = \epsilon $,
	$ \beta = \beta $,
	$ \gamma = \gamma $,
	$ B = B $,
		$ F = F $,
		$ D_h^I = D_h^I $,
	$ P_I = P_I $,	
	$ ( \Omega, \F, \P ) = ( \Omega, \F , \P ) $,
	$ ( \f_t )_{ t \in [0,T] } = ( \f_t )_{ t \in [0,T] } $,
	$ ( W_t )_{ t \in [0,T] } = ( W_t )_{ t \in [0,T] } $,
	$ \xi = \xi $,
	$ \y^{ \theta, I } = \X^{ \theta, I } $
	for
	$ \theta \in \varpi_T $,
	$ I \in \mathcal{P}_0(\H) $, 
	$ h \in (0,T] $
	in the setting of Corollary~\ref{Corollary:full_discrete_scheme_convergence}) 
	verifies that
	\begin{equation}
	\begin{split}
	\label{eq:Exp1}
		&
		\sup\nolimits_{\theta \in \varpi_T}
		\sup\nolimits_{J \in \mathcal{P}_0(H) }
		\sup\nolimits_{s \in [0,T]} 
		\E [
		\exp
		 (
		16 \rho
		\|  
		\X_s^{\theta, J} 
		\|_H^2   
		 )
		 ] 
		< 
		\infty 
		.
	\end{split}
	\end{equation}
	In addition,
	note that
	the fact that
	$ 16 \rho <
	\nicefrac{1}{ ( 8 \max \{ \| B \|_{\HS(U, H )}^2, 1 \} \max \{ T, 1 \} )^2 } $,
	\eqref{eq:SPlit1},
	and~\cite[Corollary~3.4]{JentzenLindnerPusnik2017b}
	(applies with 
	$ H = H $,
	$ U = U $,
	$ \H = \H $,
	$ \values = \values $,
	$ A = A $,
	$ \beta = \beta $,
	$ T = T $,
	$ ( \Omega, \F, \P, ( \f_t )_{ t \in [0,T] } ) = ( \Omega, \F, \P, ( \f_t )_{ t \in [0,T] } ) $,
	$ ( W_t )_{ t \in [0,T] } = ( W_t )_{ t \in [0,T] } $, 
	$ B = B $,
	$ P_I = P_I $, 
	$ \hat P_{ \mathbb{U} } = \operatorname{Id}_U $,
	$ \chi^{\theta, I, \mathbb{U}} = ( [0,T] \times \Omega \ni (t, \omega) 
	\mapsto 
	\1_{  D_{ |\theta|_T  }^I  }
	\!
	( \X^{\theta, I }_t ( \omega ) ) \in [0,1] ) $, 
	$ \O^{ \theta, I, \mathbb{U} } = \O^{ \theta, I } $,
	$ \varepsilon = 16 \rho $
	for  
		$ \theta \in \varpi_T $,
	$ I \in \mathcal{P}_0(\H) $
	in the setting of~\cite[Corollary~3.4]{JentzenLindnerPusnik2017b}) 
	assure that
	\begin{equation}
	\begin{split}
	\label{eq:Exp2}
	&
	\sup\nolimits_{\theta \in \varpi_T}
	\sup\nolimits_{J \in \mathcal{P}_0(H) }
	\sup\nolimits_{s \in [0,T]} 
	\E [
	\exp
	 (
     16 \rho
	\|  
	\O_s^{\theta, J} 
	\|_H^2   
	 )
	 ] 
	< 
	\infty 
	.
	\end{split}
	\end{equation}
	Furthermore, note that
	Lemma~\ref{lemma:chi2_estimate}
	(applies with
	$ T = T $,
	$ B = ( U \ni u \mapsto 4 \sqrt{ \rho } Bu \in H ) $,
	$ P_I = P_I $,
	$ ( \Omega, \F, \P ) = ( \Omega, \F, \P ) $,
	$ ( W_t )_{ t \in [0,T] } = ( W_t )_{ t \in [0,T] } $
	for
	$ I \in \mathcal{P}_0(\H) $
	in the setting of Lemma~\ref{lemma:chi2_estimate})
	yields that 
	for all  
	$ I \in \mathcal{P}_0(\H) $,
	$ s \in [0, T] $
	with
	$ 32 \rho s \| B \|_{\HS(U, H)}^2 < 1 $
	we have that
	\begin{equation} 
	\label{eq:ExpMomEst}
	\E [
	\exp
	 (
	16  
	\rho
	\|  
	O_s^{I} 
	\|_H^2   
	 )
	 ]  
	\leq 
	\tfrac{2}{ 1 
		- 
		1024 \rho^2 s^2 
		\| B\|_{\HS(U, H)}^4 
	}
	.
	\end{equation}
	\sloppy 
	Next observe that the fact that 
	for all
	$ x \in [0, \infty) $
	we have that
	$ x < 2 e^x $
	gives that
	$ 4 T \| B \|_{\HS(U, H )}^2  < 2 e^{ 4 T \| B \|_{\HS(U, H )}^2 } $.
	This yields that
	$ 2 T \| B \|_{\HS(U, H )}^2   e^{ - 4 T \| B \|_{\HS(U, H )}^2 } 
	< 
	1 $.
	Therefore, we obtain that
	\begin{equation} 
	\begin{split}
	32 \rho T \| B \|_{\HS(U,H)}^2 
	&
	\leq     
	\tfrac{ 32 \epsilon 	T \| B \|_{\HS(U,H)}^2  }{ 16 } 
	e^{ - 2 T ( \aa + \| B \|_{\HS(U, H )}^2 )  }
	\leq 
	2 
	T
	\| B \|_{\HS(U,H)}^2
	e^{ - 4 T ( \aa + \| B \|_{\HS(U, H )}^2 )  } 
	\\
	&
	\leq 
	2
	T \| B \|_{\HS(U,H)}^2
	e^{ - 4  T \| B \|_{\HS(U, H )}^2   }
	< 
	1
	.
	\end{split}
	\end{equation} 
	This and~\eqref{eq:ExpMomEst}
	give that
	\begin{equation} 
	\label{eq:Exp3}
	\sup\nolimits_{I \in \mathcal{P}_0(H) } 
	\sup\nolimits_{ s \in [0,T]}
	\E [
	\exp
	 (
	16  
	\rho
	\|  
	O_s^{I} 
	\|_H^2   
	 )
	 ] 
	< \infty
	.
	\end{equation}
	Combining
	this, 
	\eqref{eq:Exp0},
	\eqref{eq:Exp1},
	\eqref{eq:Exp2}, 
	the assumption that
	$ \E[ \exp ( \epsilon \| \xi \|_H^2 ) ] < \infty $,
	and the fact that
	$ 16 \rho < \epsilon $
	illustrates that
	\begin{equation}
	\begin{split}
	\label{eq:no5} 
	&
	\sup\nolimits_{ I, J \in \mathcal{P}_0(\H) }
	\sup\nolimits_{ \theta \in \varpi_T }
	\sup\nolimits_{ s \in [0,T] } 
	\E\big[
	\exp\!
	\big(  
	\rho
	\|  
	\X_s^{\theta, J}
	-
	\O_s^{\theta, J}
	+
	O_s^I
	+
	e^{ s A } P_{ I \backslash J } \xi
	\|_H^2   
	\big)
	\big]  
	< \infty
	.
	\end{split}
	\end{equation}
	Next observe that
	the fact that
	$ \xi \in \L^{ 32 p c \max \{  ( \gamma - \delta ) / \varsigma, 1 \} }(\P; H_{ \max \{ \eta_2, \sigma, \nu, \gamma \} } ) $, 
	the fact that
	\begin{equation}
	\label{eq:Addition2}
	\Big[
	\sup\nolimits_{ v \in H_{ \max \{ \gamma, \eta_2 \} }  }
	\tfrac{ \| F(v) \|_{H  } }
	{ 1 + \| v \|_{H_{ \eta_2 } }^2 }
	\Big] 
	+
	\Big[
	\sup\nolimits_{ v \in H_{ \max \{ \gamma, \eta_1 \} }  }
	\tfrac{ \| F(v) \|_{H_{ - \alpha_2 } } }
	{ 1 +  \| v \|_{ H_{\eta_1} }^2 }
	\Big]
	+
	\Big[
	\sup\nolimits_{ v \in H_{\gamma} } 
	\tfrac{ \| F(v) \|_{H_{-\alpha_1} } }{ 1 + \| v \|_H^2 } 
	\Big]
	< \infty,
	\end{equation}
		the assumption that $ \E[ \exp ( \epsilon \| \xi \|_H^2 ) ] < \infty $,
\eqref{eq:less assumptions}, 
	\eqref{eq:AssIndicator3}--\eqref{eq:coercivity use},
 the fact that
 $ \epsilon \leq 
 \exp( -2(\aa + \| B \|_{\HS(U, H )}^2 )T ) $,
	and 
	Corollary~\ref{corollary:AprioriNumApp}
	(applies with 
	$ T = T $,
	$ a = \aa $,
	$ b = \aa $,
	$ \upnu = \upnu $,
	$ \varsigma = \varsigma $,
	$ \epsilon = \epsilon $,
	$ \beta = \beta $,
	$ \gamma = \gamma $,
	$ B = B $, 
	$ F = F $,
	$ D_h^I = D_h^I $, 
	$ P_I = P_I $,
	$ ( \Omega, \F, \P ) = ( \Omega, \F, \P ) $,
	$ ( \f_t )_{ t \in [0,T] } = ( \f_t )_{ t \in [0,T] } $,
	$ ( W_t )_{ t \in [0,T] } = ( W_t )_{ t \in [0,T] } $,
	$ \xi = \xi $,
	$ \y^{\theta, I} = \X^{\theta, I} $,
	$ p = 8 p  c 
	\max \{ \nicefrac{ ( \gamma - \delta ) }{ \varsigma }, 1 \} $,
	$ \eta_1 = \eta_1 $,
	$ \eta_2 = \eta_2 $,
	$ \iota = \max \{ \eta_2, \sigma, \nu, \gamma \} $, 
	$ \alpha_1 = \alpha_1 $,
	$ \alpha_2 = \alpha_2 $
	for   
	$ h \in (0,T] $,
	$ \theta \in \varpi_T $,
	$ I \in \mathcal{P}_0(\H) $
	in the setting of 
	Corollary~\ref{corollary:AprioriNumApp}) 
	verify that 
	\begin{equation}
	\label{eq:AssSatisfied000}
	\sup\nolimits_{\theta \in \varpi_T}
	\sup\nolimits_{I\in \mathcal{P}_0(\H)} 
	\sup\nolimits_{ t \in [0,T] }  
	\| \X_t^{\theta, I} \|_{\L^{8 p c \max \{ 
			 ( \gamma - \delta ) / \varsigma, 1 \} }( \P; H_{ \max \{ \eta_2, \sigma, \nu, \gamma \} } ) }  
	< \infty 
	.
	\end{equation} 
	Combining this 
	and the fact that
	$
	\sup\nolimits_{ I \in \mathcal{P}_0(\H) } 
	\sup\nolimits_{ v \in P_I(H) }
	\big( 
	\nicefrac{   ( \| P_I F(v) \|_{H_{ \gamma - \delta }} )  }{
		( 1 + \| v \|_{H_{\sigma}}^2 )	
	}
\big)  
< \infty 
$
	yields that
	\begin{equation}
	\begin{split} 
	&
	\sup\nolimits_{ \theta \in \varpi_T }
	\sup\nolimits_{ I \in \mathcal{P}_0(\H) }
	\sup\nolimits_{ t \in [0,T] } 
\| P_I F ( \X_t^{\theta, I } ) \|_{\L^{4p c \max \{  
		 ( \gamma - \delta ) / \varsigma, 1 \} }( \P; H_{ \gamma-\delta } ) }
	\\
	&
	\leq
	\bigg[
	\sup_{ I \in \mathcal{P}_0(\H) } 
\sup_{ v \in P_I(H) }
	\tfrac{ \| P_I F(v) \|_{H_{ \gamma-\delta }} }{
		1 + \| v \|_{H_{ \sigma }}^2	
	} 
	\bigg]
	\bigg[
	1
	+
	\sup_{ \theta \in \varpi_T }
	\sup_{ I \in \mathcal{P}_0(\H) }
	\sup_{ t \in [0,T] }
	\| \X_t^{\theta,I} \|_{\L^{8 p c \max \{ 
			 ( \gamma - \delta ) / \varsigma, 1 \} }( \P; H_{ \sigma } ) }^2
	\bigg] 
	< \infty
	.
	\end{split}
	\end{equation}
	This and~\eqref{eq:AssSatisfied000} assure that
	\begin{equation}
	\label{eq:AssSatisfied}
	\sup\nolimits_{\theta \in \varpi_T}
	\sup\nolimits_{I\in \mathcal{P}_0(\H)} 
	\sup\nolimits_{ t \in [0,T] } 
	\big[
	\| P_I F ( \X_t^{\theta, I } ) \|_{\L^{4 p}( \P; H_{ \gamma-\delta } ) }
	+
	\| P_I 
	F ( \X_t^{\theta, I } ) \|_{\L^{2 p}( \P; H_{  \gamma - \delta } ) }
	\big]
	< \infty 
	\end{equation} 
	and
	\begin{equation}
	\label{eq:Impo}
	\sup\nolimits_{ I \in \mathcal{P}_0(\H) }
	\sup\nolimits_{ \theta \in \varpi_T }
	\sup\nolimits_{ t \in [0,T] }
	\big[
	\|  
	\X_t^{\theta, I}
	\|_{ \L^{  4 p c } (\P; H_\kappa) }
	+
	\|  
	 \X_t^{\theta, I}
	\|_{ \L^{  2 \max \{ 
			4 p ( \gamma - \delta ) / \varsigma, 1 \}  } (\P; H_\nu) }
	\big]
	< \infty
	.
	\end{equation}
	In addition, note that~\eqref{eq:contintion on D}
	and~\eqref{eq:Impo}
	verify that
	\begin{equation} 
	\label{eq:R_integrable}
	\sup\nolimits_{ I \in \mathcal{P}_0(\H) }
	\sup\nolimits_{ \theta \in \varpi_T }
	\sup\nolimits_{ t \in [0,T] } 
	\big[ 
	\|  
	\X_t^{\theta, I}
	\|_{ \L^{  4 p c } (\P; H_\kappa) }
	+
	\|  
	r( \X_t^{\theta, I} )
	\|_{ \L^{  
			 4 p ( \gamma - \delta ) / \varsigma  } (\P; \R) }
		 \big] 
	< \infty
	.
	\end{equation}
		Moreover, 
		observe that~\eqref{eq:AssIndicator3}
	and Markov's inequality
	ensure
	that for all 
	$ \theta \in \varpi_T $,
	$ I \in \mathcal{P}_0(\H) $,
	$ h \in (0,T] $,
	$ t \in [0,T] $
	we have that 
	\begin{equation}
	\begin{split}
	\label{eq:FastGrowth}
	&
	\big\| 
	1
	-
	\1_{  D_h^I  } 
	( \X^{\theta, I }_{\llcorner t \lrcorner_\theta } )
	\big\|_{ \L^{ 4 p c }( \P; \R ) }
	=
	\big\|
	\1_{  P_I(H) \backslash D_h^I  } 
	( \X^{\theta, I }_{\llcorner t \lrcorner_\theta } )
	\big\|_{ \L^{ 4 p c }( \P; \R ) }
	\leq 
	\big\|
	\1_{  
		\{
		C ( 1 + \| \X^{\theta, I }_{\llcorner t \lrcorner_\theta} \|_{ H_\nu }^2 ) > \upnu h^{ - \varsigma } \} %
	} 
	\big\|_{ \L^{ 4 p c }( \P; \R ) }
	\\
	&
	\leq 
	\big[
	\P\big(
	| C ( 1 + \| \X^{\theta, I }_{\llcorner t \lrcorner_\theta} \|_{ H_\nu }^2 ) |^{  \nicefrac{ 4 p c ( \gamma - \delta ) }{ \varsigma } } 
	>
	( \upnu h^{ - \varsigma } )^{ \nicefrac{ 4 p c ( \gamma - \delta ) }{ \varsigma } }
	\big)
	\big]^{ \nicefrac{1}{ ( 4 p c ) } }
	\\
	&
	\leq
	( \upnu h^{ - \varsigma } )^{- \nicefrac{ ( \gamma - \delta ) }{ \varsigma } }
	\big(
	\E\big[
	| C ( 1 + \| \X^{\theta, I }_{\llcorner t \lrcorner_\theta} \|_{ H_\nu }^2 ) |^{  \nicefrac{ 4 p c ( \gamma - \delta ) }{ \varsigma } }
	\big]
	\big)^{ \nicefrac{ 1 }{ ( 4 p c ) } }
	\\
	&
	=
	| \upnu |^{ - \nicefrac{ ( \gamma - \delta ) }{ \varsigma } } h^{  \gamma - \delta }
	C^{ \nicefrac{ ( \gamma - \delta ) }{ \varsigma } }
	\big\|
	1 + \| \X^{\theta, I }_{\llcorner t \lrcorner_\theta} \|_{ H_\nu }^2   
	\big\|_{ \L^{   
			4 p c ( \gamma - \delta ) / \varsigma } ( \P, \R ) }^{ \nicefrac{ ( \gamma - \delta ) }{ \varsigma } }
	\\
	&
	\leq
	| \upnu |^{ - \nicefrac{ ( \gamma - \delta ) }{ \varsigma } } h^{  \gamma - \delta }
	C^{ \nicefrac{ ( \gamma - \delta ) }{ \varsigma } }
	\big(
	1 + \| \X^{\theta, I }_{\llcorner t \lrcorner_\theta} 
	\|_{ \L^{ 2
			\max \{
			 4 p c  ( \gamma - \delta )
			/ \varsigma ,
			1 
			\}  	 
		} ( \P, H_\nu ) }^{ 2 } 
	\big)^{ \nicefrac{ ( \gamma - \delta ) }{ \varsigma } }
	.
	\end{split}
	\end{equation}
	Combining
	this
	and~\eqref{eq:Impo}
	illustrates that
	there exists
	$ \mathbf{C} \in [1, \infty) $
	which satisfies that
	for all 
	$ \theta \in \varpi_T $,
	$ I \in \mathcal{P}_0(\H) $,
	$ t \in [0,T] $
	we have that
 \begin{equation} 
 \big\| 
	1
	-
	\1_{  D_{ | \theta |_T }^I  } 
	( \X^{\theta, I }_{\llcorner t \lrcorner_\theta } )
	\big\|_{ \L^{ 4 p c }( \P; \R ) } 
	\leq \mathbf{C} \, [ | \theta |_T ]^{ \gamma - \delta } 
	.
	\end{equation} 
	This, \eqref{eq:SPlit1},
	and~\cite[Corollary~3.3]{JentzenLindnerPusnik2017b}
	(applies with
	$ H = H $,
	$ U = U $,
	$ \H = \H $,
	$ \values = \values $,
	$ A = A $,
	$ \beta = \beta $,
	$ T = T $,
	$ ( \Omega, \F, \P, ( \f_t )_{ t \in [0,T] } ) = ( \Omega, \F, \P, ( \f_t )_{ t \in [0,T] } ) $,
	$ ( W_t )_{ t \in [0,T] } = ( W_t )_{ t \in [0,T] } $,  
	$ B = B $,
	$ P_I = P_I $,
	$ \hat P_{ \mathbb{U} } = \operatorname{Id}_U $,
	$ \chi^{\theta, I, \mathbb{U}} = ( [0,T] \times \Omega \ni (t, \omega) 
	\mapsto 
	\1_{  D_{ |\theta|_T  }^I  }
	\!
	( \X^{\theta, I }_t ( \omega ) ) \in [0,1] ) $, 
	$ \O^{ \theta, I, \mathbb{U} } = \O^{ \theta, I } $,
	$ p = 4 p c $, 
	$ C = \mathbf{C} $, 
	$ \gamma = \max \{ \delta, \kappa \} $,
	$ \eta = \gamma - \delta $,
	$ \rho = \gamma - \delta $, 
	$ O = O $
	for
	$ \theta \in \varpi_T $,
	$ I \in \mathcal{P}_0(\H) $  
	in the setting of~\cite[Corollary~3.3]{JentzenLindnerPusnik2017b}) 
	illustrate that
	there exists 
	$ \mathscr{C} \in \R $
	which satisfies that
	for all 
	$ I, J \in \mathcal{P}_0(\H) $
	with $ I \subseteq J $ we have that
	\begin{equation}
	\begin{split}
	\label{eq:no6}
	&
	\sup\nolimits_{s \in [0,T]} 
	\| \O_s^{\theta, I} - O_s^{J} \|_{\L^{ 4 p c }(\P; H_{ \max \{ \delta, \kappa \} } )}  
	\leq
	\mathscr{C} 
	\big(
	\| P_{ \H \backslash I } ( -A)^{\delta - \gamma} \|_{L(H)}
	+
	[ |\theta|_T ]^{ \gamma - \delta }
	\big) 
	.
	\end{split}
	\end{equation}
Moreover, observe that~\eqref{eq:Identity_Clarity}
and
the fact that 
$ ( \X_t^{\theta, I})_{ t \in [0,T] } $,
$ \theta \in \varpi_T $,
$ I \in \mathcal{P}_0(\H) $,
are 
$ (\f_t)_{ t \in [0,T] } $-adapted
stochastic processes 
w.c.s.p.\
ensure that
$ (\O_t^{\theta, I})_{ t \in [0,T] } $,
$ \theta \in \varpi_T $,
$ I \in \mathcal{P}_0(\H) $,
are 
$ (\f_t)_{ t \in [0,T] } $-adapted
stochastic processes 
w.c.s.p.
	This,
	the assumption that
	for all 
	$ I \in \mathcal{P}_0(\H) $,
	$ x, y \in P_I( H ) $ 
	we have that   
	$ \langle F'(x) y, y \rangle_H  \leq 
	( \varepsilon \| x \|_{H_{\nicefrac{1}{2}}}^2 
	+ C  ) \| y \|_H^2 
	+ \|y \|_{H_{\nicefrac{1}{2}}}^2 $,
	$ \| P_I( F( x ) - F( y ) ) \|_H
	\leq
	C \| x - y \|_{H_\delta} 
	( 1 + \| x \|_{H_\kappa}^c  + \| y \|_{H_\kappa}^c ) $,
	and
	$ \langle x, A x + F(x+y) \rangle_H
	\leq
	\Phi(y) ( 1 + \| x \|_H^2 ) $,   
	the fact that
	$  \varepsilon 
	\leq 
	\frac{ \rho }{ p } 
	\exp( - 2 ( \aa + \| B \|_{\HS(U, H )}^2 ) T ) $,
	the fact that
	$ \xi \in \L^{ 4 p \max \{c,2\} }( \P |_{\f_0}; H_{ \max \{\gamma, \eta_2 \} } ) $,
	the fact that 
	$ \E \big[ \| \xi\|_H^{ 16 p } ] < \infty $,
	\eqref{eq:less assumptions}, 
	\eqref{eq:Identity_Clarity},
	\eqref{eq:no3},
	\eqref{eq:involve O},
	\eqref{eq:AssIndicator},
	\eqref{eq:coercivity use},
	\eqref{eq:no5}, 
	\eqref{eq:Addition2},
	\eqref{eq:AssSatisfied},
	\eqref{eq:R_integrable}, 
	\eqref{eq:no6},
	and
	Proposition~\ref{proposition:Exact_to_numeric_general}
	(applies with
	$ T = T $,
	$ \upnu = \upnu $,
	$ \varsigma = \varsigma $,
	$ \alpha = \gamma - \delta $, 
	$ \aa = \aa $,
	$ \iota = \gamma - \delta $,
	$ \rho = \rho $, 
	$ C = \max \{ C, \mathfrak{C}, \mathscr{C} \} $, 
	$ c = c $,
	$ p = p $,
	$ \beta = \beta $,
	$ \gamma = \gamma $, 
    $ \delta = \delta $,
	$ \kappa = \kappa $,
	$ \eta_1 = \eta_1 $,
	$ \eta_2 = \eta_2 $,
	$ \alpha_1 = \alpha_1 $,
	$ \alpha_2 = \alpha_2 $,
$ B = B $, 
	$ \varepsilon = \varepsilon $,
		$ F = F $,
	$ r = r $,
		$ D_h^I = D_h^I $,
	$ \Phi = \Phi $,
	$ P_I = P_I $,
	$ ( \Omega, \F, \P)
	=
	( \Omega, \F, \P ) $,
	$ ( \f_t )_{ t \in [0,T] }  = ( \f_t )_{ t \in [0,T] }  $,
	$ ( W_t )_{ t \in [0,T] } 
	=
	( W_t )_{ t \in [0,T] } $,
	$ \xi = \xi $,
	$ X = X $,
	$ O = O $,
	$ \y^{\theta, I} = \X^{ \theta, I} $,
	$ \O^{ \theta, I } = \O^{ \theta, I } $
	for
	$ \theta \in \varpi_T $,
	$ I \in \mathcal{P}_0(\H) $,
	$ h \in (0,T] $ 
	in the setting of
	Proposition~\ref{proposition:Exact_to_numeric_general})
	therefore justify~\eqref{eq:Main convergence estimate}.
	The proof of Theorem~\ref{theorem:Exact_to_numeric}
	is hereby completed.
\end{proof}
\begin{corollary} 
	\label{corollary:Exact_to_numeric} 
	\sloppy 
	Assume Setting~\ref{setting:main},
	let  
 	$ T \in (0,\infty) $,
 		$ \varsigma \in (0, \nicefrac{1}{18} ) $, 
$ \aa \in [0, \infty) $,
	$ C, c, p \in [1,\infty) $,
	$ ( C_\varepsilon )_{ \varepsilon \in (0, \infty) } \subseteq [0, \infty) $,
$ \beta \in [0, \nicefrac{1}{2}) $,
$ \gamma \in [ 2 \beta, \nicefrac{1}{2} + \beta ) \cap (0,\infty) $,
	$ \delta \in ( \gamma - \nicefrac{1}{2}, \gamma) \cap [0, \infty) $, 
	$ \kappa \in [0, \gamma] 
	\cap [0, \nicefrac{1}{2} + \beta - \gamma + \delta  ) $,
	$ \eta_0 = 0 $,
	$ \sigma, \nu, \eta_1 \in [0, \nicefrac{1}{2} + \beta ) $, 
	$ \eta_2 \in [\eta_1, \nicefrac{1}{2} + \beta ) $,
	$ \alpha_1 \in [0, 1 - \eta_1 ) $, 
	$ \alpha_2 \in [0, 1 - \eta_2 ) $, 
	$ \alpha_3 = 0 $, 
	$ B \in  \HS(U, H_\beta) $, 
	$ F \in \mathcal{C}^1(  H_\gamma, H ) $,
	let
	$ \Phi \colon H \to [0, \infty) $
	be a function,
    let $ ( P_I )_{ I \in \mathcal{P}(\H) } \subseteq L(H) $ 
    satisfy
for all 
	$ I \in \mathcal{P}(\H) $,
	$ x \in H $
	that
	$ P_I(x)
	= \sum_{h \in I} \langle h, x \rangle_H h $,
	assume for all 
	$ I \in \mathcal{P}(\H) $ 
	that 
	$ ( P_I(H) \ni v \mapsto \Phi(v) \in [0, \infty) )
	\in 
	\mathcal{C}( P_I(H), [0, \infty) ) $,
	assume for all    
	$ I \in \mathcal{P}_0(\H) $,
	$ x, y \in P_I( H ) $, 
	$ \varepsilon \in (0, \infty) $
	that   
		$ \langle x, F(x) \rangle_H \leq \aa ( 1 + \| x \|_H^2 ) $,
	$ \langle F'(x) y, y \rangle_H  \leq 
	( \varepsilon \| x \|_{H_{\nicefrac{1}{2}}}^2 
	+ C_\varepsilon ) \| y \|_H^2 
	+ \|y \|_{H_{\nicefrac{1}{2}}}^2 $,
	$ \| F( x ) - F( y ) \|_H
	\leq
	C \| x - y \|_{H_\delta} 
	( 1 + \| x \|_{H_\kappa}^c  + \| y \|_{H_\kappa}^c ) $,
	$ \langle x, A x + F(x+y) \rangle_H
	\leq
	\Phi(y) ( 1 + \| x \|_H^2 ) $,   
	and
		\begin{equation}
	\label{eq:Addition0}
	\bigg[
	\sup\nolimits_{ J \in \mathcal{P}_0(\H) } 
	\sup\nolimits_{ v \in P_J(H) }
	\Big\{
	\tfrac{ \| P_J F(v) \|_H }{
		1 + \| v \|_{H_\nu}^2	
	} 
 + 
  \tfrac{ \| P_J F(v) \|_{H_{ \gamma - \delta }}  }{
		1 + \| v \|_{H_\sigma}^2	
	} 
\Big\} 
	\bigg]
	+
	\sum_{i=0}^2
	\Big[
	\sup\nolimits_{ v \in H_{ \max \{ \gamma, \eta_i \} } } 
	\tfrac{ \| F(v) \|_{H_{ - \alpha_{i+1} } } }{ 1 + \| v \|_{H_{\eta_i}}^2 } 
	\Big]
	<
	\infty
	,
	\end{equation}
	let
	$ ( \Omega, \F, \P ) $
	be a probability space with a normal filtration
	$ ( \f_t )_{t \in [0,T]} $,
	let $ (W_t)_{t\in [0,T]} $
	be an $ \operatorname{Id}_U $-cylindrical  
	$ ( \f_t )_{t\in [0,T]} $-Wiener process, 
	let 
	$ \xi \in \L^{ 32 p c \max \{  ( \gamma - \delta ) / \varsigma, 1 \} }(\P|_{ \f_0 }; H_{ \max \{ \eta_2, \sigma, \nu, \gamma \} } ) $
	satisfy 
	$ \inf_{ \epsilon \in (0,\infty) } \E[ \exp ( \epsilon \| \xi \|_H^2 ) ] < \infty $,
	let 
	$ X \colon [0,T] \times \Omega \to H_\gamma $
	be an 
	$ (\f_t )_{t\in [0,T]} $-adapted
	stochastic process 
	w.c.s.p.\ 
	which satisfies for all $ t \in [0,T] $ that
\begin{equation}
[ X_t ]_{\P, \B(H_\gamma)} 
= 
\bigg[
e^{tA} \xi 
+
\int_0^t e^{(t-s)A} F(X_s) \, ds
\bigg]_{\P, \B(H_\gamma)} 
+
\int_0^t e^{(t-s)A} B \, dW_s,
\end{equation}
	and
	let
	$ \y^{\theta, I } \colon 
	[0,T] \times \Omega \to P_I( H ) $,
	$ \theta \in \varpi_T $,
	$ I \in \mathcal{P}_0(\H) $, 
	be 
	$ ( \f_t )_{t\in [0,T]} $-adapted
	stochastic 
	processes 
	which satisfy for all 
	$ \theta \in \varpi_T $,
	$ I \in \mathcal{P}_0(\H) $, 
	$ t \in [0,T] $
	that 
	$ \y_0^{\theta,I}= P_I \xi $
	and
	\begin{align}  
	\label{eq:Scheme2}
	\nonumber  
	[
	\y_t^{\theta, I}  
	]_{\P, \mathcal{B}( P_I(H) ) }
	&=   
	\big[
	e^{(t-\llcorner t \lrcorner_\theta )A}
	\y^{\theta, I }_{
		\llcorner t \lrcorner_\theta 
	}   
	+ 
	\1_{ \{ 1 + \|  \y^{\theta, I }_{\llcorner t \lrcorner_\theta } \|_{H_\nu}^2 \leq [ | \theta |_T ]^{- \varsigma } \} }
	e^{(t-\llcorner t \lrcorner_\theta )A}
	P_I F(
	\y^{\theta, I }_{ \llcorner t \lrcorner_\theta  } 
	) 
	(
	t - \llcorner t \lrcorner_\theta 
	)  
	\big ]_{\P, \mathcal{B}( P_I(H) ) }
	\\
	&
	\quad 
	+
	\frac{
		\int_{ \llcorner t \lrcorner_\theta  }^t 
		\1_{ \{ 1 + \|  \y^{\theta, I }_{\llcorner t \lrcorner_\theta } \|_{H_\nu}^2 \leq [ | \theta |_T ]^{- \varsigma } \} }
		e^{(t- \llcorner t \lrcorner_\theta  )A}
		P_I B 
		\, 
		dW_s
	}{
		1 + 
		\| 
		\int_{ \llcorner t \lrcorner_\theta }^t
		P_I B 
		\, 
		dW_s 
		\|_H^2
	} 
	.
	\end{align}  
	Then there exists 
	$ \mathfrak{c} \in \R $
	such that for all 
	$ \theta \in \varpi_T $, 
	$ I \in \mathcal{P}_0(\H) $
	we have that
	\begin{equation}
	\label{eq:Further simplified convergence estimate}
	\sup\nolimits_{ t \in [0,T] }
	\| X_t - \y^{\theta, I }_t \|_{\L^p(\P; H)}
	\leq
	\mathfrak{c} \big( \| P_{\H \backslash I } ( -A )^{ \delta - \gamma } \|_{L(H)}
	+
	[| \theta |_T]^{ \gamma - \delta  }
	\big)
	.
	\end{equation}  
\end{corollary}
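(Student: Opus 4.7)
The plan is to reduce the statement of Corollary~\ref{corollary:Exact_to_numeric} directly to Theorem~\ref{theorem:Exact_to_numeric} by constructing an admissible truncation function $r$ and truncation sets $D_h^I$ whose associated numerical scheme coincides with the one defined in~\eqref{eq:Scheme2}. The guiding observation is that the indicator $\1_{\{1+\|\cdot\|_{H_\nu}^2\leq [|\theta|_T]^{-\varsigma}\}}$ used in~\eqref{eq:Scheme2} agrees with $\1_{D_{|\theta|_T}^I}$ provided one picks $r(v)=\upnu(1+\|v\|_{H_\nu}^2)$ and sets $D_h^I=\{v\in P_I(H)\colon r(v)\leq \upnu h^{-\varsigma}\}$ for an appropriately large normalization $\upnu\geq 1$.

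First I would use the hypothesis $\inf_{\epsilon>0}\E[\exp(\epsilon\|\xi\|_H^2)]<\infty$ (interpreted as the existence of some $\epsilon_0>0$ with $\E[\exp(\epsilon_0\|\xi\|_H^2)]<\infty$) to select $\epsilon\in(0,\exp(-2(\aa+\|B\|_{\HS(U,H)}^2)T)]$ small enough to satisfy the integrability bound of Theorem~\ref{theorem:Exact_to_numeric}. Let $\mathbf{K}$ be the finite constant provided by the first summand in~\eqref{eq:Addition0}, so that $\|P_J F(v)\|_H\leq \mathbf{K}(1+\|v\|_{H_\nu}^2)$ for all $J\in\mathcal{P}_0(\H)$, $v\in P_J(H)$. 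I would then set
\begin{equation}
\upnu = \max\bigl\{\|B\|_{\HS(U,H)}\max\{T^{\varsigma},1\},\, \mathbf{K},\, \epsilon\|(-A)^{-\nu}\|_{L(H)}^2,\, 1\bigr\}
\end{equation}
and define $r$ and $D_h^I$ as above. From $\|v\|_H^2\leq \|(-A)^{-\nu}\|_{L(H)}^2\|v\|_{H_\nu}^2$ it follows that $\|B\|_{\HS(U,H)}+\epsilon\|v\|_H^2\leq \upnu+\upnu\|v\|_{H_\nu}^2=r(v)\leq \upnu(1+\|v\|_{H_\nu}^2)$; for $v\in D_h^I$ the bounds $\|P_I F(v)\|_H\leq \mathbf{K} h^{-\varsigma}\leq \upnu h^{-\varsigma}$ and $\|B\|_{\HS(U,H)}\leq \upnu T^{-\varsigma}\leq \upnu h^{-\varsigma}$ are immediate. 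The local Lipschitz condition of the corollary implies the projected variant required by the theorem since $\|P_I z\|_H\leq \|z\|_H$.

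Next I would exploit the family of bounds $\langle F'(x)y,y\rangle_H\leq (\varepsilon\|x\|_{H_{1/2}}^2+C_\varepsilon)\|y\|_H^2+\|y\|_{H_{1/2}}^2$ valid for every $\varepsilon\in(0,\infty)$: fixing $\varepsilon$ inside the admissible window prescribed by Theorem~\ref{theorem:Exact_to_numeric} (which depends only on $p$, $\aa$, $\|B\|_{\HS(U,H)}$, $T$ and $\epsilon$), one obtains the corresponding constant $C_\varepsilon$, and then replaces the theorem's constant $C$ by $\widetilde C=\max\{C,C_\varepsilon,\mathbf{K}\}$. With this choice every remaining hypothesis of the theorem—coercivity of $F$, growth/smoothing estimates in~\eqref{eq:Addition0}, integrability of $\xi$, and the measurability/adaptedness of $X$ and $\y^{\theta,I}$—is inherited verbatim from the corollary.

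Finally I would invoke Theorem~\ref{theorem:Exact_to_numeric} with the parameters $(r,\upnu,D_h^I,\epsilon,\varepsilon,\widetilde C)$ constructed above to obtain a constant $\mathfrak{c}\in\R$ yielding~\eqref{eq:Further simplified convergence estimate}. The main obstacle is the interlocking bookkeeping in the choice of $\upnu$, $\epsilon$, and $\varepsilon$: $\upnu$ must simultaneously dominate $\mathbf{K}$, $\|B\|_{\HS(U,H)}T^{\varsigma}$, and $\epsilon\|(-A)^{-\nu}\|_{L(H)}^2$, while $\epsilon$ must be small enough both to preserve finiteness of $\E[\exp(\epsilon\|\xi\|_H^2)]$ and to lie in the admissible window $(0,\exp(-2(\aa+\|B\|_{\HS(U,H)}^2)T)]$, and $\varepsilon$ must thereafter be pushed below the even smaller threshold demanded by the theorem. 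Once these interdependent constants are chosen in the correct order, all remaining verifications are purely algebraic and the conclusion follows directly.
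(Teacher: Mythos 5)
Your proposal follows essentially the same route as the paper's own proof: both reduce Corollary~\ref{corollary:Exact_to_numeric} to Theorem~\ref{theorem:Exact_to_numeric} by setting $r(v)$ proportional to $1+\|v\|_{H_\nu}^2$ and $D_h^I=\{v\in P_I(H)\colon 1+\|v\|_{H_\nu}^2\leq h^{-\varsigma}\}$, choosing $\epsilon$ from the finiteness of $\inf_{\epsilon}\E[\exp(\epsilon\|\xi\|_H^2)]$, fixing $\varepsilon$ in the theorem's admissible window with the corresponding $C_\varepsilon$, and absorbing everything into one master constant (the paper's $\mathbf{C}$). Two small bookkeeping points you gloss over but which the paper handles explicitly: the master constant must also dominate $\upnu$ so that the upper bound $r(x)\leq C(1+\|x\|_{H_\nu}^2)$ holds (your $\widetilde C=\max\{C,C_\varepsilon,\mathbf{K}\}$ need not exceed $\upnu$; enlarging it fixes this), and $r$ must be extended to all of $H_\gamma$ (e.g.\ by $0$ off $H_{\max\{\nu,\gamma\}}$ when $\nu>\gamma$) and verified to be $\mathcal{B}(H_\gamma)$-measurable together with $D_h^I\in\mathcal{B}(H_\gamma)$, for which the paper cites auxiliary measurability lemmas.
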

\begin{proof}[Proof of Corollary~\ref{corollary:Exact_to_numeric}]
	Throughout this proof
	let
 $ D_h^I \in \mathcal{P}(H) $,
 $ h \in (0,T] $, 
 $ I \in \mathcal{P}_0(\H) $,
 be the sets which satisfy
 for all 
  $ I \in \mathcal{P}_0(\H) $,
 $ h\in (0,T] $
 that		
 \begin{equation} 
 \label{eq:need 1}
 D_h^I = \{ v \in P_I(H) \colon 1 + \| v \|_{ H_\nu }^2 \leq h^{-\varsigma} \},
 \end{equation} 
 let
  $ \epsilon \in (0, 
  \exp( - 2 ( \aa + \| B \|_{\HS(U, H )}^2 ) T ) ] $, 
  $ \varepsilon, \mathbf{C} \in (0,\infty) $ satisfy that
\begin{equation} 
\begin{split} 
\label{eq:master C}
\mathbf{C}
&
= 
\max \{ C_\varepsilon, 1 \}
\max \{ \| B \|_{\HS(U, H )}, 1 \}
\max \{ \| (-A)^{-\nu} \|_{L(H) }^2, 1 \}
\\
&
\quad 
+
\max\!
\Big\{ 
\sup\nolimits_{ J \in \mathcal{P}_0(\H) }
\sup\nolimits_{ v \in 
P_J(H) 
}
\tfrac{ \| P_J F(v) \|_H }{
	1 + \| v \|_{H_{\nu}}^2	
},
\| B \|_{\HS(U, H  )}
\Big\},
\end{split}
\end{equation} 
	\begin{equation}
	\label{eq:bound on varepsilon}
	 \varepsilon < \tfrac{ \exp(- 2 ( \aa +  \| B \|_{\HS(U, H )}^2 ) T ) }{ 16 p }
	\min \! \Big \{ \epsilon 
	\exp ( -2( \aa + \| B \|_{\HS(U, H )}^2 ) T ),
	\tfrac{ 1 }{ ( 8 
		\max \{ \| B \|_{\HS(U, H )}^2, 1 \}
		\max \{ T, 1 \}
		)^2
	}
	\Big \}, 
	\end{equation}
and
$ \E[ \exp ( \epsilon \| \xi \|_H^2 ) ] < \infty $,
and
let
$ r \colon H_\gamma \to [0, \infty) $
satisfy 
 for all 
 $ v \in H_\gamma $ 
 that
\begin{equation} 
\begin{split} 
& 
r( v ) =
\begin{cases}
\mathbf{C} ( 1 + \| v \|_{H_\nu}^2 ) 
&\colon v \in H_{ \max \{ \nu, \gamma \} }\\
0 &\colon  
v \in ( H_\gamma \backslash H_{\max \{ \nu, \gamma \} } )
.
\end{cases}
\end{split} 
\end{equation} 
	Observe that,	
	e.g.,
	Becker et al.\ \cite[Lemma~5.3]{BeckerGessJentzenKloeden2017} 
	(applies with
	$ V = H_{ \max \{ \nu, \gamma \} } $,
	$ W = H_\gamma $,
	$ ( S, \mathcal{S} ) = ( [0,\infty), \B( [0,\infty) ) ) $,
	$ \Psi = r $ 
	in the setting of  
	Becker et al.\ \cite[Lemma~5.3]{BeckerGessJentzenKloeden2017})
	ensures that
	\begin{equation} 
	\label{eq:Measurable} 
	r \in \M( \B(H_\gamma), \B( [0, \infty) ) ) 
	.
	\end{equation} 
Next note that for all
$ x \in H_{ \max \{ \nu, \gamma \} } $ we have that
\begin{equation}
\begin{split} 
\label{eq:Ass1}
&
\| B \|_{\HS(U, H )} + \epsilon \| x \|_H^2 
\leq 
\max \{ \| B \|_{\HS(U, H )}, \epsilon \}
( 1 + \| x \|_H^2 )
\\
&
\leq
\max \{ \| B \|_{\HS(U, H )}, \epsilon \}
\max \{ \| (-A)^{-\nu} \|_{L(H) }^2, 1 \}
( 1 + \| x \|_{H_\nu}^2 )
\leq 
\mathbf{C} 
( 1 + \| x \|_{ H_\nu }^2 ) 
=
r(x)
.
\end{split} 
\end{equation} 
	Moreover, observe that for all
		$ I \in \mathcal{P}_0(\H) $,
	$ h \in (0,T] $ 
	we have that 
	\begin{equation}  
	D_h^I = \{ v \in P_I( H ) \colon r(v) \leq  \mathbf{C}  h^{- \varsigma }  \}
	.
	\end{equation} 
	This,
	\eqref{eq:Measurable},
	and, 
	e.g., 
	Andersson et al.\ \cite[Lemma~2.2]{AnderssonKurniawanJentzen2016}
	(applies with
	$ V_0 = H_\gamma $,
	$ V_1 = P_I(H) $
		for
		$ I \in \mathcal{P}_0( \H ) $
	in the setting of 
	Andersson et al.\ \cite[Lemma~2.2]{AnderssonKurniawanJentzen2016})
	assure that for all
	 $ I \in \mathcal{P}_0(\H) $,
 	$ h \in (0,T] $
 we have that
 \begin{equation} 
 \label{eq:Measurable Hgamma}
 D_h^I \in \B(H_\gamma).
 \end{equation}
	Furthermore, note that~\eqref{eq:need 1}
	and~\eqref{eq:master C}
	give that 
	for all 
		$ I \in \mathcal{P}_0(\H) $,
	$ h \in (0,T] $,  
	$ x \in D_h^I $
	we have that 
	\begin{equation} 
	\begin{split}
	\label{eq:Ass3}
	&
	\max\{\| P_I F(x) \|_H,
	\| B \|_{\HS(U, H  )}
	\} 
	\\
	&
	\leq 
	\max\!
	\Big\{
	\Big(
	\sup\nolimits_{ J \in \mathcal{P}_0(\H) }
	\sup\nolimits_{ v \in 
P_J(H)	
}
	\tfrac{ \| P_J F(v) \|_H }{
		1 + \| v \|_{H_{\nu}}^2	
	} 
	\Big)
	 ( 1 + \| x \|_{H_{\nu}}^2  )
	,
	\| B \|_{\HS(U, H  )}
	\Big\}
	\\
	& 
	\leq 
	\max\!
	\Big\{ 
	\sup\nolimits_{ J \in \mathcal{P}_0(\H) }
	\sup\nolimits_{ v \in 
		P_J(H)
	}
	\tfrac{ \| P_J F(v) \|_H }{
		1 + \| v \|_{H_{\nu}}^2	
	},
	\| B \|_{\HS(U, H  )}
	\Big\} 
	(
	1
	+
	\| x \|_{H_{\nu}}^2
	)
	\leq 
	\mathbf{C} h^{-\varsigma}.
	\end{split} 
	\end{equation}
	Combining this, 
	\eqref{eq:Addition0}, 
	\eqref{eq:bound on varepsilon}, 
	\eqref{eq:Measurable}--\eqref{eq:Measurable Hgamma}, 
	the fact that
	$ \E[ \exp ( \epsilon \| \xi \|_H^2 ) ] < \infty $, 
	the assumption that for all    
	$ I \in \mathcal{P}_0(\H) $,
	$ x, y \in P_I( H ) $ 
	we have that   
	$ \langle F'(x) y, y \rangle_H  \leq 
	( \varepsilon \| x \|_{H_{\nicefrac{1}{2}}}^2 
	+ C_\varepsilon ) \| y \|_H^2 
	+ \|y \|_{H_{\nicefrac{1}{2}}}^2 $,
	$ \langle x, F(x) \rangle_H \leq \aa ( 1 + \| x \|_H^2 ) $,
	$ \| F( x ) - F( y ) \|_H
	\leq
	C \| x - y \|_{H_\delta} 
	( 1 + \| x \|_{H_\kappa}^c  + \| y \|_{H_\kappa}^c ) $,
	and
	$ \langle x, A x + F(x+y) \rangle_H
	\leq
	\Phi(y) ( 1 + \| x \|_H^2 ) $,
	and 
	Theorem~\ref{theorem:Exact_to_numeric}
	(applies with
	$ T = T $,
	$ \upnu = \mathbf{C} $,
	$ \varsigma = \varsigma $,
	$ \aa = \aa $,
	$ C = \mathbf{C} $,
	$ c = c $,
	$ p = p $,
	$ \beta = \beta $,
	$ \gamma = \gamma $, 
	$ \delta = \delta $,
	$ \kappa = \kappa $,
	$ \eta_0 = \eta_0 $,
	$ \sigma = \sigma $,
	$ \nu = \nu $,
	$ \eta_1 = \eta_1 $,
	$ \eta_2 = \eta_2 $,
	$ \alpha_1 = \alpha_1 $,
	$ \alpha_2 = \alpha_2 $,
	$ \alpha_3 = \alpha_3 $,
	$ B = B $,
	$ \epsilon = \epsilon $,
	$ \varepsilon = \varepsilon $,
	$ F = F $, 
	$ r = r $,
	$ D_h^I = D_h^I $, 
	$ \Phi = \Phi $,
	$ P_I = P_I $,
	$ ( \Omega, \F, \P ) = ( \Omega, \F, \P ) $,
	$ ( \f_t )_{ t \in [0,T] } = ( \f_t )_{ t \in [0,T] } $,
	$ ( W_t )_{ t \in [0,T] } = ( W_t )_{ t \in [0,T] } $,
	$ \xi = \xi $,
	$ X = X $,
	$ \y^{\theta, I} = \y^{ \theta, I } $
	for
		$ \theta \in \varpi_T $,
	$ I \in \mathcal{P}_0(\H) $,
	 $ h \in (0,T] $
	in the setting of 
	Theorem~\ref{theorem:Exact_to_numeric})
	justifies~\eqref{eq:Further simplified convergence estimate}.  
	The proof of Corollary~\ref{corollary:Exact_to_numeric}
	is hereby completed.
\end{proof}
\section[Strong convergence rates for stochastic Burgers equations]{Strong convergence rates for space-time discrete approximations of stochastic Burgers equations}
\label{section:Burgers}
In this section we illustrate
Corollary~\ref{corollary:Exact_to_numeric}  
in the case of
stochastic Burgers equations.
For this we combine some of the regularity results  in~\cite{JentzenLindnerPusnik2017c}
with
Corollary~\ref{corollary:Exact_to_numeric}  
to prove
in
Corollary~\ref{corollary:Burgers} 
strong convergence 
for the
numerical approximations
$ ( \y_t^{\theta, I} )_{ t \in [0,T] } $,
$ \theta \in \varpi_T $,
$ I \in \mathcal{P}_0(\H) $,
(see~\eqref{scheme:full_discrete}
below) 
of the 
solution of an additive trace-class noise driven stochastic Burgers equation
 (see~\eqref{eq:Burgers equation} below).
Finally,
Corollary~\ref{corollary:BurgersFinal}
presents the findings from 
Corollary~\ref{corollary:Burgers} 
in a further simplified setting. 
\begin{corollary}
	\label{corollary:Burgers}
Assume Setting~\ref{setting:Notation}, 
let
$ T, c_0 \in ( 0, \infty ) $,
$ c_1 \in \R $,
$ \varsigma \in (0, \nicefrac{1}{18}) $,  
$ p \in [1, \infty) $,
$ \beta \in (0, \nicefrac{1}{2} ) $,
$ \gamma \in ( [ \max \{ \nicefrac{1}{2}, 2 \beta \},   \nicefrac{1}{2}  + \beta ) \backslash \{ \nicefrac{1}{2}, \nicefrac{3}{4} \} ) $,
let
$ \lambda 
\colon
\mathcal{B}( (0,1) )
\rightarrow [0, 1] $
be the Lebesgue-Borel
measure on
$ (0,1) $,
let
$ (H, \langle \cdot, \cdot \rangle_H,
\left \| \cdot \right \|_H) 
=
(L^2( \lambda; \R), 
\langle \cdot, \cdot 
\rangle_{L^2( \lambda;\R)},
\left \| \cdot \right \|_{L^2( \lambda; \R)}
) $,  
let
$ ( e_n )_{ n \in \N } \subseteq H $
satisfy
for all
$ n \in \N $ 
that  
$ e_n = [ ( \sqrt{2} \sin( n \pi x ) )_{x \in (0,1) } ]_{ \lambda, \B(\R ) } $,
let $ \H \subseteq H $ satisfy that 
$ \H = \{ e_n \colon n \in \N \} $, 
\sloppy 
let
$ A \colon D( A ) \subseteq H \to H $ 
be the linear operator which satisfies
$ D(A) =  \{
v \in H \colon \sum_{ n=1 }^\infty  | n^2 
\langle e_n, v \rangle_H  | ^2 <  \infty
\} $ 
and  
$ \forall \, v \in D(A) \colon A v = - c_0 \sum_{ n = 1 }^\infty \pi^2 n^2 \langle 
e_n, v \rangle _H e_n $, 
let
$ (H_r, \langle \cdot, \cdot \rangle_{H_r}, \left \| \cdot \right \|_{H_r} ) $, 
$ r \in \R $, 
be a family of interpolation spaces associated to $ -A $,
for every  
$ v \in W^{1,2}( (0,1), \R ) $
let
$ \partial v    
\in H $
satisfy for all
$ \varphi  
\in 
\mathcal{C}_{cpt}^\infty( (0,1), \R ) $
that
$ \langle \partial  v, 
[ \varphi ]_{ \lambda, \mathcal{B}(\R )} \rangle_{H}
=
-
\langle v , 
[ \varphi' ]_{ \lambda, \mathcal{B}(\R )}
\rangle_{H} $,
let   
$ B \in \HS(H, H_\beta ) $,
let  
$ F \colon H_{\nicefrac{1}{2}} \to H $
satisfy 
for all
$ v \in H_{ \nicefrac{1}{2} } $ 
that
$ F(v) =
c_1
v \partial v $,
let
$ (P_I)_{ I \in \mathcal{P}(\H) } \subseteq L(H) $ 
satisfy
for all 
$ I\in \mathcal{P}(\H) $,  
$ v \in H $ 
that
$ P_I(v) =\sum_{h \in I} \left< h ,v \right>_H h $,
	let
	$ ( \Omega, \F, \P ) $
	be a probability space with a normal filtration
	$ ( \f_t )_{t \in [0,T]} $, 
	let
	$ (W_t)_{t\in [0,T]} $
	be an
	$ \operatorname{Id}_H $-cylindrical 
	$ ( \f_t )_{t\in [0,T]} $-Wiener process,
	let 
	$ \xi \in \L^{32p \max \{   ( 2 \gamma - 1 )  / (2\varsigma), 1  \} } (\P|_{\f_0}; H_\gamma ) $  
	satisfy 
	$ \inf_{ \epsilon \in (0,\infty) } \E[ \exp( \epsilon \| \xi \|_H^2 ) ] < \infty $,  
	let
	$ X  \colon [0,T] \times \Omega \to H_\gamma $
	be an $ ( \f_t )_{ t \in [0,T] } $-adapted 
	stochastic process
	w.c.s.p.\
	which satisfies
	for all
	$ t \in [0,T] $ that
	\begin{equation}
	\label{eq:Burgers equation} 
	[ X_t ]_{\P, \B(H_\gamma)} 
	= 
	\bigg[
	e^{tA} \xi 
	+
	\int_0^t e^{(t-s)A} F(X_s) \, ds
	\bigg]_{\P, \B(H_\gamma)} 
	+
	\int_0^t e^{(t-s)A} B \, dW_s,
	\end{equation}
	and
	let 
	$ \y^{\theta, I}\colon [0, T] \times \Omega \to P_I(H) $,
	$ \theta \in \varpi_T $, 
	$ I \in \mathcal{P}_0(\H) $, 
	be  
	$ (\f_t)_{t\in [0,T]} $-adapted
	stochastic processes
	which satisfy for all 
	$ \theta \in \varpi_T $, 
	$ I \in \mathcal{P}_0 (\H) $,
		$ t \in (0,T] $ 
	that
	$ \y^{\theta, I }_0 = P_I(\xi) $ and
	\begin{align}
	\label{scheme:full_discrete}
	\nonumber
	[ \y_t^{\theta, I } ]_{\P, \mathcal{B}( P_I( H ) )} 
	&= 
	\Big[ 
	e^{(t-\llcorner t \lrcorner_\theta)A} 
	\y_{
		\llcorner t \lrcorner_\theta
	}^{\theta, I } 
	+
	\,
	\1_{  \{ 
	1 + \| \y^{\theta,I }_{\llcorner t \lrcorner_{\theta}} \|_{ H_{ \nicefrac{1}{2} } }^2
		\leq  
		[|\theta|_T]^{-\varsigma }   \} }
	e^{(t-\llcorner t \lrcorner_\theta)A} 
	P_I
	F(
	\y_{ \llcorner t \lrcorner_\theta }^{\theta, I }
	) \,
	(
	t - \llcorner t \lrcorner_\theta
	)   
	\Big]_{\P, \mathcal{B}( P_I( H ) )}  
	\\
	&
	\quad
	+
	\frac{
		\int_{ \llcorner t \lrcorner_\theta }^t
		\1_{  \{ 
			1 + \| \y^{\theta,I }_{\llcorner t \lrcorner_{\theta}} \|_{ H_{ \nicefrac{1}{2} } }^2
			\leq  
			[|\theta|_T]^{-\varsigma }   \} }
		e^{(t-\llcorner t \lrcorner_\theta)A} 
		P_I
		B 
		\, dW_s
	}{
	1 + 
	\|
	\int_{ \llcorner t \lrcorner_\theta}^t 
	P_I
	B
	\, dW_s
	\|_H^2
	} 
	.
	\end{align}
	Then there exists 
	$ C \in \R $
	such that for all 
	$ \theta \in \varpi_T $,
	$ I \in \mathcal{P}_0 (\H) $
	we have that
	\begin{equation}
	\label{eq:Even further estimate}
	\sup\nolimits_{ t \in [0,T] }
	\| X_t - \y^{\theta, I }_t \|_{\L^p(\P; H)}
	\leq
	C
	\big( \| P_{\H \backslash I } ( -A )^{ (\nicefrac{1}{2}) - \gamma } \|_{L(H)}
	+
	[| \theta |_T]^{  \gamma - (\nicefrac{1}{2}) }
	\big)
	.
	\end{equation}
\end{corollary}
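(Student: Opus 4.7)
The plan is to apply Corollary~\ref{corollary:Exact_to_numeric} with the parameter choices $\delta = \nicefrac{1}{2}$, $\kappa = \nicefrac{1}{2}$, $\nu = \nicefrac{1}{2}$, $c = 1$, $\aa = 0$, and appropriate values of $\sigma, \eta_1, \eta_2, \alpha_1, \alpha_2$ dictated by the Sobolev mapping properties of the Burgers nonlinearity. With these choices the conclusion~\eqref{eq:Further simplified convergence estimate} becomes the target rate $\gamma - \nicefrac{1}{2}$, the moment requirement $\xi \in \mathcal{L}^{32pc\max\{(\gamma-\delta)/\varsigma, 1\}}(\P|_{\f_0}; H_{\max\{\eta_2,\sigma,\nu,\gamma\}})$ of Corollary~\ref{corollary:Exact_to_numeric} reduces (by $H_\gamma \hookrightarrow H_{1/2}$ and $c=1$) to the present hypothesis $\xi \in \mathcal{L}^{32p\max\{(2\gamma-1)/(2\varsigma),1\}}(\P|_{\f_0}; H_\gamma)$, and the truncation indicator $\1_{\{1 + \|\y^{\theta,I}_{\llcorner t\lrcorner_\theta}\|_{H_{1/2}}^2 \leq [|\theta|_T]^{-\varsigma}\}}$ coincides with the one in~\eqref{eq:Scheme2} for $\nu = \nicefrac{1}{2}$.

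The substantive work consists in verifying the structural hypotheses on $F(v) = c_1 v \partial v$. Coercivity is immediate: integration by parts combined with the Dirichlet boundary condition gives $\langle x, F(x)\rangle_H = (c_1/3)\int \partial(x^3)\, dx = 0$, so $\aa = 0$ is admissible on $P_I(H) \subseteq H_{1/2}$. Expanding $F'(x)y = c_1(y\partial x + x\partial y)$ and integrating by parts yields $\langle F'(x)y, y\rangle_H = (c_1/2)\int y^2 \partial x \, dx$; combining this with the one-dimensional Gagliardo--Nirenberg inequality $\|y\|_{L^4}^2 \leq C \|y\|_H^{3/2}\|y\|_{H_{1/2}}^{1/2}$ and two successive applications of Young's inequality produces the derivative bound
\begin{equation*}
\langle F'(x)y, y\rangle_H \leq (\varepsilon \|x\|_{H_{1/2}}^2 + C_\varepsilon)\|y\|_H^2 + \|y\|_{H_{1/2}}^2
\qquad \text{for every } \varepsilon \in (0,\infty).
\end{equation*}
The two-variable coercivity $\langle x, Ax + F(x+y)\rangle_H \leq \Phi(y)(1 + \|x\|_H^2)$ is then obtained by expanding $F(x+y) = (c_1/2)\partial(x+y)^2$, integrating by parts to reduce to the two terms $(c_1/2)\int x^2 \partial y$ and $-(c_1/2)\int y^2 \partial x$, and absorbing the arising $\|x\|_{H_{1/2}}^2$ contributions into the strictly negative $\langle x, Ax\rangle_H = -\|x\|_{H_{1/2}}^2$; the $y$-dependence is collected into a continuous function $\Phi \colon H \to [0,\infty)$.

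For the local Lipschitz estimate and the $F$-growth bounds in the negative-index Sobolev spaces $H_{-\alpha_i}$ appearing in~\eqref{eq:Addition0}, the plan is to invoke the corresponding estimates in~\cite{JentzenLindnerPusnik2017c} directly. In particular the identity $F(x) - F(y) = (c_1/2)\partial((x-y)(x+y))$ together with the one-dimensional multiplication estimates from that paper yields $\|F(x) - F(y)\|_H \leq C\|x - y\|_{H_{1/2}}(1 + \|x\|_{H_{1/2}} + \|y\|_{H_{1/2}})$, matching the required form with $\delta = \kappa = \nicefrac{1}{2}$ and $c = 1$. The same reference provides bounds of the form $\|F(v)\|_{H_{-\alpha_i}} \leq C(1 + \|v\|_{H_{\eta_i}}^2)$ and $\|P_J F(v)\|_{H_{\gamma - 1/2}} \leq C(1 + \|v\|_{H_\sigma}^2)$ for suitable admissible choices of $(\eta_i, \alpha_i, \sigma)$.

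The main obstacle will be the bookkeeping of the fractional-Sobolev indices: one must select $\sigma, \eta_1, \eta_2, \alpha_1, \alpha_2$ so that all of the growth bounds in~\eqref{eq:Addition0} hold simultaneously, so that the constraints $\eta_i \in [0, \nicefrac{1}{2}+\beta)$, $\alpha_i \in [0, 1-\eta_i)$, and $\kappa \in [0, \nicefrac{1}{2}+\beta-\gamma+\delta)$ are all respected across the admissible range $\gamma \in [\max\{\nicefrac{1}{2}, 2\beta\}, \nicefrac{1}{2}+\beta) \setminus \{\nicefrac{1}{2}, \nicefrac{3}{4}\}$ (the exclusion of $\gamma \in \{\nicefrac{1}{2}, \nicefrac{3}{4}\}$ reflects precisely the borderline cases in the Sobolev embeddings used in~\cite{JentzenLindnerPusnik2017c}), and so that the moment exponents line up with the hypothesis on $\xi$. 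Once the parameters are fixed, the application of Corollary~\ref{corollary:Exact_to_numeric} is immediate and~\eqref{eq:Even further estimate} follows.
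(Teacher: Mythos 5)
Your proposal is correct and follows essentially the same route as the paper: the paper's proof also applies Corollary~\ref{corollary:Exact_to_numeric} with $\delta = \kappa = \nu = \nicefrac{1}{2}$, $c = 1$, $\aa = 0$, $C_\varepsilon = \nicefrac{C}{\varepsilon^2}$, and verifies the hypotheses on $F(v) = c_1 v \partial v$ (one-sided Lipschitz/derivative bound, coercivity $\langle x, F(x)\rangle_H = 0$, two-variable coercivity with an explicit $\Phi$, local Lipschitz estimate, and the negative-index growth bounds) via the estimates of~\cite{JentzenLindnerPusnik2017c}, exactly as you outline by direct computation or citation. The index bookkeeping you leave open is resolved in the paper by choosing $\sigma = \gamma$, $\eta_2 = \nicefrac{1}{2}$, $\eta_1 = \nicefrac{(1-\alpha_2)}{3}$ with $\alpha_2 \in (\nicefrac{1}{4}, \nicefrac{1}{2})$ and $\alpha_1 \in (\nicefrac{3}{4}, \nicefrac{(2+\alpha_2)}{3})$, which is compatible with all the stated constraints.
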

\begin{proof}[Proof of Corollary~\ref{corollary:Burgers}]
	Throughout this proof let
	$ \Phi \colon H \to [0, \infty) $
	satisfy 
	for all 
	$ w \in H $
	that
	\begin{equation}
	\Phi( w ) = 
	\begin{cases}
	\tfrac{ 3 | c_1 |^2 }{ 8  | c_0 |  } 
	\bigg[   
	\sup_{ u \in H_{\nicefrac{1}{2} } \backslash \{ 0 \} }
	\tfrac{ \| u \|_{ L^\infty(\lambda; \R) } }{ \| u \|_{ H_{ \nicefrac{1}{2} } } }
	+
	\sup_{ u \in H_{\nicefrac{1}{2} } \backslash \{ 0 \} }
	\tfrac{ \| u \|_{ L^4(\lambda; \R) }^2 }{ \| u \|_{ H_{ \nicefrac{1}{2} } }^2 } 
	\bigg]^2
	(
	1
	+ 
	\| w \|_{ H_{ \nicefrac{1}{2} } }^2
	)^2
	&\colon w \in H_{ \nicefrac{1}{2} } \\
	0 &\colon w \in ( H \backslash H_{ \nicefrac{1}{2} }  )
	.
	\end{cases}
	\end{equation}
	We intend to verify
	Corollary~\ref{corollary:Burgers}
	through an application of 
	Corollary~\ref{corollary:Exact_to_numeric}.
	For this note that, e.g., \cite[item~(ii) 
	of Lemma~4.13]{JentzenLindnerPusnik2017c} 
	yields that
	for all 
	$ v, w \in H_\gamma \subseteq H_{ \nicefrac{1}{2} } $  
	we have that 
	\begin{equation}
	\begin{split}
	\label{eq:LocLips}
	&
	\| F(v) - F(w) \|_H 
	\leq 
	\tfrac{ | c_1 | }{ \sqrt{3} \, c_0 }
	(   
	\| v \|_{H_{ \nicefrac{1}{2} } }   
	+
	\| w \|_{H_{ \nicefrac{1}{2} } }
	)
	\| v - w \|_{H_{ \nicefrac{1}{2} } }    
	.
	\end{split}
	\end{equation}
	In addition, observe that, e.g., \cite[Lemma~4.19]{JentzenLindnerPusnik2017c} 
	and the fact that
	$ H_\gamma \subseteq H_{ \nicefrac{1}{2} } $
	continuously give that
	\begin{enumerate}[(a)] 
	\item \label{item:C1 F} we have that
	$ F \in \mathcal{C}^1(H_\gamma, H) $
	and 
	\item \label{item:C1 F1} we have that  
	there exists $ C \in (0, \infty) $
	which satisfies for all 
	$ \varepsilon \in (0,\infty) $,
	$ v, w \in H_\gamma \subseteq H_{ \nicefrac{1}{2} } $ 
	that
	\begin{equation} 
	\begin{split} 
	&
	\langle F'(w) v, v \rangle_H 
	\leq  
	\varepsilon
	\| w \|_{H_{\nicefrac{1}{2}}}^2    \| v \|_H^2 
	+
	\tfrac{C}{ \varepsilon^2 } \| v\|_H^2
	+
	\| v \|_{H_{\nicefrac{1}{2}}}^2
	.
	\end{split}
	\end{equation}	
	\end{enumerate}
	Furthermore,
	note that
	the fact that
	$ 0 \leq \gamma - \frac{1}{2}  < \frac{1}{2} $,
	the fact that
	$ \gamma \neq \frac{3}{4} $,
	and, e.g., \cite[Lemma~4.20]{JentzenLindnerPusnik2017c} 
	(applies with
	$ \alpha = \gamma - \frac{1}{2} $
	in the setting of~\cite[Lemma~4.20]{JentzenLindnerPusnik2017c}) 
	ensure that
	\begin{equation}
	\begin{split}
	\label{eq:Change again}
	\sup\nolimits_{ I \in \mathcal{P}_0(\H) }
	\sup\nolimits_{ v \in H_\gamma \backslash \{ 0 \} }
	\!
	\Big( 
	\tfrac{ \| P_I F(v) \|_{H_{ \gamma - (\nicefrac{1}{2} )  } } }
	{ \| v \|_{ H_{ \gamma  } }^2 }
	\Big)
	<
	\infty 
	.
	\end{split}
	\end{equation}
	Moreover,
 observe that, e.g., \cite[Lemma~4.20]{JentzenLindnerPusnik2017c} 
	(applies with
	$ \alpha = 0 $
	in the setting of~\cite[Lemma~4.20]{JentzenLindnerPusnik2017c}) 
	verifies that  
	\begin{equation} 
	\label{eq:F_B_estimate}
	\sup\nolimits_{ I \in \mathcal{P}_0(\H) }
	\sup\nolimits_{ v \in H_{ \nicefrac{1}{2} } \backslash \{0\} }
	\!
	\Big(
	\tfrac{ \| P_I F(v) \|_H }{
		\| v \|_{ H_{ \nicefrac{1}{2} } }^2	
	} 
\Big) 
	<
	\infty
	.
	\end{equation}
	In addition, note that, e.g., \cite[Lemma~4.23]{JentzenLindnerPusnik2017c} 
	verifies that  
	for all 
	$ I \in \mathcal{P}_0(\H) $, 
	$ x \in P_I(H) $ 
	we have
	that 
	\begin{equation} 
	\langle x, F(x) \rangle_H = 0 
	.
	\end{equation}
	Furthermore, observe that, e.g., \cite[Corollary~4.22]{JentzenLindnerPusnik2017c} 
	(applies with 
	$ \alpha_1 = \alpha_1 $,
	$ \alpha_2 = \alpha_2 $
	for 
	$ \alpha_1 \in ( \nicefrac{3}{4}, \infty) $,
	$ \alpha_2 \in ( \nicefrac{1}{4}, \nicefrac{1}{2} ] $
	in the setting of~\cite[Corollary~4.22]{JentzenLindnerPusnik2017c}) 
	yields that for all  
	$ \alpha_1 \in ( \nicefrac{3}{4}, \infty) $,
	$ \alpha_2 \in ( \nicefrac{1}{4}, \nicefrac{1}{2} ] $
	we have that
	\begin{equation}
	\label{eq:EnoughRegular}
	\Big[ 
	\sup\nolimits_{ v \in H_{ \nicefrac{1}{2} } \backslash \{0\} }
	\tfrac{ \| F(v) \|_{H  } }
	{ \| v \|_{H_{ \nicefrac{1}{2} } }^2 }
	\Big]
	+
	\Big[
	\sup\nolimits_{ v \in H_{ 
			\nicefrac{1}{2}   } \backslash \{ 0 \} }
	\tfrac{ \| F(v) \|_{H_{ - \alpha_2 } } }
	{ \| v \|_{ H_{  \nicefrac{ ( 1 - \alpha_2 ) }{3} } }^2 } 
	\Big]
	+
	\Big[ 
	\sup\nolimits_{ v \in H_{ \nicefrac{1}{2} } \backslash
		\{0\} } 
	\tfrac{ \| F(v) \|_{H_{-\alpha_1} } }{ \| v \|_H^2 }
	\Big]
	< \infty 
	.
	\end{equation}  
	Moreover,
	note that, e.g., \cite[Corollary~4.24]{JentzenLindnerPusnik2017c} 
	(applies with
	$ \iota = \nicefrac{1}{2} $,
	$ v = v $,
	$ w = w $
	for
	$ v, w \in H_{ \nicefrac{1}{2} } $
	in the setting of~\cite[Corollary~4.24 ]{JentzenLindnerPusnik2017c}) 
	assures that for all 
	$ v, w \in H_{ \nicefrac{1}{2} } $
	we have that
	\begin{equation}
	\begin{split}
	\label{eq:GeneralCoercivity}
	\langle v, F( v + w ) \rangle_H 
	&
	\leq
	\Phi(w) ( 1 +  \| v \|_H^2 )
	-
	\langle v, A v \rangle_H  
	.
	\end{split}
	\end{equation}
	Combining
	this, 
	the
	assumption 
	that
	$ \inf_{ \epsilon \in (0, \infty) } 
	\E[ \exp( \epsilon \| \xi \|_H^2 ) ] < \infty $, 
	items~\eqref{item:C1 F} and~\eqref{item:C1 F1},
	\eqref{eq:involve O}, 
	\eqref{eq:LocLips},
	and~\eqref{eq:Change again}--\eqref{eq:EnoughRegular}
	with 
	Corollary~\ref{corollary:Exact_to_numeric}
	(applies with
    $ ( H, \langle \cdot, \cdot \rangle_H,
    \left \| \cdot \right \|_H ) 
    =
    ( H, \langle \cdot, \cdot \rangle_H,
    \left \| \cdot \right \|_H ) $,
    $ ( U, \langle \cdot, \cdot \rangle_U,
    \left \| \cdot \right \|_U ) 
    =
    ( H, \langle \cdot, \cdot \rangle_H,
    \left \| \cdot \right \|_H ) $,
    $ \H = \H $,
    $ \values_{e_n} = - c_0 \pi^2 n^2 $,
    $ A = A $,
    $ H_r = H_r $,
	$ T = T $,
	$ \varsigma = \varsigma  $,
	$ \aa = 0 $,
	$ C = \max \{ 1, \nicefrac{|c_1|}{c_0} \} $,
	$ c = 1 $,
	$ p = p $,
	 $ C_\varepsilon = \nicefrac{ C }{ \varepsilon^2 } $,
	 $ \beta = \beta $,
	 $ \gamma = \gamma $, 
	$ \delta = \nicefrac{1}{2} $,
	$ \kappa = \nicefrac{1}{2} $,
	$ \sigma = \gamma $, 
	$ \nu = \nicefrac{1}{2} $,
	$ \eta_1 = \nicefrac{ ( 1 - \alpha_2 ) }{3} $,
	$ \eta_2 = \nicefrac{1}{2} $,
	$ \alpha_1 = \alpha_1 $,
	$ \alpha_2 = \alpha_2 $,
	$ B = B $,  
	$ F = ( H_\gamma \ni x \mapsto F(x) \in H ) $,
	$ \Phi = \Phi $,
	$ P_I = P_I $,
	$ ( \Omega, \F, \P ) 
	=
	( \Omega, \F, \P ) $,
	$ ( \f_t )_{ t \in [0,T] } 
	= ( \f_t )_{ t \in [0,T] } $,
	$ (W_t)_{ t \in [0,T] } 
	=
	(W_t)_{ t \in [0,T] } $,
	$ \xi = \xi $,
	$ X = X $,
	$ \y^{\theta, I} = \y^{\theta, I} $
	for 
$ n \in \N $,
$ r \in \R $,
$ \varepsilon \in (0, \infty) $,
$ \alpha_2 \in ( \nicefrac{1}{4}, \nicefrac{1}{2} ) $,
$ \alpha_1 \in ( \nicefrac{3}{4}, \nicefrac{(2+\alpha_2)}{3} ) $, 
	$ \theta \in \varpi_T $,
	$ I \in \mathcal{P}_0(\H) $
	in the setting of Corollary~\ref{corollary:Exact_to_numeric})
	therefore
	justifies~\eqref{eq:Even further estimate}.
	The proof of Corollary~\ref{corollary:Burgers}
	is hereby completed.
\end{proof}
\begin{corollary}
	\label{corollary:BurgersFinal}
	\sloppy 
	Assume Setting~\ref{setting:Notation},
	let  
	$ T, \varepsilon, c_0 \in (0,\infty) $,
	$ c_1 \in \R $,
	$ \varsigma  \in (0, \nicefrac{1}{18}) $, 
	$ p \in [1,\infty) $,
	$ \beta \in (0, \nicefrac{1}{2} ] $,  
	$ \gamma \in [\nicefrac{1}{2}, \nicefrac{1}{2}  + \beta ) $,
	let
	$ \lambda 
	\colon
	\mathcal{B}( (0,1) )
	\rightarrow [0, 1] $
	be the Lebesgue-Borel
	measure on
	$ (0,1) $,
let
	$ (H, \langle \cdot, \cdot \rangle_H,
	\left \| \cdot \right \|_H)  
	=
	(L^2( \lambda; \R),
	\langle \cdot, \cdot 
	\rangle_{L^2( \lambda; \R)},
	\left \| \cdot \right \|_{L^2( \lambda; \R)} ) $,  
	let     
	$ (e_n)_{ n \in \N} \subseteq H $
	satisfy 
	for all $ n \in \N $
	that
	$ e_n = [ 
	( \sqrt{2} \sin( n \pi x ) )_{x \in (0,1) } 
	]_{ \lambda, \B(\R ) } $, 
	let
	$ A \colon D( A ) \subseteq H \to H $ 
	be the linear operator which satisfies
	$ D(A) =  \{
	v \in H \colon \sum_{ n=1 }^\infty  | n^2 
	\langle e_n, v \rangle_H  | ^2 <  \infty
	 \} $ 
	and  
	$ \forall \, v \in D(A) \colon A v = - c_0 \sum_{ n = 1 }^\infty \pi^2 n^2 \langle 
	e_n, v \rangle _H e_n $, 
	let
	$ (H_r, \langle \cdot, \cdot \rangle_{H_r}, \left \| \cdot \right \|_{H_r} ) $, $ r\in \R $, be a family of interpolation spaces associated to $ -A $, 
	for every  
	$ v \in W^{1,2}( (0,1), \R ) $
	let
	$ \partial v    
	\in H $
	satisfy for all 
	$ \varphi  
	\in 
	\mathcal{C}_{cpt}^\infty( (0,1), \R ) $
	that
	$ \langle \partial  v, 
	[ \varphi ]_{\lambda, \mathcal{B}(\R )} \rangle_H
	=
	-
	\langle v, 
	[ \varphi' ]_{ \lambda, \mathcal{B}(\R )}
	\rangle_H $,
	let
	$ B \in \HS(H, H_\beta) $,
	$ \xi \in H_{ \nicefrac{1}{2} + \beta } $,
	let  
	$ F \colon H_{\nicefrac{1}{2}} \to H $ 
satisfy 
	 for all   
	 $ v \in H_{ \nicefrac{1}{2} } $ 
	 that
	 $ F( v ) =
	 c_1
	 v \partial v 
	 $,
	let 
	$ ( P_N )_{N \in \N } \subseteq L(H) $ 
	satisfy
	for all 
	$ N \in \N $, 
	$ v \in H $ 
	that
	$ P_N( v ) = \sum_{n=1}^N \langle e_n ,v \rangle_H e_n $, 
	let
	$ ( \Omega, \F, \P ) $
	be a probability space with a normal filtration
	$ ( \f_t )_{t \in [0,T]} $,
	let
	$ (W_t)_{t\in [0,T]} $
	be an
	$ \operatorname{Id}_H $-cylindrical 
	$ ( \f_t )_{t\in [0,T]} $-Wiener process,  
	and
	let 
	$ \y^{\theta, N} \colon [0, T] \times \Omega \to P_N(H) $,
	$ \theta \in \varpi_T $, 
	$ N \in \N $, 
	be  
	$ (\f_t)_{t\in [0,T]} $-adapted
	stochastic processes 
	which satisfy for all 
	$ \theta \in \varpi_T $, 
	$ N \in \N $,
	$ t \in (0, T] $ 
	that
	$ \y^{\theta, N}_0 = P_N(\xi) $ and
	\begin{equation} 
	\begin{split} 
	\label{eq:lastEq}
	&[ \y_t^{\theta, N } ]_{\P, \mathcal{B}(P_N(H))} 
	=
	\frac{
		\int_{ \llcorner t \lrcorner_\theta }^t
		\1_{  \{ 
			1
			+
			\|   
			\y^{\theta, N}_{\llcorner t \lrcorner_{\theta}}  \|_{ H_{ \nicefrac{1}{2} } }^2 
			\leq 
			[|\theta|_T]^{- \varsigma }   \}}
		e^{(t-\llcorner t \lrcorner_\theta)A} 
		P_N
		B 
		\, dW_s
	}{
		1 + 
		\|
		\int_{ \llcorner t \lrcorner_\theta}^t 
		P_N
		B 
		\, dW_s
		\|_H^2
	} 
\\
& 
+
	\Big[
	e^{(t-\llcorner t \lrcorner_\theta)A}  
	\y_{
		\llcorner t \lrcorner_\theta
	}^{\theta, N} 
	+
	\,
	\1_{  \{ 
		1
		+
		\|   
		\y^{\theta, N}_{\llcorner t \lrcorner_{\theta}}  \|_{ H_{ \nicefrac{1}{2} } }^2 
		\leq 
		[|\theta|_T]^{- \varsigma }   \}}
		e^{(t-\llcorner t \lrcorner_\theta)A} 
	P_N
	F(
	\y_{ \llcorner t \lrcorner_\theta }^{\theta, N}
	) \,
	(
	t - \llcorner t \lrcorner_\theta
	) 
	\Big]_{\P, \mathcal{B}(P_N(H))}  
	.
	\end{split}
	\end{equation} 
	Then  
	\begin{enumerate}[(i)]
		\item \label{item:SolutionExists}
		there exists 
		an up to indistinguishability unique $ ( \f_t )_{ t \in [0,T] } $-adapted 
		stochastic process
		$ X  \colon [0,T] \times \Omega \to H_\gamma $
		w.c.s.p.\ 
		such that 
		for all
		$ t \in [0,T] $ 
		we have that
		\begin{equation}
		[ X_t ]_{\P, \B(H_{ \gamma } )} 
		= 
		\bigg[
		e^{tA} \xi 
		+
		\int_0^t e^{(t-s)A} F(X_s) \, ds
		\bigg ]_{\P, \B(H_{ \gamma })} 
		+
		\int_0^t e^{(t-s)A} B \, dW_s
		\end{equation} 
		and
	\item \label{item:AppConverges} 
	there exists $ C \in \R $
	such that for all 
	$ \theta \in \varpi_T $,
	$ N \in \N $
	we have that
	\begin{equation}
	\sup\nolimits_{ t \in [0,T] }
	\| X_t - \y^{\theta, N }_t \|_{\L^p(\P; H)}
	\leq
	C
	\big(
	N^{ ( \varepsilon - 2 \beta ) }
	+
	[| \theta |_T]^{ ( \beta - \varepsilon ) }
	\big)
	.
	\end{equation}
	\end{enumerate}
\end{corollary}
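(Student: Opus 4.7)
The plan is to deduce both items from Corollary~\ref{corollary:Burgers}, combined with a classical existence and uniqueness result for mild solutions of stochastic Burgers equations. For item~\eqref{item:SolutionExists}, existence and uniqueness of an $(\f_t)_{t \in [0,T]}$-adapted stochastic process $X \colon [0,T] \times \Omega \to H_\gamma$ with continuous sample paths satisfying the mild formulation follows from well-posedness theory for stochastic Burgers equations with additive trace-class noise (e.g., from the framework of~\cite{JentzenLindnerPusnik2017c}); the deterministic initial value $\xi \in H_{1/2+\beta} \subseteq H_\gamma$ trivially satisfies all integrability requirements, including $\inf_{\epsilon \in (0,\infty)} \E[\exp(\epsilon \|\xi\|_H^2)] < \infty$ and all finite moments in $H_\gamma$.

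For item~\eqref{item:AppConverges}, I apply Corollary~\ref{corollary:Burgers} with carefully perturbed parameters. Given $\varepsilon, p \in (0,\infty)$, we may assume without loss of generality that $\varepsilon$ is small (otherwise the bound is trivial from the uniform $\L^p$-moment estimates in Corollary~\ref{corollary:AprioriNumApp} together with the regularity of $X$). Setting $\tilde\beta = \min\{\beta, 1/2\} - \varepsilon/8$ and $\tilde\gamma = 1/2 + \beta - \varepsilon/2$, and further shrinking $\varepsilon$ if needed to ensure $\tilde\gamma \neq 3/4$, one checks that the hypotheses of Corollary~\ref{corollary:Burgers} are met: $\tilde\gamma \in [\max\{1/2, 2\tilde\beta\}, 1/2 + \tilde\beta) \setminus \{1/2, 3/4\}$ (using $\beta \leq 1/2$), $B \in \HS(H, H_\beta) \subseteq \HS(H, H_{\tilde\beta})$, and all moment hypotheses on the deterministic $\xi$ hold automatically. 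Moreover, the scheme~\eqref{eq:lastEq} coincides with~\eqref{scheme:full_discrete} of Corollary~\ref{corollary:Burgers} upon taking $I = \{e_1, e_2, \ldots, e_N\}$, since both employ the same $\|\cdot\|_{H_{1/2}}^2$-based truncation and the identical tamed noise increment.

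Corollary~\ref{corollary:Burgers} then produces a constant $C \in \R$ with
\begin{equation*}
\sup\nolimits_{t \in [0,T]} \|X_t - \y^{\theta, N}_t\|_{\L^{\max\{p,1\}}(\P; H)} \leq C \bigl( \|P_{\H \setminus \{e_1, \ldots, e_N\}}(-A)^{1/2 - \tilde\gamma}\|_{L(H)} + [|\theta|_T]^{\tilde\gamma - 1/2} \bigr).
\end{equation*}
The spectral representation of $-A$ gives $\|P_{\H \setminus \{e_1, \ldots, e_N\}}(-A)^{1/2 - \tilde\gamma}\|_{L(H)} = (c_0 \pi^2 (N+1)^2)^{1/2 - \tilde\gamma} \leq C' N^{\varepsilon - 2\beta}$, while $[|\theta|_T]^{\tilde\gamma - 1/2} = [|\theta|_T]^{\beta - \varepsilon/2} \leq \max\{T,1\}^{\varepsilon/2}[|\theta|_T]^{\beta - \varepsilon}$, yielding the claimed rate; H\"older's inequality handles the passage from $\L^{\max\{p,1\}}$ to $\L^p$ in case $p < 1$. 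The main technical subtlety will be carefully arranging the parameters so as to simultaneously respect the constraints $\beta \in (0, 1/2)$ and $\gamma \notin \{1/2, 3/4\}$ of Corollary~\ref{corollary:Burgers} --- in particular at the boundary case $\beta = 1/2$ --- which is resolved by the perturbations above and absorbed into the arbitrariness of $\varepsilon$.
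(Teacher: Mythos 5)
Your route is the same as the paper's in outline: item~(i) from the well-posedness result \cite[Theorem~5.10]{JentzenLindnerPusnik2017c}, and item~(ii) by feeding perturbed parameters into Corollary~\ref{corollary:Burgers}, using the spectral bound $\|(\operatorname{Id}_H-P_N)(-A)^{\nicefrac{1}{2}-\tilde\gamma}\|_{L(H)}\leq |c_0|^{\nicefrac{1}{2}-\tilde\gamma}N^{-2(\tilde\gamma-\nicefrac{1}{2})}$ and absorbing a factor $T^{\nicefrac{\varepsilon}{2}}$ into the constant. However, your parameter check contains a genuine error precisely at the boundary case you flag. With $\tilde\beta=\min\{\beta,\nicefrac{1}{2}\}-\nicefrac{\varepsilon}{8}=\beta-\nicefrac{\varepsilon}{8}$ and $\tilde\gamma=\nicefrac{1}{2}+\beta-\nicefrac{\varepsilon}{2}$ one has $2\tilde\beta-\tilde\gamma=\beta-\nicefrac{1}{2}+\nicefrac{\varepsilon}{4}$, so whenever $\beta>\nicefrac{1}{2}-\nicefrac{\varepsilon}{4}$ (in particular for $\beta=\nicefrac{1}{2}$, which is admissible here, and then for every $\varepsilon>0$) we get $\tilde\gamma<2\tilde\beta$, and the hypothesis $\tilde\gamma\in[\max\{\nicefrac{1}{2},2\tilde\beta\},\nicefrac{1}{2}+\tilde\beta)$ of Corollary~\ref{corollary:Burgers} fails; shrinking $\varepsilon$ does not repair this, since at $\beta=\nicefrac{1}{2}$ the defect is exactly $\nicefrac{\varepsilon}{4}>0$. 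The fix is the paper's choice: reduce $\beta$ by at least half of the amount by which $\nicefrac{1}{2}+\beta$ is reduced, e.g.\ $\tilde\beta=\beta-\nicefrac{\varepsilon}{4}$, which yields $2\tilde\beta=2\beta-\nicefrac{\varepsilon}{2}\leq\nicefrac{1}{2}+\beta-\nicefrac{\varepsilon}{2}=\tilde\gamma$ precisely because $\beta\leq\nicefrac{1}{2}$, while keeping $\tilde\gamma<\nicefrac{1}{2}+\tilde\beta$, $\tilde\beta\in(0,\nicefrac{1}{2})$, and (after excluding the single value $\varepsilon=2\beta-\nicefrac{1}{2}$) $\tilde\gamma\notin\{\nicefrac{1}{2},\nicefrac{3}{4}\}$.

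Two further points should be made explicit. First, Corollary~\ref{corollary:Burgers} requires the exact solution to be an $H_{\tilde\gamma}$-valued process, and $\tilde\gamma$ may exceed the $\gamma$ fixed in the statement; the paper therefore invokes \cite[Theorem~5.10]{JentzenLindnerPusnik2017c} at every regularity level $\pmb{\gamma}\in[\nicefrac{1}{2},\nicefrac{1}{2}+\beta)$ and applies Corollary~\ref{corollary:Burgers} to the higher-regularity version $X^{\nicefrac{1}{2}+\beta-\nicefrac{\varepsilon}{2}}$, which is then identified with the process from item~(i) by uniqueness; applying Corollary~\ref{corollary:Burgers} directly to the $H_\gamma$-valued $X$ of item~(i) is not literally possible when $\gamma<\tilde\gamma$. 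Second, your treatment of large $\varepsilon$ is acceptable but can be simplified: for $\varepsilon\geq 2\beta$ (or $\varepsilon=2\beta-\nicefrac{1}{2}$) one may apply the estimate with a smaller admissible $\varepsilon'$ and use $N^{\varepsilon'-2\beta}\leq N^{\varepsilon-2\beta}$ together with $[|\theta|_T]^{\beta-\varepsilon'}\leq[\max\{T,1\}]^{\varepsilon-\varepsilon'}[|\theta|_T]^{\beta-\varepsilon}$, rather than appealing to uniform moment bounds. With these corrections your argument coincides with the paper's proof.
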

\begin{proof}[Proof of Corollary~\ref{corollary:BurgersFinal}]
Observe that~\cite[Theorem~5.10]{JentzenLindnerPusnik2017c}
(applies with 
$ T = T $,   
$ \varepsilon = \nicefrac{1}{2} + \beta - \pmb{\gamma} $, 
$ c_0 = c_0 $,
$ c_1 = c_1 $,
$ \beta = \beta $, 
$ \gamma = \pmb{\gamma} $,
$ H = H $,
$ e_n = e_n $,
$ A = A $,
$ H_r = H_r $,
$ ( \Omega, \F, \P ) 
=
( \Omega, \F, \P ) $,
$ ( \f_t )_{ t \in [0,T] } 
=
( \f_t )_{ t \in [0,T] } $,
$ ( W_t )_{ t \in [0,T] } 
=
 ( W_t )_{ t \in [0,T] } $,
$ B = B $, 
$ \xi = ( \Omega \ni \omega \mapsto \xi \in H_{  \nicefrac{1}{2} + \beta  } ) $ 
for 
$ r \in \R $,
$ n \in \N $,
$ \pmb{\gamma} \in [ \nicefrac{1}{2}, \nicefrac{1}{2} 
+ \beta ) $
in the setting of~\cite[Theorem~5.10]{JentzenLindnerPusnik2017c}) 
yields that  
	there exist  
  up to modification unique $ ( \f_t )_{ t \in [0,T] } $-adapted 
stochastic processes
$ X^{ \pmb{\gamma} } 
\colon 
[0,T] \times \Omega \to H_{ \pmb{\gamma} } $,
$ \pmb{\gamma} \in [ \nicefrac{1}{2}, \nicefrac{1}{2} + \beta ) $,  
w.c.s.p.\ 
which satisfy
for all
$ \pmb{\gamma} \in [ \nicefrac{1}{2}, \nicefrac{1}{2} + \beta ) $,
$ t \in [0,T] $ that
\begin{equation}
\label{eq:ExistenceLast}
[ X_t^{ \pmb{\gamma} } ]_{\P, \B(H_{ \pmb{\gamma} } )} 
= 
\bigg[
e^{tA} \xi 
+
\int_0^t e^{(t-s)A} F( X_s^{ \pmb{\gamma} } ) \, ds
\bigg ]_{\P, \B(H_{ \pmb{\gamma} })} 
+
\int_0^t e^{(t-s)A} B \, dW_s
.
\end{equation} 	
This justifies item~\eqref{item:SolutionExists}.
In the next step we note that for all 
$ \iota \in (0,\infty) $,
$ N \in \N $,
$ v \in H $ we have that
\begin{equation} 
\begin{split}
&
\| ( \operatorname{Id}_H - P_N ) ( -A )^{-\iota} v \|_H^2
=
|c_0|^{ - 2 \iota }
\sum_{n=N+1}^\infty 
(\pi^2 n^2)^{- 2 \iota}
| \langle v, e_n \rangle_H |^2
\\
&
\leq
|c_0|^{ - 2 \iota }
( \pi^2 N^2 )^{-2\iota} 
\sum_{n=N+1}^\infty  
| \langle v, e_n \rangle_H |^2
\leq
|c_0|^{ - 2 \iota }
( \pi^2 N^2 )^{-2\iota} 
\| v \|_H^2 
.
\end{split}
\end{equation} 
This yields that
for all 
$ \iota \in (0,\infty) $,
$ N \in \N $ 
we have that
\begin{equation}
\| ( \operatorname{Id}_H - P_N ) (-A)^{-\iota} \|_{L(H)}
\leq
 | c_0 |^{ - \iota }  
\pi^{-2\iota}
N^{-2\iota}
\leq 
 | c_0 |^{ - \iota } 
N^{-2\iota}
.
\end{equation}
The fact that
for all
$ \theta \in \varpi_T $, 
$ \epsilon \in (0, \infty) $
we have that
$ [ | \theta |_T ]^{ \beta 
	- 
	(\nicefrac{ \epsilon }{ 2 } ) }
\leq
T^{ \nicefrac{\epsilon}{2} } 
[ | \theta |_T ]^{ ( \beta - \epsilon ) } $,
\eqref{eq:ExistenceLast},
and
Corollary~\ref{corollary:Burgers}
(applies with
$ T = T $,
$ c_0 = c_0 $,
$ c_1 = c_1 $,
$ \varsigma = \varsigma $,
$ p = p $, 
$ \beta = \beta - \frac{ \epsilon }{ 4 } $,
$ \gamma = \frac{1}{2} + \beta - \frac{\epsilon}{2} $, 
$ H = H $,
$ e_n = e_n $, 
$ A = A $,
$ H_r = H_r $, 
$ B = B $,
$ F = F $,
$ P_{ \{ e_1, e_2, \ldots, e_n \} } = P_n $,
$ ( \Omega, \F, \P) 
=
( \Omega, \F, \P ) $,
$ (\f_t)_{ t \in [0,T] } =  (\f_t)_{ t \in [0,T] } $,
$ ( W_t )_{ t \in [0,T] } = ( W_t )_{ t \in [0,T] } $,   
$ \xi = ( \Omega \ni \omega \mapsto \xi \in H_{  ( \nicefrac{1}{2} )  + \beta - ( \nicefrac{\epsilon}{2} ) } ) $,
$ X = ( [0,T] \times \Omega \ni (t,\omega) \mapsto 
X_t^{ ( \nicefrac{1}{2} ) + \beta - ( \nicefrac{\epsilon}{2} ) } 
( \omega )
\in H_{ ( \nicefrac{1}{2} ) 
	+ \beta 
	- ( \nicefrac{\epsilon}{2} ) 
} ) $,
$ \y^{ \theta, \{ e_1, e_2, \ldots, e_n \} } 
=
\y^{\theta, n } $ 
for  
$ r \in \R $,
$ \theta \in \varpi_T $,
$ n \in \N $, 
$ \epsilon \in ( (0, 2 \beta ) \backslash 
\{ 2 \beta - \nicefrac{1}{2} \} ) $ 
in the setting of 
Corollary~\ref{corollary:Burgers}) 
therefore
justify item~\eqref{item:AppConverges}.
The proof of Corollary~\ref{corollary:BurgersFinal}
is hereby completed.
\end{proof} 
\subsubsection*{Acknowledgements}
This project has been partially supported through the SNSF-Research project $ 200021\_156603 $ ''Numerical 
approximations of nonlinear stochastic ordinary and partial differential equations''
and through the Deutsche Forschungsgesellschaft (DFG) 
via RTG 2131 \textit{High-dimensional Phenomena in Probability – Fluctuations and Discontinuity}.
\newpage
\bibliographystyle{acm}
\bibliography{../../../Bib/bibfile}

\def\cprime{$'$} \def\cprime{$'$}
\begin{thebibliography}{10}

\bibitem{ag06}
{\sc Alabert, A., and Gy\"{o}ngy, I.}
\newblock On numerical approximation of stochastic {B}urgers' equation.
\newblock In {\em From stochastic calculus to mathematical finance}. Springer,
  Berlin, 2006, pp.~1--15.

\bibitem{AliprantisBorder2006}
{\sc Aliprantis, C.~D., and Border, K.~C.}
\newblock {\em Infinite dimensional analysis}, third~ed.
\newblock Springer, Berlin, 2006.
\newblock A hitchhiker's guide.

\bibitem{AnderssonKurniawanJentzen2016}
{\sc Andersson, A., Jentzen, A., and Kurniawan, R.}
\newblock {Existence, uniqueness, and regularity for stochastic evolution
  equations with irregular initial values}.
\newblock {\em arXiv:1512.06899\/} (2016), 35 pages.
\newblock Revision requested from \emph{J.\ Math.\ Anal.\ Appl.}

\bibitem{BeccariHutzenthalerJentzenKurniawanLindnerSalimova2019}
{\sc Beccari, M., Hutzenthaler, M., Jentzen, A., Kurniawan, R., Lindner, F.,
  and Salimova, D.}
\newblock {Strong and weak divergence of exponential and linear-implicit Euler
  approximations for stochastic partial differential equations with
  superlinearly growing nonlinearities}.
\newblock {\em arXiv:1903.06066\/} (2019), 65 pages.

\bibitem{BeckerGessJentzenKloeden2017}
{\sc Becker, S., Gess, B., Jentzen, A., and Kloeden, P.~E.}
\newblock {Strong convergence rates for explicit space-time discrete numerical
  approximations of stochastic Allen-Cahn equations}.
\newblock {\em arXiv:1711.02423\/} (2017), 104 pages.

\bibitem{BeckerGessJentzenKloeden2018}
{\sc Becker, S., Gess, B., Jentzen, A., and Kloeden, P.~E.}
\newblock {Lower and upper bounds for strong approximation errors for numerical
  approximations of stochastic heat equations}.
\newblock {\em arXiv:1811.01725\/} (2018), 20 pages.

\bibitem{BeckerJentzen2019}
{\sc Becker, S., and Jentzen, A.}
\newblock Strong convergence rates for nonlinearity-truncated {E}uler-type
  approximations of stochastic {G}inzburg-{L}andau equations.
\newblock {\em Stochastic Process. Appl. 129}, 1 (2019), 28--69.

\bibitem{BessaihBrzezniakMillet2014}
{\sc Bessaih, H., Brze\'{z}niak, Z., and Millet, A.}
\newblock Splitting up method for the 2{D} stochastic {N}avier-{S}tokes
  equations.
\newblock {\em Stoch. Partial Differ. Equ. Anal. Comput. 2}, 4 (2014),
  433--470.

\bibitem{BessaihMillet2018}
{\sc Bessaih, H., and Millet, A.}
\newblock Strong {$L^2$} convergence of time numerical schemes for the
  stochastic two-dimensional {N}avier-{S}tokes equations.
\newblock {\em IMA J. Numer. Anal. 39}, 4 (2019), 2135--2167.

\bibitem{BloemkerJentzen2013}
{\sc Bl\"omker, D., and Jentzen, A.}
\newblock Galerkin approximations for the stochastic {B}urgers equation.
\newblock {\em SIAM J. Numer. Anal. 51}, 1 (2013), 694--715.

\bibitem{Breckner2000}
{\sc Breckner, H.}
\newblock Galerkin approximation and the strong solution of the
  {N}avier-{S}tokes equation.
\newblock {\em J. Appl. Math. Stochastic Anal. 13}, 3 (2000), 239--259.

\bibitem{BrehierCuiHong2018}
{\sc Br\'ehier, C.-E., Cui, J., and Hong, J.}
\newblock {Strong convergence rates of semidiscrete splitting approximations
  for the stochastic Allen–Cahn equation}.
\newblock {\em IMA Journal of Numerical Analysis 39}, 4 (07 2018), 2096--2134.

\bibitem{BrzezniakCarelliProhl2013}
{\sc Brze\'{z}niak, Z., Carelli, E., and Prohl, A.}
\newblock Finite-element-based discretizations of the incompressible
  {N}avier-{S}tokes equations with multiplicative random forcing.
\newblock {\em IMA J. Numer. Anal. 33}, 3 (2013), 771--824.

\bibitem{CarelliProhl2012}
{\sc Carelli, E., and Prohl, A.}
\newblock Rates of convergence for discretizations of the stochastic
  incompressible {N}avier-{S}tokes equations.
\newblock {\em SIAM J. Numer. Anal. 50}, 5 (2012), 2467--2496.

\bibitem{ConusJentzenKurniawan2019}
{\sc Conus, D., Jentzen, A., and Kurniawan, R.}
\newblock Weak convergence rates of spectral {G}alerkin approximations for
  {SPDE}s with nonlinear diffusion coefficients.
\newblock {\em Ann. Appl. Probab. 29}, 2 (2019), 653--716.

\bibitem{CoxHutzenthalerJentzenNeervenWelti2016}
{\sc Cox, S., Hutzentheler, M., Jentzen, A., van Neerven, J., and Welti, T.}
\newblock {{C}onvergence in {H}\"older norms with applications to {M}onte
  {C}arlo methods in infinite dimensions}.
\newblock {\em arXiv:1605.00856\/} (2016), 48 pages.
\newblock To appear in \emph{IMA J. Num. Anal.}

\bibitem{CoxHutzenthalerJentzen2013}
{\sc Cox, S.~G., Hutzenthaler, M., and Jentzen, A.}
\newblock {Local {L}ipschitz continuity in the initial value and strong
  completeness for nonlinear stochastic differential equations}.
\newblock {\em arXiv:1309.5595v2\/} (2014), 84 pages.
\newblock Revision requested from \emph{Mem.\ Amer.\ Math.\ Soc.}

\bibitem{CuiHong2018}
{\sc Cui, J., and Hong, J.}
\newblock Analysis of a splitting scheme for damped stochastic nonlinear
  {S}chr\"{o}dinger equation with multiplicative noise.
\newblock {\em SIAM J. Numer. Anal. 56}, 4 (2018), 2045--2069.

\bibitem{CuiHong2019}
{\sc Cui, J., and Hong, J.}
\newblock {Analysis of a full discretization of stochastic Cahn--Hilliard
  equation with unbounded noise diffusion}.
\newblock {\em arXiv:1907.11869\/} (2019), 25 pages.

\bibitem{CuiHongLiu2017}
{\sc Cui, J., Hong, J., and Liu, Z.}
\newblock Strong convergence rate of finite difference approximations for
  stochastic cubic {S}chr\"odinger equations.
\newblock {\em J. Differential Equations 263}, 7 (2017), 3687--3713.

\bibitem{CuiHongLiuZhou2019}
{\sc Cui, J., Hong, J., Liu, Z., and Zhou, W.}
\newblock Strong convergence rate of splitting schemes for stochastic nonlinear
  {S}chr\"{o}dinger equations.
\newblock {\em J. Differential Equations 266}, 9 (2019), 5625--5663.

\bibitem{CuiHongSun2019}
{\sc Cui, J., Hong, J., and Sun, L.}
\newblock {Strong convergence rate of a full discretization for stochastic
  Cahn–Hilliard equation driven by space-time white noise}.
\newblock {\em arXiv:1812.06289\/} (2019), 29 pages.

\bibitem{dz92}
{\sc {Da Prato}, G., and Zabczyk, J.}
\newblock {\em Stochastic equations in infinite dimensions}, vol.~44 of {\em
  Encyclopedia of Mathematics and its Applications}.
\newblock Cambridge University Press, Cambridge, 1992.

\bibitem{DavieGaines2001}
{\sc Davie, A.~M., and Gaines, J.~G.}
\newblock Convergence of numerical schemes for the solution of parabolic
  stochastic partial differential equations.
\newblock {\em Math. Comp. 70}, 233 (2001), 121--134.

\bibitem{Doersek2012}
{\sc D\"{o}rsek, P.}
\newblock Semigroup splitting and cubature approximations for the stochastic
  {N}avier-{S}tokes equations.
\newblock {\em SIAM J. Numer. Anal. 50}, 2 (2012), 729--746.

\bibitem{FengLiZhang2017}
{\sc Feng, X., Li, Y., and Zhang, Y.}
\newblock Finite element methods for the stochastic {A}llen-{C}ahn equation
  with gradient-type multiplicative noise.
\newblock {\em SIAM J. Numer. Anal. 55}, 1 (2017), 194--216.

\bibitem{FurihataKovacsLarssonLindgren2017Published}
{\sc Furihata, D., Kov\'acs, M., Larsson, S., and Lindgren, F.}
\newblock Strong {C}onvergence of a {F}ully {D}iscrete {F}inite {E}lement
  {A}pproximation of the {S}tochastic {C}ahn--{H}illiard {E}quation.
\newblock {\em SIAM J. Numer. Anal. 56}, 2 (2018), 708--731.

\bibitem{GerencserJentzenSalimova2017published}
{\sc Gerencs\'{e}r, M., Jentzen, A., and Salimova, D.}
\newblock On stochastic differential equations with arbitrarily slow
  convergence rates for strong approximation in two space dimensions.
\newblock {\em Proc. A. 473}, 2207 (2017), 20170104, 16.

\bibitem{GyoengyMillet2005}
{\sc Gy\"{o}ngy, I., and Millet, A.}
\newblock On discretization schemes for stochastic evolution equations.
\newblock {\em Potential Anal. 23}, 2 (2005), 99--134.

\bibitem{GyongySabanisSiska2016}
{\sc Gy\"{o}ngy, I., Sabanis, S., and \v{S}i\v{s}ka, D.}
\newblock Convergence of tamed {E}uler schemes for a class of stochastic
  evolution equations.
\newblock {\em Stoch. Partial Differ. Equ. Anal. Comput. 4}, 2 (2016),
  225--245.

\bibitem{HairerNonstiffProblems}
{\sc Hairer, E., N{\o}rsett, S.~P., and Wanner, G.}
\newblock {\em Solving ordinary differential equations. {I}}, second~ed.,
  vol.~8 of {\em Springer Series in Computational Mathematics}.
\newblock Springer-Verlag, Berlin, 1993.
\newblock Nonstiff problems.

\bibitem{Haireretal2015}
{\sc Hairer, M., Hutzenthaler, M., and Jentzen, A.}
\newblock Loss of regularity for {K}olmogorov equations.
\newblock {\em Ann. Probab. 43}, 2 (2015), 468--527.

\bibitem{HairerMattingly2006}
{\sc Hairer, M., and Mattingly, J.~C.}
\newblock Ergodicity of the 2{D} {N}avier-{S}tokes equations with degenerate
  stochastic forcing.
\newblock {\em Ann. of Math. (2) 164}, 3 (2006), 993--1032.

\bibitem{HefterJentzen2019}
{\sc Hefter, M., and Jentzen, A.}
\newblock On arbitrarily slow convergence rates for strong numerical
  approximations of {C}ox-{I}ngersoll-{R}oss processes and squared {B}essel
  processes.
\newblock {\em Finance Stoch. 23}, 1 (2019), 139--172.

\bibitem{Higham2002}
{\sc Higham, D.~J., Mao, X., and Stuart, A.~M.}
\newblock Strong convergence of {E}uler-type methods for nonlinear stochastic
  differential equations.
\newblock {\em SIAM J. Numer. Anal. 40}, 3 (2002), 1041--1063.

\bibitem{Hu1996}
{\sc Hu, Y.}
\newblock Semi-implicit {E}uler-{M}aruyama scheme for stiff stochastic
  equations.
\newblock In {\em Stochastic analysis and related topics, {V} ({S}ilivri,
  1994)}, vol.~38 of {\em Progr. Probab.} Birkh\"{a}user Boston, Boston, MA,
  1996, pp.~183--202.

\bibitem{HutzenthalerJentzen2014PerturbationArxiv}
{\sc Hutzenthaler, M., and Jentzen, A.}
\newblock On a perturbation theory and on strong convergence rates for
  stochastic ordinary and partial differential equations with non-globally
  monotone coefficients.
\newblock {\em arXiv:1401.0295v1\/} (2014), 41 pages.
\newblock To appear in \emph{The Annals of Probability}.

\bibitem{HutzenthalerJentzen2014Memoires}
{\sc Hutzenthaler, M., and Jentzen, A.}
\newblock Numerical approximations of stochastic differential equations with
  non-globally {L}ipschitz continuous coefficients.
\newblock {\em Mem. Amer. Math. Soc. 236}, 1112 (2015), v+99.

\bibitem{HutzenthalerJentzenKloeden2011}
{\sc Hutzenthaler, M., Jentzen, A., and Kloeden, P.~E.}
\newblock Strong and weak divergence in finite time of {E}uler's method for
  stochastic differential equations with non-globally {L}ipschitz continuous
  coefficients.
\newblock {\em Proc. R. Soc. Lond. Ser. A Math. Phys. Eng. Sci. 467}, 2130
  (2011), 1563--1576.

\bibitem{HutzenthalerJentzenKloeden2012}
{\sc Hutzenthaler, M., Jentzen, A., and Kloeden, P.~E.}
\newblock Strong convergence of an explicit numerical method for {SDE}s with
  nonglobally {L}ipschitz continuous coefficients.
\newblock {\em Ann. Appl. Probab. 22}, 4 (2012), 1611--1641.

\bibitem{HutzenthalerJentzenKloeden2013}
{\sc Hutzenthaler, M., Jentzen, A., and Kloeden, P.~E.}
\newblock Divergence of the multilevel {M}onte {C}arlo {E}uler method for
  nonlinear stochastic differential equations.
\newblock {\em Ann. Appl. Probab. 23}, 5 (2013), 1913--1966.

\bibitem{HutzenthalerJentzenSalimova2016}
{\sc Hutzenthaler, M., Jentzen, A., and Salimova, D.}
\newblock Strong convergence of full-discrete nonlinearity-truncated
  accelerated exponential {E}uler-type approximations for stochastic
  {K}uramoto-{S}ivashinsky equations.
\newblock {\em Commun. Math. Sci. 16}, 6 (2018), 1489--1529.

\bibitem{HutzenthalerJentzenWang2017Published}
{\sc Hutzenthaler, M., Jentzen, A., and Wang, X.}
\newblock Exponential integrability properties of numerical approximation
  processes for nonlinear stochastic differential equations.
\newblock {\em Math. Comp. 87}, 311 (2018), 1353--1413.

\bibitem{KurniawanJentzen2015Arxiv}
{\sc Jentzen, A., and Kurniawan, R.}
\newblock Weak convergence rates for {E}uler-type approximations of semilinear
  stochastic evolution equations with nonlinear diffusion coefficients.
\newblock {\em arXiv:1501.03539\/} (2015), 51 pages.
\newblock Accepted in \emph{Found.\ Comput.\ Math.}

\bibitem{JentzenLindnerPusnik2017b}
{\sc Jentzen, A., Lindner, F., and Pu\v{s}nik, P.}
\newblock {Exponential moment bounds and strong convergence rates for
  tamed-truncated numerical approximations of stochastic convolutions}.
\newblock {\em arXiv:1812.05198\/} (2018), 25 pages.
\newblock Accepted in \emph{Numerical Algorithms}.

\bibitem{JentzenLindnerPusnik2017a}
{\sc Jentzen, A., Lindner, F., and Pu\v{s}nik, P.}
\newblock On the {A}lekseev-{G}r\"{o}bner formula in {B}anach spaces.
\newblock {\em Discrete Contin. Dyn. Syst. Ser. B 24}, 8 (2019), 4475--4511.

\bibitem{JentzenLindnerPusnik2017c}
{\sc Jentzen, A., Lindner, F., and Pu\v{s}nik, P.}
\newblock {Spatial Sobolev regularity for stochastic Burgers equations with
  additive trace class noise}.
\newblock {\em arXiv:1908.06128\/} (2019), 54~pages.

\bibitem{JentzenYaroslavtsevaMuller2016}
{\sc Jentzen, A., M\"uller-Gronbach, T., and Yaroslavtseva, L.}
\newblock On stochastic differential equations with arbitrary slow convergence
  rates for strong approximation.
\newblock {\em Commun. Math. Sci. 14}, 6 (2016), 1477--1500.

\bibitem{JentzenPusnik2018Published}
{\sc Jentzen, A., and Pu\v{s}nik, P.}
\newblock Exponential moments for numerical approximations of stochastic
  partial differential equations.
\newblock {\em Stoch. Partial Differ. Equ. Anal. Comput. 6}, 4 (2018),
  565--617.

\bibitem{JentzenPusnik2019Published}
{\sc Jentzen, A., and Pu\v{s}nik, P.}
\newblock {Strong convergence rates for an explicit numerical approximation
  method for stochastic evolution equations with non-globally Lipschitz
  continuous nonlinearities}.
\newblock {\em IMA Journal of Numerical Analysis\/} (April 2019).
\newblock \url{https://doi.org/10.1093/imanum/drz009}, in press, published
  online: 12 April 2019 (Accessed 1 August 2019).

\bibitem{JentzenSalimovaWelti2019}
{\sc Jentzen, A., Salimova, D., and Welti, T.}
\newblock Strong convergence for explicit space-time discrete numerical
  approximation methods for stochastic {B}urgers equations.
\newblock {\em J. Math. Anal. Appl. 469}, 2 (2019), 661--704.

\bibitem{BlomkerKamrani2017b}
{\sc Kamrani, M., and Bl\"omker, D.}
\newblock Pathwise convergence of a numerical method for stochastic partial
  differential equations with correlated noise and local {L}ipschitz condition.
\newblock {\em J. Comput. Appl. Math. 323\/} (2017), 123--135.

\bibitem{KovacsLarssonLindgren2013Published}
{\sc Kov\'acs, M., Larsson, S., and Lindgren, F.}
\newblock On the backward {E}uler approximation of the stochastic
  {A}llen-{C}ahn equation.
\newblock {\em J. Appl. Probab. 52}, 2 (2015), 323--338.

\bibitem{KovacsLarssonLindgren2015published}
{\sc Kov\'{a}cs, M., Larsson, S., and Lindgren, F.}
\newblock On the discretisation in time of the stochastic {A}llen-{C}ahn
  equation.
\newblock {\em Math. Nachr. 291}, 5-6 (2018), 966--995.

\bibitem{LiuRoeckner2015Book}
{\sc Liu, W., and R{\"o}ckner, M.}
\newblock {\em Stochastic partial differential equations: an introduction}.
\newblock Universitext. Springer, Cham, 2015.

\bibitem{MajeeProhl2018}
{\sc Majee, A.~K., and Prohl, A.}
\newblock Optimal strong rates of convergence for a space-time discretization
  of the stochastic {A}llen-{C}ahn equation with multiplicative noise.
\newblock {\em Comput. Methods Appl. Math. 18}, 2 (2018), 297--311.

\bibitem{MazzonettoSalimova2019}
{\sc Mazzonetto, S., and Salimova, D.}
\newblock {Existence, uniqueness, and numerical approximations for stochastic
  Burgers equations}.
\newblock {\em arXiv:1901.09288\/} (2019), 23 pages.

\bibitem{MuellerGronbachRitter2007}
{\sc M\"{u}ller-Gronbach, T., and Ritter, K.}
\newblock Lower bounds and nonuniform time discretization for approximation of
  stochastic heat equations.
\newblock {\em Found. Comput. Math. 7}, 2 (2007), 135--181.

\bibitem{MuellerGronbachRitterWagner2008b}
{\sc M\"{u}ller-Gronbach, T., Ritter, K., and Wagner, T.}
\newblock Optimal pointwise approximation of a linear stochastic heat equation
  with additive space-time white noise.
\newblock In {\em Monte {C}arlo and quasi-{M}onte {C}arlo methods 2006}.
  Springer, Berlin, 2008, pp.~577--589.

\bibitem{MuellerGronbachRitterWagner2008}
{\sc M\"{u}ller-Gronbach, T., Ritter, K., and Wagner, T.}
\newblock Optimal pointwise approximation of infinite-dimensional
  {O}rnstein-{U}hlenbeck processes.
\newblock {\em Stoch. Dyn. 8}, 3 (2008), 519--541.

\bibitem{MullerYaroslavtseva2017}
{\sc M\"{u}ller-Gronbach, T., and Yaroslavtseva, L.}
\newblock A note on strong approximation of {SDE}s with smooth coefficients
  that have at most linearly growing derivatives.
\newblock {\em J. Math. Anal. Appl. 467}, 2 (2018), 1013--1031.

\bibitem{Printems2001}
{\sc Printems, J.}
\newblock On the discretization in time of parabolic stochastic partial
  differential equations.
\newblock {\em M2AN Math. Model. Numer. Anal. 35}, 6 (2001), 1055–1078.

\bibitem{Sabanis2013ECP}
{\sc Sabanis, S.}
\newblock A note on tamed {E}uler approximations.
\newblock {\em Electron. Commun. Probab. 18\/} (2013), 1–10.

\bibitem{Sabanis2016}
{\sc Sabanis, S.}
\newblock Euler approximations with varying coefficients: the case of
  superlinearly growing diffusion coefficients.
\newblock {\em Ann. Appl. Probab. 26}, 4 (2016), 2083--2105.

\bibitem{SellYou2002}
{\sc Sell, G.~R., and You, Y.}
\newblock {\em Dynamics of evolutionary equations}, vol.~143 of {\em Applied
  Mathematical Sciences}.
\newblock Springer-Verlag, New York, 2002.

\bibitem{TretyakovZhang2013}
{\sc Tretyakov, M.~V., and Zhang, Z.}
\newblock A fundamental mean-square convergence theorem for {SDE}s with locally
  {L}ipschitz coefficients and its applications.
\newblock {\em SIAM J. Numer. Anal. 51}, 6 (2013), 3135--3162.

\bibitem{Wang2018}
{\sc Wang, X.}
\newblock {An efficient explicit full discrete scheme for strong approximation
  of stochastic Allen-Cahn equation}.
\newblock {\em arXiv:1802.09413\/} (2018), 25~pages.

\bibitem{WangGan2013}
{\sc Wang, X., and Gan, S.}
\newblock The tamed {M}ilstein method for commutative stochastic differential
  equations with non-globally {L}ipschitz continuous coefficients.
\newblock {\em J. Difference Equ. Appl. 19}, 3 (2013), 466--490.

\bibitem{YangZhang2017}
{\sc Yang, L., and Zhang, Y.}
\newblock Convergence of the spectral {G}alerkin method for the stochastic
  reaction-diffusion-advection equation.
\newblock {\em J. Math. Anal. Appl. 446}, 2 (2017), 1230--1254.

\bibitem{Yaroslavtseva2016}
{\sc Yaroslavtseva, L.}
\newblock On non-polynomial lower error bounds for adaptive strong
  approximation of {SDE}s.
\newblock {\em J. Complexity 42\/} (2017), 1--18.

\bibitem{MullerYaroslavtseva2017b}
{\sc Yaroslavtseva, L., and M\"uller-Gronbach, T.}
\newblock On sub-polynomial lower error bounds for quadrature of {SDE}s with
  bounded smooth coefficients.
\newblock {\em Stoch. Anal. Appl. 35}, 3 (2017), 423--451.

\bibitem{ZhuZhu2017}
{\sc Zhu, R., and Zhu, X.}
\newblock Approximating 3{D} {N}avier-{S}tokes equations driven by space-time
  white noise.
\newblock {\em Infin. Dimens. Anal. Quantum Probab. Relat. Top. 20}, 4 (2017),
  1750020, 77.

\bibitem{ZhuZhu2015Arxiv}
{\sc Zhu, R., and Zhu, X.}
\newblock {Piecewise linear approximation for the dynamical $ \Phi^4_3 $
  model}.
\newblock {\em arXiv:1504.04143\/} (2017), 37 pages.

\bibitem{ZhuZhu2018}
{\sc Zhu, R., and Zhu, X.}
\newblock Lattice approximation to the dynamical {$\Phi_3^4$} model.
\newblock {\em Ann. Probab. 46}, 1 (2018), 397--455.

\end{thebibliography}

\end{document}